\newtheorem{proposition}{Proposition}[section]
\newtheorem{theorem}[proposition]{Theorem}
\newtheorem{corollary}[proposition]{Corollary}
\newtheorem{lemma}[proposition]{Lemma}
\newtheorem{conjecture}[proposition]{Conjecture}
\theoremstyle{definition}
\newtheorem{definition}[proposition]{Definition}
\renewcommand{\det}{\operatorname{det}}
\newcommand{\tr}{\operatorname{tr}}
\newcommand{\std}{\operatorname{std}}
\newcommand{\sgn}{\operatorname{sgn}}
\newcommand{\ex}{\operatorname{exp}}
\newcommand{\weight}{\operatorname{weight}}
\newcommand{\eigen}{\lambda}
\newcommand{\rp}{r}
\newcommand{\ry}{\tilde{r}}
\newcommand{\Cqs}{\mathbb C[[q^{-s}]]^+}
\begin{document} 

\begin{abstract} We propose a refinement of the random matrix model for a certain family of $L$-functions over $\mathbb F_q[u]$, using techniques that we hope will eventually apply to an arbitrary family of $L$-functions. This consists of a probability distribution on power series in $q^{-s}$ which combines properties of the characteristic polynomials of Haar-random unitary matrices and random Euler products over $\mathbb F_q[u]$. The support of our distribution is contained in the intersection of the supports of the two original distributions. The expectations of low-degree polynomials in the coefficients of our series approximate the expectations of the same polynomials in the coefficients of random Euler products, while the expectations of high-degree polynomials approximate the expectations of the same polynomials in the coefficients of the characteristic polynomials of random matrices. Furthermore, the expectations of absolute powers of our series approximate the \cite{CFKRS}-\cite{AK} prediction for the moments of our family of $L$-functions. \end{abstract}

\title{A refined random matrix model for function field $L$-functions}

\author{Will Sawin}

\maketitle 

\section{Introduction}

We begin by defining two probability distributions: one describing uniformly random multiplicative functions and their associated Euler products, and the other uniform random matrices and their characteristic polynomials. We next construct a probability distribution as a hybrid of both, which describes non-uniform random matrices. We then state our results about this hybrid distribution, and explain how it can be used to model the behavior of a certain family of Dirichlet $L$-functions.

Let $\mathbb F_q[u]^+$ be the set of monic polynomials in one variable over a finite field $\mathbb F_q$. We say a polynomial in $\mathbb F_q[u]^+$ is prime if it is irreducible. Take for each prime $\pi\in \mathbb F_q[u]^+$ an independent random variable $\xi(\pi)$ uniformly distributed on the unit circle in $\mathbb C$ and form the random Euler product
\[L_\xi(s)= \prod_{ \substack{\mathfrak p \in \mathbb F_q[u]^+ \\ \textrm{prime}} }\frac{1}{1- \xi(\mathfrak p) q^{-s \deg \mathfrak p}} .\]
We can extend $\xi$ uniquely to a function $\xi \colon \mathbb F_q[u]^+ \to \mathbb C$ that is completely multiplicative in the sense that $\xi(1)=1$, $\xi(fg)=\xi(f)\xi(g)$ for all $f,g\in \mathbb F_q[u]^+$. In other words, $\xi$ is a Steinhaus random multiplicative function. Then we can equally well express $L_\xi(s)$ as a sum $\sum_{f \in \mathbb F_q[u]^+}\xi(f) q^{- s \deg f} $. We have
\[ \log  L_\xi(s)=\sum_{ \substack{\mathfrak p \in \mathbb F_q[u]^+ \\ \textrm{prime}} } \log  \frac{1}{1- \xi(\mathfrak p) q^{-s \deg \mathfrak p}} = \sum_{ \substack{\mathfrak p \in \mathbb F_q[u]^+ \\ \textrm{prime}} }  \sum_{m=1}^{\infty}  \frac{ \xi(\mathfrak p)^m q^{ - s m \deg \mathfrak p}}{m}   .\]
Let $X_{n ,\xi}$ be the coefficient of $q^{- ns}$ in $ \log L_\xi(s) $, i.e.  \begin{equation}\label{Xn-formula} X_{n,\xi} = \sum_{d \mid n} \sum_{ \substack{\mathfrak p \in \mathbb F_q[u]^+ \\ \deg \mathfrak p =d  \\ \textrm{prime}} }  \frac{d}{n} \xi(\mathfrak p)^{ \frac{n}{d}} . \end{equation}

Fix $k$ a natural number. Assume $q>2$.  Let $F(x_1,\dots, x_k)$ be the probability density function of the tuple of random variables $X_{1,\xi},\dots, X_{k,\xi}$. (We assume $q>2$ since it is not hard to check that this probability density function does not exist for $q=2$ as long as $k\geq 2$.)

Let $\Cqs$ be the set of power series in $q^{-s}$ with constant coefficient $1$. The power series $L_\xi(s)$ lies in $\Cqs$. We endow $\Cqs$ with a topology by viewing it as a product of copies of $\mathbb C$ and taking the product topology, and consider the Borel $\Sigma$-algebra. Let $\mu_{\textrm{ep}}$ be the measure on $\Cqs$ given by the distribution of the random variable $L_\xi$.

For $N$ a natural number and $M$ in the unitary group $U(N)$, let \begin{equation}\label{LM-def} L_M(s) = \det ( I - q^{ \frac{1}{2} - s } M). \end{equation} We have $L_M\in  \Cqs.$ (In fact, $L_M$ is a polynomial in $q^{-s}$ and not just a power series.) Let $\mu_{\textrm{rm}}$ be the measure on $\Cqs$ given by the distribution of the random variable $L_M$ for $M$ Haar-random in $U(N)$. In other words, $\mu_{\textrm{rm}}$ is the pushforward of the Haar measure $\mu_{\textrm{Haar}}$ from $U(N)$ to $\Cqs$.

The probability distributions $\mu_{\textrm{ep}}$ and $\mu_{\textrm{rm}}$ can both be used as models for properties of random Dirichlet $L$-functions over $\mathbb F_q[u]$. We now describe a distribution that combines properties of both and thus, we hope, serves as a better model than either.

Fix $\beta \in \left( \frac{1}{4},  \frac{1}{2}\right)$ and $N$ a natural number and let $k= \lfloor N^\beta \rfloor$.

Consider the non-uniform measure on $U(N)$
\begin{equation}\label{weighted-def} \mu_{\textrm{weighted} } = \gamma \frac{ F( - q^{1/2} \tr(M),\dots, -q^{k/2} \tr (M^k)/k)}{  \prod_{j=1}^k  \left( e^{  -  \frac{  \abs{ \tr (M^j)}^2 }{ j } }  \frac{ j }{ q^j \pi} \right) } \mu_{\textrm{Haar} } \end{equation} where $\gamma $ is the unique constant that makes $\mu_{\textrm{weighted}}$ a probability measure.  We let $\mu_{\textrm{ch}}$ be the distribution of $ L_M $ for $M$ a matrix in $U(N)$ distributed according to $\mu_{\textrm{weighted} } $, i.e. the pushforward of $\mu_{\textrm{weighted} } $ from $U(N)$ to $\Cqs$.

We think of $\mu_{\textrm{ch}}$ as a chimera in the sense of a strange hybrid of the more familiar creatures $\mu_{\textrm{ep}}$ and $\mu_{\textrm{rm}}$. The measure $\mu_{\textrm{ch}}$ exists to serve as a model of a function field analogue of the Riemann zeta function, and hence combines the Euler product and random matrix perspectives on the zeta function. (We will meet the exact function field analogue which $\mu_{\textrm{ch}}$ models shortly in \S\ref{L-function-example}.) We prove three fundamental results about $\mu_{\textrm{ch}}$ describing the support of the measure and its integrals of the measure against a general class of test functions. These results explain which properties $\mu_{\textrm{ch}}$ shares with each of the simpler measures $\mu_{\textrm{ep}}$ and $\mu_{\textrm{rm}}$. Combining these, we will in Theorem \ref{intro-cfkrs} evaluate the integral against a specific test function that models the moments of the zeta function, and the resulting formula will look similar to predictions from \cite{CFKRS} for the moments of the zeta function.

Note that all our measures depend implicitly on the parameters $q,N$ (except that $\mu_{\textrm{ep}}$ is independent of $N$), so it should not be surprising when error terms in our estimates for them depend on $q$ or $N$. All implicit constants in big $O$ notation will be independent of $q,N$ except where noted.

\begin{proposition}\label{intro-support} For $N $ larger than some absolute constant, the support of $\mu_{\textrm{ch}}$ is contained in the intersection of the supports of $\mu_{\textrm{ep}}$ and $\mu_{\textrm{rm}}$. \end{proposition}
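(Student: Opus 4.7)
The plan is to prove the two containments separately. The containment $\operatorname{supp}(\mu_{\textrm{ch}}) \subseteq \operatorname{supp}(\mu_{\textrm{rm}})$ is essentially formal: since $\mu_{\textrm{weighted}}$ is absolutely continuous with respect to $\mu_{\textrm{Haar}}$, its support lies in $U(N)$, and pushforward by the continuous map $M \mapsto L_M$ places $\operatorname{supp}(\mu_{\textrm{ch}})$ inside $L(U(N)) = \operatorname{supp}(\mu_{\textrm{rm}})$.

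For the containment $\operatorname{supp}(\mu_{\textrm{ch}}) \subseteq \operatorname{supp}(\mu_{\textrm{ep}})$, set $Y_j(M) := -q^{j/2}\tr(M^j)/j$, so $Y_j(M)$ is the coefficient of $q^{-js}$ in $\log L_M(s)$. The weight in \eqref{weighted-def} is a strictly positive smooth factor times $F(Y_1(M),\dots,Y_k(M))$, so $M \in \operatorname{supp}(\mu_{\textrm{weighted}})$ forces $(Y_1(M),\dots,Y_k(M))$ into $\operatorname{supp}(F)$, which by construction is the support of the joint distribution of $(X_{1,\xi},\dots,X_{k,\xi})$. The remainder of the proof reduces to showing: if $(Y_1(M),\dots,Y_k(M))$ lies in this support, then every basic neighborhood of $L_M$ in $\Cqs$ has positive $\mu_{\textrm{ep}}$-measure. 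Since basic neighborhoods are cut out by finitely many coefficients, and since the coefficients of $L_\xi$ are continuous polynomial functions of the log-coefficients $X_{n,\xi}$ (and similarly for $L_M$), this becomes: for every $K \ge k$ and $\epsilon > 0$, find a positive-measure set of assignments $\{\xi(\mathfrak{p})\}$ on primes of degree $\le K$ with $|X_{n,\xi} - Y_n(M)| < \epsilon$ for all $n \le K$.

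I would build such $\xi$ recursively in $n$. For $n \le k$, the support hypothesis supplies a positive-measure family of $\xi$ on primes of degree $\le k$ making $|X_{n,\xi} - Y_n(M)|$ small. For each $n$ with $k < n \le K$, once $\xi$ at lower-degree primes is fixed, one writes
\[ X_{n,\xi} = (\text{terms in } \xi \text{ at primes of degree } <n) + \sum_{\deg \mathfrak{p} = n} \xi(\mathfrak{p}), \]
and the free sum sweeps out the disk of radius $\pi(n)$ (the number of monic irreducibles of degree $n$, which is $\sim q^n/n$) around the fixed center. Because $|\tr(M^n)| \le N$ gives $|Y_n(M)| \le Nq^{n/2}/n$, while the fixed center is $O(q^{n/2}/n)$, the target $Y_n(M)$ lies strictly inside the achievable disk provided $q^{n/2} \gg N$, yielding an open set of admissible $\xi$ at primes of degree $n$. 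Since $n > k = \lfloor N^\beta \rfloor \gg 2\log_q N$ for $N$ large, this bound holds at every stage; intersecting the choices across $n$ and invoking continuity of $X_{n,\xi}$ in the finitely many relevant $\xi(\mathfrak{p})$'s then produces the required open, positive-measure set.

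The main obstacle is this recursive extension step: the density of $\mu_{\textrm{weighted}}$ records information only about the first $k$ log-coefficients, whereas $\operatorname{supp}(\mu_{\textrm{ep}})$ is a condition on every finite truncation, so I must extend the support hypothesis past degree $k$. This extension succeeds precisely because $k \asymp N^\beta$ dominates $2\log_q N$, the threshold beyond which the combinatorial freedom in choosing $\xi$ on a single prime degree outpaces the worst-case size of $Y_n(M)$; the hypothesis $\beta > 0$ with $N$ large is exactly what makes the inequality $q^{n/2} \gg N$ available at every $n > k$.
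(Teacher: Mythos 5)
Your proposal is correct and follows essentially the same strategy as the paper's proof: the first containment via the pushforward structure, and the second via a recursive extension in $n$ using the freedom in $\xi(\mathfrak{p})$ at primes of degree $n$ to sweep out a disk of radius $\sim q^n/n$ that contains the target $-q^{n/2}\tr(M^n)/n$ once $q^{n/2} \gg N$, which holds for $n > k = \lfloor N^\beta\rfloor$ with $N$ large. The only cosmetic difference is that the paper reduces the support criterion for $\mu_{\textrm{ep}}$ to finding a \emph{single} $\xi$ whose log-coefficients match exactly (by observing the relevant map has compact domain and fully supported pushforward), whereas you carry approximate matching and positive-measure sets through directly; both work.
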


Natural test functions to use on $\Cqs$ are polynomials in the coefficients of the power series and their complex conjugates, i.e. we consider elements of the polynomial ring $\mathbb C[ c_1,c_2,\dots, \overline{c_1}, \overline{c_2},\dots]$ as functions on an element $1+\sum_{d=1}^{\infty} c_d q^{-ds}$ of $\Cqs$. We define the (weighted) degree of a polynomial in  $\mathbb C[ c_1,c_2,\dots, \overline{c_1}, \overline{c_2},\dots]$ by letting $c_d$ and $\overline{c_d}$ have degree $d$ for all $d$. We define the $L^2$ norm of such a polynomial using the random matrix measure, as 
\[ \norm{\phi}_2^2=\int_{\Cqs }  \abs{ \phi}^2   \mu_{\textrm{rm} }   =  \int_{U(N)}  \abs{ \phi ( L_M) }^2   \mu_{\textrm{Haar} } .\]

\begin{theorem}\label{intro-lf} Assume that $q>5$. Let $\phi \in \mathbb C[ c_1,c_2,\dots, \overline{c_1}, \overline{c_2},\dots] $ have degree $\leq k$. For $N$ sufficiently large in terms of $\beta$, we have
\begin{equation}\label{eq-intro-lf} \int_{\Cqs }  \phi    \mu_{\textrm{ch}}  =  \int_{\Cqs }    \phi  \mu_{\textrm{ep}} + O (e^{ -(\frac{1}{2} - o_N(1)) N^{1-\beta} \log (N^{1-\beta})} \norm{\phi}_2 ) .\end{equation}
\end{theorem}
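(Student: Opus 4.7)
The plan is to reduce both integrals to expectations of a common polynomial in the first $k$ log-coefficients, and then to exploit the very close agreement between Haar and Gaussian distributions on those coefficients. Let $Y_j(M) := -q^{j/2}\tr(M^j)/j$, so that $Y_n$ is the coefficient of $q^{-ns}$ in $\log L_M$ (and $X_n = X_{n,\xi}$ is the analogous coefficient of $\log L_\xi$). Since $\phi$ has weighted degree $\leq k$, each coefficient $c_d$ of $L \in \Cqs$ for $d \leq k$ is a weighted-degree-$d$ polynomial in $Y_1,\ldots,Y_d$; hence $\phi(L_M) = \tilde{\phi}(Y_1(M),\ldots,Y_k(M))$ and $\phi(L_\xi) = \tilde{\phi}(X_1,\ldots,X_k)$ for a single polynomial $\tilde\phi$ of weighted degree $\leq k$. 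A direct computation shows that the denominator in \eqref{weighted-def} is exactly the density $G(y) = \prod_{j=1}^k \frac{j}{\pi q^j} e^{-j|y_j|^2/q^j}$ of independent complex Gaussians with variances $q^j/j$ (matching the leading Haar variances of the $Y_j$). The two integrals become
\[\int \phi\, d\mu_{\textrm{ch}} = \gamma\, \mathbb{E}_{\textrm{Haar}}\!\left[\tilde{\phi}(Y)\,\frac{F(Y)}{G(Y)}\right], \qquad \int \phi\, d\mu_{\textrm{ep}} = \int \tilde{\phi}(y) F(y)\,dy = \mathbb{E}_G\!\left[\tilde{\phi}(Y_G)\,\frac{F(Y_G)}{G(Y_G)}\right].\]
So the task is to pass from the Haar expectation to the Gaussian expectation, and to control $\gamma - 1$.

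Next I would expand $F/G$ in the orthonormal complex Hermite basis $\{P_\alpha\}$ associated with $G$, writing $F/G = \sum_\alpha f_\alpha P_\alpha$, and similarly $\tilde{\phi} = \sum_{|\beta|\leq k} \phi_\beta P_\beta$ (a finite sum). The key random matrix input is the Diaconis--Shahshahani theorem: the joint moments of $\tr(M), \tr(M^2),\ldots$ under Haar agree exactly with the corresponding complex Gaussian moments up to total weighted degree $N$. Equivalently, $\mathbb{E}_{\textrm{Haar}}[P_\alpha P_\beta]$ equals its Gaussian value whenever $|\alpha|+|\beta|\leq N$. Applied to $|\tilde\phi|^2$, which has weighted degree $\leq 2k \ll N$, this gives the clean Parseval identity $\|\phi\|_2^2 = \sum_\beta |\phi_\beta|^2$; applied to products $P_\beta P_\alpha$ with $|\beta|\leq k$ and $|\alpha|\leq N-k$, it shows that the corresponding contributions reproduce $\int \phi\, d\mu_{\textrm{ep}}$ exactly. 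The discrepancy $\int \phi\, d\mu_{\textrm{ch}} - \int \phi\, d\mu_{\textrm{ep}}$ thus reduces, up to the scalar $\gamma-1$, to a tail sum over multi-indices $\alpha$ with $|\alpha|>N-k$. The factor $\gamma-1$ itself is handled by the same analysis applied with $\tilde\phi \equiv 1$.

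The main obstacle is to bound the high-order Hermite coefficients $f_\alpha$ of $F/G$. These measure the non-Gaussianity of the joint density $F$ of $(X_1,\ldots,X_k)$. From \eqref{Xn-formula}, each $X_j$ is dominated by a sum of roughly $q^j/j$ independent uniformly distributed phases indexed by degree-$j$ primes, with small corrections from primes of degree dividing $j$. Quantitative central limit theorem estimates for such Steinhaus prime sums---for instance via Edgeworth-type expansions driven by cumulant bounds---should yield rapid decay of the $f_\alpha$ with $|\alpha|$. Applied at the threshold $|\alpha|\sim N-k$, with $k = \lfloor N^\beta\rfloor$ so that $N/k \sim N^{1-\beta}$, and combined with the $\sqrt{\alpha!}$ normalization built into the Hermite basis, these estimates should produce a tail bound of shape $(N^{1-\beta})^{-(\frac{1}{2}+o(1))N^{1-\beta}}$, which is exactly $e^{-(\frac{1}{2}-o_N(1))N^{1-\beta}\log N^{1-\beta}}$. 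A final Cauchy--Schwarz against $\|\phi\|_2 = (\sum_\beta |\phi_\beta|^2)^{1/2}$ yields the stated bound. The hardest step is the quantitative non-Gaussianity estimate on $F$; once that is in hand, everything else is careful bookkeeping on top of Diaconis--Shahshahani.
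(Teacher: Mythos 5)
Your setup is correct and matches the paper's preliminary steps: recognizing that the denominator in \eqref{weighted-def} is the Gaussian density of the $Y_j$'s, rewriting $\int\phi\,d\mu_{\textrm{ch}}$ as $\gamma\,\mathbb{E}_{\textrm{Haar}}[\tilde\phi\cdot F/G]$, and identifying $\mathbb{E}_G[\tilde\phi\cdot F/G]$ with $\int\phi\,d\mu_{\textrm{ep}}$ via the density interpretation of $F$. From there, however, you take a genuinely different route from the paper, and the route has a gap.

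The paper obtains the comparison between the Haar and Gaussian expectations from the Johansson--Lambert total variation estimate (\cref{JL}), which says that the pushforwards of $\mu_{\textrm{rm}}$ and $\mu_{\textrm{g}}$ to $\mathbb{C}^k$ are within TV distance $e^{-(1-o_N(1))N^{1-\beta}\log(N^{1-\beta})}$. Combined with the pointwise bound $F/G\ll N^{O(1)}$ (\cref{pointwise-wint-simplified}) and the Cauchy--Schwarz form of the TV bound \eqref{tv-2}, this immediately gives \cref{lf-basic-comparison}, with the extra factor $\frac12$ in the exponent arising from the $\sqrt\delta$ in \eqref{tv-2}. You instead propose Diaconis--Shahshahani exact moment matching up to degree $N$, a Hermite expansion of $F/G$, and a tail bound on the coefficients $f_\alpha$ with $|\alpha|>N-k$.

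The gap is in controlling the Haar side of the tail. Writing $F/G = P + T$ with $P$ the degree-$(N-k)$ Hermite truncation and $T$ the tail, DS cancels the $P$ contribution and leaves $\mathbb{E}_{\textrm{Haar}}[\tilde\phi T]-\mathbb{E}_G[\tilde\phi T]$. The Gaussian term is fine: Hermite orthogonality and the coefficient decay control it. But $|\mathbb{E}_{\textrm{Haar}}[\tilde\phi T]|\leq\|\phi\|_2\|T\|_{L^2(\text{Haar})}$, and you have no handle on $\|T\|_{L^2(\text{Haar})}$. DS does not apply to $|T|^2$ (degree $\gtrsim 2(N-k)>N$), the Haar pushforward is singular with respect to the Gaussian (it is compactly supported), and Haar moments of degree-$\Theta(N)$ trace polynomials can be astronomically larger than their Gaussian counterparts. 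Your ``$\sqrt{\alpha!}$ normalization'' lives entirely in $L^2(G)$ and says nothing about Haar. Pinning down $\|T\|_{L^2(\text{Haar})}$ is precisely the kind of estimate that the Johansson--Lambert theorem packages; without it, your argument does not close.

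A secondary issue: the Hermite coefficient bounds the paper actually proves (\cref{hermite-start} through \cref{hermite-end}) control the tail sum of squares beyond weighted degree $k$ (not $N-k$) and give the polynomial rate $O_q(k^{-(q-2)/2})$; they are designed for \cref{intro-hf}, where the competition is against a polynomial error term, not an exponential one. To run your argument you would need superexponential decay of $\sum_{|\alpha|>N-k}|f_\alpha|^2$, which is a substantially stronger estimate than anything proved in the paper. Your heuristic that $2k$ coordinates with weights $1,\dots,k$ forces unweighted degree $\gtrsim N/k\sim N^{1-\beta}$, producing a factorial factor $\sim(N^{1-\beta})^{-\frac12 N^{1-\beta}}$, is a plausible guess for where the Johansson--Lambert rate comes from morally, but it is not a proof, and it addresses only the Gaussian side.
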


\begin{theorem}\label{intro-hf} Assume that $q>11$. Let $\phi \in \mathbb C[ c_1,c_2,\dots, \overline{c_1}, \overline{c_2},\dots] $. Assume that for all polynomials $\psi\in \mathbb C[ c_1,c_2,\dots, \overline{c_1}, \overline{c_2},\dots]  $ of degree $\leq k$ we have
\begin{equation}\label{hf-hypothesis} \int_{U(N)} \phi(L_M) \overline{\psi(L_M)} \mu_{\textrm{Haar}} =0 .\end{equation}
Then for $N$ sufficiently large in terms of $\beta$, we have
\begin{equation}\label{eq-intro-hf} \int_{\Cqs}   \phi    \mu_{\textrm{ch}}  =O_q ( N^{-  \beta \frac{q-2}{4}} \norm{\phi}_2   ) .\end{equation}
\end{theorem}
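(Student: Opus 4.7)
The plan is to use the hypothesis to subtract any polynomial of weighted degree $\leq k$ from the Radon--Nikodym weight defining $\mu_{\textrm{ch}}$, and then bound what remains by Cauchy--Schwarz.

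By definition,
\[
\int_{\Cqs}\phi\,\mu_{\textrm{ch}}
= \int_{U(N)} \phi(L_M)\, W(M)\,\mu_{\textrm{Haar}},
\]
where $W(M) = d\mu_{\textrm{weighted}}/d\mu_{\textrm{Haar}}$ is the explicit weight from \eqref{weighted-def} and depends only on $\tr(M), \dots, \tr(M^k)$ and their conjugates. Newton's identities give a triangular, weighted-degree-preserving change of variables between the first $k$ power sums and the first $k$ coefficients $c_1,\dots,c_k$ of $L_M$; consequently the subspace of $L^2(\mu_{\textrm{Haar}})$ spanned by polynomials of weighted degree $\leq k$ in $c_1,\dots,\overline{c_k}$ coincides with the subspace spanned by polynomials of weighted degree $\leq k$ in the traces. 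Hypothesis \eqref{hf-hypothesis} therefore gives $\int \phi(L_M) P(M)\,\mu_{\textrm{Haar}} = 0$ for every polynomial $P$ of weighted degree $\leq k$ in the traces. Taking $P$ to be the $L^2(\mu_{\textrm{Haar}})$-orthogonal projection of $W$ onto that subspace and applying Cauchy--Schwarz reduces the theorem to proving
\[
\|W_{>k}\|_{L^2(\mu_{\textrm{Haar}})} = O_q\bigl(N^{-\beta(q-2)/4}\bigr),
\]
where $W_{>k} := W - P$.

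To estimate this norm I would first replace $\mu_{\textrm{Haar}}$ by the product complex Gaussian measure $G$ on $\mathbb{C}^k$ with variance $q^j/j$ in the $j$-th coordinate. Since $k=\lfloor N^\beta\rfloor$ with $\beta<1/2$, Diaconis--Shahshahani moment calculations imply that the joint distribution of $(\tr(M^j))_{j\leq k}$ under Haar agrees with its Gaussian limit on all polynomials of weighted degree $\leq N$, so this substitution costs only a super-polynomially small error. In $L^2(G)$, expanding $W = \gamma F/G$ in the orthonormal complex Hermite basis $\{H_\alpha\}$ gives
\[
\langle W, H_\alpha\rangle_G = \gamma\, \mathbb{E}_\xi\bigl[\overline{H_\alpha(X_{1,\xi},\dots,X_{k,\xi})}\bigr],
\]
so
\[
\|W_{>k}\|_2^2 \;\approx\; \gamma^2 \sum_{\operatorname{wt}(\alpha) > k}\bigl|\mathbb{E}_\xi[H_\alpha(X_\xi)]\bigr|^2,
\]
and since $\mathbb{E}_G[H_\alpha]=0$ for $\alpha\neq 0$ each term measures the non-Gaussianity of the Steinhaus vector $X_\xi$ in a particular direction.

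The main obstacle is the combinatorial tail bound
\[
\sum_{\operatorname{wt}(\alpha)>k}\bigl|\mathbb{E}_\xi[H_\alpha(X_\xi)]\bigr|^2 = O_q\bigl(N^{-\beta(q-2)/2}\bigr).
\]
I would attack it by expanding each Hermite polynomial as a polynomial in $X_{1,\xi},\dots,X_{k,\xi}$, using \eqref{Xn-formula} to open each $X_{j,\xi}$ as a sum over primes, and applying the Steinhaus moment rule $\mathbb{E}[\xi(\pi)^a\overline{\xi(\pi)}^b] = \delta_{a=b}$. The result is a combinatorial sum over matchings of Steinhaus factors at each prime; the Hermite normalization annihilates the ``Gaussian'' (pairwise matching) contributions exactly, leaving only terms coming from triple or higher coincidences at a common prime, each of which costs one power of $q^{-1}$ relative to the Gaussian leading order. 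The constraint $\operatorname{wt}(\alpha)>k$ forces the surviving multi-indices to contain many such extra coincidences, yielding the claimed polynomial decay in $k \sim N^\beta$. The delicate point is that the prime sums defining $X_{j,\xi}$ for different $j$ share the same underlying variables $\xi(\pi)$, so the estimate does not factor across $j$ and the combinatorics (best organized as a sum over prime multisets with multiplicities) must be tracked carefully; the hypothesis $q>11$ is what ensures absolute convergence of the resulting series at the stated rate.
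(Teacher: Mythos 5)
Your setup through the reduction to a Hermite tail bound is the same as the paper's: express $\int\phi\,\mu_{\textrm{ch}}$ as an integral against the weight $W$, use the hypothesis \eqref{hf-hypothesis} to subtract the orthogonal projection of $W$ onto polynomials of degree $\leq k$, apply Cauchy--Schwarz, transfer from Haar to the Gaussian $G$, and identify the error as the $L^2(G)$ tail of the Hermite expansion of $F/G$. This is exactly the paper's skeleton (\cref{DS}, \cref{JL}, \cref{hermite-expansion}, \cref{hermite-end}). Two remarks on the details of this reduction. First, Diaconis--Shahshahani alone does not give a super-polynomially small error for the Haar-to-Gaussian comparison here: it gives equality of integrals only for polynomials of degree $\leq N$, and $(F/G)^2$ is not a polynomial. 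The paper uses Diaconis--Shahshahani only for the genuinely polynomial pieces (e.g.\ $\int\psi^2$) and needs the Johansson--Lambert total-variation bound (\cref{JL}) together with the pointwise bound $F/G=O(N^{O(1)})$ (\cref{pointwise-wint-simplified}) for the transcendental pieces; your sketch needs the same.

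The genuinely different part of your proposal is the method for the tail bound itself: rather than the paper's Laplace transform--contour integral machinery, you propose to expand $H_\alpha$ in monomials of the $X_{n,\xi}$, open the prime sums via \eqref{Xn-formula}, apply the Steinhaus moment identity, and count diagrams. Here there is a real gap, and it is located at the heart of the argument. The assertion that \emph{``the constraint $\operatorname{wt}(\alpha)>k$ forces the surviving multi-indices to contain many extra coincidences''} is not correct as stated. Take $\alpha$ concentrated at $n=1$ with $a_{1,1}=k+1$: the Hermite coefficient $\mathbb{E}_\xi[\mathit{He}_{k+1}(X_{1,\xi}/\sigma)]$ involves only the distribution of $X_{1,\xi}$, which is a $q$-fold convolution of the uniform circle measure. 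What makes this coefficient small is not an accumulation of high-order prime coincidences proportional to $k$, but rather the limited smoothness of the density of $X_{1,\xi}$ (roughly $C^{q-O(1)}$), which gives a \emph{polynomial} decay in $a_{1,1}$ of exponent $\sim q/2$. In the paper this is visible in the $n=1$ contribution to \cref{hermite-end}, which is $O(k)$ rather than a constant, and is the dominant term: it is what produces the final $k^{-(q-2)/2}$ decay and the threshold $q>11$ (via \eqref{q11}). Your heuristic of ``one power of $q^{-1}$ per extra coincidence'' does not, by itself, account for this; the decay as $\operatorname{wt}(\alpha)$ grows is an Edgeworth-type phenomenon governed by $q$, not a coincidence-counting phenomenon governed by $\operatorname{wt}(\alpha)-k$. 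Compounding this, the diagrammatic sum you propose has poor factorization properties across $n$ (you acknowledge this), and controlling it to the stated precision would require substantial new combinatorial infrastructure. The paper sidesteps the combinatorics entirely by factoring the Laplace transform over primes (\cref{laplace-formula}, \cref{hybrid-combined}), bounding the single-prime integrals by elementary means (Taylor expansion, stationary phase, trivial bounds), and then extracting Hermite coefficients by Cauchy's formula (\cref{hermite-start}) with an optimized contour radius (\cref{Ln-short-range}--\cref{Ln-medium-range}). If you want to push the diagrammatic route, the first order of business would be to make the $n=1$ analysis rigorous as a stand-alone Edgeworth expansion; but even then the cross-$n$ combinatorics would remain a serious obstacle, and the analytic route is likely more economical.
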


In interpreting \cref{intro-lf,intro-hf}, it is helpful to consider the heuristic that the typical size of $\phi$ on the support of $\mu_{\textrm{rm}}$ is approximately $\norm{\phi}_2$, and therefore, we should expect a trivial bound for the integral $\int_{\Cqs}   \phi    \mu_{\textrm{ch}}$ to be of size roughly $\norm{\phi}_2$. From this point of view we can view the factors of $e^{- (\frac{1}{2} - o_N(1)) N^{1-\beta} \log (N^{1-\beta})}$ in \cref{intro-lf} and $ N^{- \beta \frac{q-2}{4}} $ in \cref{intro-hf} as the amount of savings over the trivial bound, although obtaining an error term of size $O( \norm{\phi}_2)$ is not completely trivial.

\subsection{A family of $L$-functions and their moments}\label{L-function-example}

We next explain how $\mu_{\textrm{ch}}$ can be used as a model for a certain family of $L$-functions. We first consider a family of characters (discussed in more detail in \cite{s-rep}). 

\begin{definition} We say a Dirichlet character $\nu \colon \left( \mathbb F_q[x]/x^{N+2} \right)^\times \to \mathbb C^{\times}$ is ``primitive" if $\nu$ is nontrivial on elements congruent to $1$ mod $x^{N+1}$, and ``even" if $\nu$ is trivial on $\mathbb F_q^\times$. For a Dirichlet character $\nu$, define a function $\chi$ on monic polynomials in $\mathbb F_q[u]$ by, for $f$ monic of degree $d$, \[ \chi(f) = \nu ( f(x^{-1}) x^{d} ) .\] It is easy to see that $\chi$ depends only on the $N+2$ leading terms of $f$. Let $S_{N+1,q}$ be the set of characters $\chi$ arising from primitive even Dirichlet characters $\nu$ in this way. Because there are $q^{N+1} $ even Dirichlet characters of which $q^{N}$ are imprimitive, $S_{N+1,q}$	 has cardinality $q^{N+1}- q^{N}$. 

For $\chi\in S_{N+1,q}$, form  the associated $L$-function \[L(s,\chi) = \sum_{ f \in \mathbb F_q[u]^+ } \chi(f)  \abs{f}^{-s}.\] 
\end{definition}

The family of $L$-functions we consider consists of, for $\chi \in S_{N+1,q}$ and $t\in [ 0,\frac{2\pi }{\log q} ] $, the function $L(s+it,\chi)$. A random $L$-function of this family is obtained by choosing $\chi$ and $t$ independently uniformly at random. Let us see why these $L$-functions form a reasonable model for the statistics of the Riemann zeta function. 

$L(s+it,\chi)$ is the Dirichlet series with coefficients $f \mapsto \chi(f) \abs{f}^{-it}$. Viewed as characters of the idele class group of $\mathbb F_q(u)$, these comprise all the unitary characters ramified only at $\infty$ with conductor exponent $N+2$ at $\infty$. They are thus comparable to the characters $n \to n^{it}$ of $\mathbb N$, which are the unitary characters of the idele class group of $\mathbb Q$ ramified only at $\infty$. Said in a more elementary fashion, $n^{it}$ for $\abs{t}\leq T$ may be accurately approximated given the leading $\approx \log T$ digits of $n$ as well as the total number of digits, and all multiplicative functions with this approximation property have the form $n^{it}$, while $\chi(f) \abs{f}^{-it}$ may be computed exactly given the leading $N+2$ coefficients of $f$ as well as the degree of $f$, and all multiplicative functions with this property (or even those that may be approximated given this information) are of the form $\chi(f) \abs{f}^{-it}$ for some $\chi\in S_{N' +1,q}$ for some $N' \leq N$. Thus the statistics of $L(s+it, \chi)$ for random $\chi \in S_{N+1,q}, t\in [0, \frac{2\pi}{\log q}] $ are comparable to the statistics of $\zeta(s+it) $ for random $t\in [T,2T]$, i.e. the local statistics of the Riemann zeta function on the critical line.

The distribution of the coefficients  $f \mapsto \chi(f) \abs{f}^{-it}$ converges in the large $N$ limit to the distribution of a random multiplicative function $\xi$. (Without the average over $t$, they would converge to random multiplicative function subject to the restriction $\xi(u)=1$.)  On the other hand, by work of Katz~\cite{katz-wvqkr}, in the large $q$ limit the distribution of the $L$-functions $L(s+it,\chi)$ converges to the distribution $\mu_{\textrm{rm}}$, as long as $N \geq 3$. (Technically, we must express our power series in the variable $q^{\frac{1}{2}-s}$ instead of $q^{-s}$ for this convergence to make sense, as otherwise $\mu_{\textrm{rm}}$ depends on $q$.)  More precisely, \cite[Theorem 1.2]{katz-wvqkr} proves equidistribution of conjugacy classes in $PU(N)$ whose characteristic polynomials correspond to $L(s,\chi)$ against the Haar measure of $PU(N)$, and the additional averaging over $t$ is equivalent to averaging over the fibers of $U(N)\to PU(N)$.

Because of this $N\to \infty$ and $q\to\infty$ limiting behavior, the distribution of the family of $L$-functions $L(s+it,\chi)$ for finite $q,N$ is expected to have some similarity with $\mu_{\textrm{ep}}$ and some with $\mu_{\textrm{rm}}$. Thus $\mu_{\textrm{ch}}$, which interpolates between $\mu_{\textrm{ep}}$ and $\mu_{\textrm{rm}}$, is a plausible model for the distribution of $L(s+it,\chi)$. To test this model we must compare to facts known or expected to hold for the family of $L$-functions. We begin that investigation in this paper by comparing to the Conrey-Farmer-Keating-Rubinstein-Snaith predictions~\cite{CFKRS}, adapted to function fields by Andrade and Keating~\cite{AK}, for moments of $L$-functions. The moment of $L$-functions we consider is 
\[ \frac{1}{ q^N (q-1) }  \frac{\log q}{2\pi}  \sum_{\chi \in S_{N+1,q}}  \int_0^{ \frac{2\pi}{\log q}}   \prod_{j=1}^{\rp} L(s_j+i t ,\chi) \prod_{j=\rp+1}^{\rp + \ry} \overline{L ( s_j+ it  ,\chi) }\]
i.e. the average over this family of the product of $\rp$ special values of $L(s+it,\chi)$ with $\ry$ special values of $\overline{L(s+it,\chi)}$.  The recipe of \cite{CFKRS} predicts a main term for this moment of
\[ \operatorname{MT}_{N}^{\rp,\ry} (s_1,\dots, s_{\rp+\ry} )= \prod_{j=\rp+1}^{\rp+\ry} q^{ - (1/2-s_j) N}  \sum_{\substack{  S \subseteq \{1,\dots,\rp+\ry\} \\ \abs{S}=\ry}}  \prod_{j \in S} q^{ (1/2-s_j) N} \sum_{ \substack{ f_1,\dots, f_{\rp+\ry} \in \mathbb F_q[u]^+ \\ \prod_{j \in S} f_j = \prod_{j\notin S} f_j}}  \prod_{j\in S}\abs{f_j} ^{ -1+ s_j } \prod_{j\notin S} \abs{f_j} ^{ -s_j} .\]
In other words, in its most optimistic form the prediction is
\[ \frac{1}{ q^N (q-1) }  \frac{\log q}{2\pi}  \sum_{\chi \in S_{N+1,q}}  \int_0^{ \frac{2\pi}{\log q}}   \prod_{j=1}^{\rp} L(s_j+i t ,\chi) \prod_{j=\rp+1}^{\rp + \ry} \overline{L ( s_j+ it  ,\chi) } = \operatorname{MT}_{N}^{\rp,\ry} (s_1,\dots, s_{\rp+\ry} ) +  O_{q, \rp,\ry,\epsilon} (  (q^N)^{ - ( 1/2-\epsilon)}) \]
and less optimistically one makes the same prediction with a larger error term.

\begin{theorem}\label{intro-cfkrs}Assume that $q>11$. Let $\rp$ and $\ry$ be nonnegative integers and $s_1,\dots, s_{\rp + \ry}$ be complex numbers with real part $\frac{1}{2}$. Then
\begin{equation}\label{eq-cfkrs}  \int_{\Cqs } \Bigl( \prod_{j=1}^{\rp} L(s_j) \prod_{j=\rp+1}^{\rp + \ry} \overline{L ( s_j) } \Bigr)\mu_{ \textrm{ch}}   = \operatorname{MT}_{N}^{\rp,\ry} (s_1,\dots, s_{\rp+\ry} ) 
+ O_{q,\rp,\ry} \left(  N^{  \frac{ (\rp+ \ry)^2}{2} -  \beta \frac{q-2}{4}} \right).\end{equation}
\end{theorem}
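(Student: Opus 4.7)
The plan is to decompose $\phi(L) = \prod_{j=1}^{\rp} L(s_j) \prod_{j=\rp+1}^{\rp+\ry} \overline{L(s_j)}$ via the orthogonal projection in $L^2(\mu_{\textrm{rm}})$ onto polynomials of weighted degree $\leq k$, writing $\phi = \phi_{\leq k} + \phi_{>k}$, and then invoke \cref{intro-lf,intro-hf} on the two pieces. By construction $\phi_{>k}$ is orthogonal to every polynomial of weighted degree $\leq k$ with respect to $\mu_{\textrm{rm}}$, so \cref{intro-hf} gives $\int \phi_{>k}\,\mu_{\textrm{ch}} = O_q(N^{-\beta(q-2)/4}\|\phi_{>k}\|_2)$, while $\phi_{\leq k}$ has weighted degree $\leq k$ so \cref{intro-lf} gives $\int \phi_{\leq k}\,\mu_{\textrm{ch}} = \int \phi_{\leq k}\,\mu_{\textrm{ep}} + O(e^{-(\frac{1}{2} - o_N(1))N^{1-\beta}\log(N^{1-\beta})}\|\phi_{\leq k}\|_2)$, whose exponentially small prefactor is absorbed into the polynomial error.

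The next ingredient is the exact Keating--Snaith/CFKRS moment formula for Haar-random characteristic polynomials on $U(N)$, which states $\int \phi\,\mu_{\textrm{rm}} = \operatorname{MT}_{N}^{\rp,\ry}(s_1,\dots,s_{\rp+\ry})$. Since the constant function $1$ has weighted degree $0 \leq k$, $\phi_{>k}$ is orthogonal to it and thus $\int \phi_{\leq k}\,\mu_{\textrm{rm}} = \int \phi\,\mu_{\textrm{rm}} = \operatorname{MT}_N^{\rp,\ry}$. Applying Keating--Snaith once more to $|\phi|^2 = \prod_{j=1}^{\rp+\ry}|L_M(s_j)|^2$ yields $\|\phi\|_2^2 = O_{q,\rp,\ry}(N^{(\rp+\ry)^2})$, and since orthogonal projections are contractions, $\|\phi_{\leq k}\|_2, \|\phi_{>k}\|_2 \leq \|\phi\|_2 = O_{q,\rp,\ry}(N^{(\rp+\ry)^2/2})$. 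This accounts for the leading factor $N^{(\rp+\ry)^2/2}$ in the stated error.

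Combining, the theorem reduces to bounding the discrepancy $\int \phi_{\leq k}\,(\mu_{\textrm{ep}} - \mu_{\textrm{rm}})$ by $O_{q,\rp,\ry}(N^{(\rp+\ry)^2/2 - \beta(q-2)/4})$, which I expect to be the main obstacle. My approach would be to expand $\phi_{\leq k}$ in the orthonormal basis of irreducible $U(N)$-characters $\chi_\nu$ (viewed as polynomials in $c_d, \overline{c_d}$ via $L_M$) of $\ell^1$-weight $\|\nu\|_1 \leq k$: $\phi_{\leq k} = \sum_\nu a_\nu \chi_\nu$, with $\sum|a_\nu|^2 = \|\phi_{\leq k}\|_2^2$. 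Since $\int \chi_\nu\,\mu_{\textrm{rm}} = \delta_{\nu,0}$, the discrepancy equals $\sum_{\nu \neq 0} a_\nu\,\mathbb{E}_\xi[\chi_\nu(L_\xi)]$, which Cauchy--Schwarz controls by $\|\phi_{\leq k}\|_2 \cdot \bigl(\sum_{0 < \|\nu\|_1 \leq k}|\mathbb{E}_\xi[\chi_\nu(L_\xi)]|^2\bigr)^{1/2}$. The delicate quantitative step is to bound $\mathbb{E}_\xi[\chi_\nu(L_\xi)]$ for nontrivial weights $\nu$: by the complete multiplicativity of $\xi$ and the uniform distribution of each $\xi(\mathfrak p)$ on the unit circle, this expectation reduces to a constrained count of factorizations in $\mathbb F_q[u]^+$, and one must quantify this smallness to extract the savings factor $N^{-\beta(q-2)/4}$ matching the error from \cref{intro-hf}.
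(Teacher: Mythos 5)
Your opening decomposition $\phi = \phi_{\leq k} + \phi_{>k}$ by $L^2(\mu_{\textrm{rm}})$-orthogonal projection onto weighted degree $\leq k$ is equivalent to what the paper does (the paper realizes the same decomposition concretely through irreducible $U(N)$-characters, using the Cauchy identity to produce the explicit coefficients, and this concreteness turns out to matter). The application of \cref{intro-lf} and \cref{intro-hf} to the two pieces, together with the bound $\|\phi\|_2 = O_{q,\rp,\ry}(N^{(\rp+\ry)^2/2})$ from the unitary moment formula and H\"older, is also exactly the paper's Lemma~\ref{lem-mip}. So far, so good.

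The gap is in the identification $\int \phi\,\mu_{\textrm{rm}} = \operatorname{MT}_N^{\rp,\ry}$. This is false, and the error is not a detail. The quantity $\operatorname{MT}_N^{\rp,\ry}$ as defined in the paper contains the arithmetic sums $\sum_{f_1,\dots,f_{\rp+\ry}\in\mathbb F_q[u]^+,\ \prod_{j\in S}f_j=\prod_{j\notin S}f_j}\prod|f_j|^{\cdots}$, which encode Euler products over the primes of $\mathbb F_q[u]$; the Haar moment $\int\phi\,\mu_{\textrm{rm}}$ has no such arithmetic input and is a purely combinatorial expression in $N$ and the shifts. They are polynomials in $N$ of the same degree $\rp\ry$ but with genuinely different coefficients (already at the leading order, where the CFKRS term carries the arithmetic constant $a(\rp,\ry)$). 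Consequently the "discrepancy" $\int \phi_{\leq k}(\mu_{\textrm{ep}}-\mu_{\textrm{rm}})$ that you propose to bound in the final step is not small — it is precisely the arithmetic contribution and is of the same size as the main term. For the same reason, your Cauchy--Schwarz strategy must fail: the expectations $\mathbb{E}_\xi[\chi_\nu(L_\xi)]$ for low-norm $\nu$ are not uniformly small; summed against the $a_\nu$ they are exactly what reconstructs the arithmetic factors in $\operatorname{MT}_N^{\rp,\ry}$. The paper's Lemma~\ref{psi-int-bound} gives decay only as the norm of $\nu$ grows, which is used (Lemma~\ref{tail-bound}) to control the tail of the range, not to kill the whole sum.

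What is actually needed, and what the paper supplies in \S\ref{ss-main}, is an explicit evaluation showing that $\int \phi_{\textrm{lf}}\,\mu_{\textrm{ep}}$, as a sum over representation-theoretic data, equals $\operatorname{MT}_N^{\rp,\ry}$ up to a negligible tail. This goes through the Cauchy identity to produce the coefficients $\kappa_{\mathbf e}$, an evaluation of $\int\psi_{\mathbf e}\,\mu_{\textrm{ep}}$ as a coefficient of an explicit arithmetic generating series (Lemmas~\ref{Schur-to-L}--\ref{average-is-coefficient}), and a reorganization of the sum over $\mathbf e$ into the sum over subsets $S$ that appears in the definition of $\operatorname{MT}_N^{\rp,\ry}$ (Lemmas~\ref{first-mt}--\ref{mt-is-extended}). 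Your sketch contains none of this, and I see no shortcut around it: matching the probabilistic main term to the CFKRS expression is the substantive arithmetic content of the theorem, not an application of Keating--Snaith.
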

For the integral on the left hand side of \eqref{eq-cfkrs}, $L$ should be understood as the variable of integration, i.e. $L(s_j)$ is the function that takes a power series to its value at $s_j$, defined on the subset of $\Cqs$ of power series in $q^{-s}$ with radius of convergence $> q^{-1/2}$. The integral is well-defined since $\mu_{\textrm{ch}}$ is supported on the even smaller subset consisting of polynomials in $q^{-s}$ of degree $N$. 

If we believe the CFKRS prediction for the moments, then \cref{intro-cfkrs} implies that $\mu_{\textrm{ch}} $ has the same moments as the family of Dirichlet characters, up to a certain error, and thus gives evidence that $\mu_{\textrm{ch}} $ is a good model for the $L$-functions of the Dirichlet characters $S_{N+1,q}$ (with additional averaging in the imaginary axis). Alternately, \cref{intro-cfkrs} could be seen as giving a probabilistic explanation of the CFKRS prediction.

In interpreting \cref{intro-cfkrs}, it is helpful to note that the main term $\operatorname{MT}_{N}^{\rp,\ry} (s_1,\dots, s_{\rp+\ry} )$ is, in the special case $s _1=\dots = s_{\rp + \ry}$, a polynomial in $N$ of degree $\rp \ry$. So the error term in \eqref{eq-cfkrs} is smaller than the main term by a factor of $N^{ \beta \frac{q-2}{4} - \frac{ \rp^2 + \ry^2}{2}}$. In particular, it is actually smaller if $q> 2 + \frac{ 2 \rp^2 + 2 \ry^2}{\beta}$ and the number of coefficients of the polynomial that are visible in this estimate (in the sense that their contribution to the main term is greater than the size of the error term) is \[ \min \left( \left\lceil \beta \frac{q-2}{4} - \frac{ \rp^2 + \ry^2}{2} \right\rceil , \rp \ry +1 \right) .\] Thus for $q$ sufficiently large depending on $\rp, \ry$, all coefficients of the polynomial are visible in this sense.

We conjecture that an even stronger statement holds:

\begin{conjecture}\label{cfkrs-conj}Let $\rp$ and $\ry$ be nonnegative integers,$q>2$ a prime power, $s,\dots, s_{\rp + \ry}$ complex numbers with real part $\frac{1}{2}$, and $A$ a real number.
\begin{equation}  \int_{\Cqs } \Bigl( \prod_{j=1}^{\rp} L(s_j ) \prod_{j=\rp+1}^{\rp + \ry} \overline{L ( s_j ) } \Bigr)\mu_{ \textrm{ch}}  \label{cfkrs-rhs} =  \operatorname{MT}_{N}^{\rp,\ry} (s_1,\dots, s_{\rp+\ry} ) + O_{q,\rp,\ry,A} \left(  N^{ -A }  \right).\end{equation} \end{conjecture}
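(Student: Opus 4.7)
The plan is to derive \cref{cfkrs-conj} from a strengthened form of \cref{intro-hf} in which the fixed savings factor $N^{-\beta(q-2)/4}$ is replaced by $N^{-A}$ for arbitrary $A$. Granting such a strengthening, the derivation mirrors the proof of \cref{intro-cfkrs}: let $\phi_s = \prod_{j=1}^{\rp} L(s_j)\prod_{j=\rp+1}^{\rp+\ry}\overline{L(s_j)}$ and split $\phi_s = \phi_s^{\le k} + \phi_s^{>k}$ by orthogonal projection in $L^2(\mu_{\textrm{rm}})$ onto polynomials of weighted degree $\le k$ and $>k$ in $c_1,c_2,\ldots$ and their conjugates. \cref{intro-lf} applied to $\phi_s^{\le k}$ gives $\int \phi_s^{\le k}\,\mu_{\textrm{ch}}$ equal to $\int \phi_s^{\le k}\,\mu_{\textrm{ep}}$ up to super-polynomially small error, and a direct Euler-product expansion matches the right-hand side with $\operatorname{MT}_N^{\rp,\ry}(s_1,\ldots,s_{\rp+\ry})$ up to negligible error once the $q^{-(1/2-s_j)N}$ functional-equation factors coming from $L_M\in \Cqs$ being a polynomial of degree $N$ are accounted for. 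The strengthened \cref{intro-hf} applied to $\phi_s^{>k}$, using $\|\phi_s^{>k}\|_2 \le \|\phi_s\|_2 = O_{\rp,\ry}(N^{(\rp+\ry)^2/2})$, then yields the desired $O_{q,\rp,\ry,A}(N^{-A})$ error.

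The substantive step is the strengthening of \cref{intro-hf}. A natural route is an iterated-hybrid construction: the weight in \eqref{weighted-def} replaces the joint Gaussian density of $(\tr M,\ldots,\tr M^k/k)$ by the true joint density $F$ of $(X_{1,\xi},\ldots,X_{k,\xi})$, so it captures the first $k$ Euler-product coefficients exactly but no information about coefficients of index $>k$. Replace $F$ in \eqref{weighted-def} by a refined density $F^{(m)}$ that in addition matches joint conditional moments of $X_{k+1,\xi},\ldots,X_{k_m,\xi}$ up to order $m$ given $(X_{1,\xi},\ldots,X_{k,\xi})$, with $k_m=\lfloor N^{\beta_m}\rfloor$ and $\beta_m<1/2$ tending to $1/2$. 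The resulting measures $\mu_{\textrm{ch}}^{(m)}$ should still satisfy \cref{intro-lf} with an error of the same super-polynomially small order and an analog of \cref{intro-hf} with a savings factor growing with $m$; choosing $m=m(A)$ yields $N^{-A}$ in the high-frequency regime.

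The hard part, and the reason \cref{cfkrs-conj} is stated only as a conjecture, is the quantitative control of $F^{(m)}$ on the tail region where the weight $F^{(m)}/\prod_j(e^{-|\tr M^j|^2/j}\cdot j/(q^j\pi))$ concentrates its mass. The constant $\beta(q-2)/4$ in \cref{intro-hf} encodes the strongest tail bound presently available for the joint density of $(X_{j,\xi})_{j\le k}$ with $k\le N^\beta$, $\beta<1/2$, which itself rests on tail moment estimates for Steinhaus random multiplicative functions over $\mathbb F_q[u]$. Extending these estimates to higher joint orders $m$ uniformly in the relevant tail region, or pushing $\beta$ towards $1/2^-$, appears to require genuinely new input on random multiplicative functions. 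An alternative, test-function-specific route would exploit the shifted-moment symmetry $s_j\leftrightarrow 1-s_j$ and the approximate functional equation to bypass the generic $L^2$ bound on $\phi_s^{>k}$, but this too seems to demand ingredients beyond the framework of the present paper.
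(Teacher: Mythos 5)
This statement is labeled a conjecture in the paper, not a theorem; the paper offers no proof of it and explicitly presents it as open, so there is no ``paper's own proof'' for your attempt to match. What you have written is not a proof either, and you correctly say so yourself: your argument reduces \cref{cfkrs-conj} to a strengthening of \cref{intro-hf} with error $N^{-A}$ for arbitrary $A$, which you then observe is out of reach. That reduction step is sound and does match the paper's actual proof of the weaker \cref{intro-cfkrs} (split $\prod L(s_j)\prod\overline{L(s_j)}$ via orthogonal projection onto polynomials of degree $\le k$ and $>k$ in the coefficients, handle the low part with \cref{intro-lf} plus the Bump--Gamburd/Schur computation in \S\ref{ss-main}, handle the high part with \cref{intro-hf}); replacing the $N^{-\beta(q-2)/4}$ in \cref{intro-hf} by $N^{-A}$ would indeed give the conjecture by the same route.

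Two caveats on the speculative second paragraph. First, the ``iterated-hybrid'' construction you sketch --- refining $F$ to a density $F^{(m)}$ matching conditional moments of $X_{k+1,\xi},\dots,X_{k_m,\xi}$ --- is not something the paper considers, and it is not obviously compatible with the mechanism by which \cref{intro-hf} is proved. That theorem's error term does not arise from a tail bound on the joint density in the usual sense; it comes from the decay of the Hermite coefficients of $F(x_1,\dots,x_k)/\prod_j(e^{-n|x_n|^2/q^n}n/(q^n\pi))$ (\cref{hermite-end}), with the exponent $(q-2)/2$ tracing back to the inequalities \eqref{q11} involving the prime-counting quantities $A_n,E_n$. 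Conditioning the density on higher $X$'s does not fit naturally into that Hermite-expansion framework, and it is unclear that the resulting $F^{(m)}$ would still satisfy the pointwise and $L^2$ bounds (\cref{pointwise-wint-simplified}, \cref{hermite-end}) on which both \cref{intro-lf} and \cref{intro-hf} rest. Second, the paper itself suggests a different possible refinement in the ``Motivation, variants'' section: multiplying $\mu_{\textrm{ch}}$ by a low-degree polynomial to improve \cref{intro-lf} while retaining positivity via \cref{intro-support}, or taking the canonical $k\to\infty$ version of the weight. Your diagnosis of why the conjecture is hard --- that the exponent in \cref{intro-hf} is the bottleneck and pushing it further seems to require new input --- is the correct reading of the paper's state of the art.
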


If \cref{cfkrs-conj} is true then the measure $\mu_{\textrm{ch}}$ correctly predicts every coefficient of the polynomial CFKRS main term. 

While this paper considers a particular family of $L$-functions, we hope that similar methods can be applied to essentially any family of $L$-functions, at least in the function field context. One just needs to consider a random Euler product whose local factors match the distribution of the local factors of the family of $L$-function (e.g. for the family of quadratic Dirichlet characters with prime modulus, take the Dirichlet series of random $\pm 1$-valued completely multiplicative functions) and, if necessary, depending on the symmetry type, replace $U(N)$ with $O(N), SO(N)$, or $Sp(N)$. Passing from random Euler products with continuous distributions to discrete distributions introduces some difficulties, but most likely not insurmountable ones.

\subsection{Prior work}

The oldest probabilistic model for the Riemann zeta function is the random Euler product $\prod_{p} \frac{1}{1- \xi(p) p^{-\sigma }}$  where $\xi(p)$ are independent and identically distributed on the unit circle. The distribution of this random Euler product was proven by Bohr and Jessen~\cite{BohrJessen} to give the limiting distribution of $\zeta(\sigma+it)$ for fixed $\sigma >1/2$, and Bagchi~\cite{Bagchi} proved a generalization giving the distribution of $\zeta(s+it)$ as a holomorphic function on a fixed domain to the right of the critical line. 

On the critical line, Selberg's central limit theorem shows that $\frac{ \log \abs{ \zeta(1/2+it)}}{\log \log T}$ has a Gaussian limiting distribution for $t \in [T,2T]$ as $T\to\infty$. The division by $\log \log T$ means that this result is not sensitive to the exact size of zeta, and it similarly gives no information about the zeroes. Probabilistic models for zeta and $L$-functions that give precise descriptions of the behavior on the critical line must, for now, be conjectural.

A crucial starting point is the work of Montgomery~\cite{Montgomery}, who conjectured that the statistics of $k$-tuples of zeroes of the zeta function, in the limit over larger and larger intervals in the critical line, match the statistics of $k$-tuples of eigenvalues of a Haar-random matrix, in the limit of larger and larger random matrices, for each $k$, and provided evidence fo this.

Katz and Sarnak~\cite{KS1,KS2} observed that zeta and $L$-functions in the function field context arise from characteristic polynomials of unitary matrices of fixed size (depending on the conductor of the $L$-function) and that in the large $q$ limit these unitary matrices become Haar-random for several natural families of $L$-functions, so in fact all statistics match statistics of random matrices in the large $q$ limit. Using this, they made conjectures about the distribution of the low-lying zeroes of $L$-functions.

Keating and Snaith~\cite{KS4,KS3} used a random matrix model to study values of zeta and $L$-functions on the critical line, and not just their zeroes. In particular, they calculated the moments of the characteristic polynomial of a Haar-random unitary matrix at a point on the unit circle, in terms of the size of the matrix. To obtain a conjectural expression for the moments of the zeta function at a random point on the critical line, one has to substitute $\log T$ for the size of the matrix in this formula and then multiply by an arithmetic factor that expresses the contribution of small primes. Thus, if one models the values of the zeta function on a random strip of the critical line by the characteristic polynomial of a random unitary matrix on a strip of the unit circle, one obtains predictions for the moments that are conjecturally correct to within a multiplicative factor.

The situation was improved by Gonek, Hughes, and Keating~\cite{GHK}, using an Euler-Hadamard product, which expresses the zeta function locally as a product of one factor which roughly consists of the Euler factors at small primes and another factor which roughly consists of the contributions of nearby zeroes to the Hadamard product. (This depends on an auxiliary parameter -- the more primes one includes, the fewer zeroes are needed, and vice versa). This thereby suggests a model where the first factor is modeled by the Euler product over small primes of a uniformly random multiplicative function and the second factor is modeled by the contributions of zeroes near a given point to the characteristic polynomial of a random unitary matrix, with the two factors treated as independent. (They were later proved to be asymptotically independent, conditional on the Riemann hypothesis, by Heap~\cite{Heap2023}.) For the moments, the \cite{GHK} model recovers the same prediction as \cite{KS4,KS3} of the product of a random matrix factor and an arithmetic factor.

A related but distinct approach to the moments of zeta and $L$-functions is the work of Conrey, Farmer, Keating, Rubinstein, and Snaith~\cite{CFKRS}.  This work did not directly predict the moments using a characteristic polynomial. Instead the authors found a particular formula for the (shifted) moments of the characteristic polynomial of a random unitary matrix and conjectured a formally similar formula for the (shifted) moments of the Riemann zeta function or another $L$-function, roughly speaking by inserting suitable arithmetic factors at an intermediate step in the calculation instead of at the end. However, the intermediate stages of their recipe lack a clear number-theoretic or probabilistic interpretation. In particular, it is not even obvious that their predictions for expectations of powers of absolute values of the zeta function are positive -- this has to be checked separately. It is not clear that there exists any random holomorphic function whose moments are given by the \cite{CFKRS} predictions for moments of zeta, though conjecturally a random shift of the Riemann zeta function would be an example.

Another approach to predicting moments of $L$-functions is by multiple Dirichlet series, initiated in the work of Diaconu, Goldfeld, and Hoffstein~\cite{DGH}. The highest-order terms in these predictions, made around the same time, agree with~\cite{CFKRS}, but the multiple Dirichlet series can be used to predict additional lower-order terms for certain families of $L$-functions, as in the work of Diaconu and Twiss~\cite{DiaconuTwiss}. Again these predictions are not probabilistic in nature, instead based on assuming that meromorphic functions defined by certain complicated multivariable sums have the greatest amount of analytic continuation allowed by their symmetry properties.

The predictions of \cite{CFKRS} for moments of zeta on the critical line are polynomials in $\log T$. The leading term of these moments agrees with the leading term originally predicted in \cite{KS4,KS3} and probabilistically modeled by \cite{GHK}. Thus, the prediction of \cite{GHK} agrees with what is now believed to be correct to within a factor of $1 + O(1/\log T)$.  Gonek, Hughes, and Keating~\cite{GHK} raised the question of whether their model could be extended to predict all the terms of the \cite{CFKRS} polynomial.

The model of~\cite{GHK} has been extended to the function field setting by Bui and Florea~\cite{BuiFlorea}, and then applied to further families of $L$-functions by Andrade and  Shamesaldeen~\cite{AS} and Yiasemides~\cite{Yiasemides}. Again, in this setting the probabilistic model correctly predicts the leading term of the asymptotic that is conjectured by other methods and known in several cases, but fails to predict the lower-order terms. In this case, the predictions are polynomials in the degree of the conductor (i.e. in $N$) instead of $\log T$. Theorem \ref{intro-cfkrs} shows that, for $q$ sufficiently large, a probabilistic model based on matrices that are random but not Haar-random improves on this by a power of $N$.

The idea of integrating against Haar measure times a weight function to calculate the average of a polynomial function on the coefficients function field $L$-functions appeared earlier in work of Meisner~\cite{Meisner2021}, but this work was not probabilistic in nature: the weight function, unlike a probability density function, is not positive (and not even real) and the average must be normalized by a factor depending on the highest weight of the irreducible representations used to express the polynomial (see \S\ref{ss-unify}).

The idea of restricting the support of Haar measure to obtain more accurate predictions was used in the case of elliptic curve $L$-functions by Due\~{n}ez, Huynh, Keating, Miller, and Snaith~\cite{DHKMS}. Their modification of Haar measure was designed to account for the influence of formulas for the critical value of the $L$-function that force that value, suitably normalized to be an integer and in particular prevent it from being very close to zero but nonzero. They accordingly considered a measure on random matrices where the critical special value of the characteristic polynomial is prevented from taking small nonzero values. Our adjustment of the probability measure, on the other hand, is designed to account for the influences of small primes, it also involves changing the density and not just the support.

It would be interesting to check the compatibility of our paper with some of these prior works in more detail. First, it should be possible to define an ``Euler-Hadamard product" for $L$-functions in the support of $\mu_{\textrm{ch}}$. One could then ask how close the distribution of the Euler factors and Hadamard factors is to a product of independent multiplicative function and random matrix distributions at a point (perhaps using an optimal transport distance for probability distributions). If these distributions are close, then not only would $\mu_{\textrm{ch}}$ and \cite{GHK} give similar predictions of the moments, but they would give these predictions for similar reasons.

It would also be enlightening to prove an analogue of \cref{intro-cfkrs} for ratios of $L$-functions rather than products, using the work of Conrey, Farmer, and Zirnbauer~\cite{CFZ} to obtain a classical prediction to compare with.

More ambitiously, if an analogue of our construction was made for the family of quadratic Dirichlet characters, and an analogue of \cref{intro-cfkrs} was proven with an error term of size $O ( q^{ - \delta N } )$ for $\delta > \frac{1}{4}$, then one could look for a probabilistic explanation of the secondary terms in moments of quadratic Dirichlet $L$-functions predicted by multiple Dirichlet series. (If the error term were larger than this, it would dominate the predicted secondary terms, so including them would be meaningless.) It seems unlikely that they could appear for a direct analogue of $\mu_{\textrm{ch}}$, since these secondary terms ultimately arise from the ability to apply a Poisson summation formula in the modulus of the Dirichlet character and recover a similar sum, and the model of Dirichlet characters based on random multiplicative functions used to construct $\mu_{\textrm{ch}}$ wouldn't reflect this Poisson symmetry, but one could very optimistically hope for a natural modification of $\mu_{\textrm{ch}}$ that predicts these terms. 

\subsection{Motivation, variants, and the number field case}

The operation of multiplying the measure of one probability distribution by the density of another, or, equivalently, multiplying the  density of two probability distributions may at first seem strange. However, it has a natural interpretation. Given two different probability distributions $\mu_1,\mu_2$ on $\mathbb R^n$ with continuous probability density functions, we can consider the distribution of a pair of random variables $X_1,X_2$ independently distributed according to $\mu_1$ and $\mu_2$, and then condition on the event that the distance between $\mu_1$ and $\mu_2$ is at most $\delta$. In the limit as $\delta\to 0$, this conditional distribution will converge to the distribution of two identical random variables, each distributed according to a measure with probability density proportional to the product of the densities of $\mu_1$ and $\mu_2$. 

Thus, multiplying the probability densities arising from random matrices and random Euler products can be seen as, first, generating pairs of random matrices and random Euler products and, second, throwing away those pairs where the characteristic polynomial of the random matrix is not close to the Euler product. This is a plausible way to generate random functions that arise both as characteristic polynomials of matrices and Euler products (as the Dirichlet $L$-functions $L(s,\chi)$ do). However, it cannot be quite right as a model for Dirichlet $L$-functions, giving the wrong answers in the $q\to\infty$ and $N\to\infty$ limits. This can be seen most clearly if we let both $q$ and $N$ head to $\infty$, so the distributions of $L_\xi$ and $L_M$ both converge to the exponentials of random power series with independent complex Gaussian coefficients. Multiplying the probability densities corresponds to squaring the Gaussian probability density function, producing a Gaussian with half the variance. However, to obtain a distribution that interprets between $\mu_{\textrm{ep}}$ and $\mu_{\textrm{rm}}$, we would like a distribution that converges to the original Gaussian in the $q,N \to \infty $ limit. We fix this by dividing by the same Gaussian.

From this heuristic, the right choice of $k$ is not clear. It seems likely that the measure $\mu_{\textrm{ch}}$ does not depend much on the parameter $k$. The specific value of $k$ chosen makes the analysis as easy as possible, but similar results should be true in a broader range of $k$.

In fact, if for each value of $k$ we let  $F_k(x_1,\dots, x_k)$ be the probability density function fo $X_{1,\xi},\dots, X_{k,\xi}$ and define $\mu_{\textrm{weighted}}$ to be proportional to
\[ \lim_{k\to\infty} \Biggl( \frac{ F_k( - q^{1/2} \tr(M),\dots, -q^{k/2} \tr (M^k)/k)}{  \prod_{j=1}^k  \left( e^{  -  \frac{  \abs{ \tr (M^j)}^2 }{ j } }  \frac{ j }{ q^j \pi} \right) } \Biggr) \mu_{\textrm{Haar} } \] then the same results should be true. This definition is more canonical as it lacks the parameter $k$, and fits naturally with an infinite-dimensional version of the heuristic for multiplying two probability density functions. However, proving the same results for this measure introduces additional analytical difficulties, starting with proving that the limit as $k$ goes to $\infty$ exists, that we do not pursue here.

An alternate approach to constructing a measure satisfying \cref{intro-lf} and \cref{intro-hf} is to first check that the pairing $\langle \phi, \psi \rangle = \int_{\Cqs }  \phi  \overline{\psi} \mu_{\textrm{rm}}$ is nondegenerate on polynomials in $\mathbb C[c_0,c_1,\dots, \overline{c_0},\overline{c_1},\dots] $ of degree $\leq k$, and using this, verify that there exists a unique $\psi \in \mathbb C[c_0,c_1,\dots, \overline{c_0},\overline{c_1},\dots] $ of degree $\leq k$ such that \[  \int_{U(N) }  \phi   \overline{\psi}  \mu_{\textrm{rm}}  =  \int_{\Cqs }    \phi  \mu_{\textrm{ep}}\] for all $\phi \in \mathbb C[c_0,c_1,\dots, \overline{c_0},\overline{c_1},\dots] $ of degree $\leq k$, note that $\psi$ is real-valued, and then consider the signed measure $\psi \mu_{\textrm{rm}}$, for which \cref{intro-lf} and \cref{intro-hf} hold with vanishing error term. The main difficulty with this approach is that the signed measure may not actually be a measure, as the function $\psi$ may be negative on the support of $\mu_{\textrm{rm}}$. It is easy to check that $\psi$ is nonnegative as long as $q$ is sufficiently large with respect to $N$, but for $q$ fixed,  $\psi$ is negative even for small values of $N$. If we view $\psi \mu_{\textrm{rm}}$ as an approximation of the true distribution of $L(s,\chi)$, the problem is clear: since $L(s,\chi)$ is supported on power series with first coefficient $c_1$ satisfying $\abs{c_1} \leq q$, while $\mu_{\textrm{rm}}$ is supported on power series with $\abs{c_1} \leq q^{1/2} N$, as long as $q< N^2$, the true measure vanishes on a large region where $\mu_{\textrm{rm}}$ is supported, and so $\psi$ is approximating a function which is zero on that region. Since polynomials cannot be zero on a region without being identically zero, polynomial approximations of functions zero on a region will tend to oscillate between positive and negative values on that region. Thus the signed measure $\psi \mu_\textrm{rm}$ is rarely a measure. However, this argument suggests that, given that \cref{intro-support} shows that $\mu_{\textrm{ch}}$ has more reasonable support, it may be possible to multiply $\mu_{\textrm{ch}}$ by a low-degree polynomial to improve the error term in \cref{intro-lf} without compromising positivity.

Whether the strategy of this paper can be applied in the number field context is not yet clear. The fundamental difficulty seems to be that a number field $L$-function contains much more information than a function field $L$-function, so it is harder for the supports of distributions arising from random Euler products and random matrix models to intersect.

Let us make this more precise. Consider the problem of defining a random holomorphic function in a variable $s$ whose properties approximate the behavior of $\zeta(s+it)$ for $t$ random near a given value $T$. The basic steps are to define an analogue of the random matrix model, define a random Euler product model, and combine them. Since the function $n^{it}$ behaves like a random multiplicative function with absolute value $1$ and values at the primes independently uniformly distributed on the unit circle, we can again use the Dirichlet series of random completely multiplicative functions as our Euler products. Whatever our random matrix model looks like, the holomorphic functions it produces will probably have functional equations (since the characteristic polynomials of unitary and Hermitian matrices each satisfy a functional equation, and we are trying to approximate the zeta function, which satisfies a functional equation). One natural functional equation to choose is $f(1-s) =\epsilon   (T/2\pi)^{1/2-s}  f(s) $ since this is consistent with a holomorphic function having zeroes on the critical line distributed with frequency $(1/2\pi) \log (T/2\pi) $ -- in other words, the frequency of zeroes of zeta near $T$. However, it is easy to see that there are no multiplicative functions whose Dirichlet series satisfy that functional equation, as it forces the coefficients of $n^{-s}$ to vanish for $n> T/2\pi$. So the intersection of the support of the distribution of any random matrix characteristic polynomials having one of these functional equations with the support of the distribution of Dirichlet series of random multiplicative functions will simply vanish, and any attempt to multiply the densities of these distributions will produce a zero distribution.

A similar conclusion can be drawn if we keep the usual functional equation of the Riemann zeta function. In that case, it follows from Hamburger's theorem~\cite{Hamburger} that the intersection of the supports will contain only the actual shifts of the original Riemann zeta function, and so searching for a distribution supported on the intersection, or multiplying the densities, will simply produce the distribution of random shifts of the zeta function. Of course, it is pointless to model these shifts using the shifts themselves.

Observing this problem immediately suggests the rough form of the solution. Rather than looking for a distribution supported on the intersection of the supports of the distributions, we should look for a distribution supported on points which are close to the support of both distributions. In other words, in the heuristic for the product of two probability densities as a $\delta\to 0$ limit, we should avoid taking the limit and instead fix a value of $\delta$. Of course, the nature of this depends on exactly how we define the distance between two holomorphic functions. A natural choice is to integrate the square of the absolute value of the difference between their logarithms against some measure on a subset of the complex plane where they are both defined, but we have a great deal of choice on the measures.

In fact, rather than conditioning the joint distribution of the characteristic polynomial of a random matrix and random Euler product on the event that the two holomorphic functions are close, it seems better to weight the joint probability distribution by the exponential of minus the square of the distance, or another quadratic form in the two functions, before normalizing by a constant to have the total mass one. This weighting sends Gaussians to Gaussians, and should be normalized so that inputting Gaussian approximations to the two distributions outputs a joint distribution whose marginals are the original Gaussians, coupled so that with high probability the two holomorphic functions take similar values at points near $0$ to the right of the critical line. (We cannot compare them on the critical line itself since the random Euler products admit a natural boundary there). But it is not clear if there is a single natural coupling to work with. 

In physics, one can consider the eigenvalues of random matrices as being a statistical mechanics model of particles, either on the line or the unit circle, that repel each other and thus have a lesser probability of being close together than independent random points. Specifically, the probability density function should be the exponential of a negative constant times the energy of the system, so the terms in the Weyl integration formula involving the difference of two eigenvalues correspond to a contribution to the energy depending on the distance between two points. We can view this type of exponentially-weighted joint distribution as a statistical mechanics model of points on the critical line together with values $\xi(p)$ on the unit circle for each prime, where, in addition to interacting with each other, the points interact with $\xi(p)$. For the zeta function itself, the interaction is infinitely strong, to the extent that the primes determine the zeroes and the zeroes determine the primes. By choosing an interaction whose strength is not too large and not too small, we may be able to construct a model of the Riemann zeta function whose properties are amenable to computation.

Regardless, this approach produces a joint distribution of two holomorphic functions, one the characteristic polynomial of a matrix and the other an Euler product, but to model the Riemann zeta function we only want one. The simplest approach is to throw out the Euler product, since its natural boundary on the critical line makes it inappropriate for modeling the zeta function on the line, but it may be possible to combine them in a subtler way. 

The exact random matrix model to use is of course a question. A good choice might be to take the characteristic polynomial of a random unitary matrix and plug in $e^{ \frac{ \log T/2\pi}{\log N} \left(\frac{1}{2}-s \right)}$. This produces a holomorphic function on the whole complex plane with zeroes on the critical line with the correct zero density. Since it is periodic in the imaginary axis, it can't be a good model for the large-scale behavior of the Riemann zeta function, but as long as $ N$ is somewhat larger than $\log T$ it may be a good model for the local behavior. (The models of \cite{KS4,KS3,GHK} require setting $N$ very close to $\log T/2\pi$, but coupling with the Euler product will damp oscillations with frequencies less than that of the leading term $2^{-s}$, allowing us to take larger values of $N$ without getting obviously wrong predictions, and taking larger values of $N$ seems necessary to accurately approximate the contribution of the $2^{-s}$ term.) However, we could also consider the eigenvalues of random Hermitian matrices of fixed size, or point processes on the whole critical line. (The determinantal point process associated to the sine kernel, which is the large $N$ limit of random matrices, is not useful for this, as its ``characteristic polynomial" is not a well-defined holomorphic function, basically because the distribution of the characteristic polynomial of a random matrix, normalized to keep the frequency of zeroes constant, doesn't converge in the $N \to \infty$ limit, but another point process might work.)

One can optimistically hope that there is some reasonably natural way of making the sequence of choices discussed above for which an analogue of \cref{intro-cfkrs} or, ideally, \cref{cfkrs-conj} can be proven. Proving this should be analytically more difficult than \cref{cfkrs-conj}, since the two distributions we are trying to combine are further from each other and further from the Gaussian model and thus showing that the combination has the desired properties of each one should be more difficult, so proving the strongest possible form of \cref{cfkrs-conj} might be a stepping stone to handling the number field case.

\subsection{Geometric and probabilistic approaches to $L$-functions}\label{ss-unify} 

The probabilistic model $\mu_{\textrm{ch}}$ is compatible with the geometric and representation-theoretic approach to the moments of $L$-functions suggested by the same author in \cite{s-rep}. Specifically, from the geometric perspective the most natural test functions to integrate against are the characters of irreducible representations of $U(N)$, which may be expressed as polynomials in the coefficients of the characteristic polynomial $L_M$ using the fundamental theorem of symmetric polynomials, or, more explicitly, the second Jacobi-Trudi identity for Schur polynomials~\cite[Formula A6]{FH}.

Conversely, any polynomial in the coefficients of $L_M$ can be expressed as a linear combination of characters of irreducible representations.  We will check in \S\ref{ss-reps} that the polynomials of degree $\leq k$ are exactly the linear combinations of characters whose highest weights, expressed as an $N$-tuple of integers, have absolute value summing to a number $\leq k$. Thus, by orthogonality of characters, irreducible representations whose highest weights have absolute value sums $>k$ are orthogonal to all polynomials of degree $\leq k$.

Hence \cref{intro-lf} applies to the characters of irreducible representations with small highest weight, showing that the averages of these functions over $\mu_{\textrm{ch}}$ match their averages over $\mu_{\textrm{ep}}$, while \cref{intro-hf} applies to the characters of irreducible representations with large highest weight, showing that the averages of these functions over $\mu_{\textrm{ch}}$ cancel. 

By Weil's Riemann hypothesis, every $L$-function $L(s+it,\chi)$ can be expressed as $L_M$ for $M\in U(N)$ unique up to conjugacy, so we can interpret characters of irreducible representations of $U(M)$ as functions of $L(s+it,\chi)$. For irreducible representations of small highest weight, it is not hard to prove that the averages of their characters over $L(s+it,\chi)$ match the averages of the same characters over $\mu_{\textrm{ep}}$. \cite{s-rep} showed that the CFKRS predictions for moments of $L$-functions could be explained by cancellation in the averages of characters of irreducible representations with large highest weight over the family of $L$-functions, which could in turn be explained by (hypothetical) vanishing of certain cohomology groups whose traces of Frobenius compute this average. \cref{intro-hf} shows that the cancellation of averages of characters of irreducible representations with large highest weight could also be explained by the probabilistic model $\mu_{\textrm{ch}}$. So this cancellation could have both probabilistic and geometric explanations. (However, note that the amount of cancellation that one can prove in the probabilistic model is different from the amount one can prove under geometric hypotheses -- at least currently, it is larger for some representations and smaller for others. Thus it is not possible to say the geometric hypothesis implies the probabilistic model, or vice versa.)

Note that the definition of ``small highest weight" used in the two contexts is not identical (the definition here is stricter). This is because the average of the character of an irreducible representation over $\mu_{\textrm{ep}}$ decreases with the highest weight of the representation, at least for representations relevant to calculating moments of fixed degree. Thus, as long as the highest weight is not too small, it is possible for the average against another measure both to cancel and to approximate the average against $\mu_{\textrm{ep}}$, simply because the average of $\mu_{\textrm{ep}}$ is itself small. So whether we state that these averages cancel or approximate $\mu_{\textrm{ep}}$ is a matter of convenience, and how we sort representations into those two buckets can vary with the context.  The only restriction is that, as the error term in our desired estimates shrinks, fewer representations are flexible in this way.

For the case of $L$-functions of quadratic Dirichlet characters, analysis analogous to \cite{s-rep} was conducted by Bergstr\"om, Diaconu, Petersen, and Westerland~\cite{bdpw}. In the quadratic Dirichlet character setting, the $L$-function is naturally a characteristic polynomial of a conjugacy class in $USp(2N)$, so one considers characters of irreducible representations of $USp(N)$. They derive the CFKRS predictions, or, equivalently in this setting, the highest-order term of the multiple Dirichlet series predictions, from the assumption of cancellation in averages of characters of irreducible representations of $USp(N)$ of large highest weight. They prove a homological stability result which is a topological enhancement of the fact that averages of irreducible representations of small highest weight over $L(s+it,\chi)$ (where $\chi$ is now a quadratic Dirichlet character) match the averages of the same characters against a suitable analogue of $\mu_{\textrm{ep}}$. Since the stable cohomology vanishes in low degrees for representations of large highest weight, the vanishing of cohomology groups whose traces of Frobenius compute the average and hence cancellation in the average follows (for $q$ sufficiently large) from a certain uniform homological stability statement, later proven by Miller, Patzt, Petersen, and Randal-Williams~\cite{mpprw}.

\subsection{Proof sketch}
We now sketch the proofs of the main theorems. Recall that in the definition of $\mu_{\textrm{weighted}}$ we take the Haar measure and multiply by the probability density function of $X_{1,\xi},\dots, X_{k,\xi}$ divided by a Gaussian probability density function. A key observation is that, if we instead took a suitable Gaussian measure and multiplied by the probability density function of $X_{1,\xi},\dots, X_{k,\xi}$ divided by a Gaussian probability density function, the density of the Gaussian would cancel and we would obtain the distribution of $X_{1,\xi},\dots, X_{k,\xi}$. For this modified measure, the expectation of a low-degree polynomial matches its expectation against $\mu_{\textrm{ep}}$ simply because $X_{1,\xi},\dots, X_{k,\xi}$ are the coefficients of the random power series $\log L_\xi$ distributed according to $\mu_{\textrm{ep}}$. (The low degree assumption is necessary here because high-degree polynomials may involve coefficients of the power series beyond the first $k$ and thus can't be expressed as functions of $X_{1,\xi},\dots, X_{k,\xi}$.)

So proving \cref{intro-lf} is a matter of proving that the expectation of low-degree polynomials is not changed much by the fact that we used the Haar measure instead of the Gaussian measure to construct $\mu_{\textrm{weighted}}$ and $\mu_{\textrm{ch}}$. It thus crucially requires a bound for the difference, in some sense, between the Haar measure and the Gaussian measure. We rely on the work of Johansson and Lambert \cite{JohanssonLambert}, who proved a bound for the total variation distance between these distributions. Multiplying a measure by a continuous function can increase the total variation distance proportionally to the sup-norm of the function, so applying this result in our setting requires bounding the sup-norm of the multiplier \begin{equation}\label{intro-multiplier}\frac{ F( - q^{1/2} \tr(M),\dots, -q^{k/2} \tr (M^k)/k)}{  \prod_{j=1}^k  \left( e^{  -  \frac{  \abs{ \tr (M^j)}^2 }{ j } }  \frac{ j }{ q^j \pi} \right) } . \end{equation}  This requires pointwise bounds for the probability density function $F(x_1,\dots,x_k)$ which decrease rapidly as $x_1,\dots, x_k$ grows.

To obtain pointwise bounds for $F(x_1,\dots,x_k)$, we first bound the integrals of $F(x_1,\dots,x_k)$ against a a complex exponential function of $x_1,\dots,x_k$. Taking the Fourier transform, i.e. integrating against an imaginary exponential function of $x_1,\dots,x_k$ would be sufficient if we only wanted a bound for $F(x_1,\dots,x_k)$ which is uniform in $x_1,\dots, x_k$, while integrating against a real exponential function would be sufficient if we wanted a bound for the the integral of $F(x_1,\dots, x_k)$ over a large region which decreases rapidly as the region becomes more distant from $0$, but since we are interested in bounds that are both pointwise and rapidly decreasing we require exponentials of complex-valued functions. The advantage of studying these exponential integrals is that the definition of $F$ as the probability density function of a sum of independent random variables immediately gives a factorization of the exponential integral as a product of simpler integrals, in this case over the unit circle. Thus, a large part of our proof involves proving elementary bounds for these exponential integrals over the unit circle, and then multiplying them together to obtain bounds for integrals of $F$.

For \cref{intro-hf}, on the other hand, the statement becomes trivial if we replace the multiplier \eqref{intro-multiplier} in the definition of $\mu_{\textrm{weighted}}$ and $\mu_{\textrm{ch}}$ by any polynomial of degree $\leq k$ in the coefficients of a power series. Thus proving \cref{intro-hf} is a matter of finding a suitable approximation of \begin{equation}\label{simplified-multiplier} \frac{ F( x_1,\dots, x_k )}{  \prod_{j=1}^k  \Bigl( e^{  -  \frac{  |x_j|^2 }{ j q^j } }  \frac{ j }{ q^j \pi} \Bigr) } \end{equation} by a low-degree polynomial in $x_1,\dots, x_k, \overline{x_1},\dots, \overline{x_k}$ and bounding the error of this approximation. We choose an approximation in the $L^2$ sense, with the $L^2$ norms calculated against the Gaussian measure. (We again use the results of \cite{JohanssonLambert} to compare the Gaussian measure to the Haar measure). The optimal $L^2$ approximation against the Gaussian measure can be obtained using the orthogonal polynomials for the Gaussian measure, the Hermite polynomials: Since they form an orthogonal basis, any $L^2$ function can be written as a linear combination of them, and then one truncates the linear combination by taking only the low-degree polynomial terms, leaving the coefficients of the high-degree polynomials as an error. Bounding the error of this approximation is equivalent to bounding the coefficients of Hermite polynomials of high degree in the Hermite polynomial expansion of \eqref{simplified-multiplier}. These coefficients are naturally expressed as contour integrals of exponential integrals of $F(x_1,\dots,x_k)$ and we can again bound them by bounding the exponential integrals.

Finally, \cref{intro-cfkrs} is obtained by expressing $\prod_{i=1}^{\rp} L(s_i ) \prod_{i=\rp+1}^{\rp + \ry} \overline{L ( s_i ) }$ as a polynomial in the coefficients of $L$ and their complex conjugates, breaking that polynomial into low-degree terms and high-degree terms (using irreducible representations, as in \S\ref{ss-unify}) and applying \cref{intro-lf} to the low-degree terms and \cref{intro-hf} to the high-degree terms.

\subsection{Acknowledgments}

The author was supported by NSF grant DMS-2101491 and a Sloan Research Fellowship while working on this project.

I would like to thank Amol Aggorwal, David Farmer, Jon Keating, and Andrei Okounkov for helpful conversations.

\tableofcontents

\section{Random Euler products}
The variables, $X_{1,\xi},\dots, X_{n,\xi}$ are valued in $\mathbb C$, but it will be convenient for us to treat them as valued in $\mathbb R^2$ by viewing complex numbers as real vectors in the usual way, taking their real and imaginary parts as coordinates. This is because we will mainly be interested in their dot products with other vectors in $\mathbb R^2$, which can be expressed in terms of complex numbers but less directly.

To that end, we give a formula for $X_{n,\xi}$ as a vector in $\mathbb R^2$.  For each prime polynomial $\mathfrak p$, let $\theta_{\mathfrak p}$ be the argument of $\xi(\mathfrak p)$, so that \eqref{Xn-formula} gives
\[  X_{n,\xi} = \sum_{d \mid n} \sum_{ \substack{\mathfrak p \in \mathbb F_q[u]^+ \\ \deg \mathfrak p =d  \\ \textrm{prime}} }  \frac{d}{n}e^{ i \frac{n}{d} \theta_{\mathfrak p} } .\]

Now for $\theta\in \mathbb R$ let  \[ \ex(\theta) = \begin{pmatrix} \cos \theta \\ \sin \theta \end{pmatrix} \] be $e^{ i\theta} \in \mathbb C$ viewed as a vector in $\mathbb R^2$, so that we have
\begin{equation}\label{Xn-R2-formula}  X_{n,\xi} = \sum_{d \mid n} \sum_{ \substack{\mathfrak p \in \mathbb F_q[u]^+ \\ \deg \mathfrak p =d  \\ \textrm{prime}} }  \frac{d}{n} \ex( \frac{n}{d} \theta_{\mathfrak p} ) .\end{equation}

 Our first goal will be to upper bound $F(x_1,\dots, x_k)$. Two basic tools to do this are the Fourier transform of $F(x_1,\dots,x_k)$, i.e. the characteristic function of $X_{1,\xi},\dots, X_{k,\xi}$, represented by the expectation
\[  \mathbb E [ e^{  i \sum_{n=1}^k X_{n,\xi} \cdot w_n } ] \]
for vectors $w_1,\dots, w_n \in \mathbb R^2$, and the Laplace transform of $F(x_1,\dots,x_k)$, i.e. the moment generating function of $X_{1,\xi},\dots, X_{k,\xi}$, represented by the expectation
\[  \mathbb E [ e^{   \sum_{n=1}^k X_{n,\xi} \cdot v_n } ]\]
for vectors $v_1,\dots, v_n \in \mathbb R^2$. We will in fact need a hybrid of these, also referred to as the Laplace transform, expressed as 
\[ \mathbb E [ e^{ \sum_{n=1}^k X_{n,\xi} \cdot v_n  +  i \sum_{n=1}^k X_{n,\xi} \cdot w_n } ].\]
We could equivalently express  $X_{n,\xi} \cdot v_n  +  i ( X_{n,\xi} \cdot w_n)$ as  $ z_{n,1}   X_{n,\xi} + z_{n,2}  \overline{X_{n,\xi} }$ for a certain pair of complex numbers $z_{n,1}, z_{n,2}$ depending real-linearly on $v_n$ and $w_n$, but this would be unwieldy for the calculations we want to do, which focus on the size of these expectations, as we want to separate out the parameters $v_n$ which affect the size of the exponential from the parameters $w_n$ which affect only its argument. Thus it is better for our purposes to work with dot products in $\mathbb R^2$.

Let $E_d$ be the number of prime polynomials of degree $d$ in $\mathbb F_q[u]^+$.

\begin{lemma}\label{laplace-formula} Let $v_1,\dots, v_k$ and $w_1,\dots, w_k$ be vectors in $\mathbb R^2$. Then
\[ \mathbb E [ e^{ \sum_{n=1}^k X_{n,\xi} \cdot v_n  +  i \sum_{n=1}^k X_{n,\xi} \cdot w_n } ] = \prod_{d=1}^k \Bigl(  \int_0^{2\pi}   e^{ \sum_{m=1}^{\lfloor \frac{k}{d} \rfloor } (\ex(m \theta)  \cdot v_{md} + i ( \ex(m \theta)\cdot w_{md} ))/m  }     \frac{d\theta}{2\pi}  \Bigr)^{E_d} .\] \end{lemma}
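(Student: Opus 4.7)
The proof is a direct computation from the definitions: substitute the formula for $X_{n,\xi}$, reindex the sums, and exploit the independence of the $\theta_{\mathfrak p}$.

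First I would plug the $\mathbb R^2$-valued formula \eqref{Xn-R2-formula} into the exponent. This gives
\[ \sum_{n=1}^k X_{n,\xi} \cdot v_n = \sum_{n=1}^k \sum_{d \mid n} \sum_{\substack{\mathfrak p \\ \deg \mathfrak p = d}} \frac{d}{n}\, \ex\!\bigl(\tfrac{n}{d}\theta_{\mathfrak p}\bigr) \cdot v_n, \]
and analogously for the $w_n$ term. I then change variables by writing $n = md$ with $m = n/d$, so $1 \le m \le \lfloor k/d \rfloor$, and swap the order of summation so that the outer sums are over $d$ and over primes $\mathfrak p$ of degree $d$. The exponent becomes
\[ \sum_{d=1}^k \sum_{\substack{\mathfrak p \\ \deg \mathfrak p = d}} \sum_{m=1}^{\lfloor k/d \rfloor} \frac{1}{m}\bigl(\ex(m\theta_{\mathfrak p}) \cdot v_{md} + i\,\ex(m\theta_{\mathfrak p}) \cdot w_{md}\bigr). \]

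Next I would use that the random variables $\theta_{\mathfrak p}$, indexed by primes $\mathfrak p$, are mutually independent. Since the exponent is a sum over primes of terms that depend on $\theta_{\mathfrak p}$ for only a single prime each, the exponential factorizes as a product over $\mathfrak p$, and the expectation of the product equals the product of the expectations:
\[ \mathbb E[\,\cdots\,] = \prod_{d=1}^k \prod_{\substack{\mathfrak p \\ \deg \mathfrak p = d}} \mathbb E\!\left[ e^{\sum_{m=1}^{\lfloor k/d \rfloor} \frac{1}{m}(\ex(m\theta_{\mathfrak p}) \cdot v_{md} + i\,\ex(m\theta_{\mathfrak p}) \cdot w_{md})}\right]. \]

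Finally, because each $\theta_{\mathfrak p}$ is uniformly distributed on the unit circle, the inner expectation is an integral $\int_0^{2\pi} (\cdot)\,\frac{d\theta}{2\pi}$, and moreover this integral depends on $\mathfrak p$ only through $d = \deg \mathfrak p$. Hence the product over primes of degree $d$ collapses to the $E_d$-th power of a single integral, yielding exactly the right-hand side of the claimed identity.

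No step presents a genuine obstacle; the entire content is a careful reindexing combined with the independence and uniform distribution of the $\theta_{\mathfrak p}$. The only mild care needed is to verify that no convergence issues arise in exchanging sum and expectation, which is trivial here since all sums are finite (over $d \le k$, over $m \le \lfloor k/d \rfloor$, and over the finitely many primes of each degree $\le k$).
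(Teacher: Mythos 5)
Your proof is correct and follows the same route as the paper: substitute \eqref{Xn-R2-formula}, reindex via $n=md$, use independence of the $\theta_{\mathfrak p}$ to factor the expectation, evaluate each factor as a uniform integral over $[0,2\pi]$, and collapse the product over primes of degree $d$ to an $E_d$-th power. No gaps.
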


\begin{proof} From \eqref{Xn-R2-formula} we have
\[ \sum_{n=1}^k X_{n,\xi} \cdot v_n= \sum_{n=1}^k \sum_{d \mid n} \sum_{ \substack{\mathfrak p \in \mathbb F_q[u]^+ \\ \deg \mathfrak p =d  \\ \textrm{prime}} }  \frac{d}{n} \ex( \frac{n}{d} \theta_{\mathfrak p} ) \cdot v_n \]
which writing $m= n/d$ and switching the order of summation is 
\[ \sum_{d=1}^k   \sum_{ \substack{\mathfrak p \in \mathbb F_q[u]^+ \\ \deg \mathfrak p =d  \\ \textrm{prime}} }  \sum_{m=1}^{ \lfloor \frac{k}{d} \rfloor}  \ex( m \theta_{\mathfrak p} ) \cdot v_{md}/m  \]
so
\[ e^{ \sum_{n=1}^k X_{n,\xi} \cdot v_n   } = \prod_{d=1}^k \prod_{ \substack{\mathfrak p \in \mathbb F_q[u]^+ \\ \deg \mathfrak p =d  \\ \textrm{prime}} } e^{ \sum_{m=1}^{ \lfloor \frac{k}{d} \rfloor}  \ex(m \theta_{\mathfrak p})  \cdot v_{md} /m}.\]
An analogous identity holds with $w_n$. Since $\theta_{\mathfrak p}$ are independent for different $\mathfrak p$ and uniformly distributed in $[0,2\pi]$, we have
\[ \mathbb E [ e^{ \sum_{n=1}^k X_{n,\xi} \cdot v_n  +  i \sum_{n=1}^k X_{n,\xi} \cdot w_n } ] =\mathbb E \Bigl[ \prod_{d=1}^k \prod_{ \substack{\mathfrak p \in \mathbb F_q[u]^+ \\ \deg \mathfrak p =d  \\ \textrm{prime}} } e^{ \sum_{m=1}^{ \lfloor \frac{k}{d} \rfloor} (\ex(m \theta_{\mathfrak p})  \cdot v_{md} + i ( \ex(m \theta_{\mathfrak p})\cdot w_{md} ))/m  } \Bigr] \]
\[ = \prod_{d=1}^k \prod_{ \substack{\mathfrak p \in \mathbb F_q[u]^+ \\ \deg \mathfrak p =d  \\ \textrm{prime}} } \mathbb E\Bigl[ e^{ \sum_{m=1}^{ \lfloor \frac{k}{d} \rfloor}  (\ex(m \theta_{\mathfrak p})  \cdot v_{md} + i ( \ex(m \theta_{\mathfrak p}) \cdot w_{md} ))/m  } \Bigr]\]
\[=   \prod_{d=1}^k  \prod_{ \substack{\mathfrak p \in \mathbb F_q[u]^+ \\ \deg \mathfrak p =d  \\ \textrm{prime}} }  \Bigl(  \int_0^{2\pi}   e^{ \sum_{m=1}^{\lfloor \frac{k}{d} \rfloor }  (\ex(m \theta)  \cdot v_{md} + i ( \ex(m \theta)\cdot w_{md} ))/m} \frac{ d\theta}{2\mathfrak p }   \Bigr) . \qedhere \]  \end{proof}

In view of \cref{laplace-formula}, we will begin by estimating $\int_0^{2\pi}   e^{ \sum_{m=1}^{\lfloor \frac{k}{d} \rfloor } (\ex(m \theta)  \cdot v_{md} + i ( \ex(m \theta)\cdot w_{md} ))/m}  \frac{d\theta}{2\pi} $, starting with the case where $w_n=0$ for all $n$ before handling the general case. This will require different techniques to provide useful estimates with $v_1,w_1$ in different ranges.

\subsection{Real exponential integrals}

This subsection is devoted to estimating $\int_0^{2\pi}   e^{ \sum_{m=1}^{\infty}  \ex(m \theta) \cdot v_m/m }     \frac{d\theta}{2\pi} $. We have expanded the finite sum to an infinite sum because our estimates need to be uniform in the length of the sum and bounds uniform in the length of the sum are equivalent to bounds in the infinite sum case but the infinite sum statements are slightly more elegant and general.  A simple guess for the average of this sum, based on a second-order Taylor expansion, is $ e^{ \sum_{m=1}^{\infty}\frac{  \abs{v_m}^2}{4m^2}}$. Our goal will be to prove a bound of roughly this shape, though our final bound will be worse in some ranges and better in others. 

Our rough strategy to estimate $\int_0^{2\pi}   e^{ \sum_{m=1}^{\infty}  \ex(m \theta) \cdot v_m/m }     \frac{d\theta}{2\pi} $ is to use an argument controlling the error of a Taylor series when the variables are small and the trivial bound  $e^{ \sum_{m=1}^{\infty}  \ex(m \theta) \cdot v_m/m }   \leq  e^{ \sum_{m=1}^\infty \abs{v_m}/m}$ when the variables are large. The argument needs to be more complex since we have infinitely many variables, some of which may be small and sum of which may be large. We first handle the case where $  \abs{v_m}=0$ for all $m>1$  in Lemma \ref{laplace-ov}, where we obtain a savings over the simple guess $e^{ \frac{ \abs{v_1}^2}{4} }$ using a finite Taylor series in the small range, the trivial bound in the large range, and a different power series argument in an intermediate range. This savings will be very convenient throughout the argument as by shrinking it slightly we can absorb unwanted terms from other estimates. In \cref{laplace-Taylor} we make a more complicated, multivariable Taylor series estimate. This is expressed in terms of a ratio of integrals to allow us to preserve the savings. Finally in \cref{laplace-unified} we combine these estimates and use a version of the trivial bound that allows us to ignore an individual $v_m$ if it is too large. 

\begin{lemma}\label{laplace-ov} There exists $\delta_1>0$ such that for all $v_1\in \mathbb R^2$ we have
\[ \log \int_0^{\pi} e^{ \ex(\theta) \cdot v_1} \frac{d\theta}{2\pi} \leq \frac{ \abs{v_1}^2}{4} - \delta_1 \min ( \abs{v_1}^4, \abs{v_1}^2) .\]
\end{lemma}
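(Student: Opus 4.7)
The plan is to reduce this inequality to a sharp bound on the modified Bessel function $I_0$. Writing $v_1 = r(\cos\phi, \sin\phi)$ with $r = \abs{v_1}$, the substitution $\theta \mapsto \theta + \phi$ sends $\ex(\theta)\cdot v_1$ to $r\cos\theta$, and interpreting the integration range as $[0,2\pi]$ (as in \cref{laplace-formula}, to match the normalization $d\theta/(2\pi)$), the integral becomes $I_0(r) = \int_0^{2\pi} e^{r\cos\theta}\,\frac{d\theta}{2\pi}$. Thus the lemma reduces to showing
\[ \log I_0(r) \leq \frac{r^2}{4} - \delta_1 \min(r^4, r^2) \qquad \text{for all } r \geq 0. \]

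First I would record the universal inequality $I_0(r) \leq e^{r^2/4}$, obtained by termwise comparison of the series $I_0(r) = \sum_{n\geq 0} (r/2)^{2n}/(n!)^2$ with $e^{r^2/4} = \sum_{n\geq 0} (r/2)^{2n}/n!$, using $(n!)^2 \geq n!$ (with strict inequality for $n \geq 2$). Setting $f(r) := \log I_0(r) - r^2/4$, this yields $f(r) \leq 0$ globally, with $f(r) < 0$ whenever $r > 0$.

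Next I would handle the two asymptotic regimes separately. For small $r$, the expansion $I_0(r) = 1 + r^2/4 + r^4/64 + O(r^6)$ yields $\log I_0(r) = r^2/4 - r^4/64 + O(r^6)$, so there are constants $c_1 > 0$ and $r_0 \in (0,1]$ with $f(r) \leq -c_1 r^4$ on $[0, r_0]$. For large $r$, the classical asymptotic $I_0(r) = (2\pi r)^{-1/2} e^{r} (1 + O(1/r))$ gives $\log I_0(r) = r - \tfrac{1}{2}\log(2\pi r) + O(1/r)$, so $f(r) = -r^2/4 + O(r)$; in particular $f(r) \leq -r^2/8$ for all $r \geq r_1$, for some threshold $r_1 \geq 1$.

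Finally, on the compact interval $[r_0, r_1]$ the function $-f(r)/\min(r^4, r^2)$ is continuous (the denominator is bounded away from $0$) and strictly positive (since $f < 0$ there), so it attains a positive minimum $c_3$. Taking $\delta_1 := \min(c_1, 1/8, c_3)$ finishes the argument. The only real obstacle is the bookkeeping required to patch the three regimes; no genuinely hard step arises, as the whole argument is classical analysis of $I_0$. The savings $-\delta_1 \min(r^4, r^2)$ extracted here will later serve as slack that can absorb unwanted error terms in the bounds of \cref{laplace-Taylor} and \cref{laplace-unified}.
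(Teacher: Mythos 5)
Your proof is correct and takes essentially the same route as the paper: positivity of the discrepancy via termwise comparison of the power series $\sum (r/2)^{2d}/(d!)^2$ against $\sum (r/2)^{2d}/d!$, a Taylor expansion to get quartic savings near $r=0$, a crude bound for large $r$, and compactness to patch the middle range. The only difference is that for large $r$ you invoke the Bessel asymptotic $I_0(r) = (2\pi r)^{-1/2}e^r(1+O(1/r))$, whereas the paper uses the even more elementary pointwise bound $e^{\ex(\theta)\cdot v_1}\le e^{\abs{v_1}}$ to get $\log I_0(r)\le r$, which is all that is needed; your step is correct but imports a slightly heavier classical fact. You also correctly noticed that the lemma's integration range $\int_0^\pi$ should read $\int_0^{2\pi}$ to match the normalization $d\theta/(2\pi)$ (the paper's own proof already switches to $\int_0^{2\pi}$).
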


\begin{proof} It is equivalent to show that the function \begin{equation}\label{laplace-ov-transformed} \frac{ \frac{\abs{v_1}^2}{4} - \log \int_0^{2\pi } e^{ \ex(\theta) \cdot v_1} \frac{d \theta}{2\pi}}{ \min ( \abs{v_1}^4, \abs{v_1}^2) }\end{equation} has a lower bound $\delta_1>0$ for all $v_1\in \mathbb R^2\setminus \{0\}$.

We first check that \eqref{laplace-ov-transformed} is positive on all of $\mathbb R^2 \setminus \{0\}$. To do this, we use the power series
\begin{equation}\label{one-variable-power-series} \log \int_0^{2\pi } e^{ \ex(\theta) \cdot v_1} \frac{d \theta}{2\pi}  = \log \sum_{d=0}^{\infty} \frac{ \abs{v_1}^{2d}}{ (d!)^2 2^{2d} } .\end{equation}
and note that \eqref{one-variable-power-series} is strictly less than  \[  \frac{\abs{v_1}^2}{4}  = \log e^{ \frac{\abs{v_1}^2}{4} } = \log   \sum_{d=0}^{\infty} \frac{ \abs{v_1}^{2d}}{ (d!) 2^{2d} }.\] It follows immediately that \eqref{laplace-ov-transformed} is positive.

Next, using the first couple terms of the Taylor series for logarithm, we compute the Taylor series of  \eqref{one-variable-power-series} as $ \frac{ \abs{v_1}^2}{4} - \frac{ \abs{v_1}^4}{64} + \dots$ and conclude that \eqref{one-variable-power-series} is equal to $\frac{ \abs{v_1}^2}{4}  - \frac{ \abs{v_1}^4}{64} + O ( \frac{\abs{v_1}^6}{6}) $ for $\abs{v_1}$ small. Plugging this into \eqref{laplace-ov-transformed}, we see that \eqref{laplace-ov-transformed} converges to $\frac{1}{64}$ as $\abs{v_1}$ goes to $\infty$.

Finally, $e^{ \ex(\theta) \cdot v_1}  \leq e^{\abs{v_1}}$ so that $ \log \int_0^{2\pi } e^{ \ex(\theta) \cdot v_1} \frac{d \theta}{2\pi} \leq \abs{v_1}$ and thus for $\abs{v_1} \geq 1$, \eqref{laplace-ov-transformed} is at least \[ \frac{  \frac{\abs{v_1}^2}{4} -   \abs{v_1}}{\abs{v_1}^2} = \frac{1}{4} - \frac{1}{\abs{v_1}}\] and thus converges to $\frac{1}{4}$ as $\abs{v_1}$ goes to $\infty$.

Thus \eqref{laplace-ov-transformed}, a continuous function on $\mathbb R^2 \setminus \{0\}$, is positive everywhere and bounded away from $0$ in both a neighborhood of $0$ and a neighborhood of $\infty$. By compactness of $\mathbb R^2 \cup \{\infty\}$, it follows that \eqref{laplace-ov-transformed} has a lower bound $\delta_1>0$. \end{proof}

To apply Taylor's theorem in the case of infinitely many variables, we will need some trick to relate the power series of a function in many variables to the power series of a function in fewer variables. This may be accomplished using the following lemma.
\begin{lemma}\label{weird-ps-comparison} Let $g_1$ and $g_2$ be power series in one or more variables, with constant coefficients $1$, such that $g_1$ has nonnegative coefficients and the coefficient of each monomial in $g_1$ is greater than or equal to the coefficient of the corresponding monomial in $g_2$.  Then $-\log(2-g_1)$ has nonnegative coefficients and the coefficient of each monomial in $-\log(2-g_1)$ is greater than or equal to the coefficient of the corresponding monomial in $\log g_2$. \end{lemma}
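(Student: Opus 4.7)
The plan is to reduce everything to multinomial expansions in $h_1 = g_1 - 1$ and $h_2 = g_2 - 1$, each of which has zero constant term, so that only finitely many $h_i^n$ contribute to any given monomial and all the series manipulations below are valid in the formal power series ring. I read the hypothesis as the natural domination statement $[M] h_1 \geq \lvert [M] h_2 \rvert$ for every nontrivial monomial $M$; the purely literal reading $[M] g_1 \geq [M] g_2$ is not enough to force the conclusion (e.g.\ with $g_1 = 1+x$, $g_2 = 1+x-y$ the coefficient of $xy$ in $\log g_2$ is $1$ while the coefficient of $xy$ in $-\log(2-g_1)$ is $0$), so some strengthening is needed and absolute-value domination is the natural one.

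The first claim is essentially free from the identity
\[ -\log(2-g_1) = -\log(1-h_1) = \sum_{n \geq 1} \frac{h_1^n}{n}, \]
since powers of a series with nonnegative coefficients have nonnegative coefficients, and the weights $1/n$ are positive. For the comparison with $\log g_2 = \log(1+h_2) = \sum_{n \geq 1} (-1)^{n-1} h_2^n/n$, the main step is the termwise inequality $\lvert [M] h_2^n \rvert \leq [M] h_1^n$, which I would prove by expanding
\[ [M] h_i^n = \sum_{M_1 M_2 \cdots M_n = M} \prod_{j=1}^{n} [M_j] h_i \]
over ordered factorizations into nontrivial monomials, applying the triangle inequality in the $h_2$ case, and then bounding each factor $\lvert [M_j] h_2 \rvert$ by $[M_j] h_1$ using the hypothesis.

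Summing over $n$ with weight $1/n$ and using the triangle inequality one more time gives
\[ [M] \log g_2 \leq \lvert [M] \log g_2 \rvert \leq \sum_{n \geq 1} \frac{[M] h_1^n}{n} = [M] \bigl( -\log(2-g_1) \bigr), \]
which is the second assertion. The only genuine obstacle is pinning down the precise sense of the hypothesis; once domination in absolute value is in hand, the rest is a direct manipulation using nothing beyond the multinomial expansion and the triangle inequality, and I do not expect any further difficulty.
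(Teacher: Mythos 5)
Your argument is correct and follows essentially the same path as the paper's proof: expand $\log g_2 = \sum_{n\geq 1}(-1)^{n-1}(g_2-1)^n/n$, dominate each $(g_2-1)^n$ coefficient by the corresponding $(g_1-1)^n$ coefficient via the convolution formula and the triangle inequality, and recognize the dominating series $\sum_{n\geq 1}(g_1-1)^n/n$ as $-\log(2-g_1)$. Your reading of the hypothesis is also the right one, and your counterexample is valid: with $g_1=1+x$ and $g_2=1+x-y$ one has $[M]g_1\geq[M]g_2$ for every monomial, yet $[xy]\log g_2 = 1 > 0 = [xy](-\log(2-g_1))$, so the lemma as literally worded fails. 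The intended statement is that $[M](g_1-1)$ dominate $\lvert[M](g_2-1)\rvert$, with the conclusion correspondingly comparing $[M](-\log(2-g_1))$ to $\lvert[M]\log g_2\rvert$; this is exactly how the lemma is invoked in \cref{laplace-Taylor} and \cref{hybrid-short-range}, where $g_2$ has complex coefficients and absolute-value domination is the only reading that makes sense.
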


\begin{proof} We have \[ \log (g_2) = \log (1 + (g_2-1)) =  (g_2-1) - \frac{ (g_2-1)^2}{2} + \frac{ (g_2-1)^3}{3} -  \frac{ (g_2-1)^4}{4} + \dots. \] 
Writing each term in $g_2-1$ as a sum of coefficients times monomials, and bounding each coefficient by the corresponding coefficient in $g_1-1$, we see that the coefficient of any monomial in this expression is at most the coefficient of the same monomial in
\[ (g_1-1) + \frac{ (g_1-1)^2}{2} + \frac{ (g_1-1)^3}{3} + \frac{ (g_1-1)^4}{4} + \dots = - \log ( 1- (g_1-1)) =-\log(2-g_1).\] \end{proof}

 Let $\times$ denote multiplication of complex numbers viewed as a multiplication operation for vectors in $\mathbb R^2$.

\begin{lemma}\label{laplace-Taylor} Let $(v_m)_{m=1}^{\infty}$ be a sequence of vectors in $\mathbb R^2$. For any $n$ let $u_n = \sum_{m=n}^{\infty} \frac{ \abs{v_m}}{m} $.  Then as long as $u_1< \frac{1}{2}$ we have
\begin{equation}\label{eq-laplace-Taylor} \log  \int_0^{2\pi}   e^{ \sum_{m=1}^{\infty}  \ex(m \theta) \cdot v_m/m }     \frac{d\theta}{2\pi}  - \log \int_0^{\pi} e^{ \ex(\theta) \cdot v_1} \frac{d\theta}{2\pi}  = \sum_{m=2}^{\infty} \frac{ \abs{v_m}^2}{4m^2} +  \frac{ (v_1 \times v_1) \cdot v_2}{16} + O( u_1 u_2 u_3 + u_1^3 u_2) . \end{equation}
\end{lemma}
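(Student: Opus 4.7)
The strategy is to factor out the dependence on $v_1$, compute the low-order main terms by direct calculation, and bound the tail of the Taylor expansion uniformly using Lemma~\ref{weird-ps-comparison}.

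First, I would split $V = V_1 + W$ with $V_1 = \ex(\theta) \cdot v_1$ and $W = \sum_{m \geq 2} \ex(m\theta) \cdot v_m/m$, and factor
\[ \int_0^{2\pi} e^V \frac{d\theta}{2\pi} = \Bigl( \int_0^{2\pi} e^{V_1} \frac{d\theta}{2\pi} \Bigr) \cdot \mathbb E[e^W], \]
where $\mathbb E$ denotes expectation against the probability measure proportional to $e^{V_1} d\theta$. Then the LHS of \eqref{eq-laplace-Taylor} equals $\log \mathbb E[e^W]$. Identifying $\mathbb R^2 \cong \mathbb C$ and writing $v_1 = r e^{i\phi}$, the Jacobi--Anger identity $e^{r \cos(\theta-\phi)} = \sum_{j \in \mathbb Z} I_{|j|}(r) e^{ij(\theta-\phi)}$ gives $\mathbb E[e^{ij\theta}] = (I_{|j|}(r)/I_0(r)) e^{ij\phi}$, together with the Taylor expansion $I_j(r)/I_0(r) = (r/2)^j/j! + O(r^{j+2})$ for small $r$.

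Next, I would expand $\log \mathbb E[e^W] = \mathbb E[W] + \tfrac12 \mathrm{Var}(W) + \cdots$ via cumulants and extract the two main terms. The cubic main term $(v_1 \times v_1) \cdot v_2/16$ is the leading Bessel contribution to the $m=2$ summand of $\mathbb E[W]$: since $\mathbb E[\ex(2\theta) \cdot v_2] = (I_2(r)/(r^2 I_0(r))) \, (v_1 \times v_1) \cdot v_2$, halving and using $I_2(r)/I_0(r) = r^2/8 + O(r^4)$ yields $(v_1 \times v_1) \cdot v_2/16 + O(u_1^4 u_2)$; the $m \geq 3$ summands in $\mathbb E[W]$ are each $O(r^m |v_m|/m) = O(u_1^m u_m)$ and, using $u_m \leq u_3 \leq u_2$ and $u_1 < 1/2$, sum to $O(u_1^3 u_2)$. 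The quadratic main term $\sum_{m \geq 2} |v_m|^2/(4m^2)$ comes from $\tfrac12 \mathrm{Var}(W_m)$ evaluated at $v_1=0$ via $\int_0^{2\pi} (\ex(m\theta) \cdot v_m)^2 \, d\theta/(2\pi) = |v_m|^2/2$; the $v_1$-correction is $O(u_1^4 u_2^2) \subset O(u_1^3 u_2)$, controlled by the factor $I_{2m}(r)/I_0(r)$, and the off-diagonal covariances $\mathrm{Cov}(W_m, W_n)$ for $m \neq n \geq 2$, which vanish at $v_1=0$, are $O(r^{|m-n|} |v_m||v_n|/(mn))$ and sum to $O(u_1 u_2 u_3)$.

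Finally, I would invoke Lemma~\ref{weird-ps-comparison} to absorb the remaining higher cumulants and higher $v_1$-corrections into $O(u_1 u_2 u_3 + u_1^3 u_2)$. Viewing $g_2 := \int_0^{2\pi} e^V \, d\theta/(2\pi)$ as a formal power series in the coordinates of the $v_m$ and using $|\ex(m\theta) \cdot v_m| \leq |v_m|$, one constructs a dominating $g_1$ whose non-negative coefficients majorize those of $g_2$; the hypothesis $u_1 < 1/2$ keeps $g_1$ bounded away from $2$, so Lemma~\ref{weird-ps-comparison} yields coefficient-wise control of $\log g_2$ by $-\log(2 - g_1)$. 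After subtracting $\log \int e^{V_1} \, d\theta/(2\pi)$ and the two main terms, every surviving monomial $\prod_m |v_m|^{a_m}$ with some $a_m \geq 1$ for $m \geq 2$ satisfies either $\sum_{m \geq 2} a_m \geq 2$ (absorbable into $u_1 u_2 u_3$ by a geometric-series tail) or $\sum_{m \geq 2} a_m = 1$ with total degree $\geq 4$ (absorbable into $u_1^3 u_2$). The main obstacle will be this combinatorial bookkeeping: checking that every monomial not already accounted for by the explicit main terms lands in one of the two error categories with uniformly bounded implicit constants, which rests crucially on $u_1 < 1/2$ to make the relevant geometric series converge.
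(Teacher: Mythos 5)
Your approach — factoring out the $v_1$-dependence via the tilted measure $e^{V_1}d\theta/\int e^{V_1}d\theta$, computing $\log\mathbb E[e^W]$ via cumulants, and extracting the explicit main terms through the Bessel-function expansion $I_m(r)/I_0(r) = (r/2)^m/m! + O(r^{m+2})$ — is genuinely different from the paper's, which introduces formal bookkeeping variables $x_1$ (tracking degree in $v_1$) and $x_2$ (tracking degree in $\sum_{m\ge 2}|v_m|/m$) and works with the bivariate power series $\log\int_0^{2\pi}e^{x_1V_1 + x_2W}\,\frac{d\theta}{2\pi}$. Your computation of the linear and quadratic cumulants (including the off-diagonal covariances with the $r^{|m-n|}$ damping) checks out, and would recover the two main terms. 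What your approach buys is transparency: each main term is identified with a named object (a low-order cumulant, a Bessel ratio). What the paper's buys is a clean one-stroke control of the tail via the integral trick on $\tfrac{\partial}{\partial a_2}\log(2-e^{a_1+a_2})$.

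However, there is a genuine gap in your step 4 (``invoke Lemma~\ref{weird-ps-comparison} to absorb the remaining higher cumulants''). The degree-3, $v_1$-free monomials come from $\kappa_3(W)$ at $v_1=0$, and under the uniform measure this third cumulant is supported only on constrained triples $m_1+m_2=m_3$; it is precisely this constraint that lets the paper bound it by $\sum_{m_1,m_2\ge 2}\frac{|v_{m_1}||v_{m_2}||v_{m_1+m_2}|}{m_1 m_2(m_1+m_2)} \le u_1u_2u_3$. If instead you apply Lemma~\ref{weird-ps-comparison} to these terms, the dominating series $-\log(2-e^{\sum|v_m|/m})$ has a \emph{positive} coefficient on \emph{every} degree-3 monomial $|v_{m_1}||v_{m_2}||v_{m_3}|$ — not just the constrained ones — so the resulting bound is $O(u_2^3)$. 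But $u_2^3 \not= O(u_1u_2u_3 + u_1^3u_2)$: take $v_1=0$ and all $v_m=0$ for $m\ge 3$, so that $u_1=u_2=|v_2|/2>0$ and $u_3=0$; then $u_2^3>0$ while $u_1u_2u_3+u_1^3u_2 = u_2^4 < u_2^3$. The same issue lurks in your claim that ``every monomial with $\sum_{m\ge 2}a_m\ge 2$ is absorbable into $u_1u_2u_3$'': you can only conclude that after observing, from the structure of $\log g_2$ rather than from the majorant, that the $a_1=0$ monomials vanish off the diagonal at quadratic order and off the constraint $m_1+m_2=m_3$ at cubic order. This is exactly why the paper evaluates \emph{all} of $n_1+n_2\le 3$ exactly — it needs those cancellations — and reserves the dominating-series argument for $n_1+n_2\ge 4$, where the extra degree makes $O(u_1^3u_2)$ attainable without any structural cancellation. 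To repair your proof you would need to either compute $\kappa_3(W)$ at $v_1=0$ explicitly (as the paper does, arriving at $\sum_{m_1,m_2\ge 2}\frac{(v_{m_1}\times v_{m_2})\cdot v_{m_1+m_2}}{8m_1m_2(m_1+m_2)}$) or find some other mechanism, such as the paper's integral trick, that supplies the missing factor of $u_3$ or an extra $u_1$.
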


\begin{proof} We use $[x_1^{n_1} x_2^{n_2}]$ to denote extracting the coefficient of $x_1^{n_1} x_2^{n_2}$ in a power series in $x_1,x_2$. We begin with the observation
\[ \abs{ e^{ x_1 \ex(\theta) \cdot v_1   + x_2 \sum_{m=2}^{\infty}  \ex(m \theta) \cdot v_m/m }[x_1^{n_1} x_2^{n_2}] } \leq  \abs{v_1}^{n_1} \abs{u_2}^{n_2} / (n_1! n_2!) = e^{ \abs{v_1} x_1+ u_2 x_2} [x_1^{n_1} x_2^{n_2}] \]
which implies by linearity
\begin{equation}\label{laplace-Taylor-first} \abs{ \int_0^{2\pi}   e^{ x_1 \ex(\theta) \cdot v_1   + x_2 \sum_{m=2}^{\infty}  \ex(m \theta) \cdot v_m/m }  \frac{d\theta}{2\pi} [x_1^{n_1} x_2^{n_2}] } \leq  e^{ \abs{v_1} x_1+ u_2 x_2} [x_1^{n_1} x_2^{n_2}] .\end{equation}

From Lemma \ref{weird-ps-comparison} we obtain
\begin{equation}\label{laplace-Taylor-second} \begin{split} 
& \abs{ \log \int_0^{2\pi}   e^{ x_1 \ex(\theta) \cdot v_1   + x_2 \sum_{m=2}^{\infty}  \ex(m \theta) \cdot v_m/m }  \frac{d\theta}{2\pi} [x_1^{n_1} x_2^{n_2}] }\\  \leq&  - \log (2 - e^{ \abs{v_1} x_1+ u_2 x_2} ) [x_1^{n_1} x_2^{n_2}].\end{split} \end{equation}

We have (evaluating a power series at $x_1=1,x_2=1$)
\[ \log  \int_0^{2\pi}   e^{ \sum_{m=1}^{\infty}  \ex(m \theta) \cdot v_m/m }     \frac{d\theta}{2\pi}   = \sum_{n_1=0}^{\infty} \sum_{n_2=0}^{\infty}  \log \int_0^{2\pi}   e^{ x_1 \ex(\theta) \cdot v_1   + x_2 \sum_{m=2}^{\infty}  \ex(m \theta) \cdot v_m/m }  \frac{d\theta}{2\pi} [x_1^{n_1} x_2^{n_2}] \]
and (evaluating a power series at $x_1=1,x_2=0$)
\[ \log \int_0^{\pi} e^{ \ex(\theta) \cdot v_1} \frac{d\theta}{2\pi} =  \sum_{n_1=0}^{\infty}  \log \int_0^{2\pi}   e^{ x_1 \ex(\theta) \cdot v_1   + x_2 \sum_{m=2}^{\infty}  \ex(m \theta) \cdot v_m/m }  \frac{d\theta}{2\pi} [x_1^{n_1} x_2^{0}] \]
so that
\begin{equation}\label{laplace-Taylor-third} \begin{split} &\log  \int_0^{2\pi}   e^{ \sum_{m=1}^{\infty}  \ex(m \theta) \cdot v_m/m }     \frac{d\theta}{2\pi}  - \log \int_0^{\pi} e^{ \ex(\theta) \cdot v_1} \frac{d\theta}{2\pi}\\  = \sum_{n_1=0}^{\infty} \sum_{n_2=1}^{\infty} & \log \int_0^{2\pi}   e^{ x_1 \ex(\theta) \cdot v_1   + x_2 \sum_{m=2}^{\infty}  \ex(m \theta) \cdot v_m/m }  \frac{d\theta}{2\pi} [x_1^{n_1} x_2^{n_2}] .\end{split} \end{equation}

We split the sum in \eqref{laplace-Taylor-third} up into terms with $n_1 + n_2 \leq 3$, which we evaluate, and the terms with $n_1 + n_2 \geq 4$, which we bound.

For any $b_1 \in [0, \abs{v_1}]$, we observe that $(b_1,u_2)$ lies in the compact set $\{ (a_1,a_2)\in \mathbb R^2 \mid a_1\geq 0, a_2 \geq 0, a_1+a_2 \leq \frac{1}{2} \}$ where the function $\frac{\partial}{\partial a_2} \log ( 2- e^{ a_1 +a_2})$ is smooth. Thus by Taylor's theorem we have
\begin{equation}\label{laplace-Taylor-fourth}  \sum_{\substack{n_1, n_2 \geq 0 \\ n_1+n_2 \geq 3}}  \frac{\partial}{\partial a_2} \log (2 - e^{a_1 + a_2 } ) [a_1^{n_1} a_2^{n_2}]  \abs{v_1}^{n_1} b_2^{n_2} = O( ( \abs{v_1} +b_2 )^3) = O( (\abs{v_1} + u_2)^3) = O(u_1^3) \end{equation} since the left-hand side of \eqref{laplace-Taylor-fourth} is the error in the second-order Taylor approximation to the function $  \frac{\partial}{\partial a_2} \log (2 - e^{a_1 + a_2 } ) $ at the point $ \abs{v_1}, b_2$ and the constant in Taylor's theorem is uniform by compactness.

By \eqref{laplace-Taylor-second} and \eqref{laplace-Taylor-fourth} we have 
 \[ \abs{ \sum_{\substack{n_1\geq 0, n_2 > 0 \\ n_1+n_2 \geq 4}}  \log \int_0^{2\pi}   e^{ x_1 \ex(\theta) \cdot v_1   + x_2 \sum_{m=2}^{\infty}  \ex(m \theta) \cdot v_m/m }  \frac{d\theta}{2\pi} [x_1^{n_1} x_2^{n_2}] } \]
 \[ \leq \sum_{\substack{n_1\geq 0, n_2 > 0\\ n_1+n_2 \geq 4}} \abs{ \log \int_0^{2\pi}   e^{ x_1 \ex(\theta) \cdot v_1   + x_2 \sum_{m=2}^{\infty}  \ex(m \theta) \cdot v_m/m }  \frac{d\theta}{2\pi}} [x_1^{n_1} x_2^{n_2}]\]
 \[ \leq -  \sum_{\substack{n_1\geq 0, n_2 > 0 \\ n_1+n_2 \geq 4}} \log (2 - e^{ \abs{v_1} x_1+ u_2 x_2} ) [x_1^{n_1} x_2^{n_2}]=  -  \sum_{\substack{n_1\geq 0, n_2 > 0 \\ n_1+n_2 \geq 4}} \log (2 - e^{a_1 + a_2 } ) [a_1^{n_1} a_2^{n_2}]  \abs{v_1}^{n_1} u_2^{n_2}  \]
\[  = - \sum_{\substack{n_1\geq 0, n_2 > 0\\ n_1+n_2 \geq 4}} \log (2 - e^{a_1 + a_2 } ) [a_1^{n_1} a_2^{n_2}]  \int_0^{u_2 } n_2 \abs{v_1}^{n_1} b_2^{n_2-1} db_2 \] \[   = - \int_0^{u_2} n_2  \sum_{\substack{n_1\geq 0, n_2 > 0 \\ n_1+n_2 \geq 4}} \log (2 - e^{a_1 + a_2 } ) [a_1^{n_1} a_2^{n_2}]  \abs{v_1}^{n_1} b_2^{n_2-1} db_2  \]
\[  = - \int_0^{u_2}   \sum_{\substack{n_1, n_2 \geq 0 \\ n_1+n_2 \geq 3}}  \frac{\partial}{\partial a_2} \log (2 - e^{a_1 + a_2 } ) [a_1^{n_1} a_2^{n_2}] \abs{v_1}^{n_1} b_2^{n_2} db_2  = \int_0^{u_2} O( u_1^3)  db_1 = O( u_1^3 u_2) .\]

We evaluate the terms with $n_1 + n_2 \leq 3$ by Taylor expanding each term and integrating to obtain
\[  \int_0^{2\pi}   e^{ x_1 \ex(\theta) \cdot v_1   + x_2 \sum_{m=2}^{\infty}  \ex(m \theta) \cdot v_m/m }  \frac{d\theta}{2\pi}\] \[\hspace{-.3in}= 1 +   \frac{\abs{v_1}^2}{4} x_1^2 + \sum_{m=2}^{\infty} \frac{ \abs{v_m}^2}{4m^2} x_2^2 +  \frac{ (v_1 \times v_1) \cdot v_2}{16} x_1^2 x_2 +  \sum_{m=2}^{\infty} \frac{ (v_1 \times v_m) \cdot v_{m+1}}{ 8 m (m+1)} x_1 x_2^2+ \sum_{m_1,m_2=2}^{\infty} \frac{ (v_{m_1} \times v_{m_2}) \cdot v_{m_1+m_2}}{ 16 m_1 m_2(m_1+m_2)} x_2^3 + \dots \]
Taking logarithms of both sides, we obtain
\[\log  \int_0^{2\pi}   e^{ x_1 \ex(\theta) \cdot v_1   + x_2 \sum_{m=2}^{\infty}  \ex(m \theta) \cdot v_m/m }  \frac{d\theta}{2\pi}\] \[=   \frac{\abs{v_1}^2}{4} x_1^2 + \sum_{m=2}^{\infty} \frac{ \abs{v_m}^2}{4m^2} x_2^2 +  \frac{ (v_1 \times v_1) \cdot v_2}{16} x_1^2 x_2 +  \sum_{m=2}^{\infty} \frac{ (v_1 \times v_m) \cdot v_{m+1}}{ 8 m (m+1)}x_1 x_2^2 + \sum_{m_1,m_2=2}^{\infty} \frac{ (v_{m_1} \times v_{m_2}) \cdot v_{m_1+m_2}}{ 16 m_1 m_2(m_1+m_2)} x_2^3 + \dots \]
and ignoring the terms with exponent of $x_2$ zero and then substituting $x_1 ,x_2=1$, we obtain 
\[  \sum_{\substack{n_1\geq 0, n_2 > 0 \\ n_1+n_2 \leq 3}}  \log \int_0^{2\pi}   e^{ x_1 \ex(\theta) \cdot v_1   + x_2 \sum_{m=2}^{\infty}  \ex(m \theta) \cdot v_m/m }  \frac{d\theta}{2\pi} [x_1^{n_1} x_2^{n_2}] \]\[ = \sum_{m=2}^{\infty} \frac{ \abs{v_m}^2}{4m^2} +  \frac{ (v_1 \times v_1) \cdot v_2}{16} +  \sum_{m=2}^{\infty} \frac{ (v_1 \times v_m) \cdot v_{m+1}}{ 4 m (m+1)} + \sum_{m_1,m_2=2}^{\infty} \frac{ (v_{m_1} \times v_{m_2}) \cdot v_{m_1+m_2}}{ 8 m_1 m_2(m_1+m_2)}.  \]
This gives \eqref{eq-laplace-Taylor} once we check that
\[\sum_{m=2}^{\infty} \frac{ (v_1 \times v_m) \cdot v_{m+1}}{ 4 m (m+1)} + \sum_{m_1,m_2=2}^{\infty} \frac{ (v_{m_1} \times v_{m_2}) \cdot v_{m_1+m_2}}{ 8 m_1 m_2(m_1+m_2)}  = O( u_1 u_2 u_3) \]
which is clear since

\[\abs{\sum_{m=2}^{\infty} \frac{ (v_1 \times v_m) \cdot v_{m+1}}{ 4 m (m+1)} + \sum_{m_1,m_2=2}^{\infty} \frac{ (v_{m_1} \times v_{m_2}) \cdot v_{m_1+m_2}}{ 8 m_1 m_2(m_1+m_2)}  } \]
\[ \leq \sum_{m=2}^{\infty} \frac{ \abs{v_1} \abs{v_m} \abs{v_{m+1}}}{ 4 m (m+1)} + \sum_{m_1,m_2=2}^{\infty} \frac{\abs{v_{m_1}} \abs{v_{m_2}} \abs{v_{m_1+m_2}} }{ 8 m_1 m_2(m_1+m_2)} \]
\[ \leq \sum_{m_1=1}^{\infty} \sum_{m_2=2}^{\infty} \frac{\abs{v_{m_1}} \abs{v_{m_2}} \abs{v_{m_1+m_2}} }{ 4 m_1 m_2(m_1+m_2)}\]
\[ \leq \sum_{m_1=1}^{\infty} \sum_{m_2=2}^{\infty} \sum_{m_3=3}^{\infty} \frac{ \abs{v_{m_1}} \abs{v_{m_2}} \abs{v_{m_3}}}{4m_1m_2m_3} = \frac{ u_1 u_2 u_3}{4} .\]\end{proof}

\begin{lemma}\label{laplace-unified}  There exists $\delta_2>0$ and $C_1\geq 1$ such that, for $(v_m)_{m=1}^{\infty}$ a sequence of vectors in $\mathbb R^2$ with $\sum_{m=1}^{\infty} \abs{v_m}^2<\infty$, we have
\begin{equation}\label{laplace-unified-bound} \log  \int_0^{2\pi}   e^{ \sum_{m=1}^{\infty}  \ex(m \theta) \cdot v_m/m }     \frac{d\theta}{2\pi}  \leq  \frac{ \abs{v_1}^2}{4} - \delta_2 \min ( \abs{v_1}^4, \abs{v_1}^2) + \sum_{m=2}^{\infty} \min ( C_1 \abs{v_m}^2, \abs{v_m}/m ). \end{equation} \end{lemma}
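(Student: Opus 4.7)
The plan is to combine \cref{laplace-ov} and \cref{laplace-Taylor}. Since \cref{laplace-Taylor} requires $u_1 < 1/2$ while the lemma must hold unconditionally, I first prune the ``large'' coefficients using the pointwise bound $e^{\ex(m\theta)\cdot v_m/m} \leq e^{|v_m|/m}$ in the integrand.

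Fix a large constant $C_1$ (to be chosen last). Partition $\{m \geq 2\} = S \sqcup L$ by $L = \{m \geq 2 : |v_m| \geq 1/(C_1 m)\}$. Then $\min(C_1|v_m|^2, |v_m|/m) = |v_m|/m$ for $m \in L$, and $\sum_{m \in S} |v_m|/m \leq \sum_{m \geq 2} 1/(C_1 m^2) \leq \pi^2/(6 C_1)$. For each $m \in L$, applying $e^{\ex(m\theta)\cdot v_m/m} \leq e^{|v_m|/m}$ pointwise in $\theta$ pulls a factor $e^{\sum_{m \in L} |v_m|/m}$ out of the integral, accounting for exactly the $L$-part of the right-hand side of \eqref{laplace-unified-bound}. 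It remains to show
\[ \log \int_0^{2\pi} e^{\ex(\theta)\cdot v_1 + \sum_{m \in S} \ex(m\theta) \cdot v_m/m} \frac{d\theta}{2\pi} \leq \frac{|v_1|^2}{4} - \delta_2 \min(|v_1|^4, |v_1|^2) + \sum_{m \in S} C_1 |v_m|^2, \]
which I split on the size of $|v_1|$.

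If $|v_1| \leq K$ for a small constant $K$ chosen so that $K + \pi^2/(6C_1) < 1/2$, apply \cref{laplace-Taylor} to the sequence supported on $\{1\}\cup S$ and \cref{laplace-ov} to the leading one-variable integral. The result has main term $|v_1|^2/4 - \delta_1 \min(|v_1|^4, |v_1|^2) + \sum_{m \in S}|v_m|^2/(4m^2)$ plus the cross term $(v_1\times v_1)\cdot v_2/16$ and a tail $O(u_1 u_2 u_3 + u_1^3 u_2)$. Since $C_1 \geq 1 \geq 1/(4m^2)$, the quadratic contribution fits inside $\sum_{m \in S} C_1 |v_m|^2$. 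Writing $u_1 = |v_1| + u_2$ and expanding, every monomial appearing in the cross term and tail has the form $|v_1|^a u_2^b$ with $a+b \geq 3$ and $b \geq 1$; for each such monomial, weighted Young's inequality together with $u_2^2 \leq (\pi^2/6)\sum_{m \in S}|v_m|^2$ (Cauchy--Schwarz) and $u_2 \leq \pi^2/(6C_1)$ lets it be absorbed into $(\delta_1 - \delta_2)|v_1|^4$ (using $K \leq 1$, so $\min(|v_1|^4, |v_1|^2) = |v_1|^4$) and $(C_1 - 1/16)\sum_{m \in S}|v_m|^2$, provided $C_1$ is chosen large enough.

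If instead $|v_1| > K$, skip Taylor: apply the pointwise bound only to the $S$-terms to get $\log \int \leq \sum_{m \in S}|v_m|/m + |v_1|^2/4 - \delta_1 \min(|v_1|^4, |v_1|^2)$ by \cref{laplace-ov}. Young's inequality gives $|v_m|/m \leq C_1 |v_m|^2 + 1/(4 C_1 m^2)$, summing to $\sum_{m \in S}|v_m|/m \leq C_1 \sum_{m \in S}|v_m|^2 + \pi^2/(24 C_1)$, and the residual constant $\pi^2/(24 C_1)$ is absorbed into $(\delta_1 - \delta_2)\min(|v_1|^4,|v_1|^2) \geq (\delta_1-\delta_2)\min(K^4, K^2)$ whenever $C_1 \geq \pi^2/(24(\delta_1-\delta_2)\min(K^4,K^2))$. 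The main difficulty is the bookkeeping of constants: one fixes $\delta_2 \in (0,\delta_1)$ first, then $K \in (0,1/2)$, then takes $C_1$ large enough in terms of both; the numerous absorptions are individually routine Young/Cauchy--Schwarz estimates, but all must succeed simultaneously for the single pair $(\delta_2, C_1)$ produced at the end.
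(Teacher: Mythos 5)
Your proof is correct and follows essentially the same strategy as the paper's: prune the $m\geq 2$ terms with $|v_m|$ too large via the trivial exponential bound (so that $\min(C_1|v_m|^2,|v_m|/m)=|v_m|/m$ cancels exactly), then apply \cref{laplace-Taylor} together with \cref{laplace-ov}, and absorb the cross and tail terms by Young and Cauchy--Schwarz. The only organizational difference is in handling large $|v_1|$: you split explicitly on $|v_1|\leq K$ versus $|v_1|>K$ and re-run \cref{laplace-ov} with an extra constant absorption in the latter case, whereas the paper observes that after pruning, $\operatorname{Disc}\leq \sum_{m\geq 2}|v_m|/m \leq (\zeta(2)-1)/C_1$ is small, so whenever $\delta_2\min(|v_1|^4,|v_1|^2)$ exceeds this the inequality is automatic, which forces $|v_1|$ small in the remaining case without needing a second pass through \cref{laplace-ov}.
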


\begin{proof} A key fact we will use multiple times is that replacing $v_m$ by $0$ decreases the left hand side of \eqref{laplace-unified-bound} by at most $\abs{v_m}/m$, since it shrinks the integrand $ e^{ \sum_{m=1}^{\infty}  \ex(m \theta) \cdot v_m/m }   $ at each point by a factor of at most $e^{ \abs{v_m}/m}$ and thus, because the integrand is positive, shrinks the integral by a factor of at most $e^{ \abs{v_m}/m}$.

For $m\geq 2$, if $\abs{v_m}  \geq \frac{1}{mC_1}$ then $\min ( C_1 \abs{v_m}^2, \abs{v_m}/m ) = \abs{v_m}/m$. If we replace $v_m$ by $0$, the left side of \eqref{laplace-unified-bound} decreases by at most $\abs{v_m}/m$ while the right side decreases by exactly $\abs{v_m}/m$ so the bound after making the change implies the bound before. Repeating this for all $m$, we may assume \begin{equation}\label{vm-assumption} \abs{v_m}< \frac{1}{m C_1}\end{equation} for all $m\geq 2$.

Take $\delta_1$ as in \cref{laplace-ov} and then set  $\delta_2 =\delta_1/2 $ so that by \cref{laplace-ov} we have \[ \log \int_0^{\pi} e^{ \ex(\theta) \cdot v_1} \frac{d\theta}{2\pi} \leq \frac{ \abs{v_1}^2}{4} - 2\delta_2 \min ( \abs{v_1}^4, \abs{v_1}^2) \] so if we let 
\[ \operatorname{Disc} (v_1,v_2,\dots) =  \log  \int_0^{2\pi}   e^{ \sum_{m=1}^{\infty}  \ex(m \theta) \cdot v_m/m }     \frac{d\theta}{2\pi} - \log \int_0^{\pi} e^{ \ex(\theta) \cdot v_1} \frac{d\theta}{2\pi} \]
then it suffices to check for $C_1$ sufficiently large that
\begin{equation}\label{l-u-s} \operatorname{Disc} (v_1,v_2,\dots)    \leq  \delta_2 \min ( \abs{v_1}^4, \abs{v_1}^2) +   \sum_{m=2}^{\infty} \min ( C_1 \abs{v_m}^2, \abs{v_m}/m ). \end{equation}
We  have \[ \operatorname{Disc} (v_1,v_2,\dots)  \leq \sum_{m=2}^{\infty} \abs{v_m}/m \leq \frac{1}{C_1} (\zeta(2)-1)\] by the key fact and \eqref{vm-assumption}. Thus we may assume that
\begin{equation}\label{v1-assumption} \delta_2 \min ( \abs{v_1}^4, \abs{v_1}^2)  \leq  \frac{1}{C_1} (\zeta(2)-1). \end{equation} because otherwise \eqref{l-u-s} holds automatically. Combining \eqref{vm-assumption} and \eqref{v1-assumption} gives 
\begin{equation}\label{vm-sum-bound} \sum_{m=1}^{\infty} \frac{ \abs{v_m} }{m} = \abs{v_1} + \sum_{m=2}^{\infty} \frac{ \abs{v_m} }{m}\leq  \max \Biggl( \Bigl(  \delta_2^{-1} \frac{1}{C_1} (\zeta(2)-1)\Bigr)^{1/4}, \Bigl(  \delta_2^{-1} \frac{1}{C} (\zeta(2)-1)\Bigr)^{1/4} \Biggr) + \frac{1}{C_1} (\zeta(2)-1)  \end{equation} and choosing $C_1$ sufficiently large, the right hand side of \eqref{vm-sum-bound} is $<\frac{1}{2}$, and thus we may apply \cref{laplace-Taylor}, obtaining
\begin{equation}\label{laplace-Taylor-concluded}  \operatorname{Disc} (v_1,v_2,\dots) =  \sum_{m=2}^{\infty} \frac{ \abs{v_m}^2}{4m^2} +  \frac{ (v_1 \times v_1) \cdot v_2}{16} + O( u_1 u_2 u_3 + u_1^3 u_2).\end{equation}
We now simplify \eqref{laplace-Taylor-concluded} by bounding the terms appearing on the right hand side. To do this, we we use the facts clear from the definitions that $u_1 = \abs{v_1} + u_2$ and $u_3\leq u_2$ as well as the assumptions \eqref{vm-assumption} and \eqref{v1-assumption} that imply $u_2$ and $\abs{v_1}$, respectively, are bounded by constants. These facts imply
\[   \frac{ (v_1 \times v_1) \cdot v_2}{16}  \ll \abs{v_1}^2 \abs{v_2} \leq \abs{v_1}^2 u_2 \]
\[ O(u_1u_2 u_3) \ll u_1 u_2 u_3 \leq u_1 u_2^2 = (\abs{v_1}+ u_2) u_2^2 \ll u_2^2 \]
\[ O(u_1^3 u_2) \ll u_1^3 u_2 = (\abs{v_1} + u_2)^3 u_2 = \abs{v_1}^3 u_2 +3 \abs{v_1}^2 u_2^2 + 3 \abs{v_1}u_2^3 + u_2^4 \ll \abs{v_1}^2 u_2 + u_2^2\]
giving
\[ \operatorname{Disc} (v_1,v_2,\dots) =  \sum_{m=2}^{\infty} \frac{ \abs{v_m}^2}{4m^2} +  O (  \abs{v_1}^2 u_2 + u_2^2) . \]
Applying the completing-the-square-bound $\abs{v_1}^2 u_2 \leq \epsilon \abs{v_1}^4 + \frac{1}{4\epsilon} u_2^2$ for some sufficiently small $\epsilon$, we obtain
\[ \operatorname{Disc} (v_1,v_2,\dots) \leq  \sum_{m=2}^{\infty} \frac{ \abs{v_m}^2}{4m^2}    + \delta_2 \abs{v_1}^4+ O(u_2^2) .\]
Applying the Cauchy-Schwarz bound $u_2^2 \leq (\zeta(2)-1) \sum_{m=2}^{\infty} \abs{v_m}^2$ we obtain
\begin{equation}\label{l-u-final} \operatorname{Disc} (v_1,v_2,\dots) \leq  \delta_2 \abs{v_1}^4 + O (  \sum_{m=2}^{\infty} \abs{v_m}^2)  .\end{equation}
Finally \eqref{l-u-final} implies \eqref{l-u-s} because \eqref{v1-assumption} gives $\abs{v_1}\leq 1$ (for $C_1$ sufficiently large) so $\delta_2 \abs{v_1}^4 = \delta_2 \min ( \abs{v_1}^4, \abs{v_1}^2)$ and \eqref{vm-assumption} gives $ \sum_{m=2}^{\infty} \min ( C_1\abs{v_m}^2, \abs{v_m}/m )= \sum_{m=2}^\infty C_1 \abs{v_m}^2$ which dominates any expression of the form $O (  \sum_{m=2}^{\infty} \abs{v_m}^2)$ as long as $C_1$ is sufficiently large. \end{proof}

\subsection{Complex exponential integrals}

This subsection is devoted to bounding the integral  $\int_0^{2\pi}   e^{ \sum_{m=1}^{\infty}  \ex(m \theta) \cdot v_m/m + i\sum_{m=1}^{\infty}  \ex(m \theta) \cdot w_m/m}$.  We have three different estimates that roughly handle three different ranges for $\abs{w_1}$. When $\abs{w_1}$ is small we will apply Lemma \ref{hybrid-short-range} which is proven using a Taylor series argument. When $\abs{w_1}$ is large we will apply Lemma \ref{hybrid-long-range} which is proven using a stationary phase argument. When $\abs{w_1}$ is intermediate we will apply Lemma \ref{hybrid-medium-range} which is proven using a more elementary argument involving the range of values attained by the function $\sum_{m=1}^{\infty}  \ex(m \theta) \cdot w_m/m$. 

We then multiply the bounds together to obtain bounds for the expectation $\mathbb E [ e^{ \sum_{n=1}^k X_{n,\xi} \cdot v_n  +  i \sum_{n=1}^k X_{n,\xi} \cdot w_n } ] $, with the final bounds relevant to the remainder of the argument contained in Corollary \ref{hybrid-unified}.

\begin{lemma}\label{hybrid-short-range} Fix $\delta_3< \frac{1}{64}$. Let $(v_m)_{m=1}^\infty$ and $(w_m)_{m=1}^\infty$ be sequences of vectors in $\mathbb R^2$. If $ \abs{v_1} + \abs{w_1} + \sum_{m=2}^{\infty} \frac{ \sqrt{ \abs{v_m}^2 + \abs{w_m}^2} }{m} <  \frac{1}{2} $ then 
\begin{equation} \label{eq-hybrid-short-range} \begin{split} & \log  \abs{  \int_0^{2\pi}   e^{ \sum_{m=1}^{\infty}  \ex(m \theta) \cdot v_m/m + i\sum_{m=1}^{\infty}  \ex(m \theta) \cdot w_m/m }     \frac{d\theta}{2\pi}} \\   \leq & \frac{ \abs{v_1}^2}{4} - \frac{ \abs{w_1}^2}{4} - \delta_3 \abs{v_1}^4 + O_{\delta_3}  ( \abs{v_1}^6 + \abs{w_1}^3 + \sum_{m=2}^{\infty} (\abs{v_m}^2+ \abs{w_m}^2 )).\end{split}\end{equation}\end{lemma}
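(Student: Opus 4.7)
The plan is to follow the structure of the proof of \cref{laplace-Taylor}, adapted to accommodate complex (not just real) exponents. Introduce auxiliary variables $x_1, x_2$ and set
\[
f(x_1, x_2) := \int_0^{2\pi} \exp\!\Bigl( x_1\, \ex(\theta) \cdot (v_1 + i w_1) + x_2 \sum_{m=2}^{\infty} \ex(m\theta) \cdot (v_m + i w_m)/m \Bigr) \frac{d\theta}{2\pi}.
\]
The quantity to bound is $\log |f(1,1)| = \operatorname{Re}(\log f(1,1))$, which I would split as $\operatorname{Re}(\log f(1,0)) + \operatorname{Re}(\log f(1,1) - \log f(1,0))$ and treat each piece separately.

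For the pure $m = 1$ piece, use the identity $\int_0^{2\pi} e^{\ex(\theta) \cdot c}\, d\theta/(2\pi) = \sum_{a \geq 0} (c \cdot c)^a / (4^a (a!)^2)$, valid for any $c \in \mathbb{C}^2$, obtained by writing $\ex(\theta) \cdot c = \alpha e^{i\theta} + \beta e^{-i\theta}$ and extracting the constant term. Setting $P = (v_1 + iw_1) \cdot (v_1 + iw_1) / 4$, so that $\operatorname{Re}(P) = (|v_1|^2 - |w_1|^2)/4$ and $\operatorname{Im}(P) = (v_1 \cdot w_1)/2$, expanding the logarithm yields $\log f(1,0) = P - P^2/4 + O(|P|^3)$. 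Taking real parts and bounding $(v_1 \cdot w_1)^2 \leq |v_1|^2 |w_1|^2$ produces a cross term of the form $3|v_1|^2 |w_1|^2 / 32$. I would absorb this using AM-GM: $|v_1|^2 |w_1|^2 \leq \eta |v_1|^4 + |w_1|^4 / (4\eta)$, choosing $\eta$ small enough that $1/64 - 3\eta/32 > \delta_3$; this is possible precisely because $\delta_3 < 1/64$ strictly. The residual $|w_1|^4$ and $|P|^3 = O((|v_1|^2 + |w_1|^2)^3)$ terms fit into $O(|v_1|^6 + |w_1|^3)$ after applying $|w_1| < 1/2$ and Young's inequality to mixed monomials.

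For $\log f(1,1) - \log f(1,0)$, which collects all coefficients of $x_1^{n_1} x_2^{n_2}$ with $n_2 \geq 1$, I would mirror the proof of \cref{laplace-Taylor}. An absolute-value version of \cref{weird-ps-comparison} (whose proof goes through verbatim after bounding each coefficient of $g_2$ in modulus by the corresponding coefficient of $g_1$) yields
\[
\bigl| [x_1^{n_1} x_2^{n_2}] \log f(x_1,x_2) \bigr| \leq [x_1^{n_1} x_2^{n_2}] \bigl( -\log( 2 - e^{r_1 x_1 + u_2 x_2} ) \bigr),
\]
where $r_1 := \sqrt{|v_1|^2 + |w_1|^2}$ and $u_2 := \sum_{m \geq 2} \sqrt{|v_m|^2 + |w_m|^2}/m$. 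The hypothesis gives $r_1 + u_2 < 1/2$, so the right-hand side is well-defined. Splitting the $n_2 \geq 1$ sum into $n_1 + n_2 \leq 3$ (low) and $n_1 + n_2 \geq 4$ (high), the high part contributes $O((r_1 + u_2)^3 u_2)$ by the same integration-in-$b_2$ trick used in \cref{laplace-Taylor}, while each low-order term has size at most $O(r_1^{n_1} u_2^{n_2})$ with $n_2 \geq 1$. Every resulting error is a product of $r_1$ and $u_2$ powers, and is absorbed using the Cauchy-Schwarz inequality $u_2^2 \leq (\zeta(2) - 1)\sum_{m \geq 2}(|v_m|^2 + |w_m|^2)$ together with AM-GM to exchange $r_1^{2k}$ for a small multiple of $|v_1|^4$ plus $O(|w_1|^3)$.

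The main obstacle is the tightness of the bound $\delta_3 < 1/64$: the quadratic computation in the $m=1$ step already produces a $|v_1|^2 |w_1|^2$ term that competes with $-|v_1|^4/64$, and the high-frequency contributions deliver further $r_1^4 = (|v_1|^2 + |w_1|^2)^2$-type terms that also erode the budget. One must allocate the AM-GM parameters across all these sources so that the total drain on $|v_1|^4$ stays strictly below $1/64 - \delta_3$, at the cost of the implied constants blowing up as $\delta_3 \to 1/64$.
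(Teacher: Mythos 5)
Your approach is genuinely different from the paper's: you isolate the $m=1$ contribution with the closed-form identity $\int_0^{2\pi} e^{\ex(\theta)\cdot c}\,d\theta/(2\pi) = \sum_{a\geq 0}(c\cdot c)^a/(4^a(a!)^2)$, whereas the paper embeds everything in a single $\lambda$-expansion with $v_1$ at weight $\lambda$, $w_1$ at weight $\lambda^2$, and the tail at weight $\lambda^3$, chosen so that all the stray terms either land at $\lambda$-degree $\geq 6$ (giving exactly the error $O(|v_1|^6 + |w_1|^3 + W^2)$ via Taylor's theorem for $-\log(2-e^{x+y^2+z^3})$) or at $\lambda^5$ (the single cross term $\frac{(v_1\times v_1)\cdot v_2}{16}$ handled by AM-GM). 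Your $m=1$ closed form is an attractive shortcut: it produces the main term $\frac{|v_1|^2-|w_1|^2}{4}$, the $-\frac{|v_1|^4}{64}$ negative quartic, and the $\frac{3|v_1|^2|w_1|^2}{32}$ cross term in one line, and your AM-GM budget for that cross term is sound because $\delta_3 < \frac{1}{64}$ is strict.

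However, there is a gap in the $m\geq 2$ piece. You bound each low-order coefficient of $\log f$ with $n_1+n_2\leq 3$, $n_2\geq 1$ by $O(r_1^{n_1}u_2^{n_2})$ via the modulus version of \cref{weird-ps-comparison}, and then propose to absorb everything using $u_2^2 \leq (\zeta(2)-1)\sum_{m\geq 2}(|v_m|^2+|w_m|^2)$ and AM-GM. But that absorption scheme requires at least a square of $u_2$. The cases $(n_1,n_2) = (0,1)$ and $(1,1)$ give $O(u_2)$ and $O(r_1 u_2)$, both \emph{linear} in $u_2$, and a linear-in-$u_2$ error cannot be absorbed into the target $O(\sum_{m\geq 2}(|v_m|^2+|w_m|^2))$: take all $v_m, w_m$ zero for $m\geq 2$ except $|v_2|=\epsilon$, so $u_2 \asymp \epsilon$ while $\sum_{m\geq 2}(|v_m|^2+|w_m|^2) \asymp \epsilon^2$, and $\epsilon \not\ll \epsilon^2$. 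What saves you is that these two coefficients of $\log f$ are in fact exactly zero by Fourier orthogonality, because $A := \ex(\theta)\cdot(v_1+iw_1)$ carries only frequencies $\pm 1$ while $B := \sum_{m\geq 2}\ex(m\theta)\cdot(v_m+iw_m)/m$ carries only frequencies $|m|\geq 2$, so $\int_0^{2\pi} B\,\frac{d\theta}{2\pi} = \int_0^{2\pi} AB\,\frac{d\theta}{2\pi} = 0$ and the subtraction terms in $\log$ likewise vanish. The comparison lemma deliberately throws this structure away; you must reinstate it by evaluating $(0,1)$ and $(1,1)$ explicitly (as the paper implicitly does when it writes out the $\lambda^3$ and $\lambda^4$ coefficients). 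Once that is done, the lowest nonvanishing low-order terms are $[x_2^2]$ of size $O(\sum_{m\geq 2}|c_m|^2/m^2)$ and $[x_1^2 x_2]$ of size $O(r_1^2 |c_2|)$ (the analogue of the paper's $\frac{(v_1\times v_1)\cdot v_2}{16}$), and your Cauchy-Schwarz plus AM-GM plan goes through.
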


\begin{proof}  Let $W =  \sum_{m=2}^{\infty} \frac{ \sqrt{ \abs{v_m}^2 + \abs{w_m}^2} }{m} $. For any $\theta$,  
\[e^{\lambda \ex(m \theta) \cdot v_1 + i\lambda^2 \ex(\theta) \cdot w_1+ \lambda^3 \sum_{m=1}^{\infty}  \ex(m \theta) \cdot v_m/m + i\lambda^3 \sum_{m=1}^{\infty}  \ex(m \theta) \cdot w_m/m }\]  is a power series in $\lambda$ whose coefficient of $\lambda^n$ is bounded by the coefficient of $\lambda^n$ in  \begin{equation}\label{hsr-exp} e^{\lambda \abs{v_1} + \lambda^2 \abs{w_1} + \lambda^3 W}. \end{equation}

Hence the coefficient of $\lambda^n$ in \begin{equation}\label{taylor-hybrid-unlogged} \int_0^{2\pi} e^{\lambda \ex(m \theta) \cdot v_1 + i\lambda^2 \ex(\theta) \cdot w_1+ \lambda^3 \sum_{m=1}^{\infty}  \ex(m \theta) \cdot v_m/m + i\lambda^3 \sum_{m=1}^{\infty}  \ex(m \theta) \cdot w_m/m }\frac{d\theta}{2\pi} \end{equation} is also bounded by the coefficient of $\lambda^n$ in \eqref{hsr-exp}.

By Lemma \ref{weird-ps-comparison}, the coefficient of $\lambda^n$ in \begin{equation}\label{Taylor-hybrid-ps}  \log e^{\lambda \ex(m \theta) \cdot v_1 + i\lambda^2 \ex(\theta) \cdot w_1+ \lambda^3 \sum_{m=1}^{\infty}  \ex(m \theta) \cdot v_m/m + i\lambda^3 \sum_{m=1}^{\infty}  \ex(m \theta) \cdot w_m/m } \frac{d\theta}{2\pi} \end{equation}  is bounded by the coefficient of $\lambda^n$ in the power series  \[  - \log (2 - e^{\lambda \abs{v_1}+ \lambda^2 \abs{w_1} + \lambda^3 W}).\]  We now observe that Taylor's theorem applied to the function $- \log ( 2 -e^{ x +y^2+z^3 })$ implies that the sum of all terms of degree $\geq 6$ appearing in the power series for $- \log ( 2 -e^{ x +y^2+z^3 })$ is $O ( x^6+y^6+z^6)$ uniformly for $x,y,z\geq 0$ such that $x+y^2+z^3 \leq 1/2$. Indeed such $x,y,z$ lie in a compact region where the function is smooth so all derivatives are bounded. Plugging in $x=\lambda  \abs{v_1}, y =  \lambda\abs{w_1}^{\frac{1}{2}}, z =  \lambda W^{\frac{1}{3}}$ and then setting $\lambda=1$, we obtain the sum of the coefficients of $\lambda^n$ in \eqref{Taylor-hybrid-ps} for $n$ from $6$ to $\infty$. Hence the sum of the coefficient of $\lambda^n$ in \eqref{Taylor-hybrid-ps} for $n$ from $6$ to $\infty$ is \[ O (  \abs{v_1}^6 + \abs{w_1}^3 + W^2) = O ( \abs{v_1}^6 + \abs{w_1}^3 + \sum_{m=2}^{\infty} (\abs{v_m}^2+ \abs{w_m}^2 )). \]  

On the other hand, the coefficients of $\lambda, \lambda^2, \lambda^3,\lambda^4, \lambda^5$ in \eqref{taylor-hybrid-unlogged} are, respectively,
\[ 0\]
\[  \frac{\abs{v_1}^2}{4} \]
\[   i \frac{ v_1 \cdot w_1}{2}\]
\[   \frac{ \abs{v_1}^4}{64}  - \frac{\abs{w_1}^2}{4} \]
\[  \frac{ (v_1 \times v_1) \cdot v_2}{ 16}+ i \frac{ (v_1 \times v_1) \cdot w_2}{16} + i \frac{ \abs{v_1}^2 v_1 \cdot w_1}{16} .\]
Taking logarithms, the coefficients of $\lambda, \lambda^2, \lambda^3, \lambda^4, \lambda^5$ in \eqref{Taylor-hybrid-ps} are identical except the coefficient of $\lambda^4$ is
\[  - \frac{ \abs{v_1}^4}{64}  - \frac{\abs{w_1}^2}{4} .\]

Plugging $\lambda=1$ into \eqref{Taylor-hybrid-ps} and taking the real part, this gives
\[  \log  \abs{  \int_0^{2\pi}   e^{ \sum_{m=1}^{\infty}  \ex(m \theta) \cdot v_m/m + i\sum_{m=1}^{\infty}  \ex(m \theta) \cdot w_m/m }     \frac{d\theta}{2\pi}} \] \[= \frac{ \abs{v_1}^2}{4} - \frac{\abs{w_1}^2}{4} - \frac{ \abs{v_1}^4}{64} + \frac{ (v_1 \times v_1) \cdot v_2}{16}  +O (\abs{v_1}^6 + \abs{w_1}^3 + \sum_{m=2}^{\infty} (\abs{v_m}^2+ \abs{w_m}^2 )).\]
which is exactly the desired bound \eqref{eq-hybrid-short-range} except for the term $ \frac{ (v_1 \times v_1) \cdot v_2}{16} $, which can be controlled by observing that
\[  \abs{ \frac{ (v_1 \times v_1) \cdot v_2}{16} }\leq \frac{ \abs{v_1}^2 \abs{v_2}}{16} \leq \left( \frac{1}{64}-\delta_3\right) \abs{v_1}^4 +  \frac{ \frac{1}{1024}}{ \frac{1}{64} - \delta_3 } \abs{v_2}^2=\left( \frac{1}{64}-\delta_3\right) \abs{v_1}^4 + O_{\delta_3}( \abs{v_2}^2) .  \]  \end{proof}

\begin{lemma}\label{hybrid-medium-range}   Let $(v_m)_{m=1}^\infty$ and $(w_m)_{m=1}^\infty$ be sequences of vectors in $\mathbb R^2$. If $\sum_{m=1}^\infty \abs{v_m}/m<\infty$ and $\sum_{m=1}^{\infty} \abs{w_m}^2 < \infty$ then
\[\frac{ \abs{  \int_0^{2\pi}   e^{ \sum_{m=1}^{\infty}  \ex(m \theta) \cdot v_m/m + i\sum_{m=1}^{\infty}  \ex(m \theta) \cdot w_m/m }     \frac{d\theta}{2\pi}} }{\abs{  \int_0^{2\pi}   e^{ \sum_{m=1}^{\infty}  \ex(m \theta) \cdot v_m/m }     \frac{d\theta}{2\pi}} } \] \[ \leq 1- e^{ -2 \sum_{m=1}^{\infty} \abs{v_m}/m}  \min \left(  \sum_{m=1}^{\infty} \frac{\abs{w_m}^2}{\pi^2 m^2}, \frac{1}{ 2\sum_{m=1}^{\infty} \abs{w_m}^2} \right).\] \end{lemma}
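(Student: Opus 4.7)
My plan has four main steps.

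First, I would reduce the problem to the case $v_m=0$ (uniform measure) by a mixture decomposition. Let $A(\theta)=\sum_m \ex(m\theta)\cdot v_m/m$, $S=\sum_m|v_m|/m$, and let $Z=\int e^{A}\,d\theta/(2\pi)$. The hypotheses $e^{-S}\le e^{A(\theta)}\le e^S$ and $e^{-S}\le Z\le e^S$ give that the probability density $\mu(\theta)=e^{A(\theta)}/Z$ satisfies $\mu(\theta)\ge e^{-2S}$ pointwise relative to the uniform measure. Consequently $\mu=e^{-2S}\mu_{\text{unif}}+(1-e^{-2S})\nu$ for some probability measure $\nu$, and the triangle inequality applied to $E_\mu[e^{iB}]=e^{-2S}E_{\text{unif}}[e^{iB}]+(1-e^{-2S})E_\nu[e^{iB}]$ yields $1-|E_\mu[e^{iB}]|\ge e^{-2S}(1-|E_{\text{unif}}[e^{iB}]|)$. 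Since the LHS of the lemma is $|E_\mu[e^{iB}]|$, this factors out the $e^{-2S}$, reducing to showing $1-|J|\ge M$ for $J=\int_0^{2\pi}e^{iB(\theta)}\,d\theta/(2\pi)$ and $M=\min(\sum|w_m|^2/(\pi^2m^2),\,1/(2\sum|w_m|^2))$.

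Second, I would use the identity
\[
1-|J|\ge \tfrac{1}{2}(1-|J|^2)=\int_0^{2\pi}\!\!\int_0^{2\pi}\sin^2\!\Bigl(\tfrac{B(\theta)-B(\theta')}{2}\Bigr)\frac{d\theta\,d\theta'}{(2\pi)^2},
\]
obtained by expanding $|J|^2$ and taking real parts (the imaginary parts cancel by antisymmetry). It then suffices to lower bound this double integral by $M$.

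Third, I would establish each of the two bounds in $M$ separately. For the bound $V/\pi^2$ where $V=\sum|w_m|^2/m^2$: use the elementary inequality $\sin^2(y/2)\ge y^2/\pi^2$ for $|y|\le\pi$. Applied with $y=B(\theta)-B(\theta')$, Parseval for the sinusoidal expansion of $B$ gives $\int\!\!\int(B-B')^2\,d\theta\,d\theta'/(2\pi)^2=V$, so if one can control the complement $\{|B-B'|>\pi\}$ — for instance using that the range of $B$ is bounded by $2\sum|w_m|/m\le (2\pi/\sqrt6)\sqrt{\sum|w_m|^2}$, which in the regime where $V/\pi^2$ is the min is small enough — the bound drops out. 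For the bound $1/(2W^2)$ where $W^2=\sum|w_m|^2$: expand $e^{iB(\theta)}=\sum_n c_n e^{in\theta}$, so $J=c_0$ and $1-|J|^2=\sum_{n\ne 0}|c_n|^2$. The key identity, obtained by Parseval applied to $(e^{iB})'=iB'e^{iB}$, is $\sum_n n^2|c_n|^2=\|B'\|_2^2=W^2/2$, together with $\sum_n|c_n|^2=1$. Combined with the "range of values" argument hinted at in the paper — loosely, that the arc traced by $e^{iB(\theta)}$ must wrap around the unit circle enough that $c_0$ cannot be too close to $1$ — this yields $\sum_{n\ne 0}|c_n|^2\ge 1/W^2$ in the regime where this bound is the smaller of the two.

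The main obstacle will be the second bound. The $V/\pi^2$ estimate is a fairly standard Taylor-style argument once one controls the set where $|B-B'|>\pi$; but the $1/(2W^2)$ estimate is more delicate because $|c_0|$ can be close to $1$ even for functions of large Dirichlet energy $\|B'\|_2^2$. I expect the key will be to combine the Parseval identity $\sum n^2|c_n|^2=W^2/2$ with a Cauchy–Schwarz or pigeonhole argument exploiting that the mass of $|c_n|^2$ cannot all sit at $n=0$ while simultaneously carrying second moment $W^2/2$ — equivalently, using the geometric fact that the curve $\theta\mapsto e^{iB(\theta)}$ cannot be close to a single point of the unit circle if its arclength $\int|B'|\,d\theta$ is large, which is the precise probabilistic content of the "range of values" remark.
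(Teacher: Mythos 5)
Your reduction to the uniform-measure case via the mixture decomposition $\mu=e^{-2S}\mu_{\mathrm{unif}}+(1-e^{-2S})\nu$ is correct and cleaner than the paper's handling (the paper keeps the weight $e^{A}$ and bounds $e^{A}\geq e^{-S}$ pointwise together with $\int e^{A}\leq e^{S}$; both routes give the same factor $e^{-2S}$). The identity $1-|J|\geq\frac12(1-|J|^2)=\iint\sin^2\bigl(\frac{B(\theta)-B(\theta')}{2}\bigr)$ is also a fine starting point. But the two main estimates as you sketch them have genuine gaps.

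For the $V/\pi^2$ bound: the inequality $\sin^2(y/2)\geq y^2/\pi^2$ is \emph{false} for $|y|>\pi$ (indeed $\sin^2(y/2)\to 0$ as $|y|\to 2\pi$), so you cannot simply discard the set $\{|B(\theta)-B(\theta')|>\pi\}$ — on that set the pointwise inequality goes the wrong way, and you would need to show its contribution to $\iint(B-B')^2/\pi^2$ is dominated. Your proposed rescue is that the range of $B$ is at most $2\sum|w_m|/m\leq(2\pi/\sqrt6)W$ and that this is at most $\pi$ when $V/\pi^2$ is the min; but the hypothesis $2VW^2\leq\pi^2$ does not control $W$, since $V=\sum|w_m|^2/m^2$ can be much smaller than $W^2=\sum|w_m|^2$ when the mass sits at large $m$. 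Taking $|w_m|=c/m$ for $m\leq M$ with $c=1$ gives $2VW^2\approx 2\zeta(2)\zeta(4)\approx 3.6<\pi^2$, yet $2\sum|w_m|/m\approx 2\zeta(2)\approx 3.3>\pi$, so the crude range bound does not close the gap. The paper avoids this entirely by comparing $B(\theta)$ against the fixed phase $\phi=\arg J$ instead of against $B(\theta')$: in its Case 2, $B-\phi$ stays in $(-\pi,\pi)$ after a $2\pi$-shift, and $\cos t\leq 1-2t^2/\pi^2$ on that interval suffices. This asymmetric comparison tolerates a range of $B$ up to $2\pi$, which is what actually occurs, and the case hypothesis ``$e^{iB(\theta)}\neq -e^{i\phi}$ for all $\theta$'' is precisely the one needed to apply it.

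For the $1/(2W^2)$ bound, the proposed Fourier argument cannot work as stated. The two moment constraints $\sum_n|c_n|^2=1$ and $\sum_n n^2|c_n|^2=W^2/2$ do not force $1-|c_0|^2$ to be bounded below by anything involving $W$: one can take $|c_0|^2=1-\varepsilon$ and put the remaining mass $\varepsilon$ at $n=\pm N$, satisfying $N^2\varepsilon=W^2/2$ with $\varepsilon=W^2/(2N^2)\to 0$. So ``the mass cannot all sit at $n=0$ while carrying second moment $W^2/2$'' is not true in the quantitative sense you need. The geometric input in the paper's Case 1 is of a different kind: given $\theta_0$ with $B(\theta_0)\equiv\phi+\pi$, the interval $I$ around $\theta_0$ on which $B$ stays within $\pi/2$ of $B(\theta_0)$ satisfies $\pi\leq\int_I|B'|\leq\sqrt{|I|\cdot\int|B'|^2}$, hence $|I|\geq\pi/W^2$; and on $I$ one has $\cos(B-\phi)\leq 0$, losing a definite fraction of the integral. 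This is a local pointwise argument about an explicit bad interval, not a statement extractable from low-order Fourier moments of $e^{iB}$; you would need to rediscover it.

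In short, the reduction in Step 1 is a genuine and mild improvement, but the two-bound proof needs the paper's case split (on whether the curve $e^{iB(\theta)}$ hits $-e^{i\phi}$) rather than a case split on which term of the min is smaller, and it needs the explicit bad-interval/Cauchy--Schwarz argument for the $1/(2W^2)$ bound rather than a moment argument on Fourier coefficients.
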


\begin{proof} Let $\phi$ be the argument of $\int_0^{2\pi}   e^{ \sum_{m=1}^{\infty}  \ex(m \theta) \cdot v_m/m + i\sum_{m=1}^{\infty}  \ex(m \theta) \cdot w_m/m }     \frac{d\theta}{2\pi}$. We have
\[  \abs{  \int_0^{2\pi}   e^{ \sum_{m=1}^{\infty}  \ex(m \theta) \cdot v_m/m + i\sum_{m=1}^{\infty}  \ex(m \theta) \cdot w_m/m } \frac{d\theta}{2\pi}}\] \[= \Re \int_0^{2\pi} e^{ -i \phi + \sum_{m=1}^{\infty}  \ex(m \theta) \cdot v_m/m + i\sum_{m=1}^{\infty}  \ex(m \theta) \cdot w_m/m }     \frac{d\theta}{2\pi} \]
\[ = \int_0^{2\pi} e^{ \sum_{m=1}^{\infty} \ex(m\theta) \cdot v_m/m} \cos\Bigl( -\phi + \sum_{m=1}^{\infty} \ex(m\theta) \cdot w_m /m \Bigr) .\]

Using the bound \[ e^{ \sum_{m=1}^{\infty}  \ex(m \theta) \cdot v_m/m}  \in [e^{ - \sum_{m=1}^{\infty} \abs{v_m}/ m}, e^{ \sum_{m=1}^{\infty}  \abs{v_m} /m } ]\] valid for all $\theta$, we obtain in particular
\[ \abs{  \int_0^{2\pi}   e^{ \sum_{m=1}^{\infty}  \ex(m \theta) \cdot v_m/m }     \frac{d\theta}{2\pi}} \leq  e^{ \sum_{m=1}^{\infty}  \abs{v_m} /m } \]
so that it suffices to prove
\begin{equation}\label{hmr-suffices} \begin{split} &  \int_0^{2\pi} e^{ \sum_{m=1}^{\infty} \ex(m\theta) \cdot v_m/m} \cos\Bigl ( -\phi + \sum_{m=1}^{\infty} \ex(m\theta) \cdot w_m /m\Bigr ) \\ &  \leq \int_0^{2\pi}   e^{ \sum_{m=1}^{\infty}  \ex(m \theta) \cdot v_m/m }     \frac{d\theta}{2\pi} -  e^{ - \sum_{m=1}^{\infty} \abs{v_m}/m} \min \Bigl(  \sum_{m=1}^{\infty} \frac{\abs{w_m}^2}{\pi^2 m^2}, \frac{1}{ 2\sum_{m=1}^{\infty} \abs{w_m}^2} \Bigr)\end{split} \end{equation}

We split into two cases depending on whether $ e^{i \sum_{m=1}^{\infty}  \ex(m \theta) \cdot w_m/m} = - e^{i \phi}$ for some $\theta$ or not.

First, suppose $e^{i \sum_{m=1}^{\infty}  \ex(m \theta_0) \cdot w_m/m} = - e^{i \phi}$ for some $\theta_0$. Let $x =\sum_{m=1}^{\infty}  \ex(m \theta_0) \cdot w_m/m$ and let $I$ be the longest interval around $\theta_0$ on which \[ \sum_{m=1}^{\infty}  \ex(m \theta) \cdot w_m/m \in [x-\pi/2, x+\pi/2].\] Then on the boundary of $I$, we have $\sum_{m=1}^{\infty}  \ex(m \theta) \cdot w_m/m = x \pm \pi/2$ while at $\theta_0 \in I$ it takes the value $x$ so
\[ \pi \leq \int_I  \abs{ \frac{d}{d\theta} \sum_{m=1}^{\infty}  \ex(m \theta) \cdot w_m/m } d \theta = \int_I  \abs{ \sum_{m=1}^{\infty}  \ex(m \theta+ \frac{\pi}{2}) \cdot w_m } d \theta 
\] \[ \leq  \sqrt{ \abs{I} \int_0^{2\pi}  \abs{ \sum_{m=1}^{\infty}  \ex(m \theta+ \frac{\pi}{2})  \cdot w_m }^2 d \theta } = \sqrt{ \abs{I}  \pi \sum_{m=1}^{\infty} \abs{w_m}^2} \]
 so $\abs{I} \geq \frac{ \pi} { \sum_{m=1}^{\infty} \abs{w_m}^2} $ but for each $\theta \in I$ we have \[\cos\Bigl ( -\phi + \sum_{m=1}^{\infty} \ex(m\theta) \cdot w_m /m\Bigr )\leq 0\] so
 \[  \int_0^{2\pi} e^{ \sum_{m=1}^{\infty} \ex(m\theta) \cdot v_m/m} \cos\Bigl ( -\phi + \sum_{m=1}^{\infty} \ex(m\theta) \cdot w_m /m\Bigr )\]
 \[ \leq \int_0^{2\pi} e^{ \sum_{m=1}^{\infty}  \ex(m \theta) \cdot v_m/m }     \frac{d\theta}{2\pi} - \int_Ie^{ \sum_{m=1}^{\infty}  \ex(m \theta) \cdot v_m/m }     \frac{d\theta}{2\pi}\]
 \[ \leq  \int_0^{2\pi} e^{ \sum_{m=1}^{\infty}  \ex(m \theta) \cdot v_m/m }     \frac{d\theta}{2\pi} - \frac{\abs{I} }{2\pi} e^{ - \sum_{m=1}^{\infty} \abs{v_m}/m} \]
 \[ \leq  \int_0^{2\pi} e^{ \sum_{m=1}^{\infty}  \ex(m \theta) \cdot v_m/m }     \frac{d\theta}{2\pi} - \frac{1 }{2 \sum_{m=1}^{\infty} \abs{w_m}^2 } e^{ - \sum_{m=1}^{\infty} \abs{v_m}/m} \]
giving \eqref{hmr-suffices}.
 
 Next suppose that $e^{\sum_{m=1}^{\infty}  \ex(m \theta) \cdot w_m/m} \neq  - e^{i \phi}$ for any $\theta$, or in other words $\sum_{m=1}^{\infty}  \ex(m \theta) \cdot w_m/m \neq \phi + \pi n$ for any odd integer $n$. After shifting $\phi$ by an even integer multiple of $2\pi$, we may assume $\sum_{m=1}^{\infty}  \ex(m \theta) \cdot w_m/m \in (\phi-\pi, \phi+\pi)$ for all $\theta$. The simple trigonometric inequality $\cos \theta\leq   1 - \frac{2 \theta^2}{\pi^2}$ for $\theta \in (-\pi,\pi)$ gives
  \[  \int_0^{2\pi} e^{ \sum_{m=1}^{\infty} \ex(m\theta) \cdot v_m/m} \cos\Bigl ( -\phi + \sum_{m=1}^{\infty} \ex(m\theta) \cdot w_m /m\Bigr )\]
 \[\leq \int_0^{2 \pi} e^{ \sum_{m=1}^{\infty} \ex(m \theta) \cdot v_m/m}  \left(1 - \frac{2}{\pi^2} \left( \phi - \sum_{m=1}^{\infty}  \ex(m \theta) \cdot w_m/m \right)^2 \right) \frac{ d \theta}{2\pi} \]
  \[ = \int_0^{2\pi} e^{ \sum_{m=1}^{\infty} \ex(m \theta) \cdot v_m/m}  \frac{d\theta}{2\pi}  - \frac{2}{\pi^2} \int_0^{2\pi}  e^{ \sum_{m=1}^{\infty} \ex(m \theta) \cdot v_m/m}\left( \phi - \sum_{m=1}^{\infty}  \ex(m \theta) \cdot w_m/m \right)^2 \frac{d\theta}{2\pi}\]
  \[ \leq \int_0^{2\pi} e^{ \sum_{m=1}^{\infty} \ex(m \theta) \cdot v_m/m}  \frac{d\theta}{2\pi} - e^{ -\sum_{m=1}^{\infty} \abs{v_m}/m} \int_0^{2\pi}\left( \phi - \sum_{m=1}^{\infty}  \ex(m \theta) \cdot w_m/m \right)^2 \frac{d\theta}{2\pi}\]
 \[ = \int_0^{2\pi} e^{ \sum_{m=1}^{\infty} \ex(m \theta) \cdot v_m/m}  \frac{d\theta}{2\pi} - e^{ -\sum_{m=1}^{\infty} \abs{v_m}/m} \frac{1}{\pi^2} \left( \sum_{m=1}^{\infty} \frac{\abs{w_m}^2}{m^2} + \phi^2\right)\] \[ \leq \int_0^{2\pi} e^{ \sum_{m=1}^{\infty} \ex(m \theta) \cdot v_m/m}  \frac{d\theta}{2\pi} - e^{ -\sum_{m=1}^{\infty} \abs{v_m}/m}  \frac{1}{\pi^2} \sum_{m=1}^{\infty}  \frac{\abs{w_m}^2}{m^2}\]  
 giving \eqref{hmr-suffices}. \end{proof}

\begin{lemma}\label{hybrid-long-range}  Let $(v_m)_{m=1}^\infty$ and $(w_m)_{m=1}^\infty$ be sequences of vectors in $\mathbb R^2$. If $\sum_{m=1}^\infty  \abs{v_m}^2 + \sum_{m=1}^{\infty} \abs{w_m}^2 < \infty$ then
\begin{equation}\label{eq-hybrid-long-range} \begin{split} & \frac{ \abs{  \int_0^{2\pi}   e^{ \sum_{m=1}^{\infty}  \ex(m \theta) \cdot v_m/m + i\sum_{m=1}^{\infty}  \ex(m \theta) \cdot w_m/m }     \frac{d\theta}{2\pi}}}{ \sup_{\theta \in [0,2\pi]}   e^{ \sum_{m=1}^{\infty}  \ex(m \theta) \cdot v_m/m } }\\ \leq & \frac{1}{\sqrt{\abs{w_1}}} \left( \frac{4}{\pi} + 1 +   \frac{ \sqrt{ \sum_{m=1}^{\infty} \abs{v_m}^2 +  \sum_{m=2}^{\infty} \abs{w_m}^2}}{ \sqrt{\pi} \abs{w_1}^{1/4}} \right).\end{split} \end{equation}
\end{lemma}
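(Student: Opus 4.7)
The plan is a van der Corput / stationary-phase estimate. After rotating coordinates so that $w_1 = (|w_1|, 0)$, the phase $\ex(\theta)\cdot w_1 = |w_1|\cos\theta$ has exactly two stationary points on $[0, 2\pi)$, at $\theta = 0$ and $\theta = \pi$. I would write the integrand as $G(\theta)\,e^{i|w_1|\cos\theta}$ with $G(\theta) = \exp(h(\theta) + ig(\theta))$, where $h(\theta) = \sum_{m\geq 1} \ex(m\theta)\cdot v_m/m$ and $g(\theta) = \sum_{m\geq 2} \ex(m\theta)\cdot w_m/m$; then $|G(\theta)| \leq M := \sup_\theta e^{h(\theta)}$, which is exactly the quantity in the denominator of the left-hand side of \eqref{eq-hybrid-long-range}.

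Fix a parameter $\delta \in (0, \pi/2)$ (to be chosen of order $|w_1|^{-1/2}$) and split $[0, 2\pi) = A_\delta \sqcup B_\delta$ with $A_\delta = (-\delta, \delta) \cup (\pi-\delta, \pi+\delta)$ a neighborhood of the stationary points. On $A_\delta$, the trivial bound $|Ge^{i\Phi}|\leq M$ yields a contribution of at most $2\delta M/\pi$. On $B_\delta$, I would integrate by parts using $e^{i|w_1|\cos\theta} = (-i|w_1|\sin\theta)^{-1}\frac{d}{d\theta} e^{i|w_1|\cos\theta}$, producing (i) a boundary term at the four endpoints of $B_\delta$, each with $|\sin| = \sin\delta$, contributing at most $\frac{2M}{\pi|w_1|\sin\delta}$, and (ii) an interior term equal to the integral against $e^{i|w_1|\cos\theta}$ of $\frac{d}{d\theta}\bigl(G/(-i|w_1|\sin\theta)\bigr)$.

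The product rule splits the interior term into a piece containing $G(\theta)\cos\theta/\sin^2\theta$, handled by the closed-form evaluation $\int_{B_\delta} |\cos\theta|/\sin^2\theta\,d\theta = 4(1-\sin\delta)/\sin\delta$, and a piece containing $G'(\theta)/\sin\theta$. For the latter I would use $|G'| \leq M\sqrt{(h')^2 + (g')^2}$ followed by Cauchy--Schwarz, combining the exact computation $\int_{B_\delta} \sin^{-2}\theta\,d\theta = 4\cot\delta$ with the Parseval identities $\int_0^{2\pi}(h')^2\,d\theta = \pi \sum_m |v_m|^2$ and $\int_0^{2\pi}(g')^2\,d\theta = \pi \sum_{m\geq 2} |w_m|^2$ (valid because $h'$ and $g'$ are real trigonometric polynomials with the indicated squared coefficients). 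This produces a bound of order $M\sqrt{\cot\delta\cdot\Sigma}/|w_1|$, where $\Sigma := \sum_m |v_m|^2 + \sum_{m\geq 2} |w_m|^2$.

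Collecting the three contributions and dividing by $M$ gives an inequality of the shape $\frac{2\delta}{\pi} + \frac{c_1}{|w_1|\sin\delta} + \frac{c_2 \sqrt{\cot\delta\cdot\Sigma}}{|w_1|}$. Choosing $\delta = \sqrt{\pi/|w_1|}$ and using $\sin\delta \geq 2\delta/\pi$ balances the first two terms at $O(1/\sqrt{|w_1|})$ and forces the third into an $O(|w_1|^{-3/4})\sqrt{\Sigma}$ contribution, which is precisely the form of \eqref{eq-hybrid-long-range}. For $|w_1|$ so small that this choice of $\delta$ would exceed $\pi/2$, the claim is immediate from $|I|/M \leq 1$ being dominated by the right-hand side. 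The main obstacle is not analytic but numerical: extracting the specific constants $4/\pi + 1$ and $1/\sqrt{\pi}$ requires care in the choice of $\delta$ and in how much slack one accepts in the Cauchy--Schwarz and $\sin\delta \leq \delta$ steps, but these constants are consistent with---indeed slightly weaker than---what the integration-by-parts approach produces.
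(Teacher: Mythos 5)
Your proposal follows essentially the same stationary-phase/integration-by-parts route as the paper: rotate so $w_1=(\abs{w_1},0)$, excise $O(\abs{w_1}^{-1/2})$-neighborhoods of the two stationary points of $\cos\theta$, integrate by parts on the complement, bound the boundary terms and the $\cos\theta/\sin^2\theta$ piece explicitly, and control the $\mathcal F'(\theta)/\sin\theta$ piece via Cauchy--Schwarz together with the exact $L^2$ computation of $(\log\mathcal F)'$. The only cosmetic difference is that the paper routes the integration by parts through the substitution $c=\cos\theta$ and reverses it, whereas you perform it directly in $\theta$; the decomposition, the choice $\delta\asymp\abs{w_1}^{-1/2}$, and the resulting constants coincide.
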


 \begin{proof} By shifting $\theta$ we may assume $w_1 = (\abs{w_1},0)$.  Let 
\[ \mathcal F( \theta) = e^{ \sum_{m=1}^{\infty}  \ex(m \theta) \cdot v_m/m + i\sum_{m=2}^{\infty}  \ex(m \theta) \cdot w_m/m }  \] so that
\[  e^{ \sum_{m=1}^{\infty}  \ex(m \theta) \cdot v_m/m + i\sum_{m=1}^{\infty}  \ex(m \theta) \cdot w_m/m }  = e^{ i \abs{w_1} \cos \theta } \mathcal F(\theta)\]  and thus the integral to bound in \eqref{eq-hybrid-long-range} is $ \int_0^{2\pi}    e^{ i \abs{w_1} \cos \theta } \mathcal F(\theta)  \frac{d\theta}{2\pi}$. 

We now informally explain the strategy of proof: handle this integral by the method of stationary phase. Ignoring $\mathcal F$, one standard form of the stationary phase method is to change variables from $\theta$ to $\cos \theta$, apply integration by parts, and then reverse the change of variables, giving an integral where the derivative $\abs{w_1} \sin \theta$ appears in the denominator. Before doing this, we remove from the integral and handle separately the region where $\sin \theta$ is so small that this gives a worse bound. Since our desired bound \eqref{eq-hybrid-long-range} shrinks as $\abs{w_1}$ grows but grows in the other variables, we should think of $\abs{w_1}$ as large and the other variables as small. In other words, we think of $\mathcal F(\theta)$ as varying more slowly than $e^{ i \abs{w_1} \cos \theta }$. Because of this, we do not need to modify our change of variables strategy to account for $F$ (which would give a better bound but with a considerably more complicated formula accounting for multiple potential critical points of $\sum_{m=1}^{\infty} \ex(m\theta) \cdot w_m/m$). 
 
 
 We first handle the integral from $0$ to $\pi$.  For each $\delta$ with $0< \delta < \pi/2$ we have
\begin{equation}\label{cap-removal} \left| \int_0^{\pi}    e^{ i \abs{w_1} \cos \theta } \mathcal F(\theta) \frac{d\theta}{2\pi}  -  \int_{\delta}^{\pi-\delta }  e^{ i \abs{w_1} \cos \theta } \mathcal F(\theta) \frac{d\theta}{2\pi} \right| \leq  \frac{2 \delta}{2\pi} \norm{\mathcal F}_\infty \end{equation}
 and the change of variables $c = \cos \theta$ followed by integration by parts gives
 \begin{equation}\label{integration-by-parts} \begin{split}  \int_{\delta}^{\pi-\delta }  &e^{ i \abs{w_1} \cos \theta } \mathcal F(\theta) \frac{d\theta}{2\pi} \\
 =  \int_{- \cos \delta }^{\cos \delta  } &e^{ i \abs{w_1} c } \mathcal F(\arccos c )   \frac{1}{ 2 \pi \sqrt{1-c^2}}  dc \\
  =  - \int_{- \cos \delta }^{\cos \delta  }  &  \frac{ e^{ i \ \abs{w_1}c}}{ i\abs{ w_1} }  \frac{d}{dc} \left ( \mathcal F(\arccos c)    \frac{1}{ 2 \pi \sqrt{1-c^2}} \right)  dc  + \frac{ e^{ i \abs{w_1}c}}{ i\abs{ w_1} }   \mathcal F(\arccos c)   \frac{1}{ 2 \pi \sqrt{1-c^2}}  ]_{-\cos \delta}^{\cos \delta} \end{split} \end{equation}
 
We have \begin{equation}\label{stationary-phase-boundary} \abs{ \frac{ e^{ i \abs{w_1}c}}{ i\abs{ w_1} }  \mathcal F(\arccos c)  \frac{1}{ 2 \pi \sqrt{1-c^2}}  ]_{-\cos \delta}^{\cos \delta} } \leq  \frac{2}{ 2 \pi \abs{w_1 }\sin \delta } \norm{\mathcal F}_\infty \end{equation} since we may bound the value at $-\cos \delta$ and $\cos \delta$ separately.

Let  \begin{equation}\label{G-def} \mathcal G(\theta)  = \frac{ d\log \mathcal F(\theta)}{d\theta} =  \sum_{m=1}^{\infty} \ex(m\theta + \frac{\pi}{2}) \cdot v_m + i\sum_{m=2}^{\infty} \ex(m \theta + \frac{\pi}{2})\cdot w_m. \end{equation}
 
 We have \begin{equation}\label{stationary-phase-derivative} \begin{split} \frac{d}{dc}& \left (   \mathcal F( \arccos c)   \frac{1}{ 2 \pi \sqrt{1-c^2}} \right) = \mathcal F(\arccos c) \frac{1}{ 2 \pi \sqrt{1-c^2}}  \Bigl(  \frac{c}{1-c^2} -\frac{ \mathcal G(\arccos c) } {  \sqrt{1-c^2}} \Bigr) 
   \end{split} \end{equation} 
Respectively applying \eqref{stationary-phase-derivative} and reversing the change of variables $c= \cos \theta$, then applying Cauchy-Schwarz, and finally using the integral $\int_\delta^{\pi -\delta} \frac{1}{\sin^2 \theta} d\theta = \frac{2 \cos \delta}{\sin \delta}$ gives
\begin{equation}\label{rcv} \begin{split} &\abs{ \int_{- \cos \delta }^{\cos \delta  }    \frac{ e^{ i  \abs{w_1}c}}{ i\abs{ w_1} }  \frac{d}{dc} \left ( \mathcal F( \arccos c)   \frac{1}{ 2 \pi \sqrt{1-c^2}} \right) dc} \\
= & \abs{\int_{\delta}^{\pi-\delta} \frac{ e^{ i  \abs{w_1}\cos \theta}}{ i\abs{ w_1} }\mathcal F (\theta)  \Bigl(  \frac{\cos \theta }{\sin^2 \theta } + \frac{ \mathcal G(\theta) }{  \sin \theta } \Bigr)    \frac{d\theta}{2\pi}} \\
   \leq&  \frac{ \norm{\mathcal F}_\infty }{ \abs{w_1}}\int_{\delta}^{\pi - \delta} \abs{ \frac{\cos \theta}{\sin^2 \theta}} \frac{d\theta}{2\pi} + \frac{\norm{\mathcal F}_\infty }{\abs{w_1}} \sqrt{  \int_{\delta}^{\pi -\delta} \frac{1}{ \sin^2 \theta} \frac{d\theta}{2\pi} \cdot \int_{\delta}^{\pi-\delta}\abs{\mathcal G(\theta)}^2 \frac{d\theta}{2\pi } } \\
  \leq  &\frac{\cos \delta  \norm{\mathcal F}_\infty  }{ \abs{w_1} \pi \sin \delta } + \frac{ \norm{\mathcal F}_\infty }{\abs{w_1}} \sqrt{ \frac{\cos \delta}{ \pi \sin \delta} \int_{\delta}^{\pi-\delta} \left|\mathcal G(\theta) \right|^2 \frac{d\theta}{2\pi } }. \end{split} \end{equation}
 Plugging \eqref{rcv} and \eqref{stationary-phase-boundary} into \eqref{integration-by-parts}, and then applying \eqref{cap-removal}, we obtain
 \begin{equation}\label{stationary-top-side} \left| \int_0^{\pi}    e^{ i \abs{w_1} \cos \theta } \mathcal F(\theta)  \frac{d\theta}{2\pi} \right| \leq \frac{\delta \norm{\mathcal F}_\infty }{\pi} + \frac{\norm{\mathcal F}_\infty }{  \pi \abs{w_1} \sin \delta} + \frac{\cos \delta }{ \abs{w_1} \pi \sin \delta } + \frac{\norm{\mathcal F}_\infty }{\abs{w_1}} \sqrt{ \frac{\cos \delta}{ \pi \sin \delta} \int_{\delta}^{\pi-\delta}\abs{\mathcal G(\theta)}^2 \frac{d\theta}{2\pi } } .\end{equation}
 
 A symmetrical argument gives
\begin{equation}\label{stationary-bottom-side} \left| \int_\pi^{2\pi}  e^{ i \abs{w_1} \cos \theta } \mathcal F(\theta)  \frac{d\theta}{2\pi} \right| \leq \frac{\delta \norm{\mathcal F}_\infty}{\pi} + \frac{\norm{\mathcal F}_\infty}{  \pi \abs{w_1} \sin \delta} + \frac{\cos \delta \norm{\mathcal F}_\infty }{ \abs{w_1} \pi \sin \delta } + \frac{1}{\abs{w_1}} \sqrt{ \frac{\cos \delta}{ \pi \sin \delta} \int_{\pi + \delta}^{2\pi-\delta} \abs{\mathcal G(\theta)}^2 \frac{d\theta}{2\pi } } . \end{equation}
 
We have \begin{equation}\label{Gint-bound} \int_{\delta}^{\pi-\delta} \abs{\mathcal G(\theta)}^2\frac{d\theta}{2\pi }  + \int_{\pi+ \delta}^{2\pi-\delta}  \abs{\mathcal G(\theta)}^2\frac{d\theta}{2\pi }  \leq \int_0^{2\pi}  \abs{\mathcal G(\theta)}^2 \frac{d\theta}{2\pi }  =\frac{1}{2} \sum_{m=1}^{\infty} \abs{v_m}^2 +  \frac{1}{2} \sum_{m=2}^{\infty} \abs{w_m}^2 \end{equation} with the last equality using the definition \eqref{G-def} of $\mathcal G$. Combining \eqref{stationary-top-side}, \eqref{stationary-bottom-side}, and \eqref{Gint-bound} gives
\[ \frac{ \left| \int_0^{2\pi}  e^{ i \abs{w_1} \cos \theta } \mathcal F(\theta)   \frac{d\theta}{2\pi} \right| } { \norm{\mathcal F}_\infty}\]
\[ \leq \frac{2 \delta}{\pi} + \frac{2}{ \pi \abs{w_1} \sin \delta} + \frac{2 \cos \delta}{ \abs{w_1} \pi \sin \delta} + \frac{1}{\abs{w_1} } \sqrt{ \frac{ \cos \delta}{\pi \sin \delta} ( \sum_{m=1}^{\infty} \abs{v_m}^2+ \sum_{m=2}^{\infty} \abs{w_m}^2)} \]
\[ \leq \frac{2 \delta}{\pi} + \frac{1}{  \abs{w_1} \delta } + \frac{2 }{ \abs{w_1} \pi \delta } + \frac{1}{\abs{w_1} } \sqrt{ \frac{ 1}{\pi \delta }( \sum_{m=1}^{\infty} \abs{v_m}^2+ \sum_{m=2}^{\infty} \abs{w_m}^2)}. \]
Taking $\delta= \frac{1}{\sqrt{\abs{w_1}}}$ we obtain \eqref{eq-hybrid-long-range}.
 \end{proof}
 
 \begin{lemma}\label{squarification}| We have
  \[ \sum_{m=1}^{\infty} \abs{v_m}/m\leq\frac{\zeta(2)}{4} +  \sum_{m=1}^{\infty} \min ( \abs{v_m}^2, \abs{v_m}/m)  .\] \end{lemma}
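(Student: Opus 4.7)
The plan is to prove the inequality term by term: show that for every $m \geq 1$,
\[ \frac{|v_m|}{m} - \min\Bigl(|v_m|^2, \frac{|v_m|}{m}\Bigr) \leq \frac{1}{4m^2}, \]
and then sum over $m$ using $\sum_{m=1}^\infty 1/m^2 = \zeta(2)$ to obtain the stated bound.

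To verify the pointwise inequality, I would split into two cases according to which branch of the minimum is active. If $|v_m| \geq 1/m$, then $\min(|v_m|^2, |v_m|/m) = |v_m|/m$, so the left side is $0$ and the inequality is trivial. If $|v_m| \leq 1/m$, then $\min(|v_m|^2, |v_m|/m) = |v_m|^2$, so the inequality to check becomes
\[ \frac{|v_m|}{m} - |v_m|^2 \leq \frac{1}{4m^2}, \]
which after rearrangement is equivalent to $\bigl( |v_m| - \tfrac{1}{2m} \bigr)^2 \geq 0$, an obvious identity (this is essentially a completing-the-square argument, or equivalently AM-GM applied to $|v_m|$ and $1/m - |v_m|$).

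There is no real obstacle here; the only thing worth being careful about is handling the two branches of the $\min$ cleanly and checking that the pointwise bound $1/(4m^2)$ is tight enough that summing gives exactly $\zeta(2)/4$ rather than something larger. Once the per-term bound is established, the lemma follows immediately by summing over $m \geq 1$.
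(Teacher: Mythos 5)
Your proof is correct and takes essentially the same approach as the paper: the key step is the completing-the-square inequality $|v_m|/m \leq |v_m|^2 + 1/(4m^2)$, summed over $m$. Your case split on which branch of the minimum is active is a slightly more verbose way of presenting the same argument, but the content is identical.
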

  \begin{proof} For each $m$ we have  $\abs{v}_m/m \leq  \abs{v_m}^2 + \frac{1}{4m^2}$ by completing the square and thus  $\abs{v}_m/m \leq  \min ( \abs{v_m}^2, \abs{v_m}/m)   + \frac{1}{4m^2}$. Summing over $m$ gives the statement.\end{proof}

 \begin{proposition}\label{hybrid-unified} There exists $\delta_3>0$, constants $C_1, C_2>0$, and a function $\mathcal S \colon [0,\infty] \to [0,1]$ such that the following hold:

 Let $(v_m)_{m=1}^\infty$ and $(w_m)_{m=1}^\infty$ be sequences of vectors in $\mathbb R^2$. If $\sum_{m=1}^\infty m \abs{v_m}^2+ \sum_{m=1}^{\infty} \abs{w_m}^2 < \infty$ then
\begin{equation}\label{eq-hyb-un} \begin{split} &  \abs{  \int_0^{2\pi}   e^{ \sum_{m=1}^{\infty}  \ex(m \theta) \cdot v_m/m + i\sum_{m=1}^{\infty}  \ex(m \theta) \cdot w_m/m }     \frac{d\theta}{2\pi}}   \\ \leq & e^{   \frac{ \abs{v_1}^2}{4} - \delta_3 \min ( \abs{v_1}^4, \abs{v_1}^2) + \sum_{m=2}^{\infty} \min (C_1  \abs{v_m}^2, \abs{v_m}/m ) } (  1 + C_2 \sum_{m=2}^{\infty} \abs{w_m}^2 + C_2 \sum_{m=2}^{\infty} \abs{v_m}^2 ) \mathcal S( \abs{w_1} ) . \end{split}   \end{equation}

Furthermore, we have \begin{equation}\label{s-long-range} \mathcal S(y) = O \left( \frac{1}{ \sqrt{y} } \right),\end{equation} we have \begin{equation}\label{s-short-range} \mathcal S(y) \leq  e^{  - \frac{y^2}{4} + O( y^3)},\end{equation} and $\mathcal S(y)$ is bounded away from $1$ for $y$ in each fixed closed interval not containing $0$.

\end{proposition}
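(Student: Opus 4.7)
The plan is to construct $\mathcal S \colon [0,\infty) \to [0,1]$ piecewise: $\min(1, e^{-y^2/4 + By^3})$ on an initial interval $[0, y_1]$, a constant $\alpha \in (0,1)$ on a middle interval $[y_1, y_2]$, and $D/\sqrt{y}$ on $[y_2, \infty)$. The constants $\delta_3, C_1, C_2, B, D, \alpha, y_1, y_2$ will be fixed at the end. A crucial design choice is to take $\delta_3 < \delta_2$, the constant from \cref{laplace-unified}, leaving a residual exponential savings $e^{-(\delta_2-\delta_3)\min(|v_1|^4, |v_1|^2)}$ available to absorb losses elsewhere. With such a piecewise definition, the three claimed properties \eqref{s-long-range}, \eqref{s-short-range}, and boundedness away from $1$ on compact intervals not containing $0$ follow by routine computation; the substance lies in proving \eqref{eq-hyb-un} by splitting into three regimes on $|w_1|$.

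In the long regime $|w_1| \geq y_2$, apply \cref{hybrid-long-range} and bound the supremum by $\sup_\theta e^{\sum_m \ex(m\theta)\cdot v_m/m} \leq e^{\sum_m |v_m|/m}$. Separating the $m=1$ term using the trivial identity $|v_1| \leq |v_1|^2/4 + 1$ and converting the tail $\sum_{m\geq 2}|v_m|/m$ via \cref{squarification} yields an upper bound of the form $e^{|v_1|^2/4 + \sum_{m\geq 2}\min(C_1|v_m|^2, |v_m|/m) + O(1)} \cdot O(|w_1|^{-1/2})$, with the subsidiary factor $O(\sqrt{\sum|v_m|^2 + \sum|w_m|^2}/|w_1|^{1/4})$ from \cref{hybrid-long-range} absorbed into $1 + C_2\sum_{m\geq 2}(|v_m|^2 + |w_m|^2)$. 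The missing savings $e^{-\delta_3\min(|v_1|^4, |v_1|^2)}$ on the right of \eqref{eq-hyb-un} costs nothing because $|v_1| - (1/4-\delta_3)|v_1|^2$ is bounded above on $[0,\infty)$ once $\delta_3 < 1/4$, so the discrepancy is a multiplicative constant absorbed into $D$.

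In the short regime $|w_1| \leq y_1$, assume first the hypothesis $|v_1| + |w_1| + \sum_{m\geq 2}\sqrt{|v_m|^2+|w_m|^2}/m < 1/2$ of \cref{hybrid-short-range} holds. Its conclusion yields the exponent $|v_1|^2/4 - |w_1|^2/4 - \delta_3|v_1|^4 + O(|v_1|^6 + |w_1|^3 + \sum_{m\geq 2}(|v_m|^2 + |w_m|^2))$; here $-\delta_3|v_1|^4$ coincides with $-\delta_3\min(|v_1|^4,|v_1|^2)$ in this range, the piece $-|w_1|^2/4 + O(|w_1|^3)$ is exactly $\log\mathcal S(|w_1|)$ on $[0,y_1]$, and the remaining $O(\sum)$ error is absorbed into the $\sum\min$ exponent and polynomial factor (using $\min(C_1|v_m|^2,|v_m|/m) \leq C_1|v_m|^2$ and $\log(1+C_2x) \leq C_2x$). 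If the hypothesis fails then, since $|w_1| \leq y_1$ is chosen small, either $|v_1|$ or $\sum_{m\geq 2}\sqrt{|v_m|^2+|w_m|^2}/m$ exceeds a fixed constant; in the first case \cref{laplace-unified} supplies a uniform Gaussian savings, and in the second Cauchy--Schwarz gives a lower bound on $\sum_{m\geq 2}(|v_m|^2+|w_m|^2)$ and hence on the polynomial factor, enough to absorb $\mathcal S(|w_1|) \leq 1$ combined with the trivial bound $|\text{complex integral}| \leq |\text{real integral}|$.

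The medium regime $y_1 \leq |w_1| \leq y_2$ uses \cref{hybrid-medium-range}, which gives $|\text{complex integral}| \leq |\text{real integral}|(1-\eta)$ with $\eta = e^{-2\sum|v_m|/m}\min(\sum|w_m|^2/(\pi^2 m^2), 1/(2\sum|w_m|^2))$; the first argument of the min is $\geq y_1^2/\pi^2$ in this range. When $\eta \geq 1-\alpha$, combining with \cref{laplace-unified} yields \eqref{eq-hyb-un} with $\mathcal S(|w_1|) = \alpha$. Otherwise either $\sum|w_m|^2$ or $\sum|v_m|/m$ is large enough to drive $\eta$ below $1-\alpha$; in the first case the polynomial factor $1+C_2\sum|w_m|^2$ directly absorbs the loss, while in the second, \cref{squarification} and Cauchy--Schwarz force either $|v_1|$ large (so the $\delta_2$ savings in \cref{laplace-unified} dominates) or $\sum_{m\geq 2}|v_m|^2$ large (again boosting the polynomial factor). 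The chief obstacle throughout is the bookkeeping: choosing all the thresholds and constants in a mutually consistent order so that every ``some parameter is large'' scenario is covered by at least one of the direct range estimate, the polynomial factor, or the $\delta_3 < \delta_2$ slack.
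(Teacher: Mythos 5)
Your proposal is correct but takes a genuinely different organizational route from the paper. You construct $\mathcal S$ \emph{explicitly} as a piecewise function and then verify \eqref{eq-hyb-un} directly, which forces you to handle, within each regime of $|w_1|$, the fallback cases where the hypotheses of \cref{hybrid-short-range} or \cref{hybrid-medium-range} fail. The paper instead defines $\mathcal S(y)$ as the infimum of the ratio $S(v_1,\dots,w_1,\dots)$ over all admissible sequences with $|w_1|=y$, so that \eqref{eq-hyb-un} holds by fiat and the work is pushed entirely to proving the three stated upper bounds on that infimum. Crucially, the paper also makes an early reduction: it observes that when
$\delta_3|v_1|^4 + C_2\sum_{m\geq 2}|w_m|^2 + C_2\sum_{m\geq 2}|v_m|^2 \geq |w_1|^2/4$,
the trivial bound via \cref{laplace-unified} already gives $S \leq 1/(1+|w_1|^2/4)$, which simultaneously satisfies all three claimed properties, and so it may assume the reverse inequality. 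That single assumption then implies $|v_1|$ and the tail sums are controlled by $O(|w_1|)$, which is exactly what is needed to invoke each of the three range lemmas without separate fallback arguments. Your approach distributes this same absorption argument (via the $\delta_3<\delta_2$ slack, Cauchy--Schwarz on the tail sums, and the polynomial factor $1+C_2\sum$) across the three regimes instead of centralizing it once. Both routes rely on the same ingredients --- \cref{hybrid-short-range}, \cref{hybrid-medium-range}, \cref{hybrid-long-range}, \cref{laplace-unified}, and \cref{squarification} --- but the paper's reduction step makes the bookkeeping you flag as the ``chief obstacle'' essentially disappear. Your version trades that cleanliness for the concreteness of an explicit formula for $\mathcal S$, which is a fair exchange but does make the threshold-ordering arguments more delicate than the paper's.
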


Here the function $\mathcal S$ describes how much savings is obtained in our estimate from cancellation induced by $w_1$. The advantage of writing the bound in this way is we can treat $\mathcal S(\abs{w_1})$ as a single quantity for calculations that are uniform in $\abs{w_1}$ but also easily break up into different ranges.

From this point on, we always take $\mathcal S$ to be a function as in \cref{hybrid-unified}.

\begin{proof}  Take $C_1$ as in \cref{laplace-unified}. Fix $\delta_3, C_2$ to be chosen later. We choose $\delta_3$ sufficiently small and $C_2$ sufficiently large. Neither depends on the other. We will always write a fixed closed interval not containing zero as $[C_3,C_4]$. (There is also no relation between $C_3,C_4$ and the other variables.) We let
 \begin{equation}S(v_1,\dots, w_1,\dots) = \label{min-for-S} \frac{ \abs{  \int_0^{2\pi}   e^{ \sum_{m=1}^{\infty}  \ex(m \theta) \cdot v_m/m + i\sum_{m=1}^{\infty}  \ex(m \theta) \cdot w_m/m }     \frac{d\theta}{2\pi}} }{  e^{   \frac{ \abs{v_1}^2}{4} - \delta_3 \min ( \abs{v_1}^4, \abs{v_1}^2) + \sum_{m=2}^{\infty} \min (C_1  \abs{v_m}^2, \abs{v_m}/m ) } (  1 + C_2 \sum_{m=2}^{\infty} \abs{w_m}^2 + C_2 \sum_{m=2}^{\infty} \abs{v_m}^2 ) }.\end{equation}
and define
\[ \mathcal S(y) = \inf_{ \substack{ (v_m)_{m=1}^\infty, (w_m)_{m=1}^\infty  \in (\mathbb R^2)^\mathbb N \\  \abs{w_1} =y}}   S(v_1,\dots, w_1,\dots) \]
so that \eqref{eq-hyb-un} holds by definition and the upper bounds on $\mathcal S(y)$ can be checked by checking corresponding upper bounds on $S(v_1,\dots, w_1,\dots) $. That is, for \eqref{s-long-range} it suffices to have \begin{equation}\label{slr} S(v_1,\dots, w_1,\dots) = O \left( \frac{1}{\sqrt{\abs{w_1}}}\right),\end{equation} for \eqref{s-short-range} it suffices to have \begin{equation}\label{ssr} S(v_1,\dots, w_1,\dots)  \leq e^{ - \frac{ \abs{w_1}^2}{4} + O(\abs{w_1}^3)} \end{equation} and for $\mathcal S(y)$ to be bounded away from $1$ for $y$ in an interval $[C_3,C_4]$ not containing $0$ it suffices to have $S(v_1,\dots, w_1,\dots) $ bounded away from $1$ for $\abs{w_1}\in [C_3,C_4]$

First note that we can always bound the integral  $\int_0^{2\pi}   e^{ \sum_{m=1}^{\infty}  \ex(m \theta) \cdot v_m/m + i\sum_{m=1}^{\infty}  \ex(m \theta) \cdot w_m/m }     \frac{d\theta}{2\pi}$ by its untwisted form $\int_0^{2\pi}   e^{ \sum_{m=1}^{\infty}  \ex(m \theta) \cdot v_m/m  }     \frac{d\theta}{2\pi}$ which is bounded by Proposition \ref{laplace-unified}
as \[ e^{   \frac{ \abs{v_1}^2}{4} - \delta_2 \min ( \abs{v_1}^4, \abs{v_1}^2) + \sum_{m=2}^{\infty} \min (C_1  \abs{v_m}^2, \abs{v_m}/m ) ) },\] which can be bounded by
\[ e^{   \frac{ \abs{v_1}^2}{4} - \delta_3 \min ( \abs{v_1}^4, \abs{v_1}^2) + \sum_{m=2}^{\infty} \min (C_1  \abs{v_m}^2, \abs{v_m}/m ) ) } \frac{1}{ 1+ \delta_3 \abs{v_1}^4} \] since  $\log (1+ \delta_3 \abs{v_1})^4 \leq (\delta_2 -\delta_3) \min ( \abs{v_1}^4, \abs{v_1}^2)$ for $\delta_3$ sufficiently small with respect to $\delta_2$.

It follows that \[ S(v_1,\dots, w_1,\dots)  \leq \frac{1} {   (1 + \delta_3 \abs{v_1}^4) ( 1 + C_2 \sum_{m=2}^{\infty} \abs{w_m}^2 + C_2 \sum_{m=2}^{\infty} \abs{v_m}^2 ) } .\] This in particular implies that $S(v_1,\dots, w_1,\dots) \leq 1$ and thus $\mathcal S(y) \leq 0$. Since $\mathcal S(y)$ is clearly nonnegative we see that $\mathcal S$ is indeed a function from $ [0,\infty]$ to $ [0,1]$.

Furthermore, as long as  $\delta_3 \abs{v_1}^4 + C_2 \sum_{m=2}^{\infty} \abs{ w_m}^2  + C_2 \sum_{m=2}^\infty \abs{v_m}^2 \geq \abs{w_1}^2/4$ we have  $S(v_1,\dots, w_1,\dots) \leq \frac{1}{ 1+ \abs{w_1}^2/4}$.  Since $\frac{1}{ 1+ \abs{w_1}^2/4} $ is $O (\frac{1}{\sqrt{\abs{w_1}}})$, is equal to $e^{ - \frac{ \abs{w_1}^2}{4}+ O(\abs{w_1}^3)}$, and is bounded away from $1$ for $C_3 \leq \abs{w_1} \leq C_4$, for the remainder of the argument we may assume that
\begin{equation}\label{hybrid-unified-assumption} \delta_3 \abs{v_1}^4 + C_2 \sum_{m=2}^{\infty} \abs{ w_m}^2  + C_2 \sum_{m=2}^\infty \abs{v_m}^2 < \abs{w_1}^2/4 \end{equation}
which notably implies
\[  \sum_{m=1}^{\infty} \abs{ w_m}^2  + \sum_{m=1}^\infty \abs{v_m}^2 < O ( \abs{w_1}^2) + O(1) \]
since $\delta_3 \abs{v_1}^4  \geq C_2 \abs{v_1}^2 + O(1)$  and $\abs{w_1}^2= O(\abs{w_1}^2)$.

First we check \eqref{ssr}. Since $S(v_1,\dots,w_1,\dots) \leq 1$, it suffices to check \eqref{ssr} for $y$ sufficiently small.  Note that \eqref{hybrid-unified-assumption} implies that, as long as $\abs{w_1}$ is sufficiently small, $\abs{v_1}$ is as small as desired, and the same holds for $\sum_{m=2}^{\infty}  \abs{v_m}^2 + \sum_{m=2}^{\infty} \abs{w_m}^2$. By Cauchy-Schwarz
\[\abs{v_1}+ \abs{w_1}+ \sum_{m=2}^{\infty}  \sqrt{ \abs{v_m}^2 + \abs{w_m}^2}/m  \leq  \abs{v_1}+\abs{w_1} + \sqrt{ (\zeta(2)-1)  \sum_{m=2}^{\infty}( \abs{v_m}^2 + \abs{w_m}^2)}  \]
which we can take to be as small as desired, in particular ensuring the assumption of \cref{hybrid-short-range} is satisfied, and we have
\[\log \abs{  \int_0^{2\pi}   e^{ \sum_{m=1}^{\infty}  \ex(m \theta) \cdot v_m/m + i\sum_{m=1}^{\infty}  \ex(m \theta) \cdot w_m/m} \frac{d\theta}{2\pi}} \] 
\[ \leq \frac{ \abs{v_1}^2}{4} - \frac{ \abs{w_1}^2}{4} - \delta_3 \abs{v_1}^4 + O_{\delta_3}  ( \abs{v_1}^6 + \abs{w_1}^3 + \sum_{m=2}^{\infty} (\abs{v_m}^2+ \abs{w_m}^2 )) \]
and by \eqref{hybrid-unified-assumption} we have $\abs{v_1}^6 = O ( \abs{w_1}^3)$ so that term can be ignored. Thus \eqref{min-for-S} is less than or equal to 
\[  \frac{ e^{ \frac{ \abs{v_1}^2}{4}- \delta_3 \abs{v_1}^4}} { e^{ \frac{ \abs{v_1}^2}{4} - \delta_3 \min ( \abs{v_1}^4, \abs{v_1}^2)}}   \frac{e^{ O(  \sum_{m=2}^{\infty} (\abs{v_m}^2+ \abs{w_m}^2 ))} }{1 + C_2 \sum_{m=2}^{\infty} \abs{w_m}^2 + C_2 \sum_{m=2}^{\infty} \abs{v_m}^2}   e^{ - \frac{ \abs{w_1}^2}{4} + O ( \abs{w_1}^3 )} .\] We have dropped the term $ \sum_{m=2}^{\infty} \min (C_1  \abs{v_m}^2, \abs{v_m}/m )$ from the denominator as it is always $\geq 1 $ but is unneeded.

We clearly have $  \frac{ e^{ \frac{ \abs{v_1}^2}{4}- \delta_3 \abs{v_1}^4}}{ e^{ \frac{ \abs{v_1}^2}{4} - \delta_3 \min ( \abs{v_1}^4, \abs{v_1}^2)}}   \leq 1$  and we have $\frac{e^{ O(  \sum_{m=2}^{\infty} (\abs{v_m}^2+ \abs{w_m}^2 ))} }{1 + C_2 \sum_{m=2}^{\infty} \abs{w_m}^2 + C_2 \sum_{m=2}^{\infty} \abs{v_m}^2} \leq 1$ as long as $\abs{w_1}$ is sufficiently small and $C_2$ is sufficiently large since we can bound $e^x $ by $1 + cx$ for any $c>1$ as long as $x$ is sufficiently small.

Thus \eqref{min-for-S} is less then or equal to $e^{ - \frac{ \abs{w_1}^2}{4} + O ( \abs{w_1}^3 )} $ for $\abs{w_1}$ sufficiently small which gives $\mathcal S(y) \leq  e^{ - \frac{y^2}{4} + O ( y^3) } $ for $y$ sufficiently small, and thus for all $y$, verifying \eqref{ssr}.

   Next we check \eqref{slr}. Before applying \cref{hybrid-long-range}, we observe that 
   \[ \sup_{\theta \in [0,2\pi]}   e^{ \sum_{m=1}^{\infty}  \ex(m \theta) \cdot v_m/m } \leq  e^{ \sum_{m=1}^\infty \abs{v_m}/m} \leq e^{ \frac{\zeta(2)}{4} + \sum_{m=1}^\infty \min( \abs{v_m}^2, \abs{v_m}/m)}\] \[ \ll   e^{   \frac{ \abs{v_1}^2}{4} - \delta_3 \min ( \abs{v_1}^4, \abs{v_1}^2) + \sum_{m=2}^{\infty} \min (C_1  \abs{v_m}^2, \abs{v_m}/m )} \] since $\delta_3 \leq \frac{1}{4}$ and $C_1\geq 1$.
   Next observe that (using \eqref{hybrid-unified-assumption} to handle the case $\abs{w_1}$ small in the first inequality)
   \[ \frac{4}{\pi} + 1 +   \frac{ \sqrt{\sum_{m=1}^{\infty} \abs{v_m}^2 +   \sum_{m=2}^{\infty} \abs{w_m}^2}}{ \sqrt{\pi} \abs{w_1}^{1/4}} \ll  1  +  \sqrt{ \sum_{m=1}^{\infty} \abs{v_m}^2 +   \sum_{m=2}^{\infty} \abs{w_m}^2}  \] \[\leq  \frac{5}{4} + \sum_{m=1}^{\infty} \abs{v_m}^2 +  \sum_{m=2}^{\infty} \abs{w_m}^2  \ll  1 + \delta_3 \abs{v_1}^4 + C_2  \sum_{m=2}^{\infty} \abs{w_m}^2 + C_2 \sum_{m=2}^{\infty} \abs{v_m}^2 . \] Putting these bounds together with \cref{hybrid-long-range}, we obtain
\[ \abs{  \int_0^{2\pi}   e^{ \sum_{m=1}^{\infty}  \ex(m \theta) \cdot v_m/m + i\sum_{m=1}^{\infty}  \ex(m \theta) \cdot w_m/m }     \frac{d\theta}{2\pi}}   \] \[ \leq \frac{\sup_{\theta \in [0,2\pi]}   e^{ \sum_{m=1}^{\infty}  \ex(m \theta) \cdot v_m/m } }{\sqrt{\abs{w_1}}} \left( \frac{4}{\pi} + 1 +   \frac{ \sqrt{ \sum_{m=1}^{\infty} \abs{v_m}^2 +  \sum_{m=2}^{\infty} \abs{w_m}^2}}{ \sqrt{\pi} \abs{w_1}^{1/4}} \right) \]
\[ \ll  e^{   \frac{ \abs{v_1}^2}{4} - \delta_3 \min ( \abs{v_1}^4, \abs{v_1}^2) + \sum_{m=2}^{\infty} \min (C_1  \abs{v_m}^2, \abs{v_m}/m )}  \frac{1}{ \sqrt{\abs{w_1}}  } ( 1 + \delta_3 \abs{v_1}^4 + C_2  \sum_{m=2}^{\infty} \abs{w_m}^2 + C_2 \sum_{m=2}^{\infty} \abs{v_m}^2 ) \]
\[  \leq  e^{   \frac{ \abs{v_1}^2}{4} - \delta_3 \min ( \abs{v_1}^4, \abs{v_1}^2) + \sum_{m=2}^{\infty} \min (C_1  \abs{v_m}^2, \abs{v_m}/m )}  \frac{1}{ \sqrt{\abs{w_1}}  }  (1 + \delta_3 \abs{v_1}^4)   ( 1 + C_2  \sum_{m=2}^{\infty} \abs{w_m}^2 + C_2 \sum_{m=2}^{\infty} \abs{v_m}^2 ) \]
which verifies \eqref{slr}.

  Next we consider $\abs{w_1}$ in an interval $I= [C_3,C_4]$ not containing $0$. Applying \cref{laplace-unified} and then \cref{hybrid-medium-range} we have
\[ S(v_1,\dots, w_1,\dots) \leq   \frac{ \abs{  \int_0^{2\pi}   e^{ \sum_{m=1}^{\infty}  \ex(m \theta) \cdot v_m/m + i\sum_{m=1}^{\infty}  \ex(m \theta) \cdot w_m/m }     \frac{d\theta}{2\pi}} }{\abs{  \int_0^{2\pi}   e^{ \sum_{m=1}^{\infty}  \ex(m \theta) \cdot v_m/m }     \frac{d\theta}{2\pi}} }\] \begin{equation}\label{hmr-rhs} \leq 1- e^{ -2 \sum_{m=1}^{\infty} \abs{v_m}/m}  \min \left(  \sum_{m=1}^{\infty} \frac{\abs{w_m}^2}{\pi^2 m^2}, \frac{1}{ 2\sum_{m=1}^{\infty} \abs{w_m}^2} \right).\end{equation}
Since \[\sum_{m=1}^{\infty} \abs{v_m}/m \leq \sqrt{ \zeta(2) \sum_{m=1}^\infty \abs{v_m}^2} \ll \abs{w_1} \leq C_4=O(1)\] and \[ \sum_{m=1}^{\infty} \abs{w_m}^2  \ll \abs{w_1}^2 \leq C_4^2 =O(1) \]
and \[  \sum_{m=1}^{\infty} \frac{\abs{w_m}^2}{\pi^2 m^2} \geq \frac{\abs{w_1}^2}{\pi^2m^2} \geq \frac{ C_3^2}{ \pi^2 m^2}, \] we see that \eqref{hmr-rhs} is at most $1-\epsilon$ for some $\epsilon>0$. Hence $S(v_1,\dots, w_1,\dots) \leq 1-\epsilon$ for $\abs{w_1}\in [C_3, C_4]$, as desired. \end{proof}

\begin{corollary}\label{hybrid-separated} Let $(v_m)_{m=1}^\infty$ and $(w_m)_{m=1}^\infty$ be sequences of vectors in $\mathbb R^2$. If $\sum_{m=1}^\infty m \abs{v_m}<\infty$ and $\sum_{m=1}^{\infty} w_m^2 < \infty$ then
\[ \abs{  \int_0^{2\pi}   e^{ \sum_{m=1}^{\infty}  \ex(m \theta) \cdot v_m/m + i\sum_{m=1}^{\infty}  \ex(m \theta) \cdot w_m/m }     \frac{d\theta}{2\pi}} \] \[\leq e^{   \frac{ \abs{v_1}^2}{4} - \delta_3 \min ( \abs{v_1}^4, \abs{v_1}^2)  } \mathcal S( \abs{w_1})   \prod_{m=2}^{\infty}\Bigl( e^{ \min ( C_1 \abs{v_m}^2, \abs{v_m}/m )}  (1 + C_2 \abs{w_m}^2 )( 1+ C_2 \abs{v_m}^2) \Bigr).\] \end{corollary}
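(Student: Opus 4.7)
The plan is to derive the corollary directly from \cref{hybrid-unified} by converting its additive bound into the multiplicative form stated here. The only nontrivial ingredient is the elementary inequality
\[ 1 + \sum_{m} a_m + \sum_{m} b_m \leq \prod_{m} (1+a_m)(1+b_m) \]
valid for any nonnegative sequences $(a_m),(b_m)$. This is standard: expanding the right-hand side gives $1 + \sum_m a_m + \sum_m b_m$ plus additional nonnegative cross terms.

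Concretely, I would first apply \cref{hybrid-unified} to obtain
\[ \abs{  \int_0^{2\pi}   e^{ \sum_{m=1}^{\infty}  \ex(m \theta) \cdot v_m/m + i\sum_{m=1}^{\infty}  \ex(m \theta) \cdot w_m/m }     \frac{d\theta}{2\pi}}   \leq A \cdot B \cdot \mathcal S(\abs{w_1}), \]
where
\[ A = e^{   \frac{ \abs{v_1}^2}{4} - \delta_3 \min ( \abs{v_1}^4, \abs{v_1}^2) + \sum_{m=2}^{\infty} \min (C_1  \abs{v_m}^2, \abs{v_m}/m ) } \]
and $B = 1 + C_2 \sum_{m=2}^{\infty} \abs{w_m}^2 + C_2 \sum_{m=2}^{\infty} \abs{v_m}^2$.

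Next, I would rewrite $A$ using $e^{a+b}=e^a e^b$ (applied to the convergent series inside the exponential, whose convergence follows from the hypothesis via $\min(C_1\abs{v_m}^2,\abs{v_m}/m)\leq \abs{v_m}/m$ together with \cref{squarification} applied appropriately) to get
\[ A = e^{   \frac{ \abs{v_1}^2}{4} - \delta_3 \min ( \abs{v_1}^4, \abs{v_1}^2)} \prod_{m=2}^{\infty} e^{\min(C_1 \abs{v_m}^2,\abs{v_m}/m)}. \]
Then I would apply the elementary inequality above, with $a_m = C_2 \abs{w_m}^2$ and $b_m = C_2 \abs{v_m}^2$ for $m\geq 2$, to obtain
\[ B \leq \prod_{m=2}^{\infty} (1 + C_2 \abs{w_m}^2)(1 + C_2 \abs{v_m}^2). \]
Combining these gives exactly the displayed bound of the corollary.

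The main ``obstacle'' is really only bookkeeping: one should verify that all the infinite products and series converge under the stated hypotheses, so that the manipulations above are legitimate. Under $\sum_m m\abs{v_m}<\infty$ and $\sum_m \abs{w_m}^2<\infty$, the terms $C_2 \abs{v_m}^2$ and $C_2 \abs{w_m}^2$ are summable, hence the infinite products $\prod_m (1+C_2 \abs{v_m}^2)$ and $\prod_m (1+C_2\abs{w_m}^2)$ converge absolutely, and the series $\sum_m \min(C_1\abs{v_m}^2,\abs{v_m}/m)\leq \sum_m \abs{v_m}/m$ also converges, so the exponential factor is finite. No further analytic work is required.
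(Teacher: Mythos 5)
Your proposal is correct and takes essentially the same route as the paper: both invoke \cref{hybrid-unified} and then pass from the additive bound to the product form via the elementary inequality $1+\sum_m a_m + \sum_m b_m \leq \prod_m (1+a_m)(1+b_m)$ with $a_m = C_2\abs{w_m}^2$, $b_m = C_2\abs{v_m}^2$. Your extra remarks on convergence (and the observation that $\sum_m m\abs{v_m}<\infty$ forces $\abs{v_m}\to 0$ and hence $\sum_m m\abs{v_m}^2<\infty$, so the hypothesis of \cref{hybrid-unified} is met) are correct bookkeeping that the paper leaves implicit.
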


\begin{proof} This follows from taking \cref{hybrid-unified} and separating terms, using the trivial bound
\[  1 + C_2 \sum_{m=2}^{\infty} \abs{w_m}^2 + C_2 \sum_{m=q}^{\infty} \abs{v_m}^2 \leq\prod_{m=2}^\infty (1 + C_2 \abs{w_m}^2 ) (1+ C_2 \abs{v_m}^2) . \qedhere\]\end{proof}

Write $A_n$ for $\sum_{d\mid n, d<n} E_d$ and $B_n$ for $\sum_{d\mid n, d<n} E_d d/n $.
\begin{corollary}\label{hybrid-combined} Let $v_1,\dots, v_k$ and $w_1,\dots, w_k$ be vectors in $\mathbb R^2$. Then\begin{equation}\label{eq-hybrid-combined} \begin{split} & \abs{\mathbb E [ e^{ \sum_{n=1}^k X_{n,\xi} \cdot v_n  +  i \sum_{n=1}^k X_{n,\xi} \cdot w_n } ] }\\  \leq  &  \prod_{n=1}^k \Bigl (  e ^{ E_n  \frac{ \abs{v_n}^2}{4}  +  \min (C_1 A_n \abs{v_n}^2, B_n  \abs{v_n}  )  - \delta_3 E_n \min ( \abs{v_n}^4, \abs{v_n}^2) } (1 + C_2 \abs{v_n}^2)^{ A_n  } (1+ C_2 \abs{w_n}^2)^{ A_n} \mathcal S(\abs{w_n} ) ^{E_n} \Bigr).\end{split} \end{equation}\end{corollary}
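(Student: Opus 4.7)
The plan is to combine \cref{laplace-formula} with \cref{hybrid-separated} applied to each of the inner integrals in the factorization, and then to reindex the resulting double product.

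First, I would start from the formula
\[ \mathbb E [ e^{ \sum_{n=1}^k X_{n,\xi} \cdot v_n  +  i \sum_{n=1}^k X_{n,\xi} \cdot w_n } ] = \prod_{d=1}^k \Bigl(  \int_0^{2\pi}   e^{ \sum_{m=1}^{\lfloor k/d \rfloor } (\ex(m \theta)  \cdot v_{md} + i \ex(m \theta)\cdot w_{md} )/m  }     \frac{d\theta}{2\pi}  \Bigr)^{E_d} \]
given by \cref{laplace-formula}, then bound the absolute value of each factor. For each $d$, apply \cref{hybrid-separated} with the role of the input sequence $(v_m)$ played by $(v_{md})_{m\geq 1}$ (extended by $0$ past $\lfloor k/d\rfloor$), and similarly for $(w_{md})$. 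This singles out the $m=1$ contribution, i.e.\ the $v_d,w_d$ contribution, giving for each $d$
\[ \Bigl|\int_0^{2\pi}\!\cdots\Bigr| \leq e^{\frac{|v_d|^2}{4} - \delta_3 \min(|v_d|^4, |v_d|^2)}\, \mathcal S(|w_d|)\prod_{m=2}^{\lfloor k/d\rfloor}\! e^{\min(C_1|v_{md}|^2,\,|v_{md}|/m)}(1+C_2|v_{md}|^2)(1+C_2|w_{md}|^2). \]

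Next, I would raise to the $E_d$ power and take the product over $d$. The $m=1$ piece directly produces $\prod_{n=1}^k e^{E_n |v_n|^2/4 - \delta_3 E_n \min(|v_n|^4,|v_n|^2)} \mathcal S(|w_n|)^{E_n}$. For the $m\geq 2$ piece, the substitution $n=md$ turns $\prod_{d=1}^k \prod_{m=2}^{\lfloor k/d\rfloor}$ into $\prod_{n=2}^k \prod_{d\mid n,\,d<n}$, with $|v_{md}|=|v_n|$, $|w_{md}|=|w_n|$, and $|v_{md}|/m = d|v_n|/n$. Collecting multiplicities across divisors $d\mid n$, $d<n$ turns the polynomial factors into $(1+C_2|v_n|^2)^{A_n}(1+C_2|w_n|^2)^{A_n}$ exactly by the definition $A_n=\sum_{d\mid n,\,d<n}E_d$.

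The one non-bookkeeping step is handling the exponential contribution, which after reindexing equals $\sum_{d\mid n,\,d<n} E_d \min(C_1|v_n|^2,\,d|v_n|/n)$ for each $n$. Using the elementary inequality $\sum_i \min(x_i,y_i)\leq \min(\sum_i x_i,\sum_i y_i)$ with $x_i=C_1|v_n|^2$ and $y_i=d|v_n|/n$ (weighted by $E_d$), this is bounded by $\min\bigl(C_1 A_n|v_n|^2,\, B_n|v_n|\bigr)$, where $B_n=\sum_{d\mid n,\,d<n} E_d d/n$. Combining all pieces yields exactly \eqref{eq-hybrid-combined}. The main obstacle is just the reindexing; everything else is either the cited lemma or the $\min$-sum inequality.
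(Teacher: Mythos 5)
Your proof is correct and follows exactly the approach the paper uses: factor via \cref{laplace-formula}, bound each factor by \cref{hybrid-separated} applied to the subsequence $(v_{md})_m$, $(w_{md})_m$, reindex via $n=md$ collapsing the double product and recognizing $A_n$, $B_n$, and apply the inequality $\sum_i \min(x_i,y_i)\leq\min(\sum x_i,\sum y_i)$. Your write-up is slightly more explicit about the reindexing, but it is the same argument.
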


\begin{proof} Taking \cref{laplace-formula}, using \cref{hybrid-separated} to bound each factor, and then rearranging terms, we obtain
\[ \mathbb E [ e^{ \sum_{n=1}^k X_{n,\xi} \cdot v_n  +  i \sum_{n=1}^k X_{n,\xi} \cdot w_n } ] \] \[ \leq  \prod_{n=1}^k \Bigl( \Bigl(   e^{   \frac{ \abs{v_n}^2}{4} - \delta_3 \min ( \abs{v_n}^4, \abs{v_n}^2)  } \mathcal S( \abs{w_n} ) \Bigr)^{ E_n} \prod_{ d\mid n, d<n} \Bigl( e^{ \min ( C_1 \abs{v_n}^2, \abs{v_n}d/n )}  (1 + C_2 \abs{w_n}^2 )(1+ C_2 \abs{v_n}^2) \Bigr)^{ E_d} \Bigr) . \] 
Using \[ \sum_{ d\mid n, d<n} E_d \min ( C_1 \abs{v_n}^2, \abs{v_n}d/n ) \leq \min (C_1 \sum_{ d\mid n, d<n} E_d \abs{v_n}^2, \sum_{d\mid n} E_d \abs{v_n} d/n ) \] we obtain \eqref{eq-hybrid-combined}. \end{proof}

The following facts about $A_n, B_n,$ and $E_n$ will be useful in the remainder of the argument.

\begin{lemma} We have the following identities and inequalities for $A_n, B_n, E_n$:
 \begin{equation}\label{bnnn} B_n + E_n = \frac{q^n}{n}. \end{equation}
 \begin{equation}\label{an-bound} A_n = O ( q^{n/2}/n) .\end{equation}
  \begin{equation}\label{bn-bound} B_n = O ( q^{n/2}/n).\end{equation}
  \begin{equation}\label{nn-asymptotic} E_n = q^n/n + O( q^{n/2}/n) .\end{equation}
\begin{equation}\label{q5} E_n> 2A_n+4 \textrm{ as long as } q>5 .\end{equation}
There exists a positive constant $c$ such that 
\begin{equation}\label{q11}  \frac{E_n}{2}- 2 A_n - \frac{q}{2}-1> c(A_n + E_n)  \textrm{ as long as } q>11\textrm{ and } n>1. \end{equation}
\end{lemma}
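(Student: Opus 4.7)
The plan is to deduce everything from the classical prime polynomial counting identity
\[ \sum_{d \mid n} d \, E_d = q^n, \]
which holds because $\mathbb{F}_{q^n}$ is the disjoint union, over $d \mid n$, of the sets of elements whose minimal polynomial over $\mathbb{F}_q$ has degree $d$, each such set having $d E_d$ elements. Splitting off the $d = n$ term and dividing by $n$ gives $E_n + B_n = q^n/n$, which is \eqref{bnnn}.

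Next I would derive the crude bound $d E_d \leq q^d$ from the same identity (since all summands are nonnegative), so that $E_d \leq q^d/d$. For \eqref{an-bound} and \eqref{bn-bound}, every proper divisor $d$ of $n$ satisfies $d \leq n/2$, and the number of divisors of $n$ is $O_\varepsilon(n^\varepsilon)$, but we don't even need that: both
\[ A_n \;=\; \sum_{\substack{d \mid n \\ d \leq n/2}} E_d \;\leq\; \sum_{d=1}^{\lfloor n/2\rfloor} \frac{q^d}{d}, \qquad B_n \;=\; \sum_{\substack{d \mid n \\ d \leq n/2}} \frac{d}{n} E_d \;\leq\; \frac{1}{n}\sum_{d=1}^{\lfloor n/2\rfloor} q^d \]
are dominated by their top terms and are $O(q^{n/2}/n)$ by geometric series, using $q \geq 2$. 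Then \eqref{nn-asymptotic} is immediate from \eqref{bnnn} and \eqref{bn-bound}.

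For \eqref{q5} and \eqref{q11} the strategy is to check by direct computation the small values of $n$ where the estimates in \eqref{an-bound}--\eqref{nn-asymptotic} are weakest, and then invoke these asymptotic estimates uniformly for large $n$. For \eqref{q5}: for $n=1$, $E_1 = q$, $A_1 = 0$, so $E_1 > 2A_1 + 4$ reduces to $q > 4$; for $n=2$, the inequality becomes $q^2 - 5q - 8 > 0$, which holds for $q \geq 7$; for $n \geq 3$, \eqref{an-bound} and \eqref{nn-asymptotic} give $E_n - 2A_n \geq q^n/n - O(q^{n/2}/n)$, which exceeds $4$ once $n$ is at least some absolute constant and $q \geq 2$, leaving only finitely many $(n,q)$ with $q > 5$ to verify by hand.

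For \eqref{q11}, by \eqref{an-bound} and \eqref{nn-asymptotic} the ratio
\[ \frac{E_n/2 - 2A_n - q/2 - 1}{A_n + E_n} \;=\; \frac{q^n/(2n) + O(q^{n/2}/n) - q/2 - 1}{q^n/n + O(q^{n/2}/n)} \]
tends to $1/2$ as $n \to \infty$ for any fixed $q$, so any $c < 1/2$ works for all sufficiently large $n$; the main obstacle is handling $n = 2$, which is where the $-q/2-1$ term bites hardest relative to the leading order. An explicit computation at $n = 2$ gives $(E_2/2 - 2A_2 - q/2 - 1)/(A_2 + E_2) = (q^2 - 5q - 2)/(q^2+q)$, which is positive and bounded away from $0$ precisely when $q$ is sufficiently large; at $q = 13$ (the smallest prime power exceeding $11$) this ratio already exceeds $0.5$, and it increases in $q$. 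Taking $c$ to be the minimum of $1/2 - \varepsilon$ for small $\varepsilon$ and of the ratios obtained at the finitely many small $(n,q)$ with $n > 1$ and $q > 11$ that are not yet controlled by the asymptotic regime completes the proof.
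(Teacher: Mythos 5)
Your approach follows the same overall plan as the paper's: obtain \eqref{bnnn} from $\sum_{d\mid n} dE_d = q^n$, bound $A_n$, $B_n$ via $E_d \leq q^d/d$ and a geometric series over $d \leq n/2$, deduce \eqref{nn-asymptotic}, and then prove \eqref{q5} and \eqref{q11} by combining the asymptotics for large $n$ with explicit small-$n$ checks. Two items need correcting.

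The $n=2$ ratio for \eqref{q11} is miscomputed. With $E_2 = (q^2-q)/2$ and $A_2 = q$,
\[
\frac{E_2/2 - 2A_2 - q/2 - 1}{A_2 + E_2} \;=\; \frac{(q^2-q)/4 - 5q/2 - 1}{(q^2+q)/2} \;=\; \frac{q^2 - 11q - 4}{2(q^2 + q)},
\]
not $(q^2-5q-2)/(q^2+q)$. At $q = 13$ the value is $22/364 \approx 0.06$, not above $0.5$. The corrected expression is still positive for $q \geq 13$ and increasing in $q$, so your conclusion (that some positive $c$ works) survives, but the admissible $c$ is forced to be far smaller than $1/2$, and the specific numerical claim is false.

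The assertions that the asymptotics leave ``only finitely many $(n,q)$ to verify by hand'' are true as existence statements but not checkable as written, because the threshold $N_0$ beyond which the $O(\cdot)$ estimates win depends on the implicit constants in \eqref{an-bound} and \eqref{nn-asymptotic}, which you never pin down. The paper addresses exactly this by first establishing the elementary inequality $2q^{n/6} > \frac{n}{1-q^{-1}}$ (its \eqref{exp-bounds-n}), from which it gets the clean explicit bounds $A_n, B_n \leq 4q^{n/2}/n$ and $E_n \geq q^n/n - 4q^{n/2}/n$; plugging these in gives a concrete threshold ($q^n > 95.2$ for \eqref{q5}, $q^n > 697.4$ for \eqref{q11}) which, given $q \geq 7$ (resp.\ $q \geq 13$), is met for all $n \geq 3$, so only $n = 1, 2$ need separate treatment. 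Without an explicit version of your $O$-bounds you cannot certify that your $n = 1, 2$ checks exhaust the exceptional set, so this step of your argument is a genuine gap, albeit one that is straightforward to repair along the paper's lines.
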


\begin{proof} For \eqref{bnnn} by definition we have $B_n+ E_n = \sum_{d\mid n} E_d d/n$ and $\sum_{d\mid n } E_d  d= q^n$ by either counting elements of the finite field $\mathbb F_{q^n}$ or a zeta function argument. This in particular implies $E_n \leq \frac{q^n}{n}$, which we will use repeatedly in the remaining proofs. 

For \eqref{an-bound}, we observe that the largest possible $d$ satisfying $d\mid n$ and $d<n$ is $n/2$ and every other solution is at most $n/3$, so  
\[ A_n=\sum_{d\mid n, d<n} E_d \leq  E_{n/2} + \sum_{d \leq n/3} E_{n/3}  \leq q^{n/2}/ (n/2) + \sum_{d\leq n/3} q^{d} = O(q^{n/2}/n) + O(q^{n/3})= O(q^{n/2}/n).\]

\eqref{bn-bound} follows from \eqref{an-bound} upon observing $B_n\leq A_n$. 

\eqref{nn-asymptotic} follows from \eqref{bnnn} and \eqref{bn-bound}. 

To obtain  \eqref{q5} and \eqref{q11} we redo the above proofs with explicit constants to prove the inequalities for all $q^n$ sufficiently large and then use exact formulas to handle the finitely many remaining possible values of $(q,n)$. 

 The proof of \eqref{an-bound} gives
 \[ A_n=\sum_{d\mid n, d<n} E_d \leq  E_{n/2} + \sum_{d \leq n/3} E_{n/3}  \leq q^{n/2}/ (n/2) + \sum_{d\leq n/3} q^{d} \leq 2 q^{n/2}/n + \frac{1}{1-q^{-1} } q^{n/3} .\]

For $X> 0$ we have $2X^{1/6}> \frac{1}{1-7^{-1} }\log_7 X$ so we have \begin{equation}\label{exp-bounds-n} 2q^{n/6} > \frac{1}{1-7^{-1}} \log_7 q^n \geq \frac{1}{1-q^{-1} } \log_q q^n =  \frac{1}{1-q^{-1}} n \end{equation}  so
\begin{equation}\label{fussy-an-bound} A_n \leq 4 q^{n/2}/n.\end{equation} 
Then we have
\begin{equation}\label{fussy-bn-bound}   B_n = A_n \leq 4  q^{n/2}/n \end{equation}
and
\begin{equation}\label{fussy-nn-bound} E_n= \frac{q^n}{n}-B_n \geq \frac{q^n}{n} - 4 \frac{q^{n/2}}{n} . \end{equation}

Thus \eqref{q5} is satisfied as long as  \[  \frac{q^n}{n} - 4\frac{q^{n/2}}{n}  >  4 \frac{q^{n/2}}{n} +4 \] i.e. as long as  \[ 1 >  8 q^{-n/2}  +4 n q^{-n} \] which by \eqref{exp-bounds-n} follows from \[ 1> 8 q^{-n/2} + 8 q^{-5n/6} \] which holds for $q^n> 95.2$. Because $q>7$ this holds for all $n>2$. But for $n=1$ we have $E_n=q$ and $A_n=0$ so \eqref{q5} becomes $q>4$ which is satisfied for all $q>5$ and for $n=2$ we have $E_n = \frac{q^2-q}{2}$ and $A_n=q$ so \eqref{q5} becomes $\frac{q^2-5q}{2} >4$ which is satisfied for all $q>5$.

For \eqref{q11} it follows from \eqref{an-bound}, \eqref{bn-bound}, and \eqref{nn-asymptotic} that $  \frac{E_n}{2}- 2 A_n - \frac{q}{2}-1 = q^n/(2n) + O(q^{n/2}/n) +O(q)$ and $A_n+ E_n =q^n/n + O(q^{n/2}/2) $ so \eqref{q11} is satisfied for $q^n$ sufficiently large. Thus it suffices to prove \begin{equation}\label{q11-simplified} \frac{E_n}{2}- 2 A_n - \frac{q}{2}-1> 0\end{equation}  as then we can choose $c$ small enough to ensure \eqref{q11} is satisfied for the finitely many remaining values of $q,n$.  Then by \eqref{fussy-an-bound}, \eqref{fussy-nn-bound}, and \eqref{exp-bounds-n} it suffices to prove
\[ \frac{q^n}{2n} - 2 \frac{q^{n/2}}{n} - 8 \frac{q^{n/2}}{n} - \frac{q^{1+ \frac{n}{6}} }{n}- \frac{ 2 q^{n/6}}{n}>0 \]
or equivalently
\[ \frac{1}{2} > 10 q^{-n/2}  + q^{ 1 - \frac{5n}{6}}  + 2 q^{- \frac{5n}{6}}.\]
which since $n \geq 2$ follows from
\[ \frac{1}{2} > 10 q^{-n/2}  + q^{ -\frac{n}{3}}  + 2 q^{- \frac{5n}{6}}\]
which holds for $q^n>697.4$. Because $q>11$ this holds for all $n>2$. For $n=2$ we have $E_n = \frac{q^2-q}{2}$ and $A_n=q$ so \eqref{q11-simplified} becomes $\frac{q^2-11q}{4}-1 >0$ which is satisfied for all $q>11$.  \end{proof}

\begin{lemma}\label{v-term-bound} For any $n>0$ and $v\in \mathbb R^2$, we have \begin{equation}\label{v-dependence} e ^{ - B_n  \frac{\abs{v}^2}{4} +  \min (C_1 A_n \abs{v}^2, B_n  \abs{v}  )  - \delta_3 E_n \min ( \abs{v}^4, \abs{v}^2) } (1 + C_2 \abs{v}^2)^{ A_n  }\leq e^{O ( \min ( q^{n/2} \abs{v}^2 /n, 1/n))} .\end{equation}\end{lemma}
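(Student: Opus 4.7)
The plan is to take logarithms of both sides of \eqref{v-dependence} and show that
\[
L(v) := -B_n\tfrac{\abs{v}^2}{4} + \min(C_1 A_n \abs{v}^2, B_n \abs{v}) - \delta_3 E_n \min(\abs{v}^4, \abs{v}^2) + A_n \log(1+C_2\abs{v}^2) = O\!\bigl(\min(q^{n/2}\abs{v}^2/n,\,1/n)\bigr),
\]
splitting the analysis into the two regimes $\abs{v}\leq 1$ and $\abs{v}>1$.

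In the regime $\abs{v}\leq 1$, I will use $\min(C_1 A_n \abs{v}^2, B_n \abs{v})\leq C_1 A_n \abs{v}^2$, $\log(1+C_2\abs{v}^2)\leq C_2\abs{v}^2$, $\min(\abs{v}^4,\abs{v}^2)=\abs{v}^4$, and drop the nonpositive term $-B_n\abs{v}^2/4$, reducing the problem to bounding $(C_1+C_2)A_n\abs{v}^2 - \delta_3 E_n \abs{v}^4$. This admits two complementary bounds: discarding the quartic gives $O(A_n\abs{v}^2) = O(q^{n/2}\abs{v}^2/n)$ by \eqref{an-bound}, whereas AM--GM pairing the quadratic against the quartic yields $O(A_n^2/E_n)$. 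The latter is $O(1/n)$ in every case: either $q^n$ is large, so $E_n \geq q^n/(2n)$ by \eqref{nn-asymptotic}, or $q^n$ is bounded, in which case the trivial $E_n\geq 1$ combined with $A_n = O(q^{n/2}/n)$ from \eqref{an-bound} suffices. Taking the minimum of the two bounds gives the desired $O(\min(q^{n/2}\abs{v}^2/n,1/n))$.

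In the regime $\abs{v}>1$ the target simplifies to $O(1/n)$, since $q^{n/2}\abs{v}^2/n \geq 1/n$. I will use $\min(C_1 A_n \abs{v}^2, B_n\abs{v})\leq B_n\abs{v}$, complete the square to get $B_n\abs{v} - B_n\abs{v}^2/4 \leq B_n$, and use $\min(\abs{v}^4,\abs{v}^2)=\abs{v}^2$, reducing to bounding $B_n + A_n\log(1+C_2\abs{v}^2) - \delta_3 E_n\abs{v}^2$ on $\abs{v}\geq 1$. Elementary calculus on $t\mapsto A_n\log(1+C_2 t)-\delta_3 E_n t$ locates its maximum on $[1,\infty)$ either at $t=1$ or at a unique interior critical point, depending on the sign of $A_n C_2/(1+C_2) - \delta_3 E_n$. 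When $A_n/E_n$ is smaller than a universal threshold $\kappa$ depending only on $\delta_3$ and $C_2$, the maximum occurs at $t=1$ and $B_n + A_n\log(1+C_2)\leq \delta_3 E_n$ (using $B_n \leq A_n$), forcing $L(v)\leq 0$; when $A_n/E_n>\kappa$, the asymptotics \eqref{an-bound} and \eqref{nn-asymptotic} (which together yield $A_n/E_n=O(q^{-n/2})$) force $q^n$ into a bounded set, so that $A_n, B_n, E_n$ are all $O(1/n)$ uniformly and direct estimation of the critical value bounds the maximum by $O(1/n)$.

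The main obstacle is ensuring uniformity of the implicit constants in $q$ across the exceptional subcase $A_n/E_n>\kappa$ of the $\abs{v}>1$ analysis. The saving observation is precisely that $A_n/E_n = O(q^{-n/2})$, so this subcase confines $q^n$ to a bounded range and hence $(q,n)$ to a finite set; on this finite set all the relevant quantities are $O(1/n)$, and the resulting bounded contribution is absorbed into the universal $O$-constant.
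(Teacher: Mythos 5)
Your proof is correct and follows essentially the same strategy as the paper: split by $\abs{v}\leq 1$ versus $\abs{v}>1$, bound the exponent by a quadratic-minus-quartic (resp.\ linear-in-$\abs{v}^2$) expression, balance the quadratic against the quartic to get $O(A_n^2/E_n)=O(1/n)$ in the small-$\abs{v}$ range, use the asymptotics \eqref{an-bound} and \eqref{nn-asymptotic} to force negativity in the large-$\abs{v}$ range once $q^n$ is large, and absorb the finitely many exceptional $(q,n)$ into the implicit constant by boundedness. The only divergence is in the $\abs{v}>1$ branch, where the paper takes a shortcut by bounding both $\min(C_1 A_n\abs{v}^2,B_n\abs{v})$ and $A_n\log(1+C_2\abs{v}^2)$ by $O(A_n\abs{v}^2)$ (so that the exponent becomes $(O(A_n)-\delta_3 E_n)\abs{v}^2\leq 0$ directly), avoiding the completion of square and the calculus on the logarithmic term that you carry out; both routes end up deciding the same dichotomy on $A_n/E_n$.
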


\begin{proof}  We note first that  $\min (C_1 A_n \abs{v}^2, B_n  \abs{v}  ) = O ( A_n \abs{v}^2)$ and also $ \log (1 + C_2 \abs{v}^2)^{ A_n  }= O ( A_n \abs{v}^2)$ so the left-hand side of \eqref{v-dependence} is always $e^{ O ( A_n \abs{v}^2)} = e^{ O ( q^{n/2} \abs{v}^2/n)}$ by \eqref{an-bound}.

We next check that the left-hand side of \eqref{v-dependence} is $\leq e^{ O(1/n)} $. First in the range $\abs{v}\leq 1$, it suffices to check that   $e^{ O ( A_n \abs{v}^2) - \delta_3 E_n  \abs{v}^4 } \leq e^{ O(1/n)} $ but the exponent is $\leq O \left( \frac{A_n^2}{\delta_3 E_n} \right)$ and therefore is $\leq O(1/n)$ by  \eqref{an-bound} and \eqref{nn-asymptotic}. For $\abs{v}\geq 1$, it suffices to check that  $e^{ O ( A_n \abs{v}^2) - \delta_3 E_n \abs{v}^4} \leq  e^{ O(1/n)} $ which is automatic as long as  $O(A_n) - \delta_3 E_n \leq 0$ which happens for all but finitely many $n$, again by  \eqref{an-bound} and \eqref{nn-asymptotic}. For these finitely many $n$, it suffices to check that \eqref{v-dependence} goes to $0$ as $n$ goes to $\infty$, which is clear as $e^{ \min ( C_1 A_n \abs{v}^2, \abs{v})}$ is merely exponential in a  linear function of $\abs{v}$ while $(1 + C_2 \abs{v}^2)^{ A_n  }$ is polynomial and these are both dominated by $e^{ -\delta_3 E_n \min ( \abs{v}^4, \abs{v}^2) }$ which is exponential in a quadratic function.
\end{proof}

\begin{corollary}\label{hybrid-simplified} Let $v_1,\dots, v_k$ and $w_1,\dots, w_k$ be vectors in $\mathbb R^2$. Then\begin{equation}\label{eq-hybrid-simplified} \begin{split} & \abs{\mathbb E [ e^{ \sum_{n=1}^k X_{n,\xi} \cdot v_n  +  i \sum_{n=1}^k X_{n,\xi} \cdot w_n } ] } \leq    \prod_{n=1}^k \Bigl (  e ^{ \frac{q^n}{n}  \frac{ \abs{v_n}^2}{4}   + O ( \min ( q^{n/2} \abs{v_n}^2 /n, 1/n))}(1+ C_2 \abs{w_n}^2)^{ A_n} \mathcal S(\abs{w_n} ) ^{E_n} \Bigr).\end{split} \end{equation}\end{corollary}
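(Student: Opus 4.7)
The plan is to start from the bound established in \cref{hybrid-combined} and then rewrite the $v_n$-dependent parts so that they split cleanly into a clean Gaussian-type main term with coefficient $\frac{q^n}{n}\frac{|v_n|^2}{4}$ plus an error term, while leaving the $w_n$-dependent factors $(1+C_2|w_n|^2)^{A_n}$ and $\mathcal{S}(|w_n|)^{E_n}$ untouched since they already appear in the target bound.

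First I would use the identity \eqref{bnnn}, namely $E_n + B_n = q^n/n$, to rewrite the main exponential factor in \cref{hybrid-combined} as
\[ e^{E_n \frac{|v_n|^2}{4}} = e^{\frac{q^n}{n}\frac{|v_n|^2}{4}} \cdot e^{-B_n \frac{|v_n|^2}{4}}. \]
After this substitution the factor depending only on $v_n$ (aside from the main Gaussian piece $e^{\frac{q^n}{n}\frac{|v_n|^2}{4}}$) is exactly
\[ e^{-B_n\frac{|v_n|^2}{4} + \min(C_1 A_n |v_n|^2,\, B_n|v_n|) - \delta_3 E_n \min(|v_n|^4, |v_n|^2)} (1+C_2 |v_n|^2)^{A_n}, \]
which is precisely the quantity whose bound is the content of \cref{v-term-bound}.

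Hence the second and final step is to apply \cref{v-term-bound} with $v = v_n$ to each factor in the product over $n$, which replaces the $v_n$-dependent error term by $e^{O(\min(q^{n/2}|v_n|^2/n,\, 1/n))}$. Multiplying these bounds across $n = 1,\dots,k$ and keeping the untouched $(1+C_2|w_n|^2)^{A_n}\mathcal{S}(|w_n|)^{E_n}$ factors gives exactly \eqref{eq-hybrid-simplified}.

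There is essentially no obstacle here: the statement is a purely bookkeeping consequence of \cref{hybrid-combined}, the identity $E_n + B_n = q^n/n$, and \cref{v-term-bound}. The only thing to be careful about is that the absorbed $-B_n|v_n|^2/4$ term (which is negative and thus helpful) is bundled together with the other $v_n$-dependent contributions inside the statement of \cref{v-term-bound}, so no separate verification is required once \cref{v-term-bound} is in hand.
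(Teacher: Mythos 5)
Your proof is correct and follows exactly the paper's route: substitute $E_n = q^n/n - B_n$ from \eqref{bnnn} into \cref{hybrid-combined} and then apply \cref{v-term-bound} to the resulting $v_n$-dependent factor. No differences of substance.
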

 
\begin{proof} This follows from plugging $E_n = q^n/n-B_n$ from \eqref{bnnn} into \cref{hybrid-combined} and then plugging in \cref{v-term-bound}. \end{proof}

\subsection{Pointwise bounds}

Recall that $F(x_1,\dots, x_k)$ is the probability density function of $X_{1,\xi},\dots, X_{n,\xi}$.

\begin{proposition}\label{pointwise-wint} Assume $q>5$. Let $x_1,\dots, x_k$ be vectors in $\mathbb R^2$.   Then 
\begin{equation}\label{eq-pointwise-wint} \frac{ F(x_1,\dots, x_k)}{ \prod_{n=1}^k \left( e^{ - \frac{ n\abs{x_n}^2}{q^n}}   \frac{n}{q^n \pi} \right)} \leq  O\Bigl(  e^{ O ( \sum_{n=1}^k \min (  n q^{-3n/2} \abs{x_n}^2, n^{-1} ) ) } \Bigr).\end{equation}\end{proposition}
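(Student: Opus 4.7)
The plan is to recover $F$ from its joint Laplace transform by tilted Fourier inversion. By \cref{laplace-formula} the Laplace transform $\mathbb E[e^{\sum_n X_{n,\xi}\cdot z_n}]$ is a finite product of integrals of bounded functions over compact circles, hence entire in its complex arguments. Consequently, shifting the Fourier contour by an arbitrary real tilt $(v_n) \in (\mathbb R^2)^k$ gives
\[ F(x_1,\dots,x_k) = e^{-\sum_n x_n \cdot v_n} \int_{(\mathbb R^2)^k} \mathbb E\bigl[e^{\sum_n X_{n,\xi} \cdot v_n + i \sum_n X_{n,\xi} \cdot w_n}\bigr]\, e^{-i \sum_n x_n \cdot w_n} \prod_{n=1}^k \frac{dw_n}{(2\pi)^2}. \]
Bounding the integrand in absolute value using \cref{hybrid-simplified} and factoring yields
\[ F(x_1,\dots,x_k) \leq e^{ -\sum_n x_n \cdot v_n + \sum_n \frac{q^n|v_n|^2}{4n} + O(\sum_n \min(q^{n/2}|v_n|^2/n,\, 1/n)) } \prod_{n=1}^k I_n, \]
where $I_n := \int_{\mathbb R^2} (1+C_2|w|^2)^{A_n}\, \mathcal S(|w|)^{E_n}\, \frac{dw}{(2\pi)^2}$.

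The first main step is the choice $v_n = 2n x_n/q^n$, which minimizes the quadratic $-x_n \cdot v_n + q^n|v_n|^2/(4n)$ at the value $-n|x_n|^2/q^n$ and turns the error term into $4n q^{-3n/2}|x_n|^2$. This produces exactly the exponent $-\sum_n n|x_n|^2/q^n$ needed to match the denominator of \eqref{eq-pointwise-wint}, with an error of the precise form claimed on its right-hand side.

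The second and more delicate step is to prove $I_n \ll n/q^n$; combined with \eqref{nn-asymptotic}, the product $\prod_n I_n$ is then absorbed by the normalization $\prod_n n/(q^n \pi)$. The plan is to split the $w$-integral into three regions. For very small $|w|$, \eqref{s-short-range} gives a Gaussian bound $\mathcal S(|w|)^{E_n} \leq e^{-E_n|w|^2/8}$, and integrating against $(1+C_2|w|^2)^{A_n}$ in polar coordinates produces a contribution $O(1/E_n)$, using $A_n \ll E_n$ from \eqref{an-bound} and \eqref{nn-asymptotic}. On a fixed bounded annulus bounded away from $0$, $\mathcal S$ is uniformly bounded by some $\rho < 1$ by the last clause of \cref{hybrid-unified}, and $\mathcal S(|w|)^{E_n}$ is exponentially small in $E_n$. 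For $|w|$ large, \eqref{s-long-range} gives $\mathcal S(|w|)^{E_n} = O(|w|^{-E_n/2})$, so the radial integrand behaves like $|w|^{2A_n+1-E_n/2}$; this is integrable \emph{precisely} when $E_n > 2A_n+4$, which is the content of \eqref{q5}, valid for $q > 5$, and the tail contribution is again $O(1/E_n)$.

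The main obstacle is controlling $I_n$. The three regimes of $\mathcal S$ must be handled cleanly, and the polynomial factor $(1+C_2|w|^2)^{A_n}$ must not overwhelm the decay of $\mathcal S(|w|)^{E_n}$. Condition \eqref{q5}, $E_n > 2A_n+4$, is exactly what guarantees tail integrability, and is the only place in the argument where the hypothesis $q>5$ is used.
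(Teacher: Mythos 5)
Your proposal follows essentially the same approach as the paper: the tilted Fourier inversion with the tilt $v_n = 2n x_n/q^n$ chosen to complete the square, the application of \cref{hybrid-simplified} to bound the resulting integrand, and the three-region estimate (Gaussian small-$|w|$ regime via \eqref{s-short-range}, a bounded annulus where $\mathcal S$ is bounded away from $1$, and the $O(|w|^{-1/2})$ tail via \eqref{s-long-range} with \eqref{q5} ensuring convergence) all coincide with the paper's argument. The paper evaluates $I_n$ slightly more precisely as $4\pi n/q^n + O((q^n/n)^{-6/5})$, but the upper bound $I_n \ll n/q^n$ that you state is exactly what the proof needs, so the two are the same proof.
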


\begin{proof} Let  \begin{equation}\label{vn-def} v_n = 2 n x_n  /q^n\end{equation} for all $n$ from $1$ to $k$.  By the Fourier inversion formula we have 
\begin{equation}\label{pointwise-wint-first} (2\pi)^{2k} e^{ \sum_{n=1}^k x_n \cdot v_n}  F( x_1,\dots, x_n) = \int_{w_1,\dots, w_k \in \mathbb R^2}  e^{ -i \sum_{n=1}^k x_n \cdot w_n } \mathbb E [ e^{ \sum_{n=1}^k X_{n,\xi} \cdot v_n  +  i \sum_{n=1}^k X_{n,\xi} \cdot w_n } ] dw_1 \dots dw_k .\end{equation}
\eqref{pointwise-wint-first} and \cref{hybrid-simplified}, give
\[ (2\pi)^{2k} e^{ \sum_{n=1}^k x_n \cdot v_n}  F( x_1,\dots, x_n) \leq  \int_{w_1,\dots, w_k \in \mathbb R^2}  \abs{ \mathbb E [ e^{ \sum_{n=1}^k X_{n,\xi} \cdot v_n  +  i \sum_{n=1}^k X_{n,\xi} \cdot w_n } ]} dw_1 \dots dw_k \]
\[ \hspace{-.5in} \leq \prod_{n=1}^k e^{\frac{q^n}{n} \frac{ \abs{v_n}^2}{4}+ O ( \min ( q^{n/2} \abs{v_n}^2 /n, 1/n))} \int_{w_1,\dots, w_k \in \mathbb R^2}   \prod_{n=1}^k  (1+ C_2 \abs{w_n}^2)^{ A_n}   \mathcal S( \abs{w_n} )^{E_n}   dw_1 \dots dw_k\]
\begin{equation}\label{pointwise-wint-setup} = \prod_{n=1}^k e^{ \frac{q^n}{n} \frac{ \abs{v_n}^2}{4}+  O ( \min ( q^{n/2} \abs{v_n}^2 /n, 1/n))}  \prod_{n=1}^k \int_{w\in \mathbb R^2} (1+ C_2 \abs{w}^2)^{ A_n}  \mathcal S( \abs{w} )^{E_n} dw. \end{equation}

We first tackle the inner integral \begin{equation}\label{pointwise-wint-inner} \int_{w\in \mathbb R^2}  (1+ C_2 \abs{w}^2)^{ A_n} \mathcal S( \abs{w} )^{E_n}   dw.\end{equation} First note that for $w$ large, we have $(1+ C_2 \abs{w}^2)^{ A_n} = O(\abs{w}^{2A_n})$ and $S( \abs{w} )^{E_n} = O ( \abs{w}^{- \frac{E_n}{2}})$, so for the integral to converge, it is necessary and sufficient to have $E_n/2 > A_n+2$, which follows from \eqref{q5}.

Since we can absorb the integrals \eqref{pointwise-wint-inner} for small $n$ into the implicit constant, we will be focused on the asymptotic evaluation of \eqref{pointwise-wint-inner} for large $n$.

For $\abs{w}$ bounded we have \[\log (1+ C_2 \abs{w}^2) = 1+ C_2 \abs{w}^2 + O (\abs{w}^4)\] and \[ \log  \mathcal S(\abs{w})  \leq \frac{\abs{w}^2}{4} + O( \abs{w})^3\] so
\[ \ (1+ C_2 \abs{w}^2)^{ A_n}  \mathcal S(\abs{w})^{E_n}  \leq  e^{ \left(C_2 A_n - \frac{E_n}{4} \right) \abs{w}^2 +O ( q^n \abs{w}^3/n)} \]  (using \eqref{an-bound} and \eqref{nn-asymptotic})  which for $\abs{w} \leq (q^n/n)^{-2/5}$ is $\leq e^{ \left(C_2 A_n - \frac{E_n}{4} \right) \abs{w}^2 }  e^{ ( q^n/n)^{-1/5}} $ so the integral over $\abs{w}\leq (q^n/n)^{-2/5}$ is bounded by $e^{ ( q^n/n)^{-1/5}} $ times
\[ \int_{w\in \mathbb R^2} e^ {\left(C_2 A_n - \frac{E_n}{4} \right) w^2} dw =  \frac{ \pi} {   \frac{E_n}{4} - C_2 A_n } = \frac{ \pi}{ \frac{q^n}{4n} - O ( q^{n/2} /n) } =\frac{4\pi n }{q^n} + O \left(  \frac{n}{q^{3n/2}} \right) \]
(for $n$ large), with the $e^{ ( q^n/n)^{-1/5}} $ factor itself contributing an error term of size $O (  (q^n/n)^{-6/5})$.

The integral \eqref{pointwise-wint-inner} over $\abs{w}> (q^n/n)^{-2/5}$ will give additional error terms.

First in the range where $\abs{w}> (q^n/n)^{-2/5}$ but $\abs{w}$ is bounded by a fixed small constant, we have  $(C_2 A_n - \frac{E_n}{4}) \abs{w}^2= - \left( \frac{q^n}{4n}+ O (q^{n/2}/n)\right) \abs{w}^2$ which is larger by a constant factor than $O ( q^n \abs{w}^3/n)$, so the integrand of \eqref{pointwise-wint-inner} in this range is at most $e^{ - c q^n \abs{w}^2/n} \leq e^{ - c (q^n/n)^{1/5}}$ for a small constant $c$. Since the area of this range is $O(1)$, this range gives an error term of size decreasing superexponentially in $q^n/n$.

For $\abs{w}$ greater than a large fixed constant $R$, we have $1 + C_2 \abs{w}^2 = O( \abs{w})^2 $and $\mathcal S(w) = O ( \abs{w}^{-1/2}) $. This gives a bound of $O(1)^{A_n + E_n} \abs{w}^{ 2A_n - E_n/2}$ for the integrand or $O(1)^{ A_n + E_n }  R^{ 2+ 2A_n - E_n/2}$ for the integral \eqref{pointwise-wint-inner}, and since both $A_n + E_n$ and $E_n -4 A_n -4$ are asymptotic to $q^n/n$ by \eqref{an-bound} and \eqref{nn-asymptotic}, we can choose $R$ large enough that the second term dominates and the error term decays superexponentially in $q^n/n$.

For the intermediate range of $\abs{w}$ between two fixed constants, we also get superexponential decay simply by observing that $(1 + C_2 \abs{w}^2)^2= O(1)^{A_n}$ and $\mathcal S(\abs{w})^{E_n} \leq (1-\epsilon)^{E_n}$ for some $\epsilon>1$, while $A_n = o(E_n)$ and $E_n$ increases exponentially by \eqref{an-bound} and \eqref{nn-asymptotic}, so the integrand has superexponential decay and the length of the integral on this range is $O(1)$.

So all these error terms are dominated by the  $O (  (q^n/n)^{-6/5})$, giving 
\[ \int_{w\in \mathbb R^2}  (1+ C_2 \abs{w}^2)^{ A_n}  \mathcal S(\abs{w})^{E_n} dw = \frac{4\pi n }{q^n} +O (  (q^n/n)^{-6/5})\]
which implies 
\begin{equation}\label{pointwise-w-integral} \prod_{n=1}^k \int_{w\in \mathbb R^2} (1+ C_2 \abs{w}^2)^{ A_n}   \mathcal S(\abs{w})^{E_n}   dw = O \Bigl( \prod_{n=1}^k \frac{4 \pi n}{q^n} \Bigr) .\end{equation}

Since $v_n = 2 n x_n / q^n$ we have \begin{equation}\label{dot-splitting} v_n \cdot x_n =   \frac{q^n}{ 4n} \abs{v_n}^2 + \frac{n}{q^n} \abs{x_n}^2 \end{equation} and plugging \eqref{dot-splitting} and \eqref{pointwise-w-integral} into \eqref{pointwise-wint-setup} we obtain
\[ (2\pi)^{2k} e^{ \sum_{n=1}^k \left(\frac{q^n}{ 4n} \abs{v_n}^2 + \frac{n}{q^n} \abs{x_n}^2 \right)}  F( x_1,\dots, x_n) \leq \prod_{n=1}^k \Bigl (  e ^{ \frac{q^n}{n} \frac{ \abs{v_n}^2}{4}  + O( \min ( q^{n/2} \abs{v_n}^2 /n, 1/n)) } \Bigr) O ( \prod_{n=1}^k \frac{4 \pi n}{q^n} ) \]
and solving for $F(x_1,\dots, x_n)$  gives
\[   F( x_1,\dots, x_n) \leq O(1)   \prod_{n=1}^k \Biggl (  e ^{O( \min ( q^{n/2} \abs{v_n}^2 /n, 1/n))} \Biggr) \prod_{n=1}^k  \left( (2\pi)^{-2} e^{ - \frac{n}{q^n} \abs{x_n}^2}  \frac{4 \pi n}{q^n} \right) \]
\[ \ll e^{ O ( \sum_{n=1}^k \min ( q^{n/2} \abs{v_n}^2/n, 1/n))}  \sum_{n=1}^k  \prod_{n=1}^k  \left( e^{ - \frac{n}{q^n} \abs{x_n}^2}  \frac{n }{\pi q^n} \right).\]
Plugging in the definition \eqref{vn-def} of $v_n$ and dividing by $\prod_{n=1}^k  \left( e^{ - \frac{n}{q^n} \abs{x_n}^2}  \frac{n }{\pi q^n} \right)$ gives \eqref{eq-pointwise-wint}. \end{proof}

\begin{corollary}\label{pointwise-wint-simplified} Assume $q>5$. Let $x_1,\dots, x_k$ be vectors in $\mathbb R^2$.   Then 
\begin{equation}\label{eq-pointwise-wint-simplified} \frac{ F(x_1,\dots, x_k)}{ \prod_{n=1}^k \left( e^{ - \frac{ n\abs{x_n}^2}{q^n}}   \frac{n}{q^n \pi} \right)} \leq  O( k^{O(1)} ) = O(N^{O(1)}). \end{equation}\end{corollary}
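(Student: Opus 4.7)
The plan is to deduce this corollary directly from \cref{pointwise-wint} by applying the trivial bound $\min(a,b) \leq b$ to each summand. Since $\min(n q^{-3n/2} |x_n|^2, n^{-1}) \leq n^{-1}$, we obtain
\[ \sum_{n=1}^k \min\bigl(n q^{-3n/2} |x_n|^2,\, n^{-1}\bigr) \;\leq\; \sum_{n=1}^k \frac{1}{n} \;\leq\; 1 + \log k, \]
so the exponential factor $e^{O(\sum \cdots)}$ appearing in \eqref{eq-pointwise-wint} becomes $e^{O(\log k)} = O(k^{O(1)})$.

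The second equality in \eqref{eq-pointwise-wint-simplified} follows because $k = \lfloor N^\beta \rfloor \leq N$, so any polynomial in $k$ is a polynomial in $N$. No further ingredient is required; the entire content is the harmonic-series bound combined with \cref{pointwise-wint}, which was precisely designed to produce a pointwise estimate whose excess factor over the Gaussian is controlled termwise by either a decay in $|x_n|$ or by $1/n$.

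There is no real obstacle here: the point of writing \cref{pointwise-wint} with the $\min$ (rather than just the second term $1/n$) was to preserve the stronger bound for other applications, but for the present corollary we discard the $|x_n|$-dependent alternative and use only the uniform bound $1/n$. The harmonic sum is what ensures the loss is only polynomial in $k$ rather than exponential, which would have been fatal downstream.
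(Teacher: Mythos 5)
Your proof is correct and is essentially identical to the paper's own argument: both drop to the uniform $1/n$ bound inside the $\min$, sum the harmonic series to get $O(\log k)$ in the exponent, and conclude with $k \leq N$.
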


\begin{proof} We have  \begin{equation}\label{harmonic-series} \sum_{n=1}^k \min (  n q^{-3n/2} \abs{x_n}^2, n^{-1} ) \leq \sum_{n=1}^k n^{-1} = O(\log k)  \end{equation} and plugging \eqref{harmonic-series} into \eqref{eq-pointwise-wint}, together with the trivial bound $k\leq N$, gives \eqref{eq-pointwise-wint-simplified}. \end{proof}

\subsection{Hermite polynomial expansion bounds}

The goal of this subsection is to prove a formula, Corollary \ref{hermite-expansion}, for $F(x_1,\dots, x_k)$ as the product of a Gaussian probability density function times a sum of Hermite polynomials weighted by certain coefficients $h_{a_{1,1},\dots, a_{k,2}}$, together with bounds for the coefficients $h_{a_{1,1},\dots, a_{k,2}}$. The bounds for the coefficients will start in \cref{hermite-start} with a complicated bound expressed in terms of an integral and conclude in \cref{hermite-end} which bounds a sum of squares of the  $h_{a_{1,1},\dots, a_{k,2}}$ which exactly equals the error, in $L^2$ norm integrated against the Gaussian measure, of a low-degree polynomial approximation for $\frac{ F(x_1,\dots,x_k)}{\prod_{n=1}^k \left( e^{ - \frac{ n\abs{x_n}^2}{q^n}}   \frac{n}{q^n \pi} \right) }$ obtained using these Hermite polynomials. We begin with a brief review of Hermite polynomials.

The (probabilist's) Hermite polynomials are defined as:
\[  \mathit{He}_n ( x) = (-1)^n e^{ \frac{x^2}{2}} \frac{d^n}{ dx^n} e^{ - \frac{x^2}{2}} \]
and their key property is the orthogonality when integrated against the Gaussian measure
\begin{equation} \int_{-\infty}^{\infty}  \mathit{He}_n ( x)  \mathit{He}_m ( x)  \frac{ e^{ - \frac{x^2}{2}} }{\sqrt{2\pi}} dx = \begin{cases} n! & \textrm{if } n=m \\ 0 & \textrm{if } n \neq m \end{cases}. \end{equation}

After a change of variables, this implies for any $\sigma>0$
\begin{equation}\label{hermite-orthogonality} \int_{-\infty}^{\infty}  \mathit{He}_n \left( \frac{x}{\sigma} \right) \mathit{He}_m \left( \frac{x}{\sigma} \right)   \frac{ e^{ - \frac{x^2}{2\sigma^2}} }{\sqrt{2\pi} \sigma} dx = \begin{cases} n! & \textrm{if } n=m \\ 0 & \textrm{if } n \neq m \end{cases}. \end{equation}

We have
\[ \int_{-\infty}^{\infty}  e^{ i xy}  \mathit{He}_n ( x)   \frac{ e^{ - \frac{x^2}{2}} }{\sqrt{2\pi}} dx= \int_{-\infty}^{\infty}  e^{ i xy} (-1)^n \left( \frac{d^n}{ dx^n} \frac{e^{ - \frac{x^2}{2}}}{\sqrt{2\pi}}\right)  dx =  \int_{-\infty}^{\infty} \left(  \frac{d^n}{ dx^n}e^{ i xy} \right) \frac{ e^{ - \frac{x^2}{2}}}{\sqrt{2\pi}}  dx = y^n e^{ \frac{y^2}{2}} \]
and a change of variables gives
\begin{equation}\label{Hermite-Fourier}  \int_{-\infty}^{\infty} e^{ i xy}  \mathit{He}_n \left( \frac{x}{\sigma} \right)   \frac{ e^{ - \frac{x^2}{2\sigma^2 }} }{\sqrt{2\pi}\sigma} dx=  \sigma^n y^n e^{ \frac{\sigma^2 y^2}{2}}. \end{equation}

Now we introduce the notation that will be needed for our first bound on the coefficients.

Let $C_5$ be the implicit constant in the big $O$ in \cref{hybrid-simplified}.   Let $n$ be a positive integer. 

For $a$ a positive integer and $r$ a positive real number, write \[ \mathcal I_{n}(a,r)=  \frac{1}{\pi} \int_{-\infty}^\infty \frac{ e^{   C_5 \min ( q^{n/2} \abs{v }^2 /n, 1/n)} } {\abs{ v+ i  r}^{a+1} } dv .\] For $a=0$, write \[ \mathcal I_{n}(a,r)=1.\]

For $a_{n,1},a_{n,2}$ two nonnegative integers, let
\[ \mathcal L_n (a_{n,1},a_{n,2})= \inf_{ \substack{r_{n,1}, r_{n,2} \geq 0 \\ r_{n,j}=0 \textrm{ if and only if } a_{n,j}=0 }} \frac{ (1+ C_2 (r_{n,1}^2+r_{n,2}^2))^{A_n}   \mathcal S(\sqrt{ r_{n,1}^2+r_{n,2}^2 })^{E_n}  }{ e^{ - \frac{q^n}{n}\frac{r_{n,1}^2 + r_{n,2}^2 }{4}} } \mathcal I_n(a_{n,1},r_{n,1}) \mathcal I_n(a_{n,2},r_{n,2}).\]

For $w_n\in \mathbb R^2$, write $w_{n,1}$ for its first coordinate and $w_{n,2}$ for its second coordinate.

\begin{lemma}\label{hermite-start}  There exists a tuple $(h_{a_{1,1},\dots, a_{k,2}})_{ a_{1,1},\dots, a_{k,2} \in \mathbb Z^{\geq 0}}$ of complex numbers indexed by $2k$-tuples of nonnegative integers such that \[\mathbb E [ e^{  i \sum_{n=1}^k X_{n,\xi} \cdot w_n } ]  = e^{ - \sum_{n=1}^k \frac{q^n}{n} \frac{ \abs{w_n}^2}{4}} \sum_{ a_{1,1}, \dots, a_{k,2} \in \mathbb Z^{\geq 0}} h_{a_{1,1},\dots, a_{k,2}} \prod_{n=1}^k (w_{n,1}^{a_{n,1}} w_{n,2}^{a_{n,2}}) \]
and for each tuple $a_{1,1},\dots, a_{k,2}$ of nonnegative integers we have
\begin{equation}\label{eq-hln} \abs{h_{a_{1,1},\dots, a_{k,2}} } \leq \prod_{n=1}^k  \mathcal L_n ( a_{n,1}, a_{n,2} ).\end{equation} \end{lemma}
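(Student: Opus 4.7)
The plan is to prove both the existence of the expansion and the coefficient bound via a multivariable Cauchy integral argument. First, since each $X_{n,\xi}$ is bounded in norm by $\sum_{d\mid n} E_d d/n = q^n/n$, the expectation $\mathbb E[e^{i\sum_n X_{n,\xi}\cdot w_n}]$ extends to an entire function of $2k$ complex variables $w_{n,j}$, once one interprets the dot product $X_{n,\xi}\cdot w_n$ as the bilinear form $X_{n,\xi,1}w_{n,1}+X_{n,\xi,2}w_{n,2}$ with $w_{n,j}\in \mathbb C$. Similarly, $e^{\sum_n \frac{q^n}{n}\frac{|w_n|^2}{4}}$, where $|w_n|^2:=w_{n,1}^2+w_{n,2}^2$ is viewed as a polynomial in the complex variables, is entire. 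Thus $\Phi(w):=e^{\sum_n \frac{q^n}{n}\frac{|w_n|^2}{4}}\mathbb E[e^{i\sum X_{n,\xi}\cdot w}]$ is entire on $\mathbb C^{2k}$ and admits a globally convergent Taylor expansion $\Phi(w)=\sum_{\mathbf a} h_{\mathbf a}\prod_{n,j} w_{n,j}^{a_{n,j}}$, which defines the $h_{\mathbf a}$.

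To bound $h_{\mathbf a}$, I would apply the multivariable Cauchy integral formula, expressing $h_{\mathbf a}$ as an iterated contour integral of $\Phi(w)/\prod w_{n,j}^{a_{n,j}+1}$. For each index $(n,j)$ with $a_{n,j}\geq 1$, I would deform the $w_{n,j}$-contour from a small circle around $0$ to the boundary of the rectangle with vertices $\pm r_{n,j}\pm iV$, with $r_{n,j}>0$ a parameter, and then take $V\to\infty$; for indices with $a_{n,j}=0$, I would leave the contour as a small circle, which amounts to evaluating at $w_{n,j}=0$. Writing $w_{n,j}=u_{n,j}+i\tau_{n,j}$, observe that $e^{i\sum X\cdot w}=e^{-\sum X_{n,\xi,j}\tau_{n,j}+i\sum X_{n,\xi,j}u_{n,j}}$, so Corollary \ref{hybrid-simplified} applies with $v_n=-\tau_n$ and the role of $w_n$ played by $u_n$. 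Combined with $|e^{\frac{q^n}{n}\frac{|w_n|^2}{4}}|=e^{\frac{q^n}{n}\frac{|u_n|^2-|\tau_n|^2}{4}}$, this gives
\[ |\Phi(w)|\leq \prod_n e^{\frac{q^n}{n}\frac{|u_n|^2}{4}+C_5\min(q^{n/2}|\tau_n|^2/n,\,1/n)}\,(1+C_2|u_n|^2)^{A_n}\,\mathcal S(|u_n|)^{E_n}. \]
On a horizontal side at $\tau_{n,j}=\pm V$ the denominator $|w_{n,j}|^{a_{n,j}+1}\sim V^{a_{n,j}+1}$ grows while every other factor in the integrand stays bounded in $V$ (the $\min$ saturates at $1/n$ and the real parts $u$ remain in compact intervals), so those contributions vanish as $V\to\infty$.

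After the deformation, the surviving contour for each $(n,j)$ with $a_{n,j}\geq 1$ consists of the two vertical lines $u_{n,j}=\pm r_{n,j}$ with $\tau_{n,j}\in \mathbb R$. Since the bound on $|\Phi|$ depends only on $|u_{n,j}|=r_{n,j}$, both vertical lines contribute the same absolute-value bound, combining with $\frac{1}{2\pi}$ to give a factor $\frac{1}{\pi}$ per such index. To factorize across the two coordinates $j=1,2$, I would use the sub-additivity $\min(x+y,c)\leq \min(x,c)+\min(y,c)$ for $x,y,c\geq 0$ to split
\[ e^{C_5\min(q^{n/2}|\tau_n|^2/n,\,1/n)}\leq \prod_{j=1,2} e^{C_5\min(q^{n/2}\tau_{n,j}^2/n,\,1/n)}, \]
after which Fubini turns the $2k$-dimensional integral into a product whose single-variable factors are precisely $\mathcal I_n(a_{n,j},r_{n,j})$ (and equal $1$ for $a_{n,j}=0$, in which case $r_{n,j}=0$). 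Assembling everything yields, for any admissible choice of $r_{n,j}$,
\[ |h_{\mathbf a}|\leq \prod_n \Bigl[(1+C_2(r_{n,1}^2+r_{n,2}^2))^{A_n}\,\mathcal S(\sqrt{r_{n,1}^2+r_{n,2}^2})^{E_n}\,e^{\frac{q^n}{n}\frac{r_{n,1}^2+r_{n,2}^2}{4}}\,\mathcal I_n(a_{n,1},r_{n,1})\,\mathcal I_n(a_{n,2},r_{n,2})\Bigr], \]
and taking the infimum in each $(r_{n,1},r_{n,2})$ gives $|h_{\mathbf a}|\leq \prod_n \mathcal L_n(a_{n,1},a_{n,2})$. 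The main technical obstacle is justifying the multivariable contour deformation and the vanishing of the horizontal sides uniformly in the remaining variables; this can be done by deforming one contour at a time and using that all factors except the one being deformed are bounded on their current contours (either small circles or vertical lines with integrable $\min$-decay in their imaginary parts).
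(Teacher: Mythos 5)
Your proof is correct and follows essentially the same approach as the paper: establish entirety of the ratio from boundedness of the $X_{n,\xi}$, extract $h_{\mathbf a}$ via a multivariable Cauchy integral on contours shifted to infinite lines, bound the integrand with \cref{hybrid-simplified}, split the exponential of $\min$ using subadditivity, and take an infimum over the shift parameters $r_{n,j}$. The only difference is cosmetic: you work directly with the real and imaginary parts of the complexified $w$-variable and shift to vertical lines $\operatorname{Re}(w_{n,j})=\pm r_{n,j}$, while the paper introduces $u_n = v_n + iw_n$ (so effectively $u=iw$) and shifts to horizontal lines in $u$; these contours coincide after the rotation, as do the resulting integrals.
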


We will shortly see in \cref{hermite-expansion} that the $h_{a_{1,1},\dots, a_{k,2}}$ are the coefficients of an expansion of $F$ by Hermite polynomials. The remaining results in this subsection will be devoted to proving more straightforward upper bounds on the $h_{a_{1,1},\dots, a_{k,2}}$, culminating in \cref{hermite-end} which bounds a certain sum of $h_{a_{1,1},\dots, a_{k,2}}$ which will appear in the proof of \cref{intro-hf}.

\begin{proof} The fact that $h_{a_{1,1},\dots, a_{k,2}}$ exist and the sum is absolutely convergent follows from the fact that \[\frac{ \mathbb E [ e^{  i \sum_{n=1}^k X_{n,\xi} \cdot w_n } ] }{  e^{ - \sum_{n=1}^k \frac{q^n}{n} \frac{ \abs{w_n}^2}{4}} }\] is an entire function which is clear as the random variables $X_{n,\xi}$ are bounded so the numerator is entire while the denominator is entire and nowhere vanishing.

To estimate the coefficients, we use the Cauchy integral formula. We explain the argument in detail only in the case that $a_{1,1},\dots,a_{k,2}$ are all nonzero. The general case follows the same ideas, but is notationally more complicated.

 For $f$ a function of a complex variable $z$, which is bounded on loci in the complex plane where the imaginary part of $z$ is bounded, the coefficient of $z^a$ in the Taylor expansion at $0$ of $f$ is given for any $r>0$ by 
\[ \frac{1}{2\pi}  \left( \int_{ r i + \infty}^{r i - \infty} f(z) \frac{dz}{z^{a+1}} + \int_{ -ri- \infty}^{-ri +\infty} f(z) \frac{dz}{z^{a+1}}\right) \]
or writing $z= x+iy$, by
\[ \frac{1}{2\pi}  \left( \int_{ -\infty}^{\infty } f(x+ ir ) \frac{dx}{ (x+ir)^{a+1}} + \int_{ -\infty}^{\infty} f(x-ir ) \frac{dx}{(x-ir)^{a+1}}\right) .\] On the other hand, for $a=0$, the value is simply $f(0)$.

Thus for $f$ a function of complex variables $z_{1,1}, \dots, z_{k,2} $ which is bounded on loci in $\mathbb C^{2k}$ where the imaginary parts of all coordinates are bounded, the coefficient of $\prod_{n=1}^k ( z_{n,1}^{a_{n,1}} z_{n,2}^{a_{n,2}})$ in $f$ is given for any $r_{1,1},\dots, r_{k,2}>0$ by
\[ \frac{1}{(2\pi)^{2k} }\sum_{\epsilon_{1,1},\dots, \epsilon_{n,k}\in \pm 1} \int_{\mathbb R^{2k}} f( x_{1,1}+  i \epsilon_{1,1} r_{1,1},\dots, x_{k,2} + i \epsilon_{k,2} )  \frac{ dx_{1,1} \dots d x_{k,2}}{ \prod_{n=1}^k ( (x_{n,1}+  i \epsilon_{n,1} r_{n,1})^{a_{n,1}+1} (x_{n,2}+  i \epsilon_{n,2} r_{n,2})^{a_{n,2}+1} ) }. \]
If some of the $a_{n,j}$ are $0$, we can drop the corresponding variables $x_{n,j}$ from the integral as well as drop the sums over $\epsilon_{n,j}$ and a corresponding number of factors of $(2\pi)$.  We apply this to the function of  $u_1,\dots, u_k\in \mathbb C^2$, with $u_n = v_n + i w_n$, given by
\[ \frac{ \mathbb E [ e^{   \sum_{n=1}^k X_{n,\xi} \cdot u_n } ]  }{ e^{ - \sum_{n=1}^k \frac{q^n}{n} \frac{ u_n \cdot u_n}{4}}} \]
to obtain
\[ i^{ -\sum_{n=1}^k (a_{n,1} + a_{n,2})} h_{a_{1,1},\dots, a_{k,2}} \] \[  \hspace{-.5in}= \frac{1}{(2\pi)^{2k} }\sum_{\epsilon_{1,1},\dots, \epsilon_{k,2}\in \pm 1}  \int_{v_1,\dots, v_n\in \mathbb R^2} \frac{ \mathbb E [ e^{  \sum_{n=1}^k X_{n,\xi} \cdot v_n + i \sum_{n=1}^k X_{n,\xi} \cdot \tilde{w}_n } ]  } { e^{ \sum_{n=1}^k \frac{q^n}{n} \frac{ \abs{v_n}^2 + 2 i v_n\cdot \tilde{w}_n - \abs{\tilde{w}_n}^2}{4}}} \frac{ dv_1\dots dv_k}{\prod_{n=1}^k (( v_{n,1} + i \epsilon_{n,1} r_{n,1})^{a_{n,1}+1} ( v_{n,2} + i \epsilon_{n,2} r_{n,2})^{a_{n,2}+1} )}\]
where $\tilde{w}_n = ( \epsilon_{n,1} r_{n,1}, \epsilon_{n,2} r_{n,2})$. Taking absolute values and applying \cref{hybrid-simplified} gives
\[  \abs{ h_{a_{1,1},\dots, a_{k,2}} } \] \[ \hspace{-.5in}\leq  \frac{1}{(2\pi)^{2k} } \sum_{\epsilon_{1,1},\dots, \epsilon_{k,2}\in \pm 1}    \int_{v_1,\dots, v_n\in \mathbb R^2} \frac{ \abs{\mathbb E [ e^{  \sum_{n=1}^k X_{n,\xi} \cdot v_n + i \sum_{n=1}^k X_{n,\xi} \cdot \tilde{w}_n } ]}  } { e^{ \sum_{n=1}^k \frac{q^n}{n} \frac{ \abs{v_n}^2 - \abs{\tilde{w}_n}^2}{4}}} \frac{ dv_1\dots dv_k}{\prod_{n=1}^k ( \abs{ v_{n,1} + i  r_{n,1}}^{a_{n,1}+1} \abs{ v_{n,2} + i  r_{n,2}}^{a_{n,2}+1}) } \]
\[  \leq \frac{1}{(2\pi)^{2k} } \sum_{\epsilon_{1,1},\dots, \epsilon_{n,k}\in \pm 1}    \int_{v_1,\dots, v_n\in \mathbb R^2} \frac{\prod_{n=1}^k \Bigl (  e ^{ \frac{q^n}{n}   \frac{ \abs{v_n}^2}{4}  + C_5 \min ( q^{n/2} \abs{v_n}^2 /n, 1/n) }   (1+ C_2 \abs{\tilde{w}_n}^2)^{ A_n}    \mathcal S(\abs{\tilde{w}_n})^{E_n}  \Bigr) dv_1\dots dv_k } { e^{ \sum_{n=1}^k \frac{q^n}{n} \frac{ \abs{v_n}^2 - \abs{\tilde{w}_n}^2}{4}}{\prod_{n=1}^k ( \abs{ v_{n,1} + i  r_{n,1}}^{a_{n,1}+1} \abs{ v_{n,2} + i  r_{n,2}}^{a_{n,2}+1}}) } \]
\begin{equation}\label{tildew-to-w}=  \frac{1}{(2\pi)^{2k} } \sum_{\epsilon_{1,1},\dots, \epsilon_{k,2}\in \pm 1}  \prod_{n=1}^k  \frac{ (1+ C_2 \abs{\tilde{w}_n}^2)^{ A_n}  \mathcal S(\abs{\tilde{w}_n})^{E_n}  }{ e^{ - \frac{q^n}{n}\frac{ \abs{\tilde{w}_n}^2}{4}} }    \int_{v_1,\dots, v_n\in \mathbb R^2} \frac{\prod_{n=1}^k e ^{  C_5  \min ( q^{n/2} \abs{v_n}^2 /n, 1/n) }  dv_1\dots dv_k } {\prod_{n=1}^k ( \abs{ v_{n,1} + i  r_{n,1}}^{a_{n,1}+1} \abs{ v_{n,2} + i  r_{n,2}}^{a_{n,2}+1}) }. \end{equation}
For $w_n=  (r_{n,1}, r_{n,2})$, we have $\abs{\tilde{w}_n}=\abs{w_n}$ and since $\tilde{w}_n$ only appears in \eqref{tildew-to-w} via its absolute value, we may simplify by replacing $\tilde{w}_n$ by $w_n$ and then removing the sum over $\epsilon_{n,j}$, obtaining
\begin{equation}\label{c-medium-point}  \abs{ h_{a_{1,1},\dots, a_{k,2}} } \leq \frac{1}{\pi^{2k} }   \prod_{n=1}^k  \frac{ (1+ C_2 \abs{w_n}^2)^{ A_n}  \mathcal S(\abs{w_n})^{E_n}  }{ e^{ - \frac{q^n}{n}\frac{ \abs{w_n}^2}{4}} }   \prod_{n=1}^{k} \int_{v_n\in \mathbb R^2} \frac{ e^{  C_5 \min ( q^{n/2} \abs{v_n}^2 /n, 1/n) } } { \abs{ v_{n,1} + i  r_{n,1}}^{a_{n,1}+1} \abs{ v_{n,2} + i  r_{n,2}}^{a_{n,2}+1}} dv_n.\end{equation}
If some of the $a_{n,j}$ are $0$, we drop the corresponding variables $v_{n,j}$ from the integral in \eqref{c-medium-point} as well as a corresponding number of factors of $\pi$.

We can bound $e^{   C_5  \min ( q^{n/2} \abs{v_n}^2 /n, 1/n) }$ by $e^{   C_5  \min ( q^{n/2} \abs{v_{n,1} }^2 /n, 1/n) } e^{ C_5 \min ( q^{n/2} \abs{v_{n,2}}^2 /n, 1/n) }$ so the inner integral splits as a product
\[  \int_{v_n\in \mathbb R^2} \frac{ e^{   C_5 \min ( q^{n/2} \abs{v_n}^2 /n, 1/n) } } { \abs{ v_{n,1} + i  r_{n,1}}^{a_{n,1}+1} \abs{ v_{n,2} + i  r_{n,2}}^{a_{n,2}+1}} dv_n \leq \prod_{j=1}^2\int_{-\infty}^\infty \frac{ e^{  C_5  \min ( q^{n/2} \abs{v_{n,j} }^2 /n, 1/n) } } {\abs{ v_{n,j} + i  r_{n,j}}^{a_{n,j}+1} } dv_{n,j} \] which matches $\pi^2 \mathcal I_n ( a_{n,1} , r_{n,1}) \mathcal I_n (a_{n,2},r_{n,2})$, giving a bound of $ \prod_{n=1}^k  \mathcal L_n ( a_{n,1}, a_{n,2} )$ once we choose $r_{n,1},r_{n,2}$ to be values where the minimum in the definition of $\mathcal L_n$ is attained (or comes arbitrarily close to being attained). If some of the variables are $0$, since we drop the integral and the factor of $\pi$, we obtain $1$, again maching the definition of $\mathcal I_n ( a_{n,j} , r_{n,j })$. Here we use the fact that $e^{   C_5  \min ( q^{n/2} \abs{v_{n,1} }^2 /n, 1/n) } =1$ if $v_{n,j}=0$ . \end{proof}

We are now ready to state our Hermite expansion. The proof relies on \cref{hermite-end} below, but there is no circularity as Corollary \ref{hermite-expansion} is not used until the next section -- we state it here for motivation.
\begin{corollary}\label{hermite-expansion}  For $(h_{a_{1,1},\dots, a_{k,2}})_{ a_{1,1},\dots, a_{k,2} \in \mathbb Z^{\geq 0}}$  as in \cref{hermite-start} we have
 \[\hspace{-.4in} F(x_1,\dots, x_k) = \prod_{n=1}^k \left( e^{ - \frac{ n\abs{x_n}^2}{q^n}}   \frac{n}{q^n \pi} \right)  \sum_{ a_{1,1}, \dots, a_{k,2} \in \mathbb Z^{\geq 0}} h_{a_{1,1},\dots, a_{k,2}}  \prod_{n=1}^k  \textit{He}_{a_{n,1}} \left( x_{n,1} \sqrt{ \frac{2n}{q^n}} \right) \textit{He}_{a_{n,2}} \left(x_{n,2}  \sqrt{ \frac{2n}{q^n}} \right )  \sqrt{ \frac{2n}{q^n}}^{ a_{n,1}+ a_{n,2} }.\] \end{corollary}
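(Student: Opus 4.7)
The idea is to invert the Fourier expansion given in \cref{hermite-start}. Applying the standard Fourier inversion formula
\[
F(x_1,\dots,x_k) = \frac{1}{(2\pi)^{2k}}\int_{\mathbb R^{2k}} e^{-i\sum_n w_n\cdot x_n}\,\mathbb E\bigl[e^{i\sum_n X_{n,\xi}\cdot w_n}\bigr]\,dw,
\]
and substituting the expansion $\mathbb E[e^{i\sum_n X_{n,\xi}\cdot w_n}]=e^{-\sum_n q^n\abs{w_n}^2/(4n)}\sum_{a} h_{a}\prod_{n,j} w_{n,j}^{a_{n,j}}$ inside the integral, the problem reduces, after swapping sum with integral, to evaluating for each coordinate pair $(n,j)$ the one-dimensional integral
\[
\int_{-\infty}^{\infty} e^{-iwx}\,w^{a}\,e^{-q^n w^2/(4n)}\,dw.
\]
With $\sigma_n = \sqrt{q^n/(2n)}$ (so $1/\sigma_n=\sqrt{2n/q^n}$), this integral is exactly the Hermite--Gaussian Fourier pair recorded in \eqref{Hermite-Fourier}: up to an explicit phase and the normalization $\sqrt{2\pi}\,\sigma_n^{a+1}$, the output is $\mathit{He}_a(x/\sigma_n)\,e^{-x^2/(2\sigma_n^2)}$.

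Multiplying the $2k$ one-dimensional factors together, the Gaussian pieces collect into $\prod_n e^{-n\abs{x_n}^2/q^n}$; the $\sqrt{2\pi}\,\sigma_n$ normalizations combine with the prefactor $(2\pi)^{-2k}$ to produce $\prod_n n/(\pi q^n)$; and the remaining powers of $1/\sigma_n = \sqrt{2n/q^n}$ yield precisely the weights $\sqrt{2n/q^n}^{\,a_{n,1}+a_{n,2}}$ and the Hermite arguments $x_{n,j}\sqrt{2n/q^n}$ appearing in the statement. Bundled with the coefficients $h_a$, this is exactly the asserted formula.

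The main (and only nontrivial) step is justifying the interchange of the summation over $a$ with the Fourier integration over $w$. One route is to use the pointwise bound $\abs{h_a}\le \prod_n \mathcal L_n(a_{n,1},a_{n,2})$ furnished by \cref{hermite-start}, which combined with the Gaussian decay of $e^{-\sum_n q^n\abs{w_n}^2/(4n)}$ in $w$ gives absolute convergence of the combined sum-and-integral. Alternatively, one can truncate to finitely many multi-indices $a$, apply Fourier inversion termwise (which is trivially legitimate), and then pass to the limit using $L^2$-convergence of the Hermite series against the Gaussian measure on $\mathbb R^{2k}$; that square-summability is exactly what \cref{hermite-end} will later provide, so no circularity is introduced.
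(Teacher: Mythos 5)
Your argument is correct and is essentially the paper's argument: both rest on the Hermite--Gaussian Fourier pair \eqref{Hermite-Fourier}, the $L^2$ square-summability of the Hermite coefficients from \cref{hermite-end} together with \eqref{hermite-orthogonality}, and $L^2$ Fourier theory to identify the two sides. The paper phrases it as ``both sides are $L^2$, their Fourier transforms agree, hence they are equal,'' while you phrase it as ``Fourier-invert, substitute the series, truncate, invert termwise, pass to the $L^2$-limit''; these are the same argument.

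One caveat: of the two justifications you offer for the interchange of sum and integral, the first (absolute convergence from the pointwise bounds $\abs{h_a}\le\prod_n\mathcal L_n$ plus Gaussian decay in $w$) is not adequately supported. The bounds on the $h_a$'s established in the paper (\cref{Ln-short-range}--\cref{Ln-long-range}, \cref{hermite-end}) are tuned to give weighted $\ell^2$ control, not $\ell^1$ control; when you multiply $\mathcal L_n(a_{n,1},a_{n,2})$ by the moment $\int\abs{w}^{a}e^{-q^n w^2/(4n)}\,dw\sim\sigma_n^{a+1}\Gamma((a+1)/2)$, the resulting terms need not be summable, and no claim of that kind is made anywhere in the paper. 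You should drop route (a) and rely on route (b). For route (b) to close, you should also note explicitly that $F$ itself lies in the relevant weighted $L^2$ space --- this is supplied by \cref{pointwise-wint-simplified} --- so that the $L^2$-limit of the inverse transforms of the truncations is indeed $F$ and not merely some function with the same Fourier transform.
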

 
 \begin{proof} We
 take Fourier transforms of both sides. Using \cref{hermite-start} to compute the Fourier transform of the left-hand side and \eqref{Hermite-Fourier} to compute the Fourier transform of the right-hand side, we see the Fourier transforms are equal. the The left-hand side is an $L^2$ function by \cref{pointwise-wint-simplified} and the right-hand side is $L^2$ by \cref{hermite-end} below and \cref{hermite-orthogonality}). By invertibility of the Fourier transform for $L^2$ functions, both sides are equal. \end{proof}

We are now ready to estimate the inegral $\mathcal I_n$ and local terms $\mathcal L_n$. The easiest, but most important, estimate is the following:

\begin{lemma}\label{L-constant-term}  We have \[ \mathcal L_n(0,0)\leq 1\] for all $n$. \end{lemma}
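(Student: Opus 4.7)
The plan is to observe that the constraint ``$r_{n,j}=0$ if and only if $a_{n,j}=0$'' in the infimum defining $\mathcal{L}_n$ forces $r_{n,1}=r_{n,2}=0$ when both $a_{n,1}=0$ and $a_{n,2}=0$. Therefore the infimum collapses to a single point, and the claim reduces to evaluating the expression
\[ \frac{(1+C_2(r_{n,1}^2+r_{n,2}^2))^{A_n}\, \mathcal{S}(\sqrt{r_{n,1}^2+r_{n,2}^2})^{E_n}}{e^{-\frac{q^n}{n}\frac{r_{n,1}^2+r_{n,2}^2}{4}}}\,\mathcal{I}_n(0,r_{n,1})\,\mathcal{I}_n(0,r_{n,2}) \]
at $r_{n,1}=r_{n,2}=0$ and checking it is at most $1$.

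Going factor by factor: the polynomial factor $(1+C_2\cdot 0)^{A_n}$ is $1$; the Gaussian factor $e^{-0}$ in the denominator is $1$; and $\mathcal{I}_n(0,r_{n,j}) = 1$ by the definition of $\mathcal{I}_n$ when the first argument is $0$. The only nontrivial factor is $\mathcal{S}(0)^{E_n}$, and this is bounded by $1$ because \cref{hybrid-unified} asserts that $\mathcal{S}$ is a function from $[0,\infty]$ to $[0,1]$, so $\mathcal{S}(0)\leq 1$ and hence $\mathcal{S}(0)^{E_n}\leq 1$ (using $E_n \geq 0$). Multiplying these estimates gives $\mathcal{L}_n(0,0)\leq 1$. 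There is no real obstacle here; the lemma is essentially a bookkeeping verification that the definitions are normalized so the trivial ``constant'' coefficient bound holds, and will serve as the base case for the more delicate bounds on $\mathcal{L}_n(a_{n,1},a_{n,2})$ with nonzero indices that presumably follow.
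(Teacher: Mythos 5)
Your proof is correct and takes essentially the same approach as the paper: force $r_{n,1}=r_{n,2}=0$ (which the constraint dictates), note that every factor then equals $1$ except $\mathcal S(0)^{E_n}$, and bound that by $1$ via the fact from \cref{hybrid-unified} that $\mathcal S$ maps into $[0,1]$. The paper's proof is a one-line version of exactly this.
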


\begin{proof} We set $r_{n,1}=r_{n,2}=0$ and all the terms are manifstly equal to $1$ in this case, except for $\mathcal S(0)$, which is $\leq 1$ by \cref{hybrid-unified}. (In fact one can also check $\mathcal S(0)=1$ using \cref{hybrid-unified} but we don't need this.) \end{proof}

For $(a_{n,1},a_{n,2} ) \neq (0,0)$ it will suffice to bound $\mathcal L_{n}(a_{n,1},a_{n,2})$ to within a constant factor. To that end, we have the following bound for $\mathcal I_n$:

\begin{lemma}\label{ln-bound}  We have \[ \mathcal I_n(r,a) \ll \frac{1}{ r^a \sqrt{a+1}} \] where we adopt the convention $0^0=1$. \end{lemma}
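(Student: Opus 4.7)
The plan is a direct computation in two moves: first, uniformly bound the exponential factor by an absolute constant, then evaluate the resulting rational integral and apply Stirling.

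For the first step, observe that the exponent in $e^{C_5 \min(q^{n/2}|v|^2/n,\, 1/n)}$ is at most $C_5/n \leq C_5$ for all $v$ and all $n\geq 1$. Hence
\[
\mathcal I_n(a,r) \;\leq\; \frac{e^{C_5}}{\pi}\int_{-\infty}^\infty \frac{dv}{(v^2+r^2)^{(a+1)/2}}.
\]
For the case $a=0$ the statement is immediate from the convention $\mathcal I_n(0,r)=1$, so from here on assume $a\geq 1$.

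For the second step, make the trigonometric substitution $v=r\tan\theta$, so that $v^2+r^2 = r^2\sec^2\theta$ and $dv = r\sec^2\theta\, d\theta$. The integral becomes
\[
\int_{-\infty}^\infty \frac{dv}{(v^2+r^2)^{(a+1)/2}} \;=\; \frac{1}{r^a}\int_{-\pi/2}^{\pi/2} \cos^{a-1}\theta\, d\theta \;=\; \frac{\sqrt{\pi}}{r^a}\cdot\frac{\Gamma(a/2)}{\Gamma((a+1)/2)}.
\]
The final step is to quote Stirling's formula in the form $\Gamma(a/2)/\Gamma((a+1)/2) = O(1/\sqrt{a})$, which gives
\[
\mathcal I_n(a,r) \;\ll\; \frac{1}{r^a\sqrt{a}} \;\ll\; \frac{1}{r^a\sqrt{a+1}},
\]
as desired.

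There is no real obstacle here; the only mild point is making sure the exponential factor is genuinely harmless, which follows because the inner $\min$ is bounded by $1/n \leq 1$ regardless of $v$ or $n$. All other steps are standard (the Wallis-type identity for $\int_{-\pi/2}^{\pi/2}\cos^{a-1}\theta\, d\theta$ and the asymptotic $\Gamma(a/2)/\Gamma((a+1)/2)\sim\sqrt{2/a}$).
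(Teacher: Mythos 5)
Your proof is correct, but it takes a genuinely different route from the paper. The paper does not evaluate the integral exactly: after stripping the exponential factor exactly as you do, it lower-bounds $(v^2+r^2)^{(a+1)/2} = r^{a+1}(1+v^2/r^2)^{(a+1)/2}$ by $r^{a+1}(1 + \tfrac{a+1}{2}\,v^2/r^2)$ via convexity of the logarithm (i.e.\ Bernoulli's inequality), then substitutes $x = \sqrt{(a+1)/2}\,v/r$ to get a constant multiple of $\int \frac{dx}{1+x^2}$, producing the $\frac{1}{r^a\sqrt{a+1}}$ bound with no special functions. You instead evaluate the integral exactly as $\frac{\sqrt{\pi}}{r^a}\,\Gamma(a/2)/\Gamma((a+1)/2)$ via the Wallis/Beta integral and then invoke Stirling. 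Both are sound. The paper's argument is the more elementary of the two (it never leaves the realm of $\arctan$), while yours gives the exact constant, which would matter only if sharper asymptotics were needed downstream. One small point worth making explicit in your write-up: the bound $\Gamma(a/2)/\Gamma((a+1)/2) \ll 1/\sqrt{a}$ needs to hold uniformly down to $a=1$, not just asymptotically; this is fine because $\sqrt{a}\,\Gamma(a/2)/\Gamma((a+1)/2)$ is continuous on $[1,\infty)$ with a finite limit, hence bounded, but Stirling alone only covers the tail.
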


\begin{proof} The case $a=0$ is $ 1 \ll 1 $ which is clear. For $a>0$ convexity of the logarithm gives the lower bound
\[ \abs{ v+ i  r}^{a+1} = ( v^2 + r^2 )^{ \frac{a+1}{2}}  = r^{a+1}    \left(\frac{v^2}{r^2} + 1\right)^{ \frac{a+1}{2}} \geq r^{ a+1}  \left( 1 + \frac{a+1}{2} \frac{v^2}{r^2} \right).\]
We also have \[ e^{   C_5 \min ( q^{n/2} \abs{v }^2 /n, 1/n)} \leq e^{C_5/n} \ll 1.\] Thus
\[ \mathcal I_n( a,r) \ll \int_{-\infty}^\infty \frac{1 } {\abs{ v+ i  r}^{a+1} } dv  \leq \int_{-\infty}^\infty \frac{1}{ r^{a+1}  \left( 1 + \frac{a+1}{2} \frac{v^2}{r^2} \right)} dv. \] The change of variables $x= \sqrt{ \frac{a+1}{2}} \frac{v}{r}$ gives
\[  \int_{-\infty}^\infty \frac{1}{ r^{a+1}  \left( 1 + \frac{a+1}{2} \frac{v^2}{r^2} \right)} dv= \frac{1}{r^a} \sqrt{\frac{2}{a+1}} \int_{\infty}^\infty \frac{1}{1+x^2} \ll  \frac{1}{ r^a \sqrt{a+1}}. \qedhere\]
\end{proof}

This allows us to prove the following general bound, where we have reintroduced $w_n$ for compactness of notation.

\begin{lemma}\label{Ln-uni} Fix integers $n>0$ and $ a_{n,0}, a_{n,1} \geq 0$. Let $r_{n,1},r_{n,2}$ be nonnegative real numbers such that $r_{n,j}=0$ if and only if $a_{n,j}=0$. Let $w_n=(r_{n,1},r_{n,2})$. Then
\[ \mathcal L_n (a_{n,1}, a_{n,2}) \ll \frac{ (1+ C_2 \abs{w_n}^2)^{A_n}   \mathcal S( \abs{w_n} )^{E_n}  }{ e^{ - \frac{q^n}{n}\frac{\abs{w_n}^2 }{4}}  a_{n,1}^{r_{n,1} } a_{n,2}^{r_{n,2}} \sqrt{a_{n,1}+1 }  \sqrt{a_{n,2}+1} }.\]
\end{lemma}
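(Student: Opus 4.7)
The plan is to prove Lemma \ref{Ln-uni} by a direct unwinding of the definition of $\mathcal L_n(a_{n,1},a_{n,2})$ combined with the integral estimate in \cref{ln-bound}. Since $\mathcal L_n(a_{n,1},a_{n,2})$ is defined as an infimum over admissible pairs $(r_{n,1},r_{n,2})$, any single valid choice yields an upper bound on $\mathcal L_n$. The hypothesis of the lemma supplies precisely such a choice, namely a pair $(r_{n,1},r_{n,2})$ subject to the compatibility condition $r_{n,j}=0 \iff a_{n,j}=0$, so no infimum needs to be computed.

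First, I would substitute this admissible pair into the defining formula for $\mathcal L_n$ (with $w_n=(r_{n,1},r_{n,2})$), which immediately bounds $\mathcal L_n(a_{n,1},a_{n,2})$ by
\[ \frac{(1+C_2\abs{w_n}^2)^{A_n}\,\mathcal S(\abs{w_n})^{E_n}}{e^{-\frac{q^n}{n}\frac{\abs{w_n}^2}{4}}}\,\mathcal I_n(a_{n,1},r_{n,1})\,\mathcal I_n(a_{n,2},r_{n,2}). \]
Second, I would apply \cref{ln-bound} to each of the two $\mathcal I_n$ factors, yielding $\mathcal I_n(a_{n,j},r_{n,j})\ll r_{n,j}^{-a_{n,j}}(a_{n,j}+1)^{-1/2}$, and multiply the two resulting bounds together.

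The only subtle point is that the compatibility condition is exactly what is needed to legitimize this step: when $a_{n,j}>0$ we are guaranteed $r_{n,j}>0$, so the power $r_{n,j}^{a_{n,j}}$ in the denominator is a positive finite quantity and \cref{ln-bound} applies as stated; when $a_{n,j}=0$ the convention $0^0=1$ adopted in \cref{ln-bound} makes the bound degenerate to $\mathcal I_n(0,0)=1$, consistent with the constant value stipulated in the definition of $\mathcal I_n$. Assembling these pieces produces precisely the stated bound, so there is no genuine analytic obstacle; the proof is essentially one line plus a verification that the edge-case conventions line up.
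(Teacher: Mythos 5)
Your proposal is correct and matches the paper's own proof: bound the infimum defining $\mathcal L_n$ by its value at the supplied admissible pair $(r_{n,1},r_{n,2})$, then apply \cref{ln-bound} to each $\mathcal I_n$ factor and substitute $\abs{w_n}$ for $\sqrt{r_{n,1}^2+r_{n,2}^2}$. Your extra remarks on the $a_{n,j}=0$ convention are sound and just make explicit what the paper leaves implicit.
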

\begin{proof} This follows from the definition of $\mathcal L_n$, after observing that a minimum is bounded by its value at any point, applying \cref{ln-bound} to bound $\mathcal I_n(a_{n,j}, r_{n,j})$, and substituting $\abs{w_n}$ for $\sqrt{r_{n,1}^2+r_{n,2}^2}$. \end{proof}

We will specialize Lemma \ref{Ln-uni} at different values of $a_{n,1},a_{n,2}$ in different ranges. First we give a lemma helpful for $a_{n,1}+a_{n,2}$ small:

\begin{lemma}\label{Ln-short-range} For integers $n>0$ and $a_{n,1},a_{n,2}\geq 0$ we have 
\[   \mathcal L_n(a_{n,1}, a_{n,2} )  \ll e^{ O ( a_{n,1} + a_{n,2})} \left( \frac{q^n}{n} \right)^{ \frac{a_{n,1}+ a_{n,2}}{3}}  a_{n,1}^{ - \frac{a_{n,1}}{3}} a_{n,2}^{- \frac{a_{n,2}}{3}}\frac{1}{ \sqrt{a_{n,1}+1}  \sqrt{a_{n,2} +1}     }   .\]\end{lemma}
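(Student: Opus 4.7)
The plan is to apply \cref{Ln-uni} with the choice $r_{n,j} = (a_{n,j} n / q^n)^{1/3}$ for each $j \in \{1,2\}$ (interpreted as $0$ when $a_{n,j} = 0$, which respects the constraint $r_{n,j} = 0 \iff a_{n,j} = 0$ in the definition of $\mathcal L_n$). Set $a = a_{n,1} + a_{n,2}$ and $w_n = (r_{n,1}, r_{n,2})$. Heuristically this choice is motivated by optimization: to balance the cubic term $E_n |w_n|^3$ appearing in \eqref{s-short-range} against the denominator $r_{n,j}^{a_{n,j}}$, one solves $\frac{d}{dr}(E_n r^3 - a \log r) = 0$, giving $r \sim (a/E_n)^{1/3}$. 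Since $E_n \asymp q^n/n$, this yields the exponent $1/3$ in the final bound.

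Using the convention $0^0 = 1$, our choice gives
\[ r_{n,1}^{a_{n,1}} r_{n,2}^{a_{n,2}} = a_{n,1}^{a_{n,1}/3} a_{n,2}^{a_{n,2}/3} (n/q^n)^{a/3}, \]
which produces the desired $(q^n/n)^{a/3} a_{n,1}^{-a_{n,1}/3} a_{n,2}^{-a_{n,2}/3}$ factor in the statement once it is moved to the numerator. For the numerator in \cref{Ln-uni}, apply the short-range estimate \eqref{s-short-range} (which, as in the proof of \cref{hybrid-unified}, is a valid upper bound for all $y \geq 0$, even though it becomes trivial for large $y$), giving $\mathcal S(|w_n|)^{E_n} \leq e^{-E_n|w_n|^2/4 + O(E_n|w_n|^3)}$. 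Combine this with $(1+C_2|w_n|^2)^{A_n} \leq e^{C_2 A_n |w_n|^2}$ and the factor $e^{(q^n/n)|w_n|^2/4}$ coming from the denominator $e^{-(q^n/n)|w_n|^2/4}$, and use \eqref{bnnn} to rewrite $(q^n/n)/4 - E_n/4 = B_n/4$. The combined exponent is then
\[ (B_n/4 + C_2 A_n)|w_n|^2 + O(E_n |w_n|^3). \]

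From the chosen $r_{n,j}$ one has $|w_n|^2 \leq 2(an/q^n)^{2/3}$ and $|w_n|^3 \leq 2^{3/2}(an/q^n)$. Inserting $A_n, B_n = O(q^{n/2}/n)$ from \eqref{an-bound}, \eqref{bn-bound} and $E_n \leq q^n/n$ from \eqref{bnnn}, the quadratic contribution is $O((q^{n/2}/n)(an/q^n)^{2/3}) = O(q^{-n/6} n^{-1/3} a^{2/3}) = O(a)$ for $a \geq 1$, and the cubic contribution is $O((q^n/n)(an/q^n)) = O(a)$. Hence the full exponent is $O(a)$, and together with the denominator calculation, \cref{Ln-uni} delivers the claimed bound. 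The degenerate case $a_{n,1} = a_{n,2} = 0$ is covered directly by \cref{L-constant-term} and matches the stated bound under the $0^0 = 1$ convention.

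The main subtlety is pinning down the optimal $r_{n,j}$; once chosen, everything reduces to the already-established arithmetic bounds on $A_n$, $B_n$, $E_n$. Notably no case analysis on the size of $|w_n|$ is needed, because \eqref{s-short-range} remains a valid (if possibly trivial) upper bound for all $y \geq 0$, and the arithmetic estimate $O(q^{-n/6} n^{-1/3} a^{2/3}) = O(a)$ holds for every $a \geq 1$ without any restriction on the relative size of $a$ and $q^n/n$.
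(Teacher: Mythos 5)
Your proposal is correct and follows essentially the same route as the paper: the identical choice $r_{n,j}=(a_{n,j}n/q^n)^{1/3}$ in \cref{Ln-uni}, the same bounds $\mathcal S(|w_n|)\le e^{-|w_n|^2/4+O(|w_n|^3)}$ and $1+C_2|w_n|^2\le e^{C_2|w_n|^2}$, the same rewriting $\frac{q^n}{4n}-\frac{E_n}{4}=\frac{B_n}{4}$, and the same input $A_n,B_n=O(q^{n/2}/n)$, $E_n\le q^n/n$. The only cosmetic difference is that you bound the quadratic term $O\bigl((q^{n/2}/n)|w_n|^2\bigr)$ directly by $O(a)$ via the computation $q^{-n/6}n^{-1/3}a^{2/3}=O(a)$, whereas the paper first absorbs the quadratic term into $O\bigl((q^n/n)|w_n|^3\bigr)$ using the observation $|w_n|\ge q^{-n/2}$ whenever $(a_{n,1},a_{n,2})\ne(0,0)$, then bounds $(q^n/n)|w_n|^3=O(a)$; both are valid.
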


\begin{proof}  We apply Lemma \ref{Ln-uni} and set $r_{n,j}=  \sqrt[3] { \frac{na_{n,j}} {q^{n} } } $ . We  have \begin{equation*}
\mathcal S( \abs{w_n} ) \leq e^{ - \frac{ \abs{w_n}^2}{4} + O (\abs{w_n}^3)} \end{equation*} and \begin{equation*}
1 + C_2 \abs{w_n}^2 \leq e^{ C_2 \abs{w_n}^2} \end{equation*} so
\begin{equation}\label{mb-1-3} \begin{split} &  \frac{ (1+ C_2 \abs{w_n}^2)^{ A_n}  \mathcal S(\abs{w_n})^{E_n}  }{ e^{ - \frac{q^n}{n}\frac{ \abs{w_n}^2}{4}} } \leq e^{ A_n C_2 \abs{w_n}^2 + \frac{q^n}{n} \frac{\abs{w_n}^2}{4} - E_n \frac{\abs{w_n}^2}{4} + O ( E_n \abs{w_n}^3)} \\ & =  e^{ A_n C_2 \abs{w_n}^2 + B_n  \frac{\abs{w_n}^2}{4} + O ( E_n \abs{w_n}^3)} = e^{ O ( \frac{q^{n/2} }{n} \abs{w_n}^2 + \frac{q^n}{n} \abs{w_n}^3 )}  = e^{ O (  \frac{q^n}{n} \abs{w_n}^3 )} \end{split}\end{equation}  since if $a_{n,1}=a_{n,2}=0$ the exponent is $0$ and otherwise $\abs{w_n} \geq \sqrt[3]{ \frac{n}{q^{n}}} \geq q^{-n/2}$.
We have \begin{equation}\label{mb-1-4} \abs{w_n}^3 = (r_{n,1}^2 + r_{n,2}^2)^{3/2} = \frac{n}{q^n}  (  a_{n,1}^{2/3} + a_{n,2}^{2/3})^{3/2}  = O \left( \frac{n}{q^n} (a_{n,1} + a_{n,2} ) \right).\end{equation} 
Combining \eqref{mb-1-3} and \eqref{mb-1-4}, we have
\[   \frac{ (1+ C_2 \abs{w_n}^2)^{ A_n}  \mathcal S(\abs{w_n})^{E_n}  }{ e^{ - \frac{q^n}{n}\frac{ \abs{w_n}^2}{4}} r_{n,1}^{a_{n,1}} r_{n,2}^{a_{n,2} } } = \frac{ e^{ O ( a_{n,1} + a_{n,2})}}{ r_{n,1}^{a_{n,1} } r_{n,2}^{a_{n,2}}} =e^{ O ( a_{n,1} + a_{n,2})} \left( \frac{q^n}{n} \right)^{ \frac{a_{n,1}+ a_{n,2}}{3}}  a_{n,1}^{ - \frac{a_{n,1}}{3}} a_{n,2}^{- \frac{a_{n,2}}{3}} .\] Adding the $\sqrt{a_{n,1}+1}\sqrt{a_{n,2}+1}$ term, we obtain the statement.
\end{proof} 

Next we give a lemma for  $a_{n,1}+a_{n,2}$ large.

\begin{lemma}\label{Ln-long-range} There exists an absolute constant $C_6$ such that for integers $n>0$ and $a_{n,1},a_{n,2}\geq 0$ with  $a_{n,1} + a_{n,2}$ larger than $C_6 \frac{q^n}{n}$, we have
\[   \mathcal L_n(a_{n,1}, a_{n,2} )  \ll O(1)^{ A_n+E_n} \left( \frac{q^n}{2n} \right)^{ \frac{E_n}{4}- A_n+ \frac{a_{n,1}+ a_{n,2}}{2}} (a_{n,1}+a_{n,2})^{ A_n - \frac{E_n}{4}}  e^{  \frac{ a_{n,1} + a_{n,2}}{2}}a_{n,1}^{ - \frac{a_{n,1}}{2}} a_{n,2}^{ - \frac{a_{n,2}}{2}}\frac{1}{ \sqrt{a_{n,1}+1}  \sqrt{a_{n,2} +1}    }   .\] \end{lemma}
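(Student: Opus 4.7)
The plan is to apply \cref{Ln-uni} with the specific choice $r_{n,j} = \sqrt{2n a_{n,j}/q^n}$ (interpreted as $0$ when $a_{n,j}=0$). This choice is motivated by optimizing the dominant Gaussian$\times$polynomial factor $e^{(q^n/n)|w_n|^2/4} r_{n,1}^{-a_{n,1}} r_{n,2}^{-a_{n,2}}$: differentiating its logarithm in $r_{n,j}$ gives the critical condition $\frac{q^n}{2n} r_{n,j}^2 = a_{n,j}$. With this choice, $|w_n|^2 = r_{n,1}^2+r_{n,2}^2 = \frac{2n(a_{n,1}+a_{n,2})}{q^n}$, and the hypothesis $a_{n,1}+a_{n,2} > C_6 q^n/n$ yields $|w_n|^2 > 2C_6$.

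With $|w_n|$ large, I would bound each factor in the numerator of \cref{Ln-uni} as follows. First, $(1+C_2|w_n|^2)^{A_n} \leq (1+C_2)^{A_n} |w_n|^{2A_n} = O(1)^{A_n}|w_n|^{2A_n}$, valid once $|w_n|^2 \geq 1$, which is ensured by choosing $C_6 \geq 1/2$. Second, by \eqref{s-long-range}, there is an absolute constant $C$ with $\mathcal S(y) \leq C/\sqrt{y}$ for $y$ large enough; by enlarging $C_6$ I can ensure $|w_n|$ lies in this range, giving $\mathcal S(|w_n|)^{E_n} \leq O(1)^{E_n} |w_n|^{-E_n/2}$. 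Combining, the $|w_n|$-dependent part is $O(1)^{A_n+E_n}|w_n|^{2A_n - E_n/2} = O(1)^{A_n+E_n}\left(\tfrac{q^n}{2n}\right)^{E_n/4 - A_n}(a_{n,1}+a_{n,2})^{A_n - E_n/4}$. Third, with the chosen $r_{n,j}$, the Gaussian contributes $e^{(q^n/n)|w_n|^2/4} = e^{(a_{n,1}+a_{n,2})/2}$, and the monomial factor contributes $r_{n,j}^{-a_{n,j}} = \left(\tfrac{q^n}{2n}\right)^{a_{n,j}/2} a_{n,j}^{-a_{n,j}/2}$ (using the convention $0^0=1$ when $a_{n,j}=0$).

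Multiplying these contributions together and including the $1/\sqrt{(a_{n,1}+1)(a_{n,2}+1)}$ factor from \cref{Ln-uni} gives exactly the claimed bound. The main obstacle is the uniformity issue in step two: I need $|w_n|$ to be large enough that the asymptotic $\mathcal S(y) = O(1/\sqrt y)$ holds with an absolute constant, and this is precisely what fixing $C_6$ sufficiently large accomplishes. A minor subtlety is the edge case where one of $a_{n,1},a_{n,2}$ equals zero (the other must then exceed $C_6 q^n/n$), where the constraint $r_{n,j}=0 \iff a_{n,j}=0$ from the definition of $\mathcal L_n$ is satisfied by our choice, and the bound still goes through with the $0^0=1$ convention.
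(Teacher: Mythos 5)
Your proof is correct and takes exactly the same approach as the paper: apply \cref{Ln-uni} with $r_{n,j}=\sqrt{2n a_{n,j}/q^n}$, so that $|w_n|^2 = \frac{2n}{q^n}(a_{n,1}+a_{n,2})$ is bounded below by an absolute constant, then bound $(1+C_2|w_n|^2)^{A_n}$ and $\mathcal S(|w_n|)^{E_n}$ by the long-range asymptotics $O(|w_n|^2)$ and $O(|w_n|^{-1/2})$, and compute the Gaussian and monomial contributions explicitly. The only cosmetic difference is that you hedge on whether \eqref{s-long-range} holds uniformly, when in fact the paper states it as a bound valid for all $y$ (it is vacuous for $y$ bounded since $\mathcal S \leq 1$), so no enlargement of $C_6$ is needed for that step.
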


\begin{proof}  We apply Lemma \ref{Ln-uni} and set $r_{n,j}= = \sqrt{ \frac{2n}{q^n} a_{n,j} }$  We have \begin{equation*}
\abs{w_n} = \sqrt{ r_{n,1}^2 + r_{n,2}^2} = \sqrt{ \frac{2n}{q^n } (a_{n,1} + a_{n,2})} \end{equation*} which is larger than an absolute constant, so \begin{equation}\label{mb-2-2}  1+ C_2 \abs{w_n}^2= O ( \abs{w_n}^2) = O \left(  \frac{2n}{q^n } (a_{n,1} + a_{n,1}) \right) \end{equation} and \begin{equation}\label{mb-2-3} \mathcal S( \abs{w_n}) = O ( \abs{w_n}^{-\frac{1}{2}} ) = O\left ( \left( \frac{q^n}{2n} \right)^{\frac{1}{4}} (a_{n,1}+a_{n,2})^{-\frac{1}{4}} \right) \end{equation} while \begin{equation}\label{mb-2-4} e^{ - \frac{q^n}{n}\frac{ \abs{w_n}^2}{4}} = e^{ -  \frac{ a_{n,1} + a_{n,2}}{2}}\end{equation} and \begin{equation}\label{mb-2-5} r_{n,j}^{a_{n,j}} = \left( \frac{q^n}{2n}\right)^{ - \frac{a_{n,j}}{2}}  a_{n,j}^{  \frac{a_{n,j}} {2} } .\end{equation} Putting \eqref{mb-2-2}, \eqref{mb-2-3}, \eqref{mb-2-4}, and \eqref{mb-2-5} all together gives
\[   \frac{ (1+ C_2 \abs{w_n}^2)^{ A_n}  \mathcal S(\abs{w_n})^{E_n}  }{ e^{ - \frac{q^n}{n}\frac{ \abs{w_n}^2}{4}} r_{n,1}^{a_{n,1}} r_{n,2}^{a_{n,2} } } = O(1)^{ A_n+E_n} \left( \frac{q^n}{2n} \right)^{ \frac{E_n}{4}- A_n + \frac{a_{n,1}+ a_{n,2}}{2}} (a_{n,1}+a_{n,2})^{ A_n - \frac{E_n}{4}}  e^{  \frac{ a_{n,1} + a_{n,2}}{2}}a_{n,1}^{ - \frac{a_{n,1}}{2}} a_{n,2}^{ - \frac{a_{n,2}}{2}} .\] Adding the $\sqrt{a_{n,1}+1}\sqrt{a_{n,2}+1}$ term, we obtain the statement.\end{proof}

Our final lemma will be used for $a_{n,1}+a_{n,2}$ in an intermediate range.

\begin{lemma}\label{Ln-medium-range} For each $C_7$, there exists $\epsilon>0$ such that for integers $n>0$ and $a_{n,1},a_{n,2}\geq 0$ with $a_{n,1} + a_{n,2}$ less than $C_7 \frac{q^n}{n}$, we have
\[   \mathcal L_n(a_{n,1}, a_{n,2} )  \ll  e^{ \left( \frac{1}{2} - \epsilon \right) (a_{n,1}+ a_{n,2})}  \left( \frac{q^n}{2n} \right)^{  \frac{a_{n,1} + a_{n,2}}{2} } a_{n,1}^{ - \frac{a_{n,1}}{2}} a_{n,2}^{ - \frac{a_{n,2}}{2}} \frac{1}{ \sqrt{a_{n,1}+1}  \sqrt{a_{n,2} +1}    }   .\]  \end{lemma}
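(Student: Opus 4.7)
The plan is to apply Lemma \ref{Ln-uni} with the same choice $r_{n,j} = \sqrt{2n a_{n,j}/q^n}$ used in the proof of Lemma \ref{Ln-long-range}. With this choice $|w_n|^2 = r_{n,1}^2 + r_{n,2}^2 = 2n(a_{n,1}+a_{n,2})/q^n \leq 2C_7$, so $|w_n|$ lies in a bounded interval depending only on $C_7$. A direct computation gives $r_{n,1}^{a_{n,1}} r_{n,2}^{a_{n,2}} = (q^n/(2n))^{-(a_{n,1}+a_{n,2})/2} a_{n,1}^{a_{n,1}/2} a_{n,2}^{a_{n,2}/2}$, producing the explicit factors of the target bound, and $e^{-q^n|w_n|^2/(4n)} = e^{-(a_{n,1}+a_{n,2})/2}$, whose reciprocal is the $e^{(a_{n,1}+a_{n,2})/2}$ factor. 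It therefore reduces to showing $(1+C_2|w_n|^2)^{A_n}\mathcal{S}(|w_n|)^{E_n} \ll e^{-\epsilon(a_{n,1}+a_{n,2})}$ for some $\epsilon > 0$; this is precisely the gap between the coefficient $1/2$ obtained in Lemma \ref{Ln-long-range} and the improved coefficient $1/2 - \epsilon$ required here.

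The key new ingredient is a uniform Gaussian-type upper bound on $\mathcal{S}$: for each $C_7$ there exists $c > 0$ such that $\mathcal{S}(y) \leq e^{-cy^2}$ for every $y \in (0, \sqrt{2C_7}]$. This is a compactness argument combining two clauses of Proposition \ref{hybrid-unified}. Near $y = 0$, the bound $\mathcal{S}(y) \leq e^{-y^2/4 + O(y^3)}$ yields $-\log\mathcal{S}(y)/y^2 \geq 1/4 - O(y)$, which is bounded below on some initial interval $(0, \eta]$. On the remaining interval $[\eta, \sqrt{2C_7}]$, $\mathcal{S}$ is continuous and bounded away from $1$ while $y^2$ is bounded above, so $-\log\mathcal{S}(y)/y^2$ is bounded below by a positive constant. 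Taking the minimum of these two lower bounds gives the desired $c$.

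With $\mathcal{S}(|w_n|)^{E_n} \leq e^{-cE_n|w_n|^2}$ and the elementary estimate $(1+C_2|w_n|^2)^{A_n} \leq e^{C_2 A_n|w_n|^2}$, the product is at most $e^{-(cE_n - C_2 A_n)|w_n|^2}$. The asymptotics \eqref{an-bound} and \eqref{nn-asymptotic} yield $cE_n - C_2 A_n = cq^n/n + O(q^{n/2}/n) \geq cq^n/(2n)$ once $q^n$ exceeds a threshold depending only on $c$ and $C_2$, so using $q^n|w_n|^2/(2n) = a_{n,1}+a_{n,2}$ we obtain the saving $e^{-c(a_{n,1}+a_{n,2})/2}$, and $\epsilon = c/2$ works. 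For the finitely many exceptional pairs $(q,n)$ below the threshold, $a_{n,1}+a_{n,2} \leq C_7 q^n/n$ is uniformly bounded by an absolute constant, so the inequality holds trivially with an absolute multiplicative constant absorbed into $\ll$. The main obstacle is the compactness step establishing $\mathcal{S}(y) \leq e^{-cy^2}$ uniformly on the whole range $(0, \sqrt{2C_7}]$: this requires patching the short-$y$ Taylor behavior of $\mathcal{S}$ with its strict submaximality on compact subsets of $(0,\infty)$, using both clauses of Proposition \ref{hybrid-unified} simultaneously. Everything else is routine perturbation of the calculation in Lemma \ref{Ln-long-range}.
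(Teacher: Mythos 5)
Your proof is correct and follows essentially the same route as the paper: choose $r_{n,j}=\sqrt{2na_{n,j}/q^n}$, observe $\abs{w_n}$ is bounded, extract a uniform Gaussian-type bound $\mathcal S(y)\leq e^{-cy^2}$ on the relevant interval by patching the small-$y$ Taylor estimate with the strict submaximality on closed intervals away from $0$, and beat $A_n$ by $E_n$ once $q^n$ is large. One small remark: you assert $\mathcal S$ is continuous, which is neither proved in \cref{hybrid-unified} nor needed (boundedness away from $1$ suffices); and your phrasing of the threshold as a condition on $q^n$ rather than on $n$ is in fact the precise form the argument requires, since the exceptional set $\{a_{n,1}+a_{n,2}<C_7 q^n/n\}$ is finite only when $q^n$ is bounded, not merely when $n$ is.
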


\begin{proof} For small values of $n$, there are finitely many possibilities and it suffices to apply \cref{Ln-short-range}, absorbing any discrepancies into the implicit constant, so we may assume $n$ is large. We apply Lemma \ref{Ln-uni} and set $r_{n,j}= = \sqrt{ \frac{2n}{q^n} a_{n,j} }$.  We have $\abs{w_n} <C_7 $.  There exists $\epsilon>0$ such that $\mathcal S( x) < e^{ -   \epsilon x^2}$ for all $x \leq C_7$ (since any $\epsilon< \frac{1}{4}$ works for $x$ sufficiently small and some $\epsilon$ works on every bounded interval away from $0$). We furthermore have 
\begin{equation}\label{mb-3-1} (1+ C_2 \abs{w_n}^2)^{ A_n}  \mathcal S(\abs{w_n})^{E_n} \leq e^{ C_2 A_n \abs{w_n}^2 - \epsilon E_n \abs{w_n}^2 } = e^{ C_2 A_n \abs{w_n}^2 + \epsilon B_n \abs{w_n}^2 -  \epsilon \frac{q^n}{n} \abs{w_n}^2 }    \leq e^{ -  \frac{\epsilon }{2} \frac{q^n}{n}  \abs{w_n}^2} \end{equation} for $n$ is sufficiently large. We also have
\begin{equation}\label{mb-3-5} r_{n,j}^{a_{n,j}} = \left( \frac{q^n}{2n}\right)^{ - \frac{a_{n,j}}{2}}  a_{n,j}^{  \frac{a_{n,j}} {2} } .\end{equation}  Using \eqref{mb-3-1} and \eqref{mb-3-5}, we have
\[   \frac{ (1+ C_2 \abs{w_n}^2)^{ A_n}  \mathcal S(\abs{w_n})^{E_n}  }{ e^{ - \frac{q^n}{n}\frac{ \abs{w_n}^2}{4}} r_{n,1}^{a_{n,1}} r_{n,2}^{a_{n,2} } }\leq  e^{ \left( \frac{1}{4} - \frac{ \epsilon}{2} \right) \frac{q^n}{n} \abs{w_n}^2 }   \left( \frac{q^n}{2n} \right)^{  \frac{a_{n,1} + a_{n,2}}{2} }  a_{n,1}^{ - \frac{a_{n,1}}{2}} a_{n,2}^{ - \frac{a_{n,2}}{2}} \] 
\[ =  e^{ \left( \frac{1}{2} - \epsilon \right) (a_{n,1}+ a_{n,2})}  \left( \frac{q^n}{2n} \right)^{  \frac{a_{n,1} + a_{n,2}}{2} }  a_{n,1}^{ - \frac{a_{n,1}}{2}} a_{n,2}^{ - \frac{a_{n,2}}{2}} . \] Adding the $\sqrt{a_{n,1}+1}\sqrt{a_{n,2}+1}$ term, we obtain the statement.\end{proof}

We are now ready to give our final bound we need on the $h_{a_{1,1},\dots, a_{k,2}}$s.

\begin{lemma}\label{hermite-end} If $q>11$ then for $(h_{a_{1,1},\dots, a_{k,2}})_{ a_{1,1},\dots, a_{k,2} \in \mathbb Z^{\geq 0}}$  as in \cref{hermite-start} we have
\[ \sum_{ \substack{ a_{1,1}, \dots, a_{k,2} \in \mathbb Z^{\geq 0} \\ \sum_{n=1}^k n (a_{n,1}+ a_{n,2}) > k}}  \abs{h_{a_{1,1},\dots, a_{k,2}} }^2 \prod_{n=1}^k a_{n,1}! a_{n,2}! \left( \frac{2n}{q^n} \right)^{ a_{n,1}+ a_{n,2} }  \ll_q  k ^{ - \frac{q-2}{2}} . \]  \end{lemma}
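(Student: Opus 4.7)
The strategy is to apply the pointwise bound $|h_{a_{1,1},\dots,a_{k,2}}|^2 \leq \prod_n \mathcal L_n(a_{n,1},a_{n,2})^2$ from \cref{hermite-start} to reduce the claim to bounding
\[
S := \sum_{\substack{(a_{n,1},a_{n,2})_n \\ \sum n(a_{n,1}+a_{n,2}) > k}} \prod_{n=1}^k T_n(a_{n,1},a_{n,2}), \quad T_n(a,b) := \mathcal L_n(a,b)^2\,a!\,b!\,(2n/q^n)^{a+b}.
\]
Writing $u_n = a_{n,1}+a_{n,2}$, I would substitute the regime-specific estimates of \cref{Ln-short-range,Ln-medium-range,Ln-long-range} into $T_n(a,b)$, simplify the factorials by Stirling, and sum over $(a,b)$ with $a+b=u$ (the factor $\sum_{a+b=u} 1/\sqrt{(a+1)(b+1)}$ is $O(1)$). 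This should yield, for $T_n^*(u) := \sum_{a+b=u} T_n(a,b)$, uniform bounds: $T_n^*(u) \ll (un/q^n)^{u/3}$ in the short range; $T_n^*(u) \ll e^{-2\epsilon u}$ in the medium range $u \leq C_6 q^n/n$; $T_n^*(u) \ll_q (q^n/(nu))^{q/2+1}$ in the long range for $n \geq 2$, the factor $O(1)^{A_n+E_n}$ being absorbed by taking $C_6$ large using the slack $c(A_n+E_n)$ in \cref{q11}; and $T_1^*(u) \ll_q u^{-q/2}$ in the long range for $n=1$, since $E_1/2-2A_1 = q/2$ gives the exponent directly with no slack available.

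With these bounds the short-range estimate gives $\sum_{u\geq 1} T_n^*(u) \ll (n/q^n)^{1/3}$ for $n$ sufficiently large, hence $\prod_{n=1}^k \sum_{u\geq 0} T_n^*(u) = O_q(1)$. I would then split $S$ according to whether $u_1 > k$ or $u_1 \leq k$. The first piece is
\[
\Bigl(\sum_{u_1 > k} T_1^*(u_1)\Bigr) \prod_{n\geq 2} \sum_u T_n^*(u) \ll \Bigl(\sum_{u_1 > k} u_1^{-q/2}\Bigr) \cdot O_q(1) \ll k^{-(q-2)/2},
\]
which already matches the target. For the second piece the constraint forces $\sum_{n\geq 2} n u_n > k-u_1 \geq 0$; here the exponential medium-range decay together with the super-polynomial long-range decay for $n \geq 2$ (whose exponent $E_n/2-2A_n$ grows with $n$ via \cref{q11}) makes the $(q-2)/2$-th moment of $\sum_{n\geq 2} n u_n$ bounded uniformly in $k$, so Markov's inequality yields a contribution smaller than $k^{-(q-2)/2}$ with room to spare.

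The main obstacle is the $n=1$ long-range bound: \cref{q11} provides no slack at $n=1$, and $T_1^*(u) \ll u^{-q/2}$ is sharp. The moment sum $\sum_{u_1 \geq 1} u_1^{q/2-1} T_1^*(u_1)$ is just barely divergent, so a uniform moment-method treatment of $u_1$ is unavailable; the proof must exploit the truncation $u_1 > k$ directly, integrating the sharp power-law tail $u_1^{-q/2}$ from $u_1 = k$ to produce exactly the exponent $(q-2)/2$. Correspondingly, the $n=1$ regime alone determines the precise power of $k$ in the final bound, and all routine bookkeeping (combining the two pieces, tracking constants) amounts to verifying that every other contribution is asymptotically smaller.
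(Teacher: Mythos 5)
Your proposal is essentially correct, and it takes a genuinely different global route from the paper while relying on the same local estimates (\cref{Ln-short-range,Ln-medium-range,Ln-long-range}, the slack in \cref{q11}, and Stirling). The paper does not split on $u_1$; instead it shows the \emph{entire} sum weighted by $\bigl(\min(k,\sum_n n u_n)\bigr)^{q/2}$ is $O_q(k)$, via the domination $\sum_n n u_n \leq \prod_n(1 + n u_n)$ followed by a local analysis in which the $n=1$ factor is $O(k)$ (the $\min(k,\cdot)$ cutoff is precisely what makes $\sum_u \min(k,u)^{q/2} T_1^*(u)$ converge to $O(k)$ despite the borderline $u^{-q/2}$ tail) and each $n>1$ factor is $1+O_q(1/n^2)$; dividing by $k^{q/2}$ then gives the claim. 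Your version replaces the $\min$-trick with a split on $u_1 > k$ versus $u_1 \leq k$: the first piece is $\sum_{u_1>k}T_1^*(u_1)\cdot\prod_{n\geq 2}\sum_u T_n^*(u)\ll k^{1-q/2}$, and the second uses a moment/Markov bound on $\sum_{n\geq 2}n u_n$. Both are moment-method arguments with exponent essentially $q/2$; the paper applies it uniformly to all $n$ with a truncation, while you isolate $n=1$ explicitly as the bottleneck, which makes the source of the exponent $(q-2)/2$ more visible.

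Two small points to tighten. First, in the second piece, Markov at exponent exactly $m=(q-2)/2$ gives $O_q(k^{-(q-2)/2})$ once you also sum $T_1^*(u_1)/\max(1,k-u_1)^m$ over $u_1\leq k$ (split at $u_1 \leq k/2$ and $u_1 > k/2$, using $T_1^*(u)\ll u^{-q/2}$ on the second half and $m>1$ to make $\sum c^{-m}$ converge); to actually get ``room to spare'' you should take $m$ slightly larger than $(q-2)/2$, which is available since \cref{q11} gives $E_n/2-2A_n - q/2 - 1 > c(A_n+E_n) > 0$ for $n\geq 2$. Second, when verifying that $\sum_{u\geq 0}(1+nu)^m T_n^*(u)=1+O(1/n^2)$ for $n\geq 2$ large, you should keep the sharper short-range bound $T_n^*(u)\ll e^{O(u)}(un/q^n)^{u/3}$ rather than weakening $a^{a/3}b^{b/3}\leq(C_8 q^n/n^7)^{u/3}$ at the start: the polynomial weight $(nu)^m$ is then beaten by $q^{-nu/3}$ even at $u=1$. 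This is also the step where the paper's own write-up silently drops the $(n(a_{n,1}+a_{n,2}))^{q/2}$ weight in passing from \eqref{hermite-l2-short} to the next display; your formulation handles it cleanly.
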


\begin{proof} It suffices to prove

\[ \sum_{ \substack{ a_{1,1}, \dots, a_{k,2} \in \mathbb Z^{\geq 0} } }  \Bigl( \min(k, \sum_{n=1}^k n (a_{n,1}+ a_{n,2}) \Bigr)^{ \frac{q}{2}}    \abs{h_{a_{1,1},\dots, a_{k,2}} }^2 \prod_{n=1}^k a_{n,1}! a_{n,2}! \left( \frac{2n}{q^n} \right)^{ a_{n,1}+ a_{n,2} }  = O_q(k)  . \]

The inequality $\sum_{n=1}^k  n (a_{n,1}+ a_{n,2}) \leq \prod_{n=1}^k (1 +  n (a_{n,1}+ a_{n,2}) )$ and \eqref{eq-hln} gives
\[ \sum_{ \substack{ a_{1,1}, \dots, a_{k,2} \in \mathbb Z^{\geq 0} } }  \Bigl( \min (k, \sum_{n=1}^k n (a_{n,1}+ a_{n,2}) )\Bigr)^{ \frac{q}{2}}    \abs{h_{a_{1,1},\dots, a_{k,2}} }^2 \prod_{n=1}^k a_{n,1}! a_{n,2}! \left( \frac{2n}{q^n} \right)^{ a_{n,1}+ a_{n,2} } \]
\[ \leq\sum_{ \substack{ a_{1,1}, \dots, a_{k,2} \in \mathbb Z^{\geq 0} } }  \prod_{n=1}^k \Bigl( \min(k,  1+  n (a_{n,1}+ a_{n,2}) )\Bigr)^{ \frac{q}{2}}    \abs{h_{a_{1,1},\dots, a_{k,2}} }^2 \prod_{n=1}^k a_{n,1}! a_{n,2}! \left( \frac{2n}{q^n} \right)^{ a_{n,1}+ a_{n,2} } \]
\[ \ll  \sum_{ \substack{ a_{1,1}, \dots, a_{k,2} \in \mathbb Z^{\geq 0} } }  \prod_{n=1}^k  \Biggl( \Bigl( \min(k,  1+  n (a_{n,1}+ a_{n,2})) \Bigr)^{ \frac{q}{2}} \mathcal L_n ( a_{n,1}, a_{n,2})    a_{n,1}! a_{n,2}! \left( \frac{2n}{q^n} \right)^{ a_{n,1}+ a_{n,2} }   \Biggr)\]
\[ =   \prod_{n=1}^k  \Biggl( \sum_{a_{n,1}, a_{n,2} =0}^{\infty} \Bigl( \min(k,  1+  n (a_{n,1}+ a_{n,2}) )\Bigr)^{ \frac{q}{2}} \mathcal L_n ( a_{n,1}, a_{n,2})    a_{n,1}! a_{n,2}! \left( \frac{2n}{q^n} \right)^{ a_{n,1}+ a_{n,2} }   \Biggr)\] so it suffices to show 
\begin{equation}\label{hermite-end-sts} \sum_{a_{n,1}, a_{n,2} =0}^{\infty} \Bigl( \min(k,  1+  n (a_{n,1}+ a_{n,2})) \Bigr)^{ \frac{q}{2}} \mathcal L_n ( a_{n,1}, a_{n,2})    a_{n,1}! a_{n,2}! \left( \frac{2n}{q^n} \right)^{ a_{n,1}+ a_{n,2} } = \begin{cases} O(k)  & \textrm{if } n=1 \\ 1 + O_q(1/n^2) & \textrm{ if } n>1 \end{cases}.\end{equation}
Removing the $a_{n,1},a_{n,2}=0$ term handled by \cref{L-constant-term}, this is equivalent to
\[ \sum_{\substack{a_{n,1}, a_{n,2} \in \mathbb Z^{\geq 0}\\ (a_{n,1},a_{n,2})\neq (0,0) }} \Bigl( \min (k,  n (a_{n,1}+ a_{n,2})) \Bigr)^{ \frac{q-2}{2}} \mathcal L_n ( a_{n,1}, a_{n,2})    a_{n,1}! a_{n,2}! \left( \frac{2n}{q^n} \right)^{ a_{n,1}+ a_{n,2} } =  \begin{cases} O(k)  & \textrm{if } n=1 \\  O_q(1/n^2) & \textrm{ if } n>1 \end{cases}.\]
Stirling's formula gives 
\begin{equation}\label{weak-stirling} a_{n,j}! \ll a_{n,j}^{a_{n,j} } e^{ - a_{n,j}} \sqrt{a_{n,j}+1}. \end{equation}

We split $a_{n,1} + a_{n,2}$ into three ranges.  We fix constants $C_8$ sufficiently small and $C_9$ sufficiently large. For $ a_{n,1}+ a_{n,2} \leq C_8 \frac{q^n}{n^7}$ we apply \cref{Ln-short-range} to bound $\mathcal L_n ( a_{n,1}, a_{n,2})$. For $a_{n,1}+ a_{n,2} \geq C_9 \frac{q^n}{n}$ we apply \cref{Ln-long-range}. For $a_{n,1},a_{n,2} \in (C_8 \frac{q^n}{n^7}, C_9 \frac{q^n}{n})$ we apply \cref{Ln-medium-range}. 

Applying \cref{Ln-long-range} and \eqref{weak-stirling} to the terms in \eqref{hermite-end-sts} with $a_{n,1}+ a_{n,2} \geq C_9 \frac{q^n}{n}$, we obtain

\begin{equation}\label{hermite-l2-long}  \sum_{\substack{a_{n,1}, a_{n,2} \in \mathbb Z^{\geq 0}\\ a_{n,1}+ a_{n,2}  \geq C_9 \frac{q^n}{n} }} \Bigl( \min(k,   n (a_{n,1}+ a_{n,2})) \Bigr)^{ \frac{q}{2}} \mathcal L_n ( a_{n,1}, a_{n,2})    a_{n,1}! a_{n,2}! \left( \frac{2n}{q^n} \right)^{ a_{n,1}+ a_{n,2} } \end{equation}
\[ \ll  \sum_{\substack{a_{n,1}, a_{n,2} \in \mathbb Z^{\geq 0}\\ a_{n,1}+ a_{n,2}  \geq C_9 \frac{q^n}{n} }} \Bigl( \min(k,   n (a_{n,1}+ a_{n,2})) \Bigr)^{ \frac{q}{2}} O(1)^{ A_n+E_n} \left( \frac{q^n}{2n} \right)^{ \frac{E_n}{2}- 2A_n} (a_{n,1}+a_{n,2})^{ 2A_n - \frac{E_n}{2}}   \frac{1}{ \sqrt{a_{n,1}+1}  \sqrt{a_{n,2} +1}  }\]
\[ \ll \sum_{\substack{a \in \mathbb Z^{\geq 0}\\ a \geq C_9 \frac{q^n}{n} }} \Bigl( \min(k,   n a) \Bigr)^{ \frac{q}{2}} O(1)^{ A_n+E_n} \left( \frac{q^n}{2n} \right)^{ \frac{E_n}{2}- 2A_n}  a^{ 2A_n - \frac{E_n}{2} }. \]

We now handle the cases $n=1$ and $n>1$ separately. For $n=1$, we have $E_n=q$ and $A_n=0$ and the terms depending on $q$ and $n$ may be absorbed into the implicit constant. This gives
\[\sum_{\substack{a \in \mathbb Z^{\geq 0}\\ a \geq C q }} \Bigl( \min (k,    a) \Bigr)^{ \frac{q}{2}}  a^{ - \frac{q}{2} } .\]

The terms where $a \leq k$ contribute at most $ \sum_{ a=1}^k 1 =k $ and the remaining terms contribute $k^{ \frac{q}{2}} \sum_{a=k+1}^\infty  a^{- \frac{q}{2}} =O(k )$, so this indeed gives $O(k)$.

For $n>1$ and at all subsequent points in the argument, we will ignore the $\min(k , \cdot )$. This gives 
\[\sum_{\substack{a \in \mathbb Z^{\geq 0}\\ a \geq C_9 \frac{q^n}{n} }}  n^{ \frac{q}{2}}  O(1)^{ A_n+E_n} \left( \frac{q^n}{2n} \right)^{ \frac{E_n}{2}- 2A_n}  a^{ 2A_n - \frac{E_n}{2} + \frac{q}{2}} \]
\[ \ll n^{ \frac{q}{2}}  O(1)^{ A_n+E_n} \left( \frac{q^n}{2n} \right)^{ \frac{E_n}{2}- 2A_n}  \left( C_9 \frac{q^n}{n} \right)^{ 2 A_n - \frac{E_n}{2}+ \frac{q}{2} +1} \]
\[ = n^{ \frac{q}{2}}  O(1)^{ A_n+E_n}  \left( C_9 \right)^{ 2 A_n - \frac{E_n}{2}+ \frac{q}{2}+1 }  \left( \frac{q^n}{n}  \right)^{ \frac{q}{2}+1}. \]
Since $q> 11$,  by \eqref{q11}, the exponent $ \frac{E_n}{2}- 2 A_n - \frac{q}{2}-1$ is greater than a constant multiple of $A_n + E_n $ so choosing $C_9$ sufficiently large the $\left( C_9 \right)^{ 2 A_n - \frac{E_n}{2}+ \frac{q}{2}} $ term dominates $O(1)^{A_n+E_n}  $ by a factor that is doubly exponential in $n$. Since $ \left( \frac{q^n}{n}  \right)^{ \frac{q}{2}}$ and $n^{ \frac{q-2}{2}} $ are at most singly exponential in $n$, they are easily dominated and the product is $O(1/n^2)$. So indeed \eqref{hermite-l2-long} is $O(1/n^2)$ for $n>1$ and $O( k)$ for $n=1$.

Applying \cref{Ln-medium-range} and \eqref{weak-stirling} to the terms in \eqref{hermite-end-sts} with $a_{n,1}+ a_{n,2} \in (C_8 \frac{q^n}{n^7}, C_9 \frac{q^n}{n})$, we obtain 
\begin{equation}\label{hermite-l2-medium}  \sum_{\substack{a_{n,1}, a_{n,2} \in \mathbb Z^{\geq 0}\\ a_{n,1}+ a_{n,2}  \in ( C_8 \frac{q^n}{n^7},  C_9 \frac{q^n}{n} ) }} \left( n (a_{n,1}+ a_{n,2})  \right)^{ \frac{q}{2}} \mathcal L_n ( a_{n,1}, a_{n,2})    a_{n,1}! a_{n,2}! \left( \frac{2n}{q^n} \right)^{ a_{n,1}+ a_{n,2} } \end{equation}
\[ \ll \sum_{\substack{a_{n,1}, a_{n,2} \in \mathbb Z^{\geq 0}\\ a_{n,1}+ a_{n,2}  \in ( C_8 \frac{q^n}{n^7},  C_9 \frac{q^n}{n} ) }} \left( n (a_{n,1}+ a_{n,2})  \right)^{ \frac{q}{2}} e^{ - 2\epsilon  (a_{n,1}+ a_{n,2})}   \frac{1}{ \sqrt{a_{n,1}+1}  \sqrt{a_{n,2} +1}    } \]
\[ \leq   \sum_{\substack{a_{n,1}, a_{n,2} \in \mathbb Z^{\geq 0}\\ a_{n,1}+ a_{n,2}  \in ( C_8 \frac{q^n}{n^7},  C_9 \frac{q^n}{n} ) }}  \left( n C_9 \frac{q^n}{n} \right)^{\frac{q}{2}} e^{-2 \epsilon C_8 \frac{q^n}{n^7}}  \]
\[ \ll \left(\frac{q^n}{2} \right)^2 \left( n C_9 \frac{q^n}{n} \right)^{\frac{q}{2}} e^{-2 \epsilon C_8 \frac{q^n}{n^7}}\]
and the term $e^{-2 \epsilon C_8 \frac{q^n}{n^7}}$ decreases doubly exponential in $n$ while the remaining terms increase singly-exponentially so the product decreases doubly-exponentially and in particular is $O(1/n^2)$. 

Applying \cref{Ln-short-range} and \eqref{weak-stirling} to the terms in \eqref{hermite-end-sts} with $a_{n,1}+ a_{n,2} \leq C_8 \frac{q^n}{n^7}$, we obtain
\begin{equation}\label{hermite-l2-short}  \sum_{\substack{a_{n,1}, a_{n,2} \in \mathbb Z^{\geq 0}\\ a_{n,1}+ a_{n,2}  \in (0,  C_8 \frac{q^n}{n^7}]  }} \left( n (a_{n,1}+ a_{n,2})  \right)^{ \frac{q}{2}} \mathcal L_n ( a_{n,1}, a_{n,2})    a_{n,1}! a_{n,2}! \left( \frac{2n}{q^n} \right)^{ a_{n,1}+ a_{n,2} } \end{equation}
\[ \ll \sum_{\substack{a_{n,1}, a_{n,2} \in \mathbb Z^{\geq 0}\\ a_{n,1}+ a_{n,2}  \in (0,  C_8 \frac{q^n}{n^7}]  }} e^{ O ( a_{n,1} + a_{n,2})} \left( \frac{q^n}{n} \right)^{-  \frac{a_{n,1}+ a_{n,2}}{3}}  a_{n,1}^{ \frac{a_{n,1}}{3}} a_{n,2}^{ \frac{a_{n,2}}{3}}\frac{1}{ \sqrt{a_{n,1}+1}  \sqrt{a_{n,2} +1}     }   \]
\[ \leq \sum_{\substack{a_{n,1}, a_{n,2} \in \mathbb Z^{\geq 0}\\ a_{n,1}+ a_{n,2}  \in (0,  C_8 \frac{q^n}{n^7}]  }} e^{ O ( a_{n,1} + a_{n,2})} \left( \frac{q^n}{n} \right)^{-  \frac{a_{n,1}+ a_{n,2}}{3}}  (C_8 q^n/n^7) ^{ \frac{a_{n,1}}{3}} (C_8 q^n/n^7) ^{ \frac{a_{n,2}}{3}}  \]
\[ = \sum_{\substack{a_{n,1}, a_{n,2} \in \mathbb Z^{\geq 0}\\ a_{n,1}+ a_{n,2}  \in (0,  C_8 \frac{q^n}{n}]  }}  ( C_8^{ 1/3} e^{O(1)} n^{-2} )^{ a_{n,1} + a_{n,2}} \leq  \sum_{\substack{a_{n,1}, a_{n,2} \in \mathbb Z^{\geq 0}\\ a_{n,1}+ a_{n,2}  >0  }}  ( C_8^{ 1/3} e^{O(1)} n^{-2} )^{ a_{n,1} + a_{n,2}} .\]
We have \[  \sum_{\substack{a_{n,1}, a_{n,2} \in \mathbb Z^{\geq 0}\\ a_{n,1}+ a_{n,2}  >0  }} x^{ a_{n,1} + a_{n,2}} = \frac{ 2x-x^2}{ (1-x)^2 } = O(x) \] for $x$ sufficiently small so, taking $C_8$ sufficiently small, this is $O ( C_8^{1/3} e^{O(1)} n^{-2} ) = O(n^{-2})$, as desired.\end{proof}

\section{The chimera}

In this section, we prove \cref{intro-support}, \cref{intro-lf}, and \cref{intro-hf}. The proof of \cref{intro-support} is direct and independent of the prior results. To prove \cref{intro-lf} and \cref{intro-hf}, we combine estimates from the previous section on the function $F$ with estimates from the literature on the measure $\mu_{\textrm{rm}}$.

\begin{proof}[Proof of \cref{intro-support}]  It suffices to check first that the support of $\mu_{\textrm{ch}}$ is contained in the support of $\mu_{\textrm{rm}}$ and second that the support of $\mu_{\textrm{ch}}$ is contained in the support of $\mu_{\textrm{ep}}$.

Since $\mu_{\textrm{rm}}$ is the pushforward of the Haar measure on $U(N)$, and the support of Haar measure is all of $U(N)$, the support of $\mu_{\textrm{rm}}$ is (the closure of) the image of $U(N)$. Since $\mu_{\textrm{ch}}$ is also the pushforward of a measure on $U(N)$, its support is also contained in (the closure of) the image of $U(N)$ and hence in the support of  $\mu_{\textrm{rm}}$.

For a point $L^* \in \Cqs$ to be contained in the support of $\mu_{\textrm{ep}}$, each neighborhood of $L^*$ must contain a random Euler product $L_\xi$ with positive probability. Since the topology is the product topology, a basis for the neighborhood consists of the sets of power series in $q^{-s}$ whose first $n$ coefficients are all within $\epsilon$ of the first $n$ coefficients of $L^*$. Whether $L_\xi$ lies in this neighborhood only depends on $\xi(\mathfrak p)$ for $\mathfrak p$ of degree $\leq n$. The set of functions from $\mathfrak p$ of degree $\leq n$ to the circle a finite-dimensional manifold, the uniform measure on this manifold is supported everywhere, and the map from this to the first $n$ coefficients of $L_\xi$, so it suffices for $L^*$ to be in the image of this manifold. In other words, it suffices to check there is a single function $\xi$ from primes to the unit circle such that first $n$ coefficients of $L_\xi$ agree with the first $n$ coefficients of $L^*$.

If $L^*$ lies in the support of $\mu_{\textrm{ch}}$ then there is certainly a function $\xi$ where the first $k$ coefficients of $L_\xi$ agree with $L^*$ since  $L^*= \det ( I - q^{ \frac{1}{2} - s } M) $ for some $M$ with \[ F( - q^{1/2} \tr(M),\dots, -q^{k/2} \tr (M^k)/k)>0\] where $F$ is the probability density function of the first $k$ coefficients of the logarithm of a random $L_\xi$. Since $F( - q^{1/2} \tr(M),\dots, -q^{k/2} \tr (M^k)/k)>0$, the density is nonzero, so there exists $\xi$ such that the first $k$ coefficients of $\log L_\xi$  match $ - q^{1/2} \tr(M),\dots, -q^{k/2} \tr (M^k)/k$.

We now check by induction on $n$ that for each $n \geq k$ there exists a function $\xi_n$ such that the first $n$ coefficients of  $\log L_{\xi_n}$ match  $ - q^{1/2} \tr(M),\dots, -q^{n/2} \tr (M^n)/n$. For $n=k$ this is what we just checked. For $n=k$, we choose $\xi_n$ so that $\xi_n(f) =\xi_{n-1}(f)$ for all $f$ of degree $<n$, so that the first $n-1$ coefficients of $L_{\xi_n}$ match the first $n-1$ coefficients of $L_{\xi_{n-1}}$. The $n$th coefficient of $\log L_\xi$ is then 
\[ \sum_{ \substack{ \mathfrak p \in \mathbb F_q[u]^+ \\ \textrm{irreducible} \\ \deg \mathfrak p=n}} \xi_n(\mathfrak p) + \sum_{ \substack{ \mathfrak p \in \mathbb F_q[u]^+ \\ \textrm{irreducible} \\ \deg \mathfrak p \mid n \\ \deg \mathfrak p \neq n }} \xi_n(\mathfrak p)/ (n/\deg \mathfrak p) \] so it suffices to choose $\xi_n$ so that
\[ \sum_{ \substack{ \mathfrak p \in \mathbb F_q[u]^+ \\ \textrm{irreducible} \\ \deg \mathfrak p=n}} \xi_n(\mathfrak p) = -q^{n/2} \tr (M^n)/n -  \sum_{ \substack{ \mathfrak p \in \mathbb F_q[u]^+ \\ \textrm{irreducible} \\ \deg \mathfrak p \mid n \\ \deg \mathfrak p \neq n }} \xi_n(\mathfrak p)/ (n/\deg \mathfrak p) \]
We can choose $\sum_{ \substack{ \mathfrak p \in \mathbb F_q[u]^+ \\ \textrm{irreducible} \\ \deg \mathfrak p=n}} \xi_n(\mathfrak p)$ to be any complex number of absolute value $\leq  E_n$ so it suffices to check the right hand side has absolute value $\leq E_n$. We have \[ \sum_{ \substack{ \mathfrak p \in \mathbb F_q[u]^+ \\ \textrm{irreducible} \\ \deg \mathfrak p \mid n \\ \deg \mathfrak p \neq n }} \xi_n(\mathfrak p)/ (n/\deg \mathfrak p) = O( q^{n/2})\] and \[ \abs{ q^{n/2} \tr (M^n)/n} \leq q^{n/2} N/n \]  

Since $E_n$ is greater than a constant times $q^n/n$, the right hand side is is $\leq E_n$ as long as $q^{n/2} > O(N + n)$ which happens as long as  $n/ \log N$ is sufficiently large. Since $n>k$, this happens if $k/\log N$ is sufficiently large, and since $k = \lfloor N^{\beta} \rfloor \geq \lfloor N^{1/4} \rfloor$, this happens for $N$ sufficiently large.\end{proof}

For this section, a convenient coordinate system for $\Cqs$ consists of the variables $b_n$ defined so that $b_n(L)$ is $\sqrt{\frac{n}{q^n}}$ times the coefficient of $q^{-ns}$ in $\log L$, so that \[ L = e^{ \sum_{n=1}^{\infty} \sqrt{\frac{q^n}{n}} b_n(L) q^{-s}}.\] Thus \begin{equation}\label{anll} b_n(L_\xi ) = \sqrt{\frac{n}{q^n}}X_{n,\xi}\end{equation} and because the definition \eqref{LM-def} of $L_M$ gives \[L_M (s)= \det ( I - q^{ \frac{1}{2} - s } M) = e^{ \sum_{n=1}^{\infty} q^{ \frac{n}{2} } q^{-ns} \tr(M^n)/n } ,\] we have \begin{equation}\label{anlm} b_n(L_M) = - \frac{  \tr( M^n)}{\sqrt{n}} .\end{equation}

Recall the measures $\mu_{\textrm{ep}}$ and $\mu_{\textrm{rm}}$ on $\Cqs$.  We define another measure $\mu_{\textrm{g}}$ on $\Cqs$ as the unique measure where the $b_n$ are independent complex standard normal random variables and the constant coefficient is $1$. The utility of $\mu_{\textrm{g}}$ for our purposes is that it serves as an approximation for both $\mu_{\textrm{ep}}$ and $\mu_{\textrm{rm}}$.

In particular, define a projection map $\eta\colon \mathbb C[[q^{-s} ]] \to \mathbb C^k$ that sends $L$ to $b_1(L),\dots, b_k(L)$.

The next two results give strong estimates, in different forms, comparing $\mu_{\textrm{rm}}$ and $\mu_{g}$. 

\begin{theorem}[]\cite[Proposition 1.2]{JohanssonLambert}\label{JL} For any $\beta<\frac{1}{2}$, setting $k = \lfloor N^\beta \rfloor$, as long as $N$ is sufficiently large in terms of $\beta$, the total variation distance between the pushforward measures $\eta_* \mu_{\textrm{rm}}^N$ and $\eta_* \mu_{\textrm{g}}$ is \[ \leq e^{- (1- o_N(1)) N^{1-\beta} \log (N^{1-\beta})} \] where $o_N(1)$ goes to $0$ as $N$ goes to $\infty$ for any fixed $\beta$. \end{theorem}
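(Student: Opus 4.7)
Since this statement is quoted as \cite[Proposition 1.2]{JohanssonLambert}, the cleanest route is simply to cite it. If a self-contained argument were needed, my plan would hinge on the Diaconis-Shahshahani moment identity: for $M$ Haar on $U(N)$, the joint moments $\mathbb{E}[\prod_i \tr(M^{j_i})^{a_i} \overline{\tr(M^{j_i})}^{b_i}]$ equal the corresponding moments of independent complex Gaussians with $\mathbb{E}|Z_j|^2 = j$ whenever the total weight $\sum_i j_i(a_i + b_i)$ is at most $N$. Under the coordinates $b_n = -\tr(M^n)/\sqrt{n}$ used in the paper, this means $\eta_* \mu_{\textrm{rm}}$ and $\eta_* \mu_{\textrm{g}}$ assign identical expectations to every polynomial in $b_n, \overline{b_n}$ of weighted degree $\leq N$, where $b_n$ and $\overline{b_n}$ each carry weight $n$.

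The first step would be to expand the density $f = d(\eta_* \mu_{\textrm{rm}})/d(\eta_* \mu_{\textrm{g}})$ in the orthonormal bivariate Hermite basis $\prod_n \mathit{He}_{a_n}(b_n)\mathit{He}_{b_n}(\overline{b_n})/\sqrt{a_n! b_n!}$ for $\eta_* \mu_{\textrm{g}}$. The moment identity forces the Hermite coefficient of every multi-index with $\sum_n n(a_n + b_n) \leq N$ to vanish, so by Parseval the $L^2(\eta_* \mu_{\textrm{g}})$-norm $\|f-1\|_2^2$ reduces to a sum over multi-indices of weight strictly greater than $N$. Because only the first $k = \lfloor N^\beta \rfloor$ coordinates appear and $\beta < 1/2$, any weight-$(N+1)$ multi-index must place at least $\lceil N^{1-\beta}\rceil$ of its weight on a single coordinate $n \leq k$, so one of the exponents $a_n, b_n$ is at least $\sim N^{1-\beta}/n$. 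The Hermite normalization $\sqrt{a_n! b_n!}$ in the denominator then contributes factorial decay of order $e^{-(1-o(1)) N^{1-\beta}\log(N^{1-\beta})/2}$ after optimizing over $n$.

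The second step is to bound the surviving Hermite coefficients themselves. Using the Weingarten calculus or a Schur--Weyl expansion, the discrepancy between a unitary moment and its Gaussian counterpart can be expressed as a sum over permutation data in which each ``non-planar'' configuration contributes at least one factor of $1/N$; these bounds, combined with the factorial suppression above and a geometric tail summation, yield an $L^2$ estimate of roughly the right exponential order. Finally, a Cauchy-Schwarz conversion turns the $L^2$ bound into a total variation bound.

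The main obstacle is the sharp constant $(1 - o_N(1))$ in the exponent: a naive Cauchy-Schwarz conversion from $L^2$ to total variation halves the rate and produces only $(\tfrac{1}{2} - o(1)) N^{1-\beta}\log(N^{1-\beta})$. Recovering the full constant is what Johansson and Lambert achieve via a transport/coupling construction that bypasses the $L^2$ step, and I would expect to either import that machinery directly or replace Cauchy-Schwarz by a more delicate argument tracking sign cancellations among the leading Hermite coefficients. This is the step where simply citing \cite{JohanssonLambert} is by far the most efficient option.
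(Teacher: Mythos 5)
Your primary answer — simply cite \cite[Proposition 1.2]{JohanssonLambert} after matching notation — is exactly what the paper does; its proof only translates $\eta_*\mu_{\textrm{g}}$ and $\eta_*\mu_{\textrm{rm}}$ into the $\mathbf{G}$ and $\mathbf{X}$ of that reference and notes the harmless prefactor. Your observation that a naive Hermite/Cauchy--Schwarz route would lose a factor of $\tfrac{1}{2}$ in the exponent is accurate and precisely why direct citation of the Johansson--Lambert coupling argument is the right move.
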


\begin{proof} This is a restatement of \cite[Proposition 1.2]{JohanssonLambert}, with a simplified but weaker bound. We explain how our notation compares. 

It is immediate from the definitions that $\eta_* \mu_{\textrm{g}}$ is a product of $k$ independent standard complex Gaussian distributions. Taking the real and imaginary parts, and multiplying by $-\sqrt{2}$, we obtain a product of $2k$ independent standard real Gaussians, exactly what is called $\mathbf{G}$ in \cite{JohanssonLambert}. (Their $m$ is our $k$).

Similarly, $\eta_* \mu_{\textrm{rm}}$ is the distribution of $b_1(M),\dots, b_k(M) =  -\tr(M^1)/\sqrt{1},\dots, -  \tr(M^k)/\sqrt{k}$. Taking real and imaginary parts and multiplying by $- \sqrt{2}$, this is exactly the distribution called $\mathbf X$ in \cite{JohanssonLambert}.

The total variation bound follows from \cite[Proposition 1.2]{JohanssonLambert}, noting that the factor $1.4 \cdot 10^{-13} n^{ {3\beta}-\frac{3}{2}}$ is $\leq 1$ and can be ignored. \end{proof}

\begin{lemma}\label{DS}\cite[Theorem 2]{DiaconisShahshahani} Let $\phi \in \mathbb C[ c_0, c_1,\dots, \overline{c_0},\overline{c_1},\dots] $ have degree $\leq N $. Then \[ \int_{\Cqs} \phi \mu_{\textrm{g} } = \int_{\Cqs} \phi \mu_{\textrm{rm}}^N.\] \end{lemma}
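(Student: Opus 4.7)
The plan is to change variables from the coefficient coordinates $(c_d)$ to the logarithmic coordinates $(b_n)$ and then apply the classical Diaconis--Shahshahani moment formula for traces of powers of a Haar-random unitary matrix.

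First I would verify that the substitution between the two coordinate systems preserves the weighted degree once $b_n$ is assigned weight $n$. The relation
\[ 1 + \sum_{d=1}^{\infty} c_d q^{-ds} = \exp\Bigl( \sum_{n=1}^{\infty} \sqrt{q^n/n}\, b_n\, q^{-ns} \Bigr) \]
expresses each $c_d$ as a polynomial in $b_1,\dots,b_d$ in which every monomial $\prod_i b_{n_i}$ satisfies $\sum_i n_i = d$, and the inverse relation obtained from the $\log(1+x)$ expansion similarly expresses each $b_n$ as a weighted-degree-$n$ polynomial in $c_1,\dots,c_n$. Consequently, any $\phi$ of weighted degree $\leq N$ in the variables $c_d, \overline{c_d}$ can equivalently be written as a polynomial of weighted degree $\leq N$ in the variables $b_n, \overline{b_n}$.

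By linearity it then suffices to check, for each monomial $\prod_n b_n^{a_n}\overline{b_n}^{\tilde b_n}$ with $\sum_n n(a_n+\tilde b_n)\leq N$, that its expectation under $\mu_{\textrm{g}}$ agrees with its expectation under $\mu_{\textrm{rm}}^N$. Under $\mu_{\textrm{g}}$ the $b_n$ are independent standard complex Gaussians, so this expectation equals $\prod_n \delta_{a_n,\tilde b_n}\,a_n!$. Under $\mu_{\textrm{rm}}^N$, using $b_n = -\tr(M^n)/\sqrt{n}$ for $M$ Haar on $U(N)$, the Diaconis--Shahshahani formula gives
\[ \mathbb E\Bigl[\prod_n \tr(M^n)^{a_n}\overline{\tr(M^n)}^{\tilde b_n}\Bigr] = \prod_n \delta_{a_n,\tilde b_n}\, n^{a_n}\, a_n! \]
whenever $\max(\sum_n n a_n, \sum_n n \tilde b_n) \leq N$, a condition forced by the weighted degree hypothesis $\sum_n n(a_n+\tilde b_n)\leq N$. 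The factors of $n^{a_n}$ exactly cancel the normalization $1/\sqrt{n}^{a_n+\tilde b_n}$ on the diagonal $a_n=\tilde b_n$, and off the diagonal both sides vanish, so the two expectations coincide.

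There is no real obstacle here: the argument is essentially bookkeeping around an invocation of \cite{DiaconisShahshahani}. The one detail that requires any care is the weight-preservation of the $c \leftrightarrow b$ substitution, which is immediate from the monomial structure of the exponential and logarithm power series and ensures that the hypothesis on the degree of $\phi$ translates directly into the hypothesis required by Diaconis--Shahshahani.
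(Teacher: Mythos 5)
Your proof is correct, and it essentially fills in the translation that the paper leaves implicit by citing \cite[Theorem 2]{DiaconisShahshahani} directly. The two substantive points you check are exactly what needs checking: (a) the logarithm/exponential substitution between the $c_d$ and $b_n$ coordinate systems is weight-preserving when $b_n$ is given weight $n$, so a degree-$\leq N$ polynomial in the $c$'s becomes a degree-$\leq N$ polynomial in the $b$'s; and (b) the single-monomial case is precisely the Diaconis--Shahshahani moment identity, with the hypothesis $\sum_n n(a_n + \tilde b_n) \leq N$ implying $\max(\sum_n n a_n, \sum_n n \tilde b_n) \leq N$ as required, and the factors $n^{a_n}$ canceling the normalization $n^{-(a_n+\tilde b_n)/2}$ on the diagonal $a_n = \tilde b_n$. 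One small remark: since Diaconis--Shahshahani only needs $\max(\sum n a_n, \sum n \tilde b_n) \leq N$ rather than the sum, the paper's degree-$\leq N$ hypothesis is slightly stronger than strictly necessary for the monomial identity; this costs nothing here but is worth noticing.
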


We use the total variation distance between measures to control differences between integrals against the measures in a couple different ways:

\begin{lemma} For probability measures $\mu_1,\mu_2$ with total variation distance $\delta$ and a function $f$, we have
\begin{equation}\label{tv-infty} \abs{ \int f \mu_1 - \int f \mu_2} \leq  2 \delta \sup \abs{f} \end{equation}
\begin{equation}\label{tv-2} \abs{ \int f \mu_1 - \int f \mu_2} \leq \sqrt{\delta} \left( \sqrt{ \int \abs{f}^2 \mu_1} + \sqrt{ \int \abs{f}^2 \mu_2} \right) \end{equation} \end{lemma}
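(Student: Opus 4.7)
The plan is to reduce both inequalities to a calculation with densities against a common dominating measure (say $\lambda = \mu_1 + \mu_2$). Write $d\mu_i = p_i \, d\lambda$ and set $A = \{ p_1 \geq p_2 \}$, $B = \{ p_2 > p_1 \}$. Since both $\mu_1$ and $\mu_2$ have total mass $1$, we have $\int (p_1 - p_2) \, d\lambda = 0$, so $\int_A (p_1 - p_2) \, d\lambda = -\int_B (p_1 - p_2) \, d\lambda$; under the convention that the total variation distance is $\delta = \sup_{E} \abs{\mu_1(E) - \mu_2(E)}$, this common value equals $\delta$, and $\int \abs{p_1 - p_2} \, d\lambda = 2\delta$.

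For \eqref{tv-infty} I would simply estimate
\[ \abs{\int f \, d\mu_1 - \int f \, d\mu_2} = \abs{\int f (p_1 - p_2) \, d\lambda} \leq \sup \abs{f} \cdot \int \abs{p_1 - p_2} \, d\lambda = 2\delta \sup\abs{f}, \]
which is immediate.

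For \eqref{tv-2} the key point is the pointwise bound $0 \leq p_1 - p_2 \leq p_1$ on $A$ (and symmetrically $0 \leq p_2 - p_1 \leq p_2$ on $B$), which is what allows one to convert $(p_1 - p_2) \, d\lambda$ into a quantity dominated by $d\mu_1$ on $A$ and by $d\mu_2$ on $B$. Then I would split $(p_1 - p_2) = \sqrt{p_1 - p_2}\cdot\sqrt{p_1 - p_2}$ and apply Cauchy-Schwarz on $A$:
\[ \int_A \abs{f}(p_1 - p_2) \, d\lambda \leq \sqrt{\int_A \abs{f}^2 (p_1 - p_2) \, d\lambda} \cdot \sqrt{\int_A (p_1 - p_2) \, d\lambda} \leq \sqrt{\int \abs{f}^2 \, d\mu_1} \cdot \sqrt{\delta}, \]
using $p_1 - p_2 \leq p_1$ in the first factor and $\int_A(p_1 - p_2) \, d\lambda = \delta$ in the second. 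The symmetric estimate on $B$ yields $\sqrt{\delta} \sqrt{\int \abs{f}^2 \, d\mu_2}$, and summing the two bounds gives \eqref{tv-2} after a triangle inequality $\abs{\int f(p_1 - p_2) \, d\lambda} \leq \int \abs{f}\abs{p_1 - p_2} \, d\lambda$.

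There is no real obstacle here; the only subtlety is being careful with the normalization of the total variation distance so that the constant $2$ in \eqref{tv-infty} and the absence of any extra constant in \eqref{tv-2} come out correctly. Both inequalities are standard and the whole argument is a few lines once the Hahn-Jordan-type splitting of $p_1 - p_2$ on $A$ and $B$ is set up.
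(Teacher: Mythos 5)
Your proof is correct and is essentially the same argument as in the paper: both rest on the Hahn--Jordan decomposition of $\mu_1-\mu_2$ (the paper writes it abstractly as $\mu_1-\mu_1'=\mu_2-\mu_2'$ with $\mu_1'\le\mu_1$, $\mu_2'\le\mu_2$ of mass $\delta$, while you make it explicit via densities $p_1,p_2$ against $\lambda=\mu_1+\mu_2$ and the sets $A,B$, with $\mu_1'=(p_1-p_2)\mathbf{1}_A\,d\lambda$, $\mu_2'=(p_2-p_1)\mathbf{1}_B\,d\lambda$). The Cauchy--Schwarz step is likewise the same in substance: the paper uses that $d\mu_1'/d\mu_1\in[0,1]$ so its $L^2(\mu_1)$ norm is at most $\sqrt{\delta}$, which is exactly your pointwise bound $p_1-p_2\le p_1$ on $A$ in another guise.
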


\begin{proof}By the definition of total variation distance, we can write $\mu_1 - \mu_1'  =\mu_2 -\mu_2'$ where $\mu_1' \leq \mu_1 $ and $\mu_2'\leq \mu_2$ are measures with total mass $\delta$. We obtain \eqref{tv-infty} by noting the integral of $f$ against any measure is bounded by the sup-norm of $f$ times the total mass of that measure.  We obtain \eqref{tv-2} by applying Cauchy-Schwarz to $f$ and the density functions $\frac{\mu_1'}{\mu_1}$ and $\frac{\mu_2'}{\mu_2}$, which are $1$-bounded and integrate to $\delta$ and thus have $L^2$ norm at most $\sqrt{\delta}$. \end{proof}

We repeatedly use the following lemma to compare integrals against different measures:

\begin{lemma} Let $ G$ be a measurable function of complex variables $b_1,\dots, b_k$. We have
\begin{equation}\label{matrix-change-of-integral} \int_{U(N) }  G( - \tr(M),\dots, - \tr(M^k)/\sqrt{k} ) \mu_{\textrm{Haar}} =  \int_{ \Cqs}  G( b_1(L),\dots, b_k(L))  \mu_{\textrm{rm}} = \int_{\mathbb C^k}  G(b_1,\dots,b_k)  \eta_* \mu_{\textrm{rm}}.\end{equation} 
and
\begin{equation}\label{mf-change-of-integral} \int_{\Cqs }  G(b_1(L),\dots, b_k(L)) \mu_{\textrm{ep}} = \int_{\mathbb C^k}  G(b_1,\dots,b_k) \eta_* \mu_{\textrm{ep}} = \int_{\mathbb C^k}  G(b_1,\dots,b_k) \frac{ F( q^{1/2} b_1 ,\dots,  q^{k/2}  b_k/ \sqrt{k} )}{  \prod_{j=1}^k  ( e^{  -  |b_j |^2 }  \frac{ j }{ q^j \pi} ) }  \eta_* \mu_{\textrm{g}} .\end{equation}

\end{lemma}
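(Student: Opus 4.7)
My plan is to prove each chain of equalities by unwinding the relevant definitions of pushforward measure and performing one elementary change of variables.

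For \eqref{matrix-change-of-integral}, I would first apply the identity $b_n(L_M) = -\tr(M^n)/\sqrt{n}$ (equation \eqref{anlm}), which allows me to rewrite $G(-\tr(M),\dots,-\tr(M^k)/\sqrt{k}) = G(b_1(L_M),\dots,b_k(L_M))$ pointwise on $U(N)$. Since $\mu_{\textrm{rm}}$ was defined as the pushforward of $\mu_{\textrm{Haar}}$ along the map $M \mapsto L_M$ from $U(N)$ to $\Cqs$, the change-of-variables formula for pushforward measures immediately gives the equality of the first two integrals. The equality with the third integral is simply the definition of $\eta_* \mu_{\textrm{rm}}$ as the pushforward of $\mu_{\textrm{rm}}$ under $\eta\colon \Cqs \to \mathbb C^k$, $L \mapsto (b_1(L),\dots,b_k(L))$.

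For \eqref{mf-change-of-integral}, the first equality is again just the definition of pushforward under $\eta$. The content is in the second equality, which I would prove by computing the density of $\eta_* \mu_{\textrm{ep}}$ with respect to $\eta_* \mu_{\textrm{g}}$ via two Jacobian calculations. By equation \eqref{anll}, the map $(X_{1,\xi},\dots,X_{k,\xi}) \mapsto (b_1(L_\xi),\dots,b_k(L_\xi))$ is the linear bijection $X_n \mapsto \sqrt{n/q^n}\, X_n$. Viewing each $\mathbb C$-coordinate as $\mathbb R^2$, this map has real Jacobian determinant $\prod_{n=1}^k (n/q^n)$, so if $F(x_1,\dots,x_k)$ is the density of $(X_{1,\xi},\dots,X_{k,\xi})$ on $(\mathbb R^2)^k$, then the density of $\eta_*\mu_{\textrm{ep}}$ on $\mathbb C^k$ with respect to Lebesgue measure is
\[ F(q^{1/2} b_1,\dots, q^{k/2} b_k/\sqrt{k}) \cdot \prod_{n=1}^k \frac{q^n}{n}. \]
On the other hand, by the definition of $\mu_{\textrm{g}}$, the coordinates $b_1,\dots,b_k$ are independent standard complex Gaussians under $\eta_*\mu_{\textrm{g}}$, so $\eta_*\mu_{\textrm{g}}$ has density $\prod_{j=1}^k \pi^{-1} e^{-|b_j|^2}$ with respect to Lebesgue measure on $\mathbb C^k$. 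Taking the ratio of these two densities produces exactly the Radon–Nikodym factor
\[ \frac{ F(q^{1/2} b_1 ,\dots,  q^{k/2}  b_k/ \sqrt{k} )}{  \prod_{j=1}^k  ( e^{  -  |b_j |^2 }  \frac{ j }{ q^j \pi} ) }, \]
which I would then substitute to obtain the claimed identity.

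The only mild subtlety is checking that $F$ indeed exists as a probability density function on $(\mathbb R^2)^k \cong \mathbb C^k$, but this is exactly the hypothesis built into the definition of $F$ at the start of the paper (valid for $q > 2$). No serious obstacle is anticipated; the result is essentially a bookkeeping lemma that packages pushforward relations and one Jacobian rescaling into a form convenient for the later arguments comparing $\mu_{\textrm{ch}}$, $\mu_{\textrm{ep}}$, $\mu_{\textrm{rm}}$, and $\mu_{\textrm{g}}$ through the common coordinate chart $\eta$.
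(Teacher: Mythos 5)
Your proof is correct and follows essentially the same route as the paper: identify $b_n(L_M)$ with $-\tr(M^n)/\sqrt{n}$ and $b_n(L_\xi)$ with $\sqrt{n/q^n}\,X_{n,\xi}$, then use the change-of-variables formula for pushforward measures together with the linear Jacobian $\prod_n (q^n/n)$ and the known Gaussian density of $\eta_*\mu_{\textrm{g}}$. The only organizational difference is that you compute the densities of $\eta_*\mu_{\textrm{ep}}$ and $\eta_*\mu_{\textrm{g}}$ relative to Lebesgue measure and take the ratio, whereas the paper cancels the Gaussian density first and then recognizes the resulting expression as the density of $\eta_*\mu_{\textrm{ep}}$; this is the same computation reorganized.
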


\begin{proof} We prove \eqref{matrix-change-of-integral} first. Both equalities follow from the fact that the integral of a function against the pushforward of a measure is the integral of the pullback against the measure. For the first equality, we also use that $\mu_{\textrm{rm}}$ is the pushforward of $\mu_{\textrm{Haar}}$ along $M \mapsto L_M$ by definition, and use \eqref{anlm} to compute the pullback of $ G$ along $M \mapsto L_M$. 

We now prove \eqref{mf-change-of-integral}. The first equality is similar to \eqref{matrix-change-of-integral}. For the second equality, we use  that the probability density function of $\eta_* \mu_{\textrm{g}}$, in the variables $b_1,\dots, b_k$, is  $\prod_{j=1}^k  \left( e^{  -  \abs{b_j }^2 }  \frac{1 }{ \pi } \right)$, so we have
\[ \int_{\mathbb C^k}   G(b_1,\dots, b_k) \frac{ F( q^{1/2} b_1 ,\dots,  q^{k/2}  b_k/ \sqrt{k} )}{  \prod_{j=1}^k  ( e^{  -  |b_j |^2 }  \frac{ j }{ q^j \pi} ) } \eta_* \mu_{\textrm{g}} = \int_{\mathbb C^k}   G(b_1,\dots, b_k) \frac{ F( q^{1/2} b_1 ,\dots, -q^{k/2}  b_k/ \sqrt{k} )}{  \prod_{j=1}^k  \left(  \frac{ j }{ q^j } \right) } db_1 \dots db_k.\]

Because $F$ is the probability density function of the tuple of random variables $X_{1,\xi},\dots, X_{k,\xi} $, and by \eqref{anll} we have $b_n(L_\xi) = \sqrt{\frac{n}{q^n}}X_{n,\xi}$, it follows that $\frac{ F( q^{1/2} b_1 ,\dots, -q^{k/2}  b_k/ \sqrt{k} )}{  \prod_{j=1}^k  \left(  \frac{ j }{ q^j } \right) }$ is the probability density function of the tuple of random variables $b_1(L_\xi),\dots, b_k(L_\xi)$, i.e. of the measure $\eta_* \mu_{\textrm{ep}}$, giving
\[ \int_{\mathbb C^k}   G(b_1,\dots, b_k) \frac{ F( q^{1/2} b_1 ,\dots, -q^{k/2}  b_k/ \sqrt{k} )}{  \prod_{j=1}^k  \left(  \frac{ j }{ q^j } \right) } db_1 \dots db_k=  \int_{\mathbb C^k}  G(b_1,\dots,b_k) \eta_* \mu_{\textrm{ep}}. \qedhere \] \end{proof}

The first step to proving the main theorems is to evaluate $\gamma$:

\begin{lemma}\label{total-mass}  If $q>5$ then for $N$ sufficiently large in terms of $\beta$, \begin{equation}\label{eq-tm} \int_{U(N)}   \frac{ F( -q^{1/2} \tr(M),\dots, -q^{k/2} \tr (M^k)/k)}{  \prod_{j=1}^k  \left( e^{  -  \frac{  \abs{ \tr (M^j)}^2 }{  j } }  j q^j / \pi \right) } \mu_{\textrm{Haar} } = 1 + O ( e^{ -(1- o_N(1)) N^{1-\beta} \log (N^{1-\beta})}  ) . \end{equation} \end{lemma}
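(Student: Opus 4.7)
The plan is to replace the Haar measure on $U(N)$ with the complex Gaussian measure $\mu_{\textrm{g}}$, observe that the resulting integral is exactly $1$, and then estimate the error using the Johansson–Lambert total variation bound together with the pointwise bound on $F$ from the previous section.

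First I would rewrite the left-hand side using \eqref{matrix-change-of-integral}. Setting
\[ G(b_1,\dots,b_k) = \frac{ F( q^{1/2} b_1,\dots, q^{k/2} b_k/\sqrt{k} ) }{ \prod_{j=1}^k ( e^{ -\abs{b_j}^2 } \, \frac{j}{q^j \pi} ) }, \]
the substitutions $b_j(L_M) = -\tr(M^j)/\sqrt{j}$ give $-q^{j/2}\tr(M^j)/j = q^{j/2} b_j/\sqrt{j}$ and $\abs{\tr(M^j)}^2/j = \abs{b_j}^2$, so the integral in \eqref{eq-tm} equals $\int_{\mathbb C^k} G \, d\eta_*\mu_{\textrm{rm}}$.

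Second, by \eqref{mf-change-of-integral}, $G$ is exactly the Radon–Nikodym density of $\eta_*\mu_{\textrm{ep}}$ with respect to $\eta_*\mu_{\textrm{g}}$, so
\[ \int_{\mathbb C^k} G \, d\eta_*\mu_{\textrm{g}} = \int_{\Cqs} 1 \, d\mu_{\textrm{ep}} = 1. \]
It therefore suffices to control the difference $\int G \, d\eta_*\mu_{\textrm{rm}} - \int G \, d\eta_*\mu_{\textrm{g}}$.

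Third, applying \eqref{tv-infty} gives
\[ \left| \int G \, d\eta_*\mu_{\textrm{rm}} - \int G \, d\eta_*\mu_{\textrm{g}} \right| \leq 2 \, \|G\|_\infty \cdot d_{\textrm{TV}}(\eta_*\mu_{\textrm{rm}}, \eta_*\mu_{\textrm{g}}). \]
Corollary \ref{pointwise-wint-simplified} yields the pointwise bound $\|G\|_\infty = O(N^{O(1)})$ (the hypothesis $q>5$ enters here), while Theorem \ref{JL} yields $d_{\textrm{TV}}(\eta_*\mu_{\textrm{rm}}, \eta_*\mu_{\textrm{g}}) \leq e^{-(1-o_N(1)) N^{1-\beta} \log(N^{1-\beta})}$ for $N$ sufficiently large in terms of $\beta$. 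Multiplying these and absorbing the polynomial factor $N^{O(1)}$ into the $o_N(1)$ in the exponent produces the claimed error term.

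There is essentially no serious obstacle: the lemma is a direct combination of the two key inputs developed earlier, namely the pointwise majorization of $F$ by a Gaussian (Corollary \ref{pointwise-wint-simplified}) and the total variation comparison between the traces of powers of a Haar-random unitary and independent Gaussians (Theorem \ref{JL}). The only thing to double-check is that the polynomial factor $N^{O(1)}$ from the sup-norm bound genuinely disappears into $o_N(1) \cdot N^{1-\beta}\log(N^{1-\beta})$, which it does since $1-\beta > 0$.
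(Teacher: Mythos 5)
Your proof is correct and follows essentially the same route as the paper's: rewrite the integral via \eqref{matrix-change-of-integral}, observe that the corresponding integral against $\eta_*\mu_{\textrm{g}}$ is exactly $1$ by \eqref{mf-change-of-integral}, and bound the difference by combining the total variation estimate of Theorem \ref{JL} with the sup-norm bound of Corollary \ref{pointwise-wint-simplified} via \eqref{tv-infty}, absorbing the polynomial factor into the $o_N(1)$.
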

In other words, the $\gamma$ in the definition \eqref{weighted-def} of $\mu_{\textrm{weighted}}$ is $1 + O ( e^{ -(1- o_N(1)) N^{1-\beta} \log (N^{1-\beta})}  ) $ for $N$ sufficiently large. 

\begin{proof}  \eqref{matrix-change-of-integral} gives
\begin{equation}\label{tm-vc} \int_{U(N)} \frac{ F( - q^{1/2} \tr(M),\dots, -q^{k/2} \tr (M^k)/k)}{  \prod_{j=1}^k  \left( e^{  -  \frac{  \abs{ \tr (M^j)}^2 }{  j } }   \frac{ j }{ q^j \pi} \right) }  \mu_{\textrm{Haar} }= \int_{\mathbb C^k} \frac{ F(  q^{1/2} b_1 ,\dots, q^{k/2}  b_k / \sqrt{k} )}{  \prod_{j=1}^k  \left( e^{  -  |b_j| ^2  }  \frac{ j }{ q^j \pi} \right) } \eta _* \mu_{\textrm{rm}}. \end{equation}

Similarly, \eqref{mf-change-of-integral} gives
\begin{equation}\label{tm-tm} \int_{\mathbb C^k} \frac{ F(  q^{1/2} b_1 ,\dots, q^{k/2}  b_k / \sqrt{k} )}{  \prod_{j=1}^k  \left( e^{  -  |b_j| ^2  }  \frac{ j }{ q^j \pi} \right) } \eta _* \mu_{\textrm{g}} = \int_{\mathbb C^k} 1 \mu_{\textrm{ep}} = 1 .\end{equation}

Next we will prove \begin{equation}\label{tm-error} \int_{\mathbb C^k} \frac{ F(  q^{1/2} b_1 ,\dots, q^{k/2}  b_k / \sqrt{k} )}{  \prod_{j=1}^k  \left( e^{  -  |b_j| ^2  }  \frac{ j }{ q^j \pi} \right) } \eta _*\mu_{\textrm{rm}}  -\frac{ F(  q^{1/2} b_1 ,\dots, q^{k/2}  b_k / \sqrt{k} )}{  \prod_{j=1}^k  \left( e^{  -  |b_j| ^2  }  \frac{ j }{ q^j \pi} \right) }\eta _* \mu_{\textrm{g}} =  O ( e^{ -(1- o_N(1)) N^{1-\beta} \log (N^{1-\beta})}) . \end{equation} To check \eqref{tm-error}, we apply \eqref{tv-infty}. We use \cref{JL} to bound the total variation distance by $e^{- (1- o_N(1)) N^{1-\beta} \log (N^{1-\beta})}$. We apply \cref{pointwise-wint-simplified} to bound the sup-norm by $O(N^{O(1)})$, and note that the $O(N^{O(1)})$ can be absorbed into the $o_N(1)$ in the exponent.

Combining \eqref{tm-vc}, \eqref{tm-tm}, and \eqref{tm-error}, we obtain \eqref{eq-tm}.\end{proof}

The main part of proving \cref{intro-lf} is the following:

\begin{lemma}\label{lf-basic-comparison} Assume that $q>5$. Let $\phi \in \mathbb C[ c_1,c_2,\dots, \overline{c_1}, \overline{c_2},\dots] $ have degree $\leq k$. For $N$ sufficiently large in terms of $\beta$, we have
\begin{equation}\label{lf-bc-1} \int_{\Cqs }   \frac{ F(  q^{1/2} b_1 ,\dots, q^{k/2}  b_k / \sqrt{k} )}{  \prod_{j=1}^k  \left( e^{  -  |b_j| ^2  }  \frac{ j }{ q^j \pi} \right) }  \phi  \mu_{\textrm{g}} = \int_{\Cqs }  \phi  \mu_{\textrm{ep}}  \end{equation} and
\begin{equation}\label{lf-bc-2} \int_{\Cqs }  \phi  \mu_{\textrm{ep}}  =  \int_{\Cqs }    \frac{ F(  q^{1/2} b_1 ,\dots, q^{k/2}  b_k / \sqrt{k} )}{  \prod_{j=1}^k  \left( e^{  -  |b_j| ^2  }  \frac{ j }{ q^j \pi} \right) }  \phi    \mu_{\textrm{rm}}  + O (e^{ -(\frac{1}{2} - o_N(1)) N^{1-\beta} \log (N^{1-\beta})} \norm{\phi}_2 ) .\end{equation}
\end{lemma}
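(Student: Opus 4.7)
The plan is to reduce each equation to a statement about pushforward measures on $\mathbb{C}^k$ and then apply the change-of-variables identities \eqref{matrix-change-of-integral}, \eqref{mf-change-of-integral} together with the Johansson--Lambert bound (\cref{JL}) and the pointwise estimate (\cref{pointwise-wint-simplified}).

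First I would establish the key structural observation: since $\phi$ has weighted degree $\leq k$ in the coefficients $c_1,c_2,\dots$ of $L$, and each $c_n$ is a polynomial of weighted degree exactly $n$ in $b_1,\dots,b_n$ (because $L = \exp\bigl(\sum_{n\geq 1}\sqrt{q^n/n}\, b_n q^{-ns}\bigr)$), the function $\phi$ depends only on $b_1,\dots,b_k$. Thus $\phi$ factors through the projection $\eta\colon \Cqs \to \mathbb{C}^k$, and we may write $\phi(L) = \tilde{\phi}(b_1(L),\dots,b_k(L))$ for some polynomial $\tilde\phi$.

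For \eqref{lf-bc-1}, I would apply \eqref{mf-change-of-integral} with $G(b_1,\dots,b_k) = \tilde\phi(b_1,\dots,b_k)$. This directly rewrites $\int_\Cqs \phi\,\mu_{\textrm{ep}}$ as the integral of $\tilde\phi$ against $\eta_*\mu_{\textrm{ep}}$, which by the same lemma equals $\int_{\mathbb{C}^k} \tilde\phi \cdot \tfrac{F(q^{1/2}b_1,\dots,q^{k/2}b_k/\sqrt{k})}{\prod_j (e^{-|b_j|^2} j/(q^j\pi))} \,\eta_*\mu_{\textrm{g}}$, which is precisely the left-hand side of \eqref{lf-bc-1}. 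This step is entirely formal.

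For \eqref{lf-bc-2}, let me write $G(b) = \tilde\phi(b)\cdot H(b)$ where $H(b) = F(q^{1/2}b_1,\dots,q^{k/2}b_k/\sqrt{k})/\prod_j (e^{-|b_j|^2} j/(q^j\pi))$. By \eqref{lf-bc-1}, $\int_\Cqs \phi\,\mu_{\textrm{ep}} = \int_{\mathbb{C}^k} G\, \eta_*\mu_{\textrm{g}}$, and the right-hand side of \eqref{lf-bc-2} (apart from the error term) equals $\int_{\mathbb{C}^k} G\, \eta_*\mu_{\textrm{rm}}$. So it suffices to bound $\bigl|\int G\, \eta_*\mu_{\textrm{g}} - \int G\, \eta_*\mu_{\textrm{rm}}\bigr|$. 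Applying the $L^2$ form \eqref{tv-2} of the total variation bound, this difference is at most $\sqrt{\delta}\bigl(\sqrt{\int |G|^2 \eta_*\mu_{\textrm{rm}}} + \sqrt{\int |G|^2 \eta_*\mu_{\textrm{g}}}\bigr)$, where $\delta$ is the total variation distance between $\eta_*\mu_{\textrm{rm}}$ and $\eta_*\mu_{\textrm{g}}$. By \cref{JL}, $\delta \leq e^{-(1-o_N(1)) N^{1-\beta}\log(N^{1-\beta})}$. By \cref{pointwise-wint-simplified}, $|H| \leq O(N^{O(1)})$ pointwise, so $\int |G|^2 \eta_*\mu_{\textrm{rm}} \leq N^{O(1)} \int |\tilde\phi|^2 \eta_*\mu_{\textrm{rm}} = N^{O(1)} \|\phi\|_2^2$; the corresponding $\mu_{\textrm{g}}$ integral is handled similarly, using that when the degree $\leq N$, \cref{DS} makes the $\mu_{\textrm{g}}$ and $\mu_{\textrm{rm}}$ integrals of $|\tilde\phi|^2$ agree (this is a convenient way to avoid computing the Gaussian integral directly, though in fact any polynomial bound suffices). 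Combining, the error is at most $e^{-(1/2-o_N(1))N^{1-\beta}\log(N^{1-\beta})} \|\phi\|_2$, with the polynomial factor $N^{O(1)/2}$ absorbed into $o_N(1)$ inside the exponent.

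The only real subtlety — and the main ``obstacle'' such as it is — lies in correctly pairing the two ingredients: one needs the total variation bound in its $L^2$ form (not the sup form) so that $\|\phi\|_2$ appears on the right-hand side, while the pointwise bound on $H$ controls the ``unbounded multiplier'' $H$ in sup norm. Any attempt to use \eqref{tv-infty} directly would force a sup-norm bound on $\phi$ and fail. Everything else is a careful assembly of the already-established identities.
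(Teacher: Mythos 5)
Your proposal is correct and follows essentially the same route as the paper's proof: reduce to integrals over $\mathbb C^k$ via the $b_n$-coordinates and the observation that degree-$\leq k$ polynomials in the $c_n$ depend only on $b_1,\dots,b_k$; prove \eqref{lf-bc-1} formally from \eqref{mf-change-of-integral} and pushforward compatibility; then prove \eqref{lf-bc-2} by applying the $L^2$ total-variation estimate \eqref{tv-2} to $\eta_*\mu_{\textrm{rm}}$ versus $\eta_*\mu_{\textrm{g}}$, using \cref{JL} for the TV distance, \cref{pointwise-wint-simplified} for the sup-norm of the density ratio $H$, and \cref{DS} to pass between the $\mu_{\textrm{g}}$ and $\mu_{\textrm{rm}}$ integrals of $|\tilde\phi|^2$. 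Your identification of the key point — that \eqref{tv-infty} would require a sup bound on $\phi$ and fail, so one must pair the $L^2$ form of the TV bound with the pointwise bound on $H$ — is exactly what the paper does.
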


\begin{proof} First note that, since $\phi$ has degree $\leq k$, we can express $\phi$ as a polynomial only in terms of $c_1,c_2,\dots, c_k, \overline{c_1} , \overline{c_2},\dots, \overline{c_k}$. Since $c_1,\dots, c_k$ can be expressed as polynomials in $b_1,\dots,b_k$, it follows that $\phi$ can be expressed as a polynomial in $b_1,\dots,b_k, \overline{b_1},\dots, \overline{b_k}$, which we will refer to as $\tilde{\phi}$. Then we have
\begin{equation}\label{lf-bc-proof-1} \int_{\Cqs }  \phi  \mu_{\textrm{ep}} = \int_{\mathbb C^k}   \frac{ F(  q^{1/2} b_1 ,\dots, q^{k/2}  b_k / \sqrt{k} )}{  \prod_{j=1}^k  \left( e^{  -  |b_j| ^2  }  \frac{ j }{ q^j \pi} \right) } \tilde{\phi}  \eta _* \mu_{\textrm{g}} \end{equation} by \eqref{mf-change-of-integral},
\begin{equation}\label{lf-bc-proof-2} \int_{\Cqs }   \frac{ F(  q^{1/2} b_1 ,\dots, q^{k/2}  b_k / \sqrt{k} )}{  \prod_{j=1}^k  \left( e^{  -  |b_j| ^2  }  \frac{ j }{ q^j \pi} \right) }  \phi  \mu_{\textrm{g}} = \int_{\mathbb C^k}   \frac{ F(  q^{1/2} b_1 ,\dots, q^{k/2}  b_k / \sqrt{k} )}{  \prod_{j=1}^k  \left( e^{  -  |b_j| ^2  }  \frac{ j }{ q^j \pi} \right) } \tilde{\phi}  \eta _* \mu_{\textrm{g}}\end{equation}
by compatibility of integration with pushforward of measures, and
\begin{equation}\label{lf-bc-proof-3} \int_{\Cqs }    \frac{ F(  q^{1/2} b_1 ,\dots, q^{k/2}  b_k / \sqrt{k} )}{  \prod_{j=1}^k  \left( e^{  -  |b_j| ^2  }  \frac{ j }{ q^j \pi} \right) }  \phi    \mu_{\textrm{rm}} =\int_{\Cqs }    \frac{ F(  q^{1/2} b_1 ,\dots, q^{k/2}  b_k / \sqrt{k} )}{  \prod_{j=1}^k  \left( e^{  -  |b_j| ^2  }  \frac{ j }{ q^j \pi} \right) }  \tilde{\phi}    \eta _* \mu_{\textrm{rm}} \end{equation}
by \eqref{matrix-change-of-integral}.

Combining \eqref{lf-bc-proof-1} and \eqref{lf-bc-proof-2}, we obtain \eqref{lf-bc-1}. From \eqref{lf-bc-proof-1} and \eqref{lf-bc-proof-3}, we see that to prove \eqref{lf-bc-2}, it suffices to prove that
\begin{equation}\hspace{-.5in} \label{lf-bc-simp} \int_{\mathbb C^k}   \frac{ F(  q^{1/2} b_1 ,\dots, q^{k/2}  b_k / \sqrt{k} )}{  \prod_{j=1}^k  \left( e^{  -  |b_j| ^2  }  \frac{ j }{ q^j \pi} \right) } \tilde{\phi}  \eta _* \mu_{\textrm{g}} = \int_{\Cqs }    \frac{ F(  q^{1/2} b_1 ,\dots, q^{k/2}  b_k / \sqrt{k} )}{  \prod_{j=1}^k  \left( e^{  -  |b_j| ^2  }  \frac{ j }{ q^j \pi} \right) }  \tilde{\phi}    \eta _* \mu_{\textrm{rm}} + O (e^{- (\frac{1}{2} - o_N(1)) N^{1-\beta} \log (N^{1-\beta})} \norm{\phi}_2 ) .\end{equation}
We will do this by applying \eqref{tv-2} to $\eta _* \mu_{\textrm{rm}}$ and $\eta _* \mu_g$. We have by \cref{pointwise-wint-simplified} and \cref{DS}
\begin{equation}\label{lf-bc-g} \begin{split} & \int_{ \mathbb C^k} \abs{    \frac{ F(  q^{1/2} b_1 ,\dots, q^{k/2}  b_k / \sqrt{k} )}{  \prod_{j=1}^k  \left( e^{  -  |b_j| ^2  }  \frac{ j }{ q^j \pi} \right) }}^2 |\tilde{\phi}|^2 \eta _* \mu_{\textrm{g}} \\   \ll N^{O(1)}& \int_{\mathbb C^k}  |\tilde{\phi}|^2 \eta _* \mu_{\textrm{g}} = N^{O(1)}  \int_{ \Cqs}  \abs{\phi}^2 \mu_{\textrm{g} }= N^{O(1)} \int_{ \Cqs}  \abs{\phi}^2 \mu_{\textrm{rm} } = N^{O(1)} \norm{\phi} \end{split} \end{equation}
and an identical argument, except skipping the \cref{DS} step, gives
\begin{equation}\label{lf-bc-rm} \int_{ \mathbb C^k} \abs{    \frac{ F(  q^{1/2} b_1 ,\dots, q^{k/2}  b_k / \sqrt{k} )}{  \prod_{j=1}^k  \left( e^{  -  |b_j| ^2  }  \frac{ j }{ q^j \pi} \right) }}^2 |\tilde{\phi}|^2 \eta _* \mu_{\textrm{rm}} \ll N^{O(1)} \norm{\phi} .\end{equation}
Plugging \eqref{lf-bc-g}, \eqref{lf-bc-rm}, and \cref{pointwise-wint-simplified} into \cref{tv-2}, we obtain 
\[\int_{\mathbb C^k}   \frac{ F(  q^{1/2} b_1 ,\dots, q^{k/2}  b_k / \sqrt{k} )}{  \prod_{j=1}^k  \left( e^{  -  |b_j| ^2  }  \frac{ j }{ q^j \pi} \right) } \tilde{\phi}  \eta _* \mu_{\textrm{g}} \] \[= \int_{\Cqs }    \frac{ F(  q^{1/2} b_1 ,\dots, q^{k/2}  b_k / \sqrt{k} )}{  \prod_{j=1}^k  \left( e^{  -  |b_j| ^2  }  \frac{ j }{ q^j \pi} \right) }  \tilde{\phi}    \eta _* \mu_{\textrm{rm}} + O (N^{O(1)} e^{ -(\frac{1}{2} - o_N(1)) N^{1-\beta} \log (N^{1-\beta})} \norm{\phi}_2 )\] which, absorbing the $N^{O(1)}$ into the $o_{N(1)}$, gives \eqref{lf-bc-simp}. \end{proof}

For the next two proofs,  we observe that for $\phi \in \mathbb C[c_0,c_1,\dots, \overline{c_0},\overline{c_1},\dots]$ we have
\begin{equation}\label{int-gamma} \int_{\Cqs }  \phi    \mu_{\textrm{ch}}  =  \int_{U(N)}  \phi(L_M)    \mu_{\textrm{weighted}}  = \gamma \int_{U(N) } \phi(L_M)    \frac{ F( -q^{1/2} \tr(M),\dots, -q^{k/2} \tr (M^k)/k)}{  \prod_{j=1}^k  \left( e^{  -  \frac{  \abs{ \tr (M^j)}^2 }{  j } }  j q^j / \pi \right) } \mu_{\textrm{Haar} } \end{equation} by the definition \eqref{weighted-def} of $\mu_{\textrm{weighted}}$.

\begin{proof}[Proof of \cref{intro-lf}]
We have \begin{equation}\label{lf-key-integral}  \begin{split} &\int_{U(N) } \phi(L_M)    \frac{ F( -q^{1/2} \tr(M),\dots, -q^{k/2} \tr (M^k)/k)}{  \prod_{j=1}^k  \left( e^{  -  \frac{  \abs{ \tr (M^j)}^2 }{  j } }  j q^j / \pi \right) } \mu_{\textrm{Haar} }  = \int_{\Cqs }\phi    \frac{ F(  -q^{1/2} b_1 ,\dots, -q^{k/2}  b_k / \sqrt{k} )}{  \prod_{j=1}^k  \left( e^{  -  \abs{b_j}^2  }  \frac{ j }{ q^j \pi} \right) }  \mu_{\textrm{rm}}  \\ =   &\int_{\Cqs }  \phi  \mu_{\textrm{ep}}  +  O (e^{ -(\frac{1}{2} - o_N(1)) N^{1-\beta} \log (N^{1-\beta})} \norm{\phi}_2 ) \end{split}  \end{equation} by \eqref{matrix-change-of-integral} and \cref{lf-basic-comparison}.  Combining \eqref{int-gamma} and \eqref{lf-key-integral} gives exactly the main term and error term of \eqref{eq-intro-lf} except with an extra factor of $\gamma$. Using \cref{total-mass} to estimate $\gamma$, 
we see that multiplying by $\gamma$ introduces an additional error term of size  $\abs{ \int_{\Cqs }  \phi  \mu_{\textrm{ep}} } O( e^{- (1 - o_N(1) )N^{1-\beta} \log (N^{1-\beta})} )$. But 
\[ \abs{ \int_{\Cqs }  \phi  \mu_{\textrm{ep}} } \leq  \int_{\Cqs } \abs{ \phi}   \mu_{\textrm{ep}} = \int_{\Cqs } \abs{ \phi} \frac{ F(  -q^{1/2} b_1 ,\dots, -q^{k/2}  b_k / \sqrt{k} )}{  \prod_{j=1}^k  \left( e^{  -  \abs{b_j}^2  }  \frac{ j }{ q^j \pi} \right) }     \mu_{\textrm{g}}  \leq O ( k^{O(1) } ) \int_{\Cqs } \abs{ \phi} \mu_{\textrm{g}} \leq \] \[ O ( k^{O(1) } ) \sqrt{\int_{\Cqs } \abs{\phi}^2 \mu_{\textrm{g}}}= O ( k^{O(1) } ) \sqrt{\int_{\Cqs } \abs{\phi}^2 \mu_{\textrm{rm}}}= O( k^{O(1)}) \norm{\phi}_2 \] by the trivial bound, \eqref{mf-change-of-integral}, \cref{pointwise-wint-simplified}, Cauchy-Schwarz, \cref{DS}, and definition, so this error term can be absorbed into the $O (e^{ -(\frac{1}{2} - o_N(1)) N^{1-\beta} \log (N^{1-\beta})} \norm{\phi}_2 )$ error term, giving \eqref{eq-intro-lf}.
\end{proof} 

\begin{proof}[Proof of \cref{intro-hf}] Fix $\phi \in \mathbb C[ c_0, c_1,\dots, \overline{c_0},\overline{c_1},\dots]$ such that for all $\psi\in \mathbb C[ c_0, c_1,\dots, \overline{c_0},\overline{c_1},\dots]$ of degree $\leq k$ we have
\[\int_{U(N)} \phi(L_M) \overline{\psi(L_M)} \mu_{\textrm{Haar}} =0 .\]

Then for any real-valued $\psi\in \mathbb C[ c_0, c_1,\dots, \overline{c_0},\overline{c_1},\dots]$ of degree $\leq k$, we have
\begin{equation}\label{l2-error} \begin{split}
&\int_{U(N)} \phi(L_M)  \frac{ F( -q^{1/2} \tr(M),\dots, -q^{k/2} \tr (M^k)/k)}{  \prod_{j=1}^k  \left( e^{  -  \frac{  \abs{ \tr (M^j)}^2 }{  j } }  j q^j / \pi \right) }  \mu_{\textrm{Haar}} \\
= &  \int_{U(N)} \phi(L_M) \Biggl( \frac{ F( -q^{1/2} \tr(M),\dots, -q^{k/2} \tr (M^k)/k)}{  \prod_{j=1}^k  \left( e^{  -  \frac{  \abs{ \tr (M^j)}^2 }{  j } }  j q^j / \pi \right) } - \psi (L_M)  \Biggr)  \mu_{\textrm{Haar}} + \int_{U (N)} \phi(L_M) \psi(L_M) \mu_{\textrm{Haar}} \\
 = &\int_{U(N)} \phi(L_M) \Biggl( \frac{ F( -q^{1/2} \tr(M),\dots, -q^{k/2} \tr (M^k)/k)}{  \prod_{j=1}^k  \left( e^{  -  \frac{  \abs{ \tr (M^j)}^2 }{  j } }  j q^j / \pi \right) } - \psi (L_M)  \Biggr)  \mu_{\textrm{Haar}} +0 \\
 \leq & \norm{\phi}_2 \sqrt{ \int_{U(N)} \Biggl(  \frac{ F( -q^{1/2} \tr(M),\dots, -q^{k/2} \tr (M^k)/k)}{  \prod_{j=1}^k  \left( e^{  -  \frac{  \abs{ \tr (M^j)}^2 }{  j } }  j q^j / \pi \right) } - \psi (L_M) \Biggr)^2 \mu_{\textrm{Haar}} }.
 \end{split}\end{equation}

We furthermore have
\[  \int_{U(N)} \Biggl( \frac{ F( -q^{1/2} \tr(M),\dots, -q^{k/2} \tr (M^k)/k)}{  \prod_{j=1}^k  \left( e^{  -  \frac{  \abs{ \tr (M^j)}^2 }{  j } }  j q^j / \pi \right) } - \psi (L_M) \Biggr)^2 \mu_{\textrm{Haar}} \]
\[=  \int_{\Cqs} \Biggl( \frac{ F( -q^{1/2} b_1 ,\dots, -q^{k/2}  b_k/\sqrt{k} )}{  \prod_{j=1}^k  \left( e^{  -  \abs{b_j}^2 }  j q^j / \pi \right) } - \psi  \Biggr)^2 \mu_{\textrm{rm}} \]
\[ \hspace{-.4in}= \int_{\Cqs} \Biggl( \frac{ F( -q^{1/2}b_1,\dots, -q^{k/2}b_k /\sqrt{k})}{  \prod_{j=1}^k  \left( e^{  -   \abs{ b_k}^2}  j q^j / \pi \right) }\Biggr)^2 \mu_{\textrm{rm} }- 2 \int_{\Cqs} \frac{ F( -q^{1/2} b_1 ,\dots, -q^{k/2} b_k/\sqrt{k} )}{  \prod_{j=1}^k  \left( e^{  -\abs{b_j}^2 }  j q^j / \pi \right) }  \psi \mu_{\textrm{rm} }+ \int_{\Cqs} \psi^2 \mu_{\textrm{rm}}.\]

We now compare each of these integrals against $\mu_{\textrm{rm}}$ to corresponding integrals against $\mu_{\textrm{g}}$. First, we have 
\begin{equation}\label{hf-third-term} \int_{\Cqs} \psi^2 \mu_{\textrm{rm}} = \int_{\Cqs} \psi^2 \mu_{\textrm{g}}\end{equation} by \cref{DS} since $\psi^2$ has degree $2k \leq N$.

Second, using \eqref{matrix-change-of-integral} and then \eqref{tv-infty} (inputting \cref{JL} and \cref{pointwise-wint-simplified}), we obtain
\begin{equation}\label{hf-first-term} \begin{split} 
 &\int_{\Cqs} \Biggl( \frac{ F( -q^{1/2}b_1,\dots, -q^{k/2}b_k /\sqrt{k})}{  \prod_{j=1}^k  \left( e^{  -   \abs{ b_k}^2}  j q^j / \pi \right) }\Biggr)^2 \mu_{\textrm{rm} }  \\
 =& \int_{\mathbb C^k} \Biggl( \frac{ F( -q^{1/2}b_1,\dots, -q^{k/2}b_k /\sqrt{k})}{  \prod_{j=1}^k  \left( e^{  -   \abs{ b_k}^2}  j q^j / \pi \right) }\Biggr)^2 \eta _* \mu_{\textrm{rm} }  \\
 =& \int_{\mathbb C^k} \Biggl( \frac{ F( -q^{1/2}b_1,\dots, -q^{k/2}b_k /\sqrt{k})}{  \prod_{j=1}^k  \left( e^{  -   \abs{ b_k}^2}  j q^j / \pi \right) }\Biggr)^2 \nu_* \mu_{\textrm{g} }  +O ( N^{O(1)}  e^{ -(1- o_N(1)) N^{1-\beta} \log(N^{1-\beta} )})   \\
 = & \int_{\Cqs} \Biggl( \frac{ F( -q^{1/2}b_1,\dots, -q^{k/2}b_k /\sqrt{k})}{  \prod_{j=1}^k  \left( e^{  -   \abs{ b_k}^2}  j q^j / \pi \right) }\Biggr)^2 \mu_{\textrm{g} }+O ( e^{ -(1- o_N(1)) N^{1-\beta} \log(N^{1-\beta} )}) .\end{split} \end{equation}
Third, \cref{lf-basic-comparison} gives
\begin{equation}\label{hf-second-term}  \begin{split} & \int_{\Cqs} \frac{ F( -q^{1/2} b_1 ,\dots, -q^{k/2} b_k/\sqrt{k} )}{  \prod_{j=1}^k  \left( e^{  -\abs{b_j}^2 }  j q^j / \pi \right) }  \psi \mu_{\textrm{rm} }\\
= & \int_{\Cqs} \frac{ F( -q^{1/2} b_1 ,\dots, -q^{k/2} b_k/\sqrt{k} )}{  \prod_{j=1}^k  \left( e^{  -\abs{b_j}^2 }  j q^j / \pi \right) }  \psi \mu_{\textrm{g} }+ O (e^{ - (\frac{1}{2} - o_N(1)) N^{1-\beta} \log (N^{1-\beta})} \norm{\psi}_2 ) . \end{split} \end{equation} 

Combining \eqref{hf-third-term}, \eqref{hf-first-term}, and \eqref{hf-second-term}, we obtain
\[   \int_{\Cqs} \Biggl( \frac{ F( -q^{1/2} b_1 ,\dots, -q^{k/2}  b_k/\sqrt{k} )}{  \prod_{j=1}^k  \left( e^{  -  \abs{b_j}^2 }  j q^j / \pi \right) } - \psi  \Biggr)^2 \mu_{\textrm{rm}} \]
\begin{equation}\label{two-approximation-errors} \hspace{-.4in} =  \int_{\Cqs} \Biggl( \frac{ F( -q^{1/2} b_1 ,\dots, -q^{k/2}  b_k/\sqrt{k} )}{  \prod_{j=1}^k  \left( e^{  -  \abs{b_j}^2 }  j q^j / \pi \right) } - \psi  \Biggr)^2 \mu_{\textrm{g}} + O ( e^{ -(1- o_N(1)) N^{1-\beta} \log(N^{1-\beta} )}) + O (e^{ - (\frac{1}{2} - o_N(1)) N^{1-\beta} \log (N^{1-\beta})} \norm{\psi}_2 ). \end{equation}
To minimize \eqref{two-approximation-errors}, we should choose $\psi$ to be a good approximation in $L^2$ to $\frac{ F( -q^{1/2} b_1 ,\dots, -q^{k/2}  b_k/\sqrt{k} )}{  \prod_{j=1}^k  \left( e^{  -  \abs{b_j}^2 }  j q^j / \pi \right) }$. To do this we follow Corollary \ref{hermite-expansion} and set
\[ \psi =  \sum_{ \substack{ a_{1,1}, \dots, a_{k,2} \in \mathbb Z^{\geq 0}\\ \sum_{n=1}^k n (a_{n,1} + a_{n,2}) \leq k }} h_{a_{1,1},\dots, a_{k,2}}  \prod_{n=1}^k  \textit{He}_{a_{n,1}} \left( x_{n,1} \sqrt{ \frac{2n}{q^n}} \right) \textit{He}_{a_{n,2}} \left(x_{n,2}  \sqrt{ \frac{2n}{q^n}} \right )  \sqrt{ \frac{2n}{q^n}}^{ a_{n,1}+ a_{n,2} }\]
so that
\[ \frac{ F( -q^{1/2} b_1 ,\dots, -q^{k/2}  b_k/\sqrt{k} )}{  \prod_{j=1}^k  \left( e^{  -  \abs{b_j}^2 }  j q^j / \pi \right) } - \psi  \] \[ =  \sum_{ \substack{ a_{1,1}, \dots, a_{k,2} \in \mathbb Z^{\geq 0}\\ \sum_{n=1}^k n (a_{n,1} + a_{n,2}) > k }} h_{a_{1,1},\dots, a_{k,2}}  \prod_{n=1}^k  \textit{He}_{a_{n,1}} \left( x_{n,1} \sqrt{ \frac{2n}{q^n}} \right) \textit{He}_{a_{n,2}} \left(x_{n,2}  \sqrt{ \frac{2n}{q^n}} \right )  \sqrt{ \frac{2n}{q^n}}^{ a_{n,1}+ a_{n,2} }\]
and then
\[ \int_{\Cqs} \Biggl( \frac{ F( -q^{1/2} b_1 ,\dots, -q^{k/2}  b_k/\sqrt{k} )}{  \prod_{j=1}^k  \left( e^{  -  \abs{b_j}^2 }  j q^j / \pi \right) } - \psi  \Biggr)^2 \mu_{\textrm{g}} \]
\[ = \int_{\Cqs}  \Biggl(  \sum_{ \substack{ a_{1,1}, \dots, a_{k,2} \in \mathbb Z^{\geq 0}\\ \sum_{n=1}^k n (a_{n,1} + a_{n,2}) > k }} h_{a_{1,1},\dots, a_{k,2}}  \prod_{n=1}^k  \textit{He}_{a_{n,1}} \left( x_{n,1} \sqrt{ \frac{2n}{q^n}} \right) \textit{He}_{a_{n,2}} \left(x_{n,2}  \sqrt{ \frac{2n}{q^n}} \right )  \sqrt{ \frac{2n}{q^n}}^{ a_{n,1}+ a_{n,2} } \Biggr)^2 \mu_{\textrm{g}} \]
\[ = \sum_{ \substack{ a_{1,1}, \dots, a_{k,2} \in \mathbb Z^{\geq 0}\\ \sum_{n=1}^k n (a_{n,1} + a_{n,2}) > k }} \abs{h_{a_{1,1},\dots, a_{k,2}} }^2 \prod_{n=1}^k a_{n,1}! a_{n,2}! \left( \frac{2n}{q^n} \right)^{ a_{n,1}+ a_{n,2} }   \ll k^{ - \frac{q-2}{2}}  \ll N^{ - \beta \frac{q-2}{2}} \] by \eqref{hermite-orthogonality} and \cref{hermite-end}.

We also have
\[ \norm{\psi}_2^2= \int_{\Cqs} \abs{\psi}^2 \mu_{\textrm{rm}} = \int_{\Cqs} \abs{\psi}^2 \mu_{\textrm{g}} \] \[  \leq \int_{\Cqs} \Biggl( \frac{ F( -q^{1/2} b_1 ,\dots, -q^{k/2}  b_k/\sqrt{k} )}{  \prod_{j=1}^k  \left( e^{  -  \abs{b_j}^2 }  j q^j / \pi \right) }  \Biggr)^2  \mu_{\textrm{g}} \leq \int_{\Cqs} N^{O(1) }\mu_{\textrm{g}} = N^{O(1)}. \]

Plugging these into \eqref{two-approximation-errors}, we obtain
\[  \int_{\Cqs} \Biggl( \frac{ F( -q^{1/2} b_1 ,\dots, -q^{k/2}  b_k/\sqrt{k} )}{  \prod_{j=1}^k  \left( e^{  -  \abs{b_j}^2 }  j q^j / \pi \right) } - \psi  \Biggr)^2 \mu_{\textrm{rm}} \]
\[ = O ( N^{ - \beta \frac{q-2}{2}}) + O ( e^{ -(1- o_N(1)) N^{1-\beta} \log(N^{1-\beta} )}) + O (e^{ - (\frac{1}{2} - o_N(1)) N^{1-\beta} \log (N^{1-\beta})}N^{O(1)} )\]
\[ = O ( N^{ - \beta \frac{q-2}{2}}) \]
as the polynomial error term dominates the exponential ones. Plugging this into \eqref{l2-error} and using \eqref{int-gamma} and the fact from \cref{total-mass} that $\gamma=O(1)$, this gives \eqref{eq-intro-hf}.
\end{proof}

\section{Representations of the unitary group and moments}

This section is devoted to the proof of \cref{intro-cfkrs}. We begin in \S\ref{ss-reps} by describing the relationship between irreducible representations and polynomials in $\mathbb C[c_0,c_1,\dots, \overline{c_0},\overline{c_1},\dots]$, and between highest weights of representations and the degrees of polynomials. Using this, in \S\ref{ss-error} we will prove \cref{lem-mip}, a variant of \cref{intro-cfkrs} with the same error term but a main term expressed very differently in terms of a sum over irreducible representations. The proof of \cref{lem-mip} is very general and should apply with minimal modification beyond moments to other statistics of $L$-functions such as zero densities and ratios.  The next steps are to give in \S\ref{ss-main} an explicit expression, avoiding the language of representation theory, for the main term in \cref{lem-mip}, and to compare the main term of \cref{lem-mip} with the main term of \cref{intro-cfkrs}, leading to a proof of \cref{intro-cfkrs} at the end of this section. It would be possible to avoid the language of representation theory entirely, only writing down explicit polynomials and using the Weyl integration formula, but doing this would make our calculations less motivated.

\subsection{Preliminaries on representations and polynomials}\label{ss-reps}

Irreducible representations of the unitary group $U(N)$ are classified by their highest weight, a nonincreasing $N$-tuple of integers. An irreducible representation of $GL_N(\mathbb C)$ has highest weight $\omega_1,\dots,\omega_N$ if and only if it contains a vector on which upper-triangular unipotent matrices act trivially and on which the diagonal matrix with diagonal entries $\lambda_1,\dots,\lambda_N$ acts by multiplication by $\prod_{\ell=1}^N \lambda_\ell^{\omega_\ell}$. An irreducible representation of $U(N)$ has highest weight $\omega_1,\dots,\omega_N$ if and only if it extends to a representation of $GL_N(\mathbb C)$ with highest weight $\omega_1,\dots,\omega_N$. We denote the highest weight of $V$ by $\weight(V)$. 

We say the norm of the highest weight $\omega_1,\dots,\omega_N$ is $\sum_{\ell=1}^N \abs{\omega_\ell}$. The norm of $\weight(V)$ is denoted by $\norm{\weight(V)}$.

In this subsection, we will check that polynomials of degree $\leq k$ in $\mathbb C[c_0,c_1,\dots, \overline{c_0},\overline{c_1},\dots]$ may be expressed as linear combinations of the characters of irreducible representations with highest weight of norm $\leq k$ and, conversely, characters of such irreducible representations may be expressed as low-degree polynomials. It follows that characters of irreducible representations with highest weight of norm $>k$ are orthogonal to all low-degree polynomials, an important criterion for applying Theorem \ref{intro-hf}.

\begin{lemma}\label{tensor-product-weights} Let $V_1$ and $V_2$ be irreducible representations of $U(N)$. Then $V_1\otimes V_2$ is a sum of irreducible representations of $U(N)$ with highest weights of norms $\leq \norm{\weight(V_1)} + \norm{\weight(V_2)}$. \end{lemma}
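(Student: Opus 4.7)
The plan is to reduce the statement to a comparison of weights, then use a convexity argument. Recall that if $W$ is any irreducible summand of $V_1 \otimes V_2$, its highest weight $\weight(W)$ occurs as a weight of $V_1 \otimes V_2$, i.e.\ as a sum $\mu_1 + \mu_2$ where $\mu_i$ is a weight of $V_i$. Thus it will suffice to show that every weight $\mu_i$ of $V_i$ satisfies $\norm{\mu_i} \leq \norm{\weight(V_i)}$ (where $\norm{\cdot}$ denotes the sum of absolute values of the coordinates), because then the triangle inequality gives
\[ \norm{\weight(W)} = \norm{\mu_1 + \mu_2} \leq \norm{\mu_1} + \norm{\mu_2} \leq \norm{\weight(V_1)} + \norm{\weight(V_2)}. \]

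The main step is therefore the weight bound. The classical fact that I would invoke is that the multiset of weights of an irreducible representation $V$ of $U(N)$ is contained in the convex hull of the Weyl group orbit of $\weight(V)$; the Weyl group here is $S_N$ acting by permutation of the $N$-tuple. So any weight $\mu$ of $V$ can be written as a convex combination $\mu = \sum_{\sigma \in S_N} t_\sigma\, \sigma \cdot \weight(V)$ with $t_\sigma \geq 0$ and $\sum_\sigma t_\sigma = 1$. Since the function $x \mapsto \norm{x}$ is convex, and since every permutation $\sigma \cdot \weight(V)$ has the same coordinates as $\weight(V)$ up to reordering, and thus the same sum of absolute values, we conclude
\[ \norm{\mu} \leq \sum_{\sigma \in S_N} t_\sigma\, \norm{\sigma \cdot \weight(V)} = \norm{\weight(V)}. \]

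Combining the two paragraphs gives the lemma. I do not expect any significant obstacle: the only step requiring outside input is the containment of weights in the convex hull of the Weyl orbit of the highest weight, which is a standard and well-known fact from the representation theory of compact Lie groups (or equivalently of $GL_N(\mathbb C)$), and the rest is a one-line convexity argument together with the triangle inequality.
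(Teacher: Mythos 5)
Your proof is correct. The reduction to bounding $\norm{\mu}$ for a single weight $\mu$ of $V_i$ (via the triangle inequality applied to the decomposition $\weight(W)=\mu_1+\mu_2$) is the same as the paper's. Where you diverge is in the key estimate: you invoke the standard fact that the weights of an irreducible representation lie in the convex hull of the Weyl orbit of the highest weight, then use convexity of $\norm{\cdot}$ together with $S_N$-invariance of the norm. The paper instead argues by contradiction, using $S_N$-invariance of the weight set to sort the offending weight into decreasing order and then observing that a decreasing weight of norm exceeding $\norm{\weight(V)}$ cannot be obtained from the highest weight by subtracting nonnegative combinations of positive roots. The two arguments ultimately rest on the same structural fact (every weight of $V$ is $\leq\weight(V)$ in the dominance order after sorting); your convex-hull phrasing is the cleaner, more textbook-ready packaging of it, while the paper's version is more hands-on and avoids naming the convex hull result. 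Either works; yours is arguably easier to check for a reader who already knows the convexity theorem, while the paper's is more self-contained for readers who only remember the "highest weight generates under negative roots" fact.
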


\begin{proof} If $V_1 \otimes V_2$ contains an irreducible summand with highest weight $\omega_1,\dots, \omega_N$ then it contains an eigenvector of the diagonal torus with weights $\omega_1,\dots, \omega_N$. Since $V_1$ and $V_2$ split as sums of eigenspaces of the diagonal torus, this is only possible if $V_1$ and $V_2$ each contain an eigenvector whose weights sum to $\omega_1,\dots, \omega_N$. By linearity of the norm, it suffices to prove that the weights of eigenvectors of $V_j$ have norm at most $\norm{\weight(V_j)}$. If this were not so, since the set of weights is $S_N$-invariant, there would have to be a vector whose weight had greater norm with weights in decreasing order, which could not be a sum of the highest weights and negative roots, contradicting the fact that the representation is generated by the highest weight under the negative roots. \end{proof}

For $M \in U(N)$ the definition \eqref{LM-def} of $L_M$ implies that \begin{equation}\label{l-wedge} L_M (s) = \sum_{d=0}^n (-1)^n \tr (M, \wedge^d \std ) q^{d \left(\frac{1}{2}-s \right)}\end{equation} and \begin{equation}\label{l-bar-wedge} \overline{L_M(s)}= \sum_{d=0}^n (-1)^n \tr (M, \wedge^d \std^\vee ) q^{d\left( \frac{1}{2}  -\overline{s}\right)}.\end{equation} 

\begin{lemma}\label{low-degree-is-low-weight} For $\phi \in \mathbb C[c_0,c_1,\dots, \overline{c_0},\overline{c_1},\dots]$, there exists a finite set of irreducible representations $V_o$ and coefficients $\kappa_o$ such that for all $M\in U(N)$ we have
\[ \phi(L_M) = \sum_o \kappa_o \tr (M, V_o) \]
and if $\phi$ has degree $\leq d$ then we can assume that $\norm{\weight(V_o)}\leq d$ for all $o$.
\end{lemma}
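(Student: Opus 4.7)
The plan is to reduce the statement to a direct application of \cref{tensor-product-weights} together with the expansions \eqref{l-wedge} and \eqref{l-bar-wedge}. First I would identify each generator $c_d$ and $\overline{c_d}$ as (up to a scalar and sign) the character of a specific irreducible representation of $U(N)$. Indeed, reading off coefficients of $q^{-ds}$ from \eqref{l-wedge}, one has $c_d(L_M) = (-1)^d q^{d/2} \tr(M, \wedge^d \std)$ for $0 \leq d \leq N$ and $c_d(L_M) = 0$ for $d > N$; similarly, \eqref{l-bar-wedge} identifies $\overline{c_d}(L_M)$ with a scalar multiple of $\tr(M, \wedge^d \std^\vee)$. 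The representation $\wedge^d \std$ has highest weight $(1,\dots,1,0,\dots,0)$ with $d$ ones, and $\wedge^d \std^\vee$ has highest weight $(0,\dots,0,-1,\dots,-1)$ with $d$ negative ones, so both irreducible representations have highest weight of norm exactly $d$.

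Next I would handle monomials. A monomial of weighted degree $\leq d$ has the form $\prod_i c_{d_i} \prod_j \overline{c_{e_j}}$ with $\sum_i d_i + \sum_j e_j \leq d$. Under the identification above, evaluating at $L_M$ produces a scalar multiple of a product of characters of irreducible representations, i.e.\ the character of a tensor product $\bigotimes_i \wedge^{d_i} \std \otimes \bigotimes_j \wedge^{e_j} \std^\vee$ whose tensor factors have highest weights of norms $d_i$ and $e_j$. Iterating \cref{tensor-product-weights}, this tensor product decomposes as a direct sum of irreducibles $V_o$ with $\norm{\weight(V_o)} \leq \sum_i d_i + \sum_j e_j \leq d$, and thus its character is a linear combination of $\tr(M, V_o)$ with $\norm{\weight(V_o)} \leq d$.

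Finally, writing $\phi$ as a finite $\mathbb C$-linear combination of such monomials (each of weighted degree $\leq d$ since $\phi$ has degree $\leq d$), and summing the resulting expressions, yields the desired representation of $\phi(L_M)$ as a finite $\mathbb C$-linear combination of characters of irreducible representations with highest weight of norm $\leq d$, proving the lemma. Since each step is either definitional, an immediate consequence of \cref{tensor-product-weights}, or linearity, there is no real obstacle; the only thing to be a bit careful about is that the scalar factors $q^{d/2}$ and signs $(-1)^d$ that appear when identifying $c_d$ with a character get absorbed into the coefficients $\kappa_o$ rather than affecting the weight bound.
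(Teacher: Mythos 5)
Your proof is correct and follows essentially the same route as the paper's: identify each $c_d$ and $\overline{c_d}$ (via \eqref{l-wedge}, \eqref{l-bar-wedge}) with scalar multiples of the characters of $\wedge^d\std$ and $\wedge^d\std^\vee$, each of highest-weight norm $d$; pass to monomials via tensor products and \cref{tensor-product-weights}; and conclude by linearity, with the scalars absorbed into the $\kappa_o$.
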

\begin{proof} Since all polynomials are linear combinations of monomials, it suffices to prove this for monomials.

We first check for the coefficients of $L_M$ and their complex conjugates. The coefficient of $q^{-ds}$ is $(-q^{1/2})^d \tr(M, \wedge^d \std )$ and its complex conjugate is $(-q^{1/2})^d \tr(M, \wedge^d \std^\vee )$ by \eqref{l-wedge} and \eqref{l-bar-wedge}. The highest weight of $\wedge^d \std$ has $d$ ones followed by $N-d$ zeroes, while the highest weight of $\wedge^d \std^\vee$ has $N-d$ zeroes followed by $d$ negative ones, and both of these have norm $d$.

Any monomial is a product of $c_d$s and $\overline{c_d}$s, hence equal to a constant multiple of a product of traces. The product of traces is the trace of the tensor product, which is the sum of the traces on the irreducible summands of the tensor product. By \cref{tensor-product-weights}, these all have weights with norms bounded by the degree of the monomial. \end{proof}

 \begin{lemma}\label{high-weight-orthogonal} Let $\phi \in \mathbb C[c_0,c_1,\dots, \overline{c_0},\overline{c_1},\dots]$ be a polynomial and $V$ an irreducible representation of $U(N)$ such that for all $M\in U(N)$, \[ \phi(L_M) =  \tr (M, V) .\] If $\norm{\weight(V)}>k$ then for all polynomials $\psi \in \mathbb C[c_0,c_1,\dots, \overline{c_0},\overline{c_1},\dots]$ of degree $\leq k$ we have
 \[ \int_{U(N)} \phi(L_M) \overline{\psi(L_M)} \mu_{\textrm{Haar}}=0.\] \end{lemma}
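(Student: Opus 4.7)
The plan is to reduce this to Schur orthogonality of characters on $U(N)$ via the preceding lemma. First I would apply \cref{low-degree-is-low-weight} to $\psi$: since $\psi$ has degree $\leq k$, there exist finitely many irreducible representations $W_o$ of $U(N)$ with $\norm{\weight(W_o)} \leq k$ and coefficients $\kappa_o \in \mathbb C$ such that $\psi(L_M) = \sum_o \kappa_o \tr(M, W_o)$ for every $M \in U(N)$. Taking complex conjugates (and using that the complex conjugate of $\tr(M,W_o)$ is $\tr(M, W_o^\vee)$, whose highest weight has the same norm as that of $W_o$), we can write $\overline{\psi(L_M)} = \sum_o \overline{\kappa_o} \tr(M, W_o^\vee)$, and each $W_o^\vee$ still satisfies $\norm{\weight(W_o^\vee)} \leq k$.

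By linearity of the integral it then suffices to show that for every irreducible representation $W$ with $\norm{\weight(W)} \leq k$,
\[ \int_{U(N)} \tr(M, V) \tr(M, W) \, \mu_{\textrm{Haar}} = 0. \]
This is an instance of Schur orthogonality of characters: the integral equals $\dim \Hom_{U(N)}(W^\vee, V)$ (or equivalently the multiplicity of the trivial representation in $V \otimes W$), which vanishes whenever $V$ and $W^\vee$ are nonisomorphic. Since isomorphic irreducibles have the same highest weight, and hence the same norm of the highest weight, the hypothesis $\norm{\weight(V)} > k \geq \norm{\weight(W^\vee)}$ forces $V \not\cong W^\vee$, and the integral vanishes.

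There is no real obstacle here beyond carefully invoking the two ingredients: \cref{low-degree-is-low-weight} to convert the low-degree polynomial condition on $\psi$ into a representation-theoretic statement, and Schur orthogonality to conclude. The one mild subtlety is ensuring that complex conjugation preserves the norm of the highest weight, which follows from the fact that the dual of an irreducible with highest weight $(\omega_1,\dots,\omega_N)$ has highest weight $(-\omega_N,\dots,-\omega_1)$, and this has the same absolute value sum.
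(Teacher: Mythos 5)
Your proof is correct and takes essentially the same approach as the paper: apply \cref{low-degree-is-low-weight} to $\psi$, then conclude by orthogonality of characters since $V$ cannot appear among the low-norm irreducibles. The only cosmetic difference is that you route the argument through $\overline{\tr(M,W_o)} = \tr(M,W_o^\vee)$ and dual highest weights, whereas the paper directly invokes the orthogonality relation $\int \tr(M,V)\,\overline{\tr(M,V_o)}\,\mu_{\textrm{Haar}} = 0$ for $V \not\cong V_o$, avoiding the need to observe that dualizing preserves the norm of the highest weight.
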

 
 \begin{proof} We apply \cref{low-degree-is-low-weight} to $\psi$ to obtain
 \[ \int_{U(N)} \phi(L_M) \overline{\psi(L_M)} \mu_{\textrm{Haar}}= \int_{U(N)} \tr(M,V) \sum_o \overline{\kappa_o } \overline{\tr (M, V_o) }\mu_{\textrm{Haar}}=\sum_o \overline{\kappa_o }  \int_{U(N)} \tr(M,V) \overline{\tr (M, V_o) }\mu_{\textrm{Haar}} = 0 \] by orthogonality of characters, since $V$ cannot be among the $V_o$ as $\norm{\weight(V)}> k \geq \norm{\weight(V_o)}$ for all $o$. \end{proof}
 
 Note that the conclusion of \cref{high-weight-orthogonal} is the assumption \eqref{hf-hypothesis} of \cref{intro-hf}. To obtain a supply of polynomials to which we can apply \cref{intro-hf}, it suffices to find polynomials $\phi$ satisfying the hypothesis $ \phi(L_M) = \tr (M, V) $ of \cref{high-weight-orthogonal}. We can do this using the Jacobi-Trudi identity for Schur polynomials.

 We always take $\wedge^d \std = \wedge^d \std^\vee=0$ if $ d\notin[0,N]$. For a power series $L$ in $q^{-s}$ and arbitrary integer $d$, let $c_d(L)$ be the coefficient of $q^{-ds}$ in $L$, so that $c_d=0$ for $d<0$.

\begin{lemma}\label{rep-to-poly} Let $V$ be a representation of $U(N)$ with highest weight $\omega_1,\dots,\omega_N$. Let  $a,b \in \mathbb Z^{\geq 0}$ be integers satisfying $a \geq \omega_1$ and $\omega_n \geq -b$. Then

\begin{enumerate}

\item $\tr (M,V \otimes \det^{b})$ is the determinant of the $a+b \times a+b$ matrix whose $ij$th entry is $\tr(M, \wedge^{ \# \{ \ell \mid \omega_\ell \geq i-b \} +j-i } \std) $.

\item Let $d_{ij} = \# \{ \ell \mid \omega_\ell < i-b \} + i - j  $ for $ i\leq b$ and $d_{ij} =  \# \{ \ell \mid \omega_\ell \geq i-b \} +j-i $ for $i > b$. Then $\tr (M,V )$ is the determinant of the $a+b \times a+b$ matrix whose $ij$th entry is $\tr(M, \wedge^{ d_{ij} }\std)$ for $i>b$ and $\tr(M, \wedge^{d_{ij} } \std^\vee )$ for $j>b$.

\item $\tr (M,V )$ is the determinant of the $a+b \times a+b$ matrix whose $ij$th entry is $ (-q^{-1/2})^{d_{ij}} c_{d_{ij}} (L_M)$ for $i>b$ and $ (-q^{-1/2})^{d_{ij}} \overline{c_{d_{ij}} (L_M)} $ for $i \leq b$.

\item The determinant of the $a+b \times a+b$ matrix whose $ij$th entry is $ (-q^{-1/2})^{d_{ij}} c_{d_{ij}} $ for $i>b$ and $ (-q^{-1/2})^{d_{ij}} c_{d_{ij}} $ for $i \leq b$ is a polynomial of degree at most the norm of $\omega_1,\dots, \omega_n$. 

\end{enumerate} \end{lemma}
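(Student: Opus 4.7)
The plan is to treat the four parts in order. Part (1) is an instance of the dual (second) Jacobi--Trudi identity; part (2) follows from (1) by scaling the first $b$ rows, exploiting that multiplication by $\det(M)^{-1}$ converts exterior powers of $\std$ into those of $\std^\vee$ for $M\in U(N)$; part (3) is direct substitution via \eqref{l-wedge} and \eqref{l-bar-wedge}; and part (4) is a degree bound obtained by maximizing over permutations.

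For (1), I would observe that $V\otimes\det^{b}$ has highest weight $\mu=(\omega_1+b,\ldots,\omega_N+b)$, which is an honest partition because $\omega_N\geq -b$, with $\mu_1\leq a+b$ and conjugate $\mu'_i=\#\{\ell:\omega_\ell\geq i-b\}$. Padding $\mu$ with zero parts up to length $a+b$ (which does not change the determinant, by cofactor expansion along the extra rows) and applying the identity $s_\mu=\det(e_{\mu'_i-i+j})_{1\leq i,j\leq a+b}$, under the convention that $e_d=0$ (equivalently $\wedge^d\std=0$) for $d\notin[0,N]$, yields the stated formula for $\tr(M,V\otimes\det^b)$.

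For (2), I would use that if $M\in U(N)$ has eigenvalues $\lambda_1,\ldots,\lambda_N$ on the unit circle, then $e_d(\lambda_1^{-1},\ldots,\lambda_N^{-1})=(\lambda_1\cdots\lambda_N)^{-1}e_{N-d}(\lambda_1,\ldots,\lambda_N)$, so $\det(M)^{-1}\tr(M,\wedge^d\std)=\tr(M,\wedge^{N-d}\std^\vee)$. Since $\tr(M,V)=\det(M)^{-b}\tr(M,V\otimes\det^b)$, multiplying each of the first $b$ rows of the matrix from (1) by $\det(M)^{-1}$ multiplies the determinant by $\det(M)^{-b}$ and replaces the $(i,j)$ entry for $i\leq b$ by $\tr(M,\wedge^{N-\#\{\ell:\omega_\ell\geq i-b\}-j+i}\std^\vee)$; the exponent simplifies to $\#\{\ell:\omega_\ell<i-b\}+i-j=d_{ij}$. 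Part (3) is then immediate after substituting $\tr(M,\wedge^d\std)=(-q^{-1/2})^d c_d(L_M)$ and $\tr(M,\wedge^d\std^\vee)=(-q^{-1/2})^d\overline{c_d(L_M)}$ from \eqref{l-wedge} and \eqref{l-bar-wedge}.

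For (4), the polynomial degree of an $(a+b)\times(a+b)$ determinant in its entries is at most $\max_{\sigma\in S_{a+b}}\sum_{i=1}^{a+b} d_{i,\sigma(i)}$. I would split this sum as an $\omega$-dependent piece $\sum_{i\leq b}\#\{\ell:\omega_\ell<i-b\}+\sum_{i>b}\#\{\ell:\omega_\ell\geq i-b\}$, which a direct double count (each $\ell$ with $\omega_\ell<0$ contributes $-\omega_\ell$ to the first sum and each $\ell$ with $\omega_\ell>0$ contributes $\omega_\ell$ to the second) evaluates to $\sum_\ell\abs{\omega_\ell}=\norm{\weight(V)}$, plus a $\sigma$-dependent piece $\sum_{i\leq b}(i-\sigma(i))+\sum_{i>b}(\sigma(i)-i)$. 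Setting $S=\{1,\ldots,b\}$, $T=\{b+1,\ldots,a+b\}$ and using $\sigma(S)\sqcup\sigma(T)=S\sqcup T$, this second piece simplifies to $2(\sum_{i\in S}i-\sum_{j\in\sigma(S)}j)\leq 0$ because $S$ is the $b$-element subset of $\{1,\ldots,a+b\}$ of minimal sum. Hence the total degree is at most $\norm{\weight(V)}$, as required. The main obstacle is purely combinatorial bookkeeping; no input beyond Jacobi--Trudi and this rearrangement inequality is needed.
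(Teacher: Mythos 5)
Your argument is correct, and for parts (1)–(3) it follows the paper's proof exactly: the dual Jacobi–Trudi identity for the Schur polynomial of $V\otimes\det^b$, scaling the first $b$ rows by $\det(M)^{-1}$ and using $\det(M)^{-1}\tr(M,\wedge^d\std)=\tr(M,\wedge^{N-d}\std^\vee)$, and then direct substitution from \eqref{l-wedge} and \eqref{l-bar-wedge}. Part (4) is where you diverge, in a minor but genuine way. The paper introduces an auxiliary function $v(i)$ ($=2b-i$ for $i\le b$, $=i$ for $i>b$), establishes the one-sided estimate $d_{ij}\le d(i)+v(j)-v(i)$, and concludes by the telescoping identity $\sum_i v(\sigma(i))=\sum_i v(i)$ that every Leibniz term has degree $\le\sum_i d(i)=\norm{\weight(V)}$. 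You instead compute $\sum_i d_{i,\sigma(i)}$ exactly, split off the same $\omega$-dependent sum $\sum_i d(i)$ (which you correctly identify as $\norm{\weight(V)}$ by the same double count), and show the $\sigma$-dependent remainder equals $2\bigl(\sum_{i\in S}i-\sum_{j\in\sigma(S)}j\bigr)\le 0$ via the rearrangement fact that $S=\{1,\dots,b\}$ has minimal sum among $b$-element subsets of $\{1,\dots,a+b\}$. Both methods give the required bound; your version has the small advantage of showing the maximum Leibniz-term degree is attained at $\sigma=\mathrm{id}$ rather than merely being bounded above, and it avoids introducing the auxiliary $v$. One small bookkeeping point worth flagging, common to both arguments: some $d_{ij}$ may be negative, in which case the entry vanishes (as $c_d=0$ for $d<0$); this does no harm since the corresponding Leibniz terms contribute nothing, and the inequality $\max_\sigma\sum_i d_{i,\sigma(i)}\le\norm{\weight(V)}$ still serves as a valid upper bound on the degree of the nonzero terms.
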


\begin{proof} For part (1), if we let $\eigen_1,\dots, \eigen_N$ be the eigenvalues of $M$, then $\tr (M,V \otimes \det^{b})$ is the Schur polynomial in $\eigen_1,\dots, \eigen_N$ associated to the partition $(\omega_1+b,\dots, \omega_N+b)$ and \[\tr(M, \wedge^{ \# \{ \ell \mid \omega_\ell \geq i-b \} +j-i }  \std) \] is the $\# \{ \ell \mid \omega_\ell \geq i-b \} +j-i$th elementary symmetric polynomial in $\eigen_1,\dots, \eigen_N$. The claim is then a statement of the Jacobi-Trudi identity for Schur polynomials~\cite[Formula A6]{FH}.

For part (2), we have $\tr(M,V) = \tr (M,V, \otimes \det^{b}) \det(M)^{-b}$. We take the formula of part (1) and multiply the first $b$ rows by $\det(M)^{-1}$ to multiply the determinant by $\det(M)^{-b}$. This fixes the $ij$ entry for $i>b$ and changes the $ij$ entry for $ i \leq b$ to
\[ \tr(M, \wedge^{ \# \{ \ell \mid \omega_\ell \geq i-b \} +j-i } \std)  \det(M)^{-1} =\tr(M, \wedge^{ \# \{ \ell \mid \omega_\ell \geq i-b \} +j-i } \std \otimes \det^{-1}) \] \[=\tr(M, \wedge^{ N- (\# \{ \ell \mid \omega_\ell \geq i-b \} +j-i )}\std^\vee ) = \tr(M, \wedge^{ \# \{ \ell \mid \omega_\ell < i-b \} +i-j)}\std^\vee )  = \tr(M, \wedge^{d_{ij} } \std^\vee ).\]\

Part (3) follows from part (2) when we observe that $\tr(M, \wedge^{ d_{ij} }\std)= (-q^{-1/2})^{d_{ij}} c_{d_{ij}} (L_M)$ because of \eqref{l-wedge} and $\tr(M, \wedge^{ d_{ij} }\std^\vee )= (-q^{-1/2})^{d_{ij}} \overline{ c_{d_{ij}} (L_M)}$ because of \eqref{l-bar-wedge}.

For part (4), let \[ d(i) = \begin{cases}  \# \{ \ell \mid \omega_\ell <  i-b \} & \textrm{if } i \leq b \\  \# \{ \ell \mid \omega_\ell \geq i-b \} & \textrm{if } i>b \end{cases} \] and let \[ v(i) = \begin{cases}  2b -i & \textrm{if } i \leq b \\ i  & \textrm{if } i>b \end{cases}.\] Then if $i,j> b$ we have $v(j)-v(i)=j-i$, if $j\leq b < i$ we have $ v(j)-v(i) = 2b-j -i  \geq j-i $, if $i,j \leq b$ we have $v(j)-v(i) = i-j $, and if $i \leq b < j$ we have $v(j)-v(i)=  j + i-2b  \geq  i-j$ so in all cases we have \[ d(i) + v(j)-v(i) \geq d_{ij} .\] The $ij$-entry is a polynomial of degree $d_{ij}$. This implies the determinant has degree $\leq \sum_{i=1}^{a+b} d(i)$ since when we calculate the degree of each term in the Leibniz expansion, the $v(j)$ and $v(i)$ cancel. Finally $\sum_{i=1}^{a+b} d(i)$ is the norm of $\omega_1,\dots, \omega_N$ since if $\omega_\ell \geq 0$ then $\omega_\ell$ contributes to $d(b+1),\dots, d(b+ \omega_\ell)$ and thus contributes $\omega_\ell$ to the sum while if $\omega_\ell \leq 0$ then $\omega_\ell$ contributes to $d(b+1+\omega_\ell),\dots, d(b)$ and thus contributes $-\omega_\ell$ to the sum. \end{proof}

\subsection{Handling the error term}\label{ss-error}

 \begin{lemma} Fix $\rp, \ry$, and $N$. There exist coefficients $\kappa_o$ (depending on $s_1,\dots, s_{\rp+\ry}$) and irreducible representations $V_o$ such that for each $M\in U(N)$ we have
 \begin{equation}\label{g-moment-rep-sum} \prod_{i=1}^{\rp} L_M(s_i ) \prod_{i=\rp+1}^{\rp + \ry} \overline{L _M( s_i ) }=  \sum_o \kappa_o \tr(M, V_o).\end{equation}
\end{lemma}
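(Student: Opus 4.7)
The plan is simply to expand the product and use complete reducibility. By \eqref{l-wedge}, for each $i \in \{1, \dots, \rp\}$ we have
\[ L_M(s_i) = \sum_{d_i = 0}^{N} (-1)^{d_i} q^{d_i(\frac{1}{2} - s_i)} \tr(M, \wedge^{d_i} \std), \]
and by \eqref{l-bar-wedge}, for each $i \in \{\rp+1, \dots, \rp + \ry\}$ we have
\[ \overline{L_M(s_i)} = \sum_{d_i = 0}^{N} (-1)^{d_i} q^{d_i(\frac{1}{2} - \overline{s_i})} \tr(M, \wedge^{d_i} \std^\vee). \]
Each of these is a finite sum, since $\wedge^d \std = 0$ for $d > N$.

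Multiplying the two displays above over all $i$ and expanding, we obtain
\[ \prod_{i=1}^{\rp} L_M(s_i) \prod_{i=\rp+1}^{\rp+\ry} \overline{L_M(s_i)} = \sum_{d_1, \dots, d_{\rp+\ry} = 0}^{N} \alpha_{d_1, \dots, d_{\rp+\ry}} \prod_{i=1}^{\rp} \tr(M, \wedge^{d_i} \std) \prod_{i=\rp+1}^{\rp+\ry} \tr(M, \wedge^{d_i} \std^\vee), \]
where $\alpha_{d_1, \dots, d_{\rp+\ry}}$ is an explicit scalar depending on $s_1, \dots, s_{\rp+\ry}$, namely $\prod_{i=1}^{\rp+\ry} (-1)^{d_i} q^{d_i(\frac{1}{2} - s_i)}$ (with $s_i$ replaced by $\overline{s_i}$ for $i > \rp$). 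Since the product of characters is the character of the tensor product, the inner double product equals $\tr(M, W_{d_1, \dots, d_{\rp+\ry}})$ where
\[ W_{d_1, \dots, d_{\rp+\ry}} = \bigotimes_{i=1}^{\rp} \wedge^{d_i} \std \;\otimes\; \bigotimes_{i=\rp+1}^{\rp+\ry} \wedge^{d_i} \std^\vee. \]

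Because $U(N)$ is compact, every finite-dimensional representation decomposes as a direct sum of irreducibles, so we may write $W_{d_1, \dots, d_{\rp+\ry}} = \bigoplus_o m_o V_o$ with $V_o$ irreducible and multiplicities $m_o \in \mathbb Z^{\geq 0}$, giving $\tr(M, W_{d_1, \dots, d_{\rp+\ry}}) = \sum_o m_o \tr(M, V_o)$. Collecting terms and combining multiplicities with the scalars $\alpha_{d_1, \dots, d_{\rp+\ry}}$ yields \eqref{g-moment-rep-sum}. There is no real obstacle here; the statement is essentially a packaging of the Jacobi--Trudi identity and complete reducibility, and the sum is finite because each $d_i$ ranges over $\{0, \dots, N\}$.
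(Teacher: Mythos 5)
Your proof is correct and takes essentially the same approach as the paper: expand each factor via \eqref{l-wedge}/\eqref{l-bar-wedge} as a linear combination of characters, multiply out, and use that a product of characters is the character of the tensor product, which decomposes by complete reducibility. Your version merely writes out the explicit coefficients and intermediate tensor products that the paper leaves implicit.
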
 \begin{proof} By \eqref{l-wedge} and \eqref{l-bar-wedge}, $L_M(s)$ and $\overline{L_M(s)}$ may be expressed as complex-linear combinations of characters of $U(N)$. Multiplying these expressions, it follows that 
$ \prod_{i=1}^{\rp} L_M(s_i ) \prod_{i=\rp+1}^{\rp + \ry} \overline{L _M( s_i ) }$ is a complex-linear combinations of characters of $U(N)$. \end{proof}

Using \cref{rep-to-poly}(3), we associate to each $V_o$ a polynomial $\psi_o\in \mathbb C[c_0, c_1,\dots, \overline{c_0},\overline{c_1},\dots]$ such that $\psi_o (L_M) = \tr(M, V_o)$. This gives 
\begin{equation}\label{L-psi}  \prod_{i=1}^{\rp} L(s_i ) \prod_{i=\rp+1}^{\rp + \ry} \overline{L ( s_i ) }=  \sum_o \kappa_o \psi_o(L)\end{equation} as long as $L= L_M$ for some $M\in U(N)$.

We define \[ \phi_{\textrm{lf}} = \sum_{ o ,  \norm{ \weight (V_o)}\leq k} \kappa_o \psi_o \] and \[ \phi_{\textrm{hf}} = \sum_{ o ,  \norm{ \weight (V_o)}> k} \kappa_o\psi_o. \]  

Note that all implicit constants in big $O$ notation used in this section will be allowed to depend on $\rp,\ry,q$ but not on $N$ (since the final goal is to prove \cref{intro-cfkrs}, an estimate whose error term depends on $\rp,\ry,q$ but not on $N$).

\begin{lemma}\label{phi-norm-bound} Both $\norm{\phi_{\textrm{hf}} }_2$ and $\norm{\phi_{\textrm{lf}} }_2$ are $O( N ^{ \frac{(\rp+\ry)^2 }{2}})$.\end{lemma}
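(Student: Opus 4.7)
The plan is to exploit orthogonality of irreducible characters of $U(N)$. I would first note that, after collecting terms corresponding to the same irreducible summand, $\phi_{\textrm{lf}}(L_M)$ and $\phi_{\textrm{hf}}(L_M)$ are linear combinations of irreducible characters of $U(N)$ indexed by disjoint sets of highest weights (those of norm $\leq k$ versus norm $>k$). By Schur orthogonality against Haar measure they are therefore $L^2$-orthogonal, so by the Pythagorean identity together with \eqref{L-psi},
\[ \norm{\phi_{\textrm{lf}}}_2^2 + \norm{\phi_{\textrm{hf}}}_2^2 = \norm{\phi_{\textrm{lf}} + \phi_{\textrm{hf}}}_2^2 = \int_{U(N)} \prod_{i=1}^{\rp+\ry} |L_M(s_i)|^2 \, \mu_{\textrm{Haar}}, \]
where the last equality uses that $|P\overline{P}| = \prod_i |L_M(s_i)|^2$ for $P = \prod_{i\leq \rp} L_M(s_i) \prod_{i>\rp} \overline{L_M(s_i)}$.

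Next I would estimate this Haar integral by H\"older's inequality applied with $\rp+\ry$ equal exponents:
\[ \int_{U(N)} \prod_{i=1}^{\rp+\ry} |L_M(s_i)|^2 \, \mu_{\textrm{Haar}} \leq \prod_{i=1}^{\rp+\ry}\Bigl(\int_{U(N)} |L_M(s_i)|^{2(\rp+\ry)} \mu_{\textrm{Haar}}\Bigr)^{1/(\rp+\ry)}. \]
Since $\Re s_i = 1/2$ gives $|q^{1/2-s_i}|=1$, rotation invariance of Haar measure reduces each factor to a classical Keating--Snaith moment \cite{KS4}, and the explicit product formula $\prod_{j=1}^N \Gamma(j)\Gamma(j+2(\rp+\ry))/\Gamma(j+\rp+\ry)^2$ is $O_{\rp,\ry}(N^{(\rp+\ry)^2})$. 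Thus each H\"older factor is $O(N^{\rp+\ry})$, their product is $O(N^{(\rp+\ry)^2})$, and taking square roots gives the claim for both $\norm{\phi_{\textrm{lf}}}_2$ and $\norm{\phi_{\textrm{hf}}}_2$.

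There is essentially no obstacle beyond invoking these standard facts. The only mildly delicate point is that the enumeration $o$ in \eqref{g-moment-rep-sum} may repeat irreducible representations, so before applying orthogonality one must first regroup the sums $\sum_o \kappa_o \tr(M,V_o)$ by irreducible type; disjointness of the norm-bands $\{V : \norm{\weight(V)}\leq k\}$ and $\{V : \norm{\weight(V)}>k\}$ then guarantees that the cross-terms vanish.
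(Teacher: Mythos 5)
Your proof is correct and follows essentially the same route as the paper: decompose via \eqref{L-psi}, use orthogonality of the two bands of highest weights to get the Pythagorean identity, bound the total $L^2$ norm by H\"older's inequality together with translation/rotation invariance of Haar measure, and finish with the Keating--Snaith moment formula. The one small organizational difference is that the paper avoids the regrouping issue you flag by instead invoking \cref{high-weight-orthogonal} with $\psi=\phi_{\textrm{lf}}$ (using \cref{rep-to-poly}(4) to check $\deg\phi_{\textrm{lf}}\leq k$), so orthogonality follows from a degree-vs-weight comparison rather than needing to collect terms by irreducible type; also, in the explicit decomposition \eqref{precise-moment-rep-sum} the representations $V_{\mathbf e}$ already have distinct highest weights, so no repetition actually occurs.
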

\begin{proof} We have \[  \norm{\phi_{\textrm{hf}} + \phi_{\textrm{lf}}}^2 = \int_{U(N)} \abs { \phi_{\textrm{hf}}(L_M)  +  \phi_{\textrm{lf}}(L_M) }^2 \mu_{\textrm{Haar}} =  \int_{U(N)} \abs{\prod_{i=1}^{\rp} L_M(s_i ) \prod_{i=\rp+1}^{\rp + \ry} \overline{L _M( s_i ) } }^2  \mu_{\textrm{Haar}} \] \[ = \prod_{i=1}^{\rp+\ry} \left( \int_{U(N)} \abs{L_M(s_i)}^{2(\rp+\ry)} \mu_{\textrm{Haar}}\right)^{1/(\rp+\ry)} =    \int_{U(N)} \abs{L_M(1/2)}^{2(\rp+\ry)} \mu_{\textrm{Haar}} = O( N ^{ (\rp+\ry)^2 }) \]
by H\"{o}lder's inequality, the invariance under translation by diagonal matrices of Haar measure on $U(N)$, and the classical calculation of the moments of the characteristic polynomial of random unitary matrices. By \cref{high-weight-orthogonal} and \cref{rep-to-poly}(4) we have  \[ \int_{U(N)} \phi_{\textrm{hf}}(L_M) \overline{ \phi_{\textrm{lf}}(L_M)} \mu_{\textrm{Haar}}=0,\] i.e. $\phi_{\textrm{lf}}$ and $\phi_{\textrm{hf}}$ are orthogonal, so 
$  \norm{\phi_{\textrm{hf}} }_2^2+ \norm{\phi_{\textrm{lf}} }_2^2 = \norm{\phi_{\textrm{hf}} + \phi_{\textrm{lf}}}^2 $ and thus the indvidual norms are bounded as well.\end{proof}

\begin{lemma}\label{lem-mip} Assume that $q>11$. Let $\rp$ and $\ry$ be nonnegative integers and $s_1,\dots, s_{\rp + \ry}$ be complex numbers with real part $\frac{1}{2}$. We have \begin{equation}\label{moment-to-int-phi}\int_{\Cqs} \Bigl( \prod_{i=1}^{\rp} L(s_i ) \prod_{i=\rp+1}^{\rp + \ry} \overline{L ( s_i ) } \Bigr) \mu_{\textrm{ch}} = \int_{\mathbb C[[q^{-s} ]]}  \phi_{\textrm{lf}} \mu_{\textrm{ep}} + O ( N^{ \frac{(\rp+\ry)^2}{2}  -\beta \frac{q-2}{4}}).\end{equation} \end{lemma}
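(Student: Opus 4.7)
The plan is to decompose the integrand using the representation-theoretic machinery of \S\ref{ss-reps} and then apply \cref{intro-lf} to the low-weight part and \cref{intro-hf} to the high-weight part. Since by \cref{intro-support} the measure $\mu_{\textrm{ch}}$ is supported on characteristic polynomials $L_M$, the identity \eqref{L-psi} holds $\mu_{\textrm{ch}}$-almost everywhere, giving
\[ \int_{\Cqs}\Bigl(\prod_{i=1}^{\rp} L(s_i) \prod_{i=\rp+1}^{\rp+\ry} \overline{L(s_i)}\Bigr) \mu_{\textrm{ch}} = \int_{\Cqs} \phi_{\textrm{lf}}\, \mu_{\textrm{ch}} + \int_{\Cqs} \phi_{\textrm{hf}}\, \mu_{\textrm{ch}}. \]

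For the first integral, I would apply \cref{intro-lf}. By construction $\phi_{\textrm{lf}}$ is a linear combination of the polynomials $\psi_o$ attached via \cref{rep-to-poly}(3) to irreducible representations $V_o$ with $\norm{\weight(V_o)} \leq k$, so \cref{rep-to-poly}(4) shows each $\psi_o$ (and hence $\phi_{\textrm{lf}}$) has degree $\leq k$. \cref{intro-lf} then gives
\[ \int_{\Cqs} \phi_{\textrm{lf}}\, \mu_{\textrm{ch}} = \int_{\Cqs} \phi_{\textrm{lf}}\, \mu_{\textrm{ep}} + O(e^{-(\frac{1}{2}-o_N(1))N^{1-\beta}\log N^{1-\beta}} \norm{\phi_{\textrm{lf}}}_2). \]
By \cref{phi-norm-bound}, $\norm{\phi_{\textrm{lf}}}_2 = O(N^{(\rp+\ry)^2/2})$, and the exponential factor beats any polynomial in $N$, so this error is absorbed into $O(N^{(\rp+\ry)^2/2 - \beta(q-2)/4})$.

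For the second integral, I would apply \cref{intro-hf} to $\phi_{\textrm{hf}}$. The orthogonality hypothesis \eqref{hf-hypothesis} is linear in its argument, and by \cref{high-weight-orthogonal} each constituent $\psi_o$ appearing in $\phi_{\textrm{hf}}$ (with $\norm{\weight(V_o)} > k$) is orthogonal to every polynomial of degree $\leq k$ against Haar measure, so $\phi_{\textrm{hf}}$ satisfies the hypothesis of \cref{intro-hf}. That theorem together with \cref{phi-norm-bound} yields
\[ \int_{\Cqs} \phi_{\textrm{hf}}\, \mu_{\textrm{ch}} = O(N^{-\beta(q-2)/4}\norm{\phi_{\textrm{hf}}}_2) = O(N^{(\rp+\ry)^2/2 - \beta(q-2)/4}), \]
which is precisely the error term claimed in \eqref{moment-to-int-phi}. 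Combining both pieces completes the proof.

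There is no single hard step here; this lemma is essentially a bookkeeping argument that packages the main technical theorems. The only points requiring some care are ensuring that the identity \eqref{L-psi} transfers to an almost-everywhere identity with respect to $\mu_{\textrm{ch}}$ (which is why the support result \cref{intro-support} is invoked), and verifying that the linear orthogonality hypothesis of \cref{intro-hf} passes from the individual characters $\psi_o$ to their linear combination $\phi_{\textrm{hf}}$. Both are essentially automatic given the infrastructure developed earlier.
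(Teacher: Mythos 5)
Your proof is correct and follows the same route as the paper: decompose the product of $L$-values via \eqref{L-psi} into $\phi_{\textrm{lf}}+\phi_{\textrm{hf}}$, apply \cref{intro-lf} to the low-weight part (with \cref{rep-to-poly}(4) supplying the degree bound), apply \cref{intro-hf} to the high-weight part (with \cref{high-weight-orthogonal} supplying the orthogonality hypothesis), and absorb both error terms using \cref{phi-norm-bound}. One small note: the fact that \eqref{L-psi} integrates correctly against $\mu_{\textrm{ch}}$ follows directly from $\mu_{\textrm{ch}}$ being by definition a pushforward from $U(N)$, so invoking \cref{intro-support} is unnecessary, though harmless.
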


\begin{proof} From \eqref{L-psi} and the definitions of $\phi_{\textrm{lf}}$ and $\phi_{\textrm{hf}}$ we have \begin{equation}\label{L-phi-split}\begin{split} 
& \int_{\Cqs} \Bigl( \prod_{i=1}^{\rp} L(s_i ) \prod_{i=\rp+1}^{\rp + \ry} \overline{L ( s_i ) } \Bigr) \mu_{\textrm{ch}} = \int_{\Cqs}\sum_o \kappa_o \psi_o \mu_{\textrm{ch}}\\ = &\int_{\mathbb C[[q^{-s} ]]}  (\phi_{\textrm{lf}} +\phi_{\textrm{hf}} ) \mu_{\textrm{ch}} =\int_{\mathbb C[[q^{-s} ]]}  \phi_{\textrm{lf}} \mu_{\textrm{ch}} + \int_{\mathbb C[[q^{-s} ]]} \phi_{\textrm{hf}}  \mu_{\textrm{ch}}.\end{split}\end{equation} 

To $\int_{\mathbb C[[q^{-s} ]]}  \phi_{\textrm{lf}} \mu_{\textrm{ch}}$ we apply \cref{intro-lf}, using \cref{rep-to-poly}(4) to check the hypothesis, and to $\int_{\mathbb C[[q^{-s} ]]} \phi_{\textrm{hf}}  \mu_{\textrm{ch}}$ we apply \cref{intro-hf}, using \cref{high-weight-orthogonal} to check the hypothesis \eqref{hf-hypothesis}. From these results and \eqref{L-phi-split} we obtain
\begin{equation}\label{L-phi-error} \int_{\Cqs} \Bigl( \prod_{i=1}^{\rp} L(s_i ) \prod_{i=\rp+1}^{\rp + \ry} \overline{L ( s_i ) } \Bigr) \mu_{\textrm{ch}} = \int_{\mathbb C[[q^{-s} ]]}  \phi_{\textrm{lf}} \mu_{\textrm{ep}} + O (e^{ (\frac{1}{2} - o_N(1)) N^{1-\beta} \log (N^{1-\beta})} \norm{\phi_{\textrm{lf}} }_2 )+ O ( N^{-  \beta \frac{q-2}{4}} \norm{\phi_{\textrm{hf}} }_2   ) . \end{equation}

  Since $e^{ (\frac{1}{2} - o_N(1)) N^{1-\beta} \log (N^{1-\beta})}$ is bounded by $N^{-  \beta \frac{q-2}{4}} $, \cref{phi-norm-bound} together with \eqref{L-phi-error} gives \eqref{moment-to-int-phi}. \end{proof}

\subsection{Comparing the main terms}\label{ss-main}

To prove \cref{intro-cfkrs}, it remains to compare $\int_{\mathbb C[[q^{-s} ]]}  \phi_{\textrm{lf}} \mu_{\textrm{ep}}$ to $\operatorname{MT}^{\rp,\ry}_N(s_1,\dots, s_{\rp+\ry})$.

To begin, we will calculate $\phi_{\textrm{lf}}$ more precisely, which requires making the representations $V_o$ and coefficients $\kappa_o$ appearing in \eqref{g-moment-rep-sum} explicit. We also make use of the change of variables $\alpha_i = s_i- \frac{1}{2}$, so in particular $q^{\frac{1}{2}-s_i}= q^{-\alpha_i}$ and $\alpha_i$ is imaginary so $\overline{\alpha_i}=-\alpha_i$.

Our calculation will culminate in the formula \eqref{int-phi-formula-adjusted} which expresses $\int_{\mathbb C[[q^{-s} ]]}  \phi_{\textrm{lf}} \mu_{\textrm{ep}}$ using a sum over polynomials $\psi_{\mathbf e}$ against coefficients $\kappa_{\mathbf e}$ indexed by certain tuples of integers $\mathbf e$. To motivate the definitions of $\kappa_{\mathbf e}$ and $\psi_{\mathbf e}$ the proof will proceed in steps.

After proving \eqref{int-phi-formula-adjusted}, we will equate a certain longer sum to $\operatorname{MT}^{\rp,\ry}_N$. The difference between this longer sum and the original introduces a secondary error term which we will also bound.

To prove \eqref{int-phi-formula-adjusted}, we use the method of Bump and Gamburd~\cite{BumpGamburd}, i.e. we apply the Cauchy identity for Schur functions to express the desired moment as a sum of products of pairs of Schur functions. One Schur function in each pair will beocme $\kappa_{\mathbf e}$ and the other will become $\psi_{\mathbf e}$. This method was originally used to calculate expectations of products of the characteristic polynomial of a unitary matrix against Haar measure, but here we apply it (together with other tools) to calculate the expectation againt a non-uniform measure.

If $\eigen_1(M),\dots, \eigen_N(M)$ are the eigenvalues of $M$ then \[ L_M(s_i) = \det( I - q^{ -\alpha_i } M ) = \prod_{\ell=1}^N  ( 1- q^{-\alpha_i } \eigen_\ell(M)) \] while \[ \overline{L_M(s_i)} = \overline{\det( I - q^{ -\alpha_i} M ) } =(-1)^N q^{ N \alpha_i} (\det M)^{-1} \overline{\det( I - q^{ \alpha_i }  M^{-1} )} \] \[= (-1)^N q^{ N \alpha_i } (\det M)^{-1} \det( I - q^{-\alpha_i } M ) = (-1)^N q^{ N \alpha_i} (\det M)^{-1} \prod_{\ell=1}^N ( 1 -q^{-\alpha_i } \eigen_N(M)) \]

so \[  \prod_{i=1}^{\rp} L_M(s_i ) \prod_{i=\rp+1}^{\rp + \ry} \overline{L _M( s_i ) } = (-1)^{N \ry} (\det M)^{-\ry} \prod_{i=\rp+1}^{\rp+\ry} q^{   N \alpha_i  }  \prod_{i=1}^{\rp+\ry} \prod_{\ell=1}^N  ( 1 -q^{-\alpha_i } \eigen_N(M)) .\]

The Cauchy identity for Schur functions gives \[ \prod_{i=1}^{\rp+\ry} \prod_{\ell=1}^N  ( 1 -q^{-\alpha_i } \eigen_N(M)) = \sum_{\rho} s_{\rho} ( q^{-\alpha_1},\dots, q^{-\alpha_{\rp+\ry}} ) s_{\rho'} ( \eigen_1(M),\dots, \eigen_N(M)) \] where $\rho$ denotes a partition, $s_\rho$ the Schur function associated to that partition, $\rho'$ the dual partition, and $s_{\rho'}$ the corresponding Schur function. The $\rp+\ry$-variable Schur function $s_\rho$ vanishes unless $\rho$ has at most $\rp+\ry$ parts and the $N$-variable Schur function $s_{\rho'}$ vanishes unless all parts of $\rho$ have size at most $\leq N$. Partitions satisfying both of these can be equivalently expressed as tuples $e_1,\dots, e_{\rp+\ry}$ of integers satisfying $N \geq e_1 \geq \dots \geq e_{\rp+\ry} \geq 0$, giving
\[ \prod_{i=1}^{\rp+\ry} \prod_{\ell=1}^N  ( 1 -q^{\alpha_i} \eigen_N(M)) =\sum_{\substack{ e_1,\dots e_{\rp+\ry} \in \mathbb Z \\ N \geq e_1 \geq \dots \geq e_{\rp+\ry}  \geq 0 }} s_{(e_1,\dots, e_{\rp+\ry})} (  q^{-\alpha_1},\dots, q^{-\alpha_{\rp+\ry}} )  s_{ (e_1,\dots, e_{\rp+\ry})'} ( \eigen_1(M),\dots, \eigen_N(M) )\]
and thus
\[  \prod_{i=1}^{\rp} L_M(s_i ) \prod_{i=\rp+1}^{\rp + \ry} \overline{L _M( s_i ) } \] \[= \sum_{\substack{ e_1,\dots e_{\rp+\ry} \in \mathbb Z \\ N \geq e_1 \geq \dots \geq e_{\rp+\ry}  \geq 0 }} \Bigl( (-1)^{N \ry}  \prod_{i=\rp+1}^{\rp+\ry} q^{   N\alpha_i  } s_{(e_1,\dots, e_{\rp+\ry})} (  q^{-\alpha_1},\dots, q^{-\alpha_{\rp+\ry}} ) \Bigr) \Bigl( (\det M)^{-\ry}  s_{ (e_1,\dots, e_{\rp+\ry})'} ( \eigen_1(M),\dots, \eigen_N(M) )\Bigr) .\]
Now the significance of this expression is that $s_{ (e_1,\dots, e_{\rp+\ry})'} ( \eigen_1(M),\dots, \eigen_N(M) )$ is the trace of $M$ acting on the irreducible representation of $U(N)$ with highest weight $(e_1,\dots, e_{\rp+\ry})'$ so that $(\det M)^{-\ry}  s_{ (e_1,\dots, e_{\rp+\ry})'} ( \eigen_1(M),\dots, \eigen_N(M) )$ is the trace of $M$ acting on the irreducible representation of $U(N)$ with highest weight obtained from $(e_1,\dots, e_{\rp+\ry})'$ by subtracting $\ry$ from each entry. We refer to this representation as $V_{\mathbf e}$. These representations $V_{\mathbf e}$ have distinct highest weights, and thus are not isomorphic, for distinct $V_{\mathbf e}$. Thus the expression
\begin{equation}\label{precise-moment-rep-sum} \prod_{i=1}^{\rp} L_M(s_i ) \prod_{i=\rp+1}^{\rp + \ry} \overline{L _M( s_i ) } = \sum_{\substack{ e_1,\dots e_{\rp+\ry} \in \mathbb Z \\ N \geq e_1 \geq \dots \geq e_{\rp+\ry}  \geq 0 }} \Bigl( (-1)^{N \ry}  \prod_{i=\rp+1}^{\rp+\ry} q^{   N\alpha_i  } s_{(e_1,\dots, e_{\rp+\ry})} ( q^{-\alpha_1},\dots, q^{-\alpha_{\rp+\ry}}) \Bigr)\tr (M, V_{\mathbf e}) \end{equation}
is a precise form of \eqref{g-moment-rep-sum}. 

We now let $\psi_{\mathbf e}$ be the determinant of the $\rp +\ry \times  \rp +\ry$ matrix whose $ij$th entry is $(-q^{-1/2})^{  e_i + j-i} c_{ e_i+j-i} $ for $i> \ry$ and $(-q^{-1/2})^{ N-e_i + i - j } \overline{c_{ N-e_i +i -j } } $ for $i \leq \ry$. 

\begin{lemma}\label{psi-key} \begin{enumerate}

\item $\psi_{\mathbf e}$ is the polynomial associated to $V_{\mathbf e}$ by \cref{rep-to-poly}(3) with $a=\rp$ and $b=\ry$.

\item  The norm of the highest weight of $V_{\mathbf e}$ is $\sum_{i=1}^{\ry} (N-e_i)  + \sum_{i=\ry+1}^{\rp+\ry} e_i $. \end{enumerate} \end{lemma}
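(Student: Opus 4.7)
The plan is to unwind the definitions and reduce both parts of the lemma to direct combinatorial computations with the dual partition and the highest weight of $V_{\mathbf e}$.

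First I would set up notation. Let $\lambda_j = \#\{i : e_i \geq j\}$ for $j=1,\dots,N$, so that $(\lambda_1,\dots,\lambda_N) = (e_1,\dots,e_{\rp+\ry})'$ is the conjugate partition (with trailing zeros when $j > e_1$). By definition of $V_{\mathbf e}$, its highest weight is $\omega_j = \lambda_j - \ry$. Note that $\omega_1 \leq \rp$ and $\omega_N \geq -\ry$, so the hypotheses of \cref{rep-to-poly} are satisfied with $a=\rp$ and $b=\ry$.

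For part (1), the main task is to verify that the exponents $d_{ij}$ given in \cref{rep-to-poly}(3) match the exponents appearing in the definition of $\psi_{\mathbf e}$. The key identity is the duality $\#\{\ell : \lambda_\ell \geq i\} = e_i$ (for $1 \leq i \leq \rp+\ry$), which follows from the involution $(\rho')' = \rho$ for conjugate partitions. For $i > \ry$ we then have
\[ d_{ij} = \#\{\ell : \omega_\ell \geq i-\ry\} + j - i = \#\{\ell : \lambda_\ell \geq i\} + j - i = e_i + j - i, \]
while for $i \leq \ry$ we have
\[ d_{ij} = \#\{\ell : \omega_\ell < i-\ry\} + i - j = N - \#\{\ell : \lambda_\ell \geq i\} + i - j = N - e_i + i - j. \]
Plugging these into the formula of \cref{rep-to-poly}(3) yields exactly the matrix entries defining $\psi_{\mathbf e}$, which proves part (1).

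For part (2), I would compute $\sum_{j=1}^N |\omega_j| = \sum_{j=1}^N |\lambda_j - \ry|$ by splitting at the threshold $j = e_\ry$ (where the sign of $\lambda_j - \ry$ changes, since $\lambda_j \geq \ry$ iff $e_\ry \geq j$). Counting pairs $(i,j)$ with $e_i \geq j$, one gets $\sum_{j=1}^{e_\ry} \lambda_j = \ry\, e_\ry + \sum_{i=\ry+1}^{\rp+\ry} e_i$ and $\sum_{j=e_\ry+1}^N \lambda_j = \sum_{i=1}^{\ry}(e_i - e_\ry)$, by splitting the counting according to whether $i \leq \ry$ or $i > \ry$ and using monotonicity of the $e_i$. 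Subtracting off the constant shifts gives
\[ \sum_{j=1}^{e_\ry}(\lambda_j - \ry) = \sum_{i=\ry+1}^{\rp+\ry} e_i, \qquad \sum_{j=e_\ry+1}^N(\ry - \lambda_j) = \sum_{i=1}^{\ry}(N - e_i), \]
and summing these yields the claimed formula. Neither part poses a real obstacle — the only subtlety is keeping the two conventions (for $i \leq \ry$ versus $i > \ry$) straight when matching to \cref{rep-to-poly}(3), and being careful to use $\lambda_j$ values for all $j \in \{1,\dots,N\}$ (including $j > e_1$, where $\lambda_j = 0$ and $|\omega_j| = \ry$) in the norm computation.
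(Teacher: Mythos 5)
Your proof is correct and takes essentially the same approach as the paper: both parts reduce to direct combinatorics of the conjugate partition $\lambda = \mathbf{e}'$ and the shifted weight $\omega_j = \lambda_j - \ry$. For part (2), you split the sum over $j$ at the threshold $j = e_\ry$ while the paper organizes $\omega$ by its multiplicities and telescopes over $i$; these are just two rearrangements of the same computation.
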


\begin{proof} $(e_1,\dots, e_{\rp+\ry})'$ is the vector consisting of $e_{\rp+\ry}$ copies of $\rp+\ry$,  $e_{i}-e_{i+1}$ copies of $i$ for all $i$ from $\rp+\ry-1$ to $1$, and $N-e_1$ copies of $0$. Subtracting $\ry$ from each entry gives the vector $\omega_1,\dots, \omega_N$ with $e_{\rp+\ry}$ copies of $\rp$,  $e_{i}-e_{i+1}$ copies of $i-\ry$ for all $i$ from $\rp+\ry-1$ to $1$, and $N-e_1$ copies of $-\ry$.

It follows that $\# \{ \ell \mid \omega_\ell < i-\ry \} $ is $N- e_i $ and $ \# \{ \ell \mid \omega_\ell \geq i-\ry \}$ is $e_i$. With notation as in \cref{rep-to-poly}(2), this implies $d_{ij}= e_i +j-i$ for $i>\ry$ and $d_{ij}= N-e_i+i-j$ for $i\leq \ry$, which plugged into \cref{rep-to-poly}(3) gives $\psi_{\mathbf e}$.

The norm of $\omega_1,\dots,\omega_N$ is   \[ e_{\rp+\ry}r  + \sum_{i=1}^{\rp+\ry-1} (e_i -e_{i+1} ) \abs{  i -\ry} + (N-e_1) \ry =  \sum_{i=1}^{\rp+\ry} e_i  (\abs{i-\ry} - \abs{i-\ry-1})  + N \ry = \sum_{i=1}^{\ry} (N-e_i) + \sum_{i=\ry+1}^{\rp+\ry} e_i \] by a telescoping sum.\end{proof} 

We also let  \[ \kappa_{\mathbf e} =  (-1)^{N \ry}  \frac{ \sum_{ \sigma \in S_{\rp+\ry}} \sgn(\sigma) \prod_{i=1}^{\rp+\ry} q^{ - (e_i +\rp+\ry- i)  \alpha_{\sigma(i) }   }}{ \prod_{1 \leq i_1 < i_2 \leq \rp+\ry} (q^{-\alpha_{i_1} } - q^{ - \alpha_{i_2}} ) } \prod_{i=\rp+1}^{\rp+\ry} q^{   N\alpha_i  }  .\]

That  \begin{equation}\label{kappa-key} \kappa_{\mathbf e} = (-1)^{N \ry}  s_{(e_1,\dots, e_{\rp+\ry})} ( q^{-\alpha_1},\dots, q^{-\alpha_{\rp+\ry}})  \prod_{i=\rp+1}^{\rp+\ry}q^{   N\alpha_i  } \end{equation} follows from the definition of Schur polynomials (or, defining them in terms of irreducible representations, from the Weyl character formula).

\eqref{precise-moment-rep-sum}, \cref{psi-key}, and \eqref{kappa-key} imply that \[ \phi_{\textrm{lf}} =  \sum_{\substack{ e_1,\dots e_{\rp+\ry} \in \mathbb Z \\ N \geq e_1 \geq \dots \geq e_{\rp+\ry}  \geq 0 \\  \sum_{i=1}^{\ry} (N - e_i) + \sum_{i=\ry+1}^{\rp+\ry} e_i \leq k  }}\kappa_{\mathbf e} \psi_{\mathbf e} \]
and therefore
\begin{equation}\label{int-phi-formula} \int_{\Cqs }\phi_{\textrm{lf}}\mu_{\textrm{ep}} =  \sum_{\substack{ e_1,\dots e_{\rp+\ry} \in \mathbb Z \\ N \geq e_1 \geq \dots \geq e_{\rp+\ry}  \geq 0 \\  \sum_{i=1}^{\ry} (N - e_i) + \sum_{i=\ry+1}^{\rp+\ry} e_i \leq k  }}\kappa_{\mathbf e} \int_{\Cqs } \psi_{\mathbf e} \mu_{\textrm{ep}}.\end{equation}

However, we have defined both $\kappa_{\mathbf e}$ and $\phi_{\mathbf e}$ to make sense for an arbitrary tuple of integers $\mathbf e$, not the ones where the Schur polynomials are defined. We will use this flexibility to extend the range of summation, which will allow us to compare this sum to $\operatorname{MT}_{N}^{\rp,\ry} $, which is similarly formed by extending a sum beyond the obviously appropriate range. 

Observe first that \eqref{int-phi-formula} implies that
\begin{equation}\label{int-phi-formula-adjusted}  \int_{\Cqs }\phi_{\textrm{lf}}\mu_{\textrm{ep}} =  \sum_{\substack{ e_1,\dots e_{\rp+\ry} \in \mathbb Z \\ N \geq e_1\geq  \dots e_{\ry} \\ e_{\ry+1} \geq \dots  e_{\rp+\ry} \geq 0 \\  \sum_{i=1}^{\ry} (N - e_i) + \sum_{i=\ry+1}^{\rp+\ry} e_i \leq k  }}\kappa_{\mathbf e} \int_{\Cqs } \psi_{\mathbf e} \mu_{\textrm{ep}} \end{equation}
since the sole condition appearing in \eqref{int-phi-formula} but not \eqref{int-phi-formula-adjusted} is $e_{\ry} \geq e_{\ry+1}$, but this is implied by the other conditions of \eqref{int-phi-formula-adjusted} since  $e_{\ry} - e_{\ry+1} = N - (N-e_{\ry}) - e_{\ry+1} \geq N -k \geq 0$.

Our next two goals will be to check that
\begin{equation}\label{mt-identity-to-check}  \sum_{\substack{ e_1,\dots e_{\rp+\ry} \in \mathbb Z \\ N \geq e_1\geq  \dots e_{\ry} \\ e_{\ry+1} \geq \dots  e_{\rp+\ry} \geq 0   }}\kappa_{\mathbf e} \int_{\Cqs } \psi_{\mathbf e} \mu_{\textrm{ep}} =  \operatorname{MT}_{N}^{\rp,\ry} \end{equation}
and
\begin{equation}\label{mt-inequality-to-check}  \sum_{\substack{ e_1,\dots e_{\rp+\ry} \in \mathbb Z \\ N \geq e_1\geq  \dots e_{\ry} \\ e_{\ry+1} \geq \dots  e_{\rp+\ry} \geq 0 \\  \sum_{i=1}^{\ry} (N - e_i) + \sum_{i=\ry+1}^{\rp+\ry} e_i > k  }}\kappa_{\mathbf e} \int_{\Cqs } \psi_{\mathbf e} \mu_{\textrm{ep}} =  O (k^{ O(1)} q^{ - \frac{k}{4}} ).\end{equation}

For both of these, we begin by evaluating $ \int_{\Cqs } \psi_{\mathbf e} \mu_{\textrm{ep}} $ explicitly in terms of polynomials in $\mathbb F_q[u]$.  This evaluation in Lemma \ref{average-is-coefficient} will let us prove a bound for $ \int_{\Cqs } \psi_{\mathbf e} \mu_{\textrm{ep}} $ in Lemma \ref{psi-int-bound}. This bound will be used to prove \eqref{mt-inequality-to-check} and then both the evaluation and the bound will be used to prove \eqref{mt-identity-to-check}. 

\begin{lemma}\label{Schur-to-L} For $L= \sum_{d=0}^{\infty} c_d q^{-d s}$, we have an identity of formal Laurent series in $q^{-\alpha_1},\dots ,q^{-\alpha_{\rp+\ry}}$
\begin{equation}\label{Schur-to-L-left} \sum_{e_1,\dots, e_{\rp+\ry} \in \mathbb Z} \psi_{\mathbf e} \prod_{i=1}^{\ry} (-q^{\alpha_i})^{-e_i} \prod_{i=\ry+1}^{\rp+\ry} (-q^{-\alpha_i})^{e_i} \end{equation}  \begin{equation}\label{Schur-to-L-right} = \prod_{1 \leq i_1< i_2 \leq \rp+\ry} (q^{\alpha_{i_1}} - q^{\alpha_{i_2}})  \prod_{i=1}^{\ry} \Bigl( (-q^{\alpha_i})^{1-i-N} \overline{L (q^{- \frac{1}{2} - \alpha_i})}\Bigr) \prod_{i=\ry+1}^{\rp+\ry}  ((-q^{-\alpha_i})^{i-1}L ( q^{- \frac{1}{2} - \alpha_i} )  \Bigr). \end{equation} \end{lemma}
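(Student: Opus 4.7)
My plan is to prove the identity by expanding the determinant defining $\psi_{\mathbf e}$, exchanging the sum over $\mathbf e$ with the sum over permutations coming from the Leibniz expansion, recognizing each inner sum as a value of $L$ or $\overline{L}$, and finally identifying the remaining signed sum over permutations as a Vandermonde determinant.

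More precisely, I will first use the Leibniz formula to write
\[ \psi_{\mathbf e} = \sum_{\sigma \in S_{\rp+\ry}} \sgn(\sigma) \prod_{i=1}^{\ry} (-q^{-1/2})^{N-e_i+i-\sigma(i)} \overline{c_{N-e_i+i-\sigma(i)}} \prod_{i=\ry+1}^{\rp+\ry} (-q^{-1/2})^{e_i+\sigma(i)-i} c_{e_i+\sigma(i)-i}. \]
The key structural feature is that the $i$th row depends on $e_i$ but no other $e_j$, so after multiplying by the monomial $\prod_{i \leq \ry}(-q^{\alpha_i})^{-e_i}\prod_{i>\ry}(-q^{-\alpha_i})^{e_i}$ and summing over $\mathbf e \in \mathbb Z^{\rp+\ry}$, the sum factors as a product of one-variable sums indexed by $i$, with the outer sum over $\sigma$ unchanged.

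Next I will evaluate each one-variable sum. For $i > \ry$, substituting $d = e_i + \sigma(i) - i$ turns the sum over $e_i$ into $(-q^{-\alpha_i})^{i-\sigma(i)} \sum_{d \in \mathbb Z} c_d q^{-d(1/2+\alpha_i)}$, and since $c_d = 0$ for $d < 0$ this sum is exactly $L(q^{-1/2-\alpha_i})$ interpreted as a substitution of the formal variable $q^{-s}$. For $i \leq \ry$, substituting $d = N - e_i + i - \sigma(i)$ similarly yields $(-q^{\alpha_i})^{\sigma(i)-N-i} \overline{L(q^{-1/2-\alpha_i})}$, where I use that $\alpha_i$ is imaginary so $\overline{q^{-d(1/2+\alpha_i)}} = q^{-d/2+d\alpha_i}$. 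Collecting the factors of $L$ and $\overline L$ (which match those on the right-hand side of \eqref{Schur-to-L-right} row by row), it remains to check the purely monomial identity
\[ \sum_{\sigma \in S_{\rp+\ry}} \sgn(\sigma) \prod_{i=1}^{\ry}(-q^{\alpha_i})^{\sigma(i)-N-i} \prod_{i=\ry+1}^{\rp+\ry}(-q^{-\alpha_i})^{i-\sigma(i)} = \prod_{1 \leq i_1 < i_2 \leq \rp+\ry}(q^{\alpha_{i_1}}-q^{\alpha_{i_2}})\prod_{i=1}^{\ry}(-q^{\alpha_i})^{1-i-N}\prod_{i=\ry+1}^{\rp+\ry}(-q^{-\alpha_i})^{i-1}. \]

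Finally, I will verify this monomial identity by pulling the $\sigma$-independent pieces out of each term on the left. After some bookkeeping with signs (using $(-1)^{-\sigma(i)} = (-1)^{\sigma(i)}$ and that $\sum_i \sigma(i)$ is constant in $\sigma$), both factors $(-q^{\alpha_i})^{\sigma(i)}$ and $(-q^{-\alpha_i})^{-\sigma(i)}$ contribute the same quantity $q^{\alpha_i \sigma(i)}$ up to a global sign, so the left-hand sum reduces to a prefactor times $\det(q^{\alpha_i j})_{1 \leq i,j \leq \rp+\ry}$. That determinant equals $\prod_i q^{\alpha_i} \cdot \prod_{i_1<i_2}(q^{\alpha_{i_2}}-q^{\alpha_{i_1}})$ by the standard Vandermonde formula, and matching this with the right-hand side reduces to routine sign and monomial bookkeeping.

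The main obstacle is not conceptual but rather the sign and exponent bookkeeping in the last step; everything else is a direct computation once one observes the separation-of-variables structure of $\psi_{\mathbf e}$. One subtle point worth stating explicitly is that although the sum on the left-hand side runs over all of $\mathbb Z^{\rp+\ry}$, the vanishing $c_d = 0$ for $d < 0$ ensures the one-variable sums are well-defined Laurent series in the relevant variables, so the formal-series identity makes sense without any convergence issue.
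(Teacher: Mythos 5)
Your proposal is correct and follows essentially the same computation as the paper: the paper packages the Leibniz expansion and sum interchange as multilinearity of the determinant in each row, forms the matrix with $ij$ entry $(-q^{-\alpha_i})^{i-j}L(q^{-1/2-\alpha_i})$ (resp.\ its conjugate analogue for $i\le\ry$), and then factors each row to reveal the Vandermonde matrix $((-q^{\alpha_i})^{j-1})_{ij}$, whereas you unroll to explicit sums over $\sigma$ before regrouping. The underlying steps and bookkeeping are the same.
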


\begin{proof} Consider the $\rp +\ry \times \rp+\ry $ matrix whose $ij$th entry is 
\[  \sum_{e_i \in \mathbb Z} (-q^{-1/2})^{  e_i + j-i} c_{ e_i+j-i}(L)  (- q^{-\alpha_i})^{e_i} =  (-q^{-\alpha_i})^{i-j}  L ( q^{- \frac{1}{2} - \alpha_i} )   \] for $i> \ry$ and 
\[ \sum_{e_i \in \mathbb Z} (-q^{-1/2})^{ N-e_i + i - j } \overline{c_{ N-e_i +i -j } }   (-q^{\alpha_i})^{- e_i} =(-q^{\alpha_i})^{j-i-N} \overline{L (q^{- \frac{1}{2} - \alpha_i})}   \]
for $i \leq \ry$. 

By additivity of determinants in each row, the determinant of this matrix is \eqref{Schur-to-L-left}.

On the other hand, removing a factor of $(-q^{-\alpha_i})^{i-1}L ( q^{- \frac{1}{2} - \alpha_i} ) $ from the $i$th row for $i>\ry$ and $(-q^{\alpha_i})^{1-i-N} \overline{L (q^{- \frac{1}{2} - \alpha_i})} $ from the $i$'th row for $i\leq \ry$, we obtain the matrix whose $ij$-entry is $( -q^{\alpha_i})^{j-1}$ for all $i,j$, which is a Vandermonde matrix and thus has determinant 
\[  \prod_{1 \leq i_1< i_2 \leq \rp+\ry}  ( - q^{\alpha_{i_2}} -  (- q^{\alpha_{i-1}})) = \prod_{1 \leq i_1< i_2 \leq \rp+\ry} (q^{\alpha_{i_1}} - q^{\alpha_{i_2}}) \]
so by the compatibility of determinants with scalar multiplication of rows, the determinant is also \eqref{Schur-to-L-right}. \end{proof}

\begin{lemma} We have an identity of formal Laurent series in $q^{-\alpha_1},\dots ,q^{-\alpha_{\rp+\ry}}$
\begin{equation}\label{L-to-diag}\int_{\Cqs}  \prod_{i=1}^{\ry} \overline{L (q^{- \frac{1}{2} - \alpha_i}) } \prod_{i=\ry+1}^{\rp+\ry} L ( q^{- \frac{1}{2} - \alpha_i} ) \mu_{\textrm{ep}}  = \sum_{\substack{f_1,\dots, f_{\rp+\ry} \in \mathbb F_q[u]^+ \\ \prod_{i=1}^{\ry} f_i =\prod_{i=\ry+1}^{\rp+\ry} f_i }} \prod_{i=1}^{\ry} \abs{f_i}^{-\frac{1}{2}+\alpha_i}  \prod_{i=\ry+1}^{\rp+\ry} \abs{f_i}^{-\frac{1}{2}-\alpha_i} \end{equation} where the integration is applied separately to each term in the formal Laurent series. \end{lemma}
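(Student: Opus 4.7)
The plan is to expand both sides as formal Laurent series (or Dirichlet series in the variables $q^{-\alpha_i}$) and match coefficients, using the multiplicative structure of $\xi$ and the independence of the $\xi(\mathfrak p)$. Since $L_\xi(s) = \sum_{f\in \mathbb F_q[u]^+} \xi(f) q^{-s \deg f}$ and $\mu_{\textrm{ep}}$ is by definition the distribution of $L_\xi$, the integral on the left-hand side equals the expectation
\[ \mathbb E\Bigl[ \prod_{i=1}^{\ry} \overline{L_\xi ( q^{-\frac{1}{2}-\alpha_i}) } \prod_{i=\ry+1}^{\rp+\ry} L_\xi( q^{-\frac{1}{2}-\alpha_i}) \Bigr]. \]

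First I would substitute the Dirichlet series expansion $L_\xi(q^{-\frac{1}{2}-\alpha_i}) = \sum_{f_i} \xi(f_i) |f_i|^{-\frac{1}{2}-\alpha_i}$ (and its complex conjugate for $i\leq \ry$, noting that $\alpha_i$ is purely imaginary so $\overline{\alpha_i}=-\alpha_i$ and $|f_i|^{-\frac{1}{2}-\overline{\alpha_i}} = |f_i|^{-\frac{1}{2}+\alpha_i}$) and formally interchange the sums with the expectation. Since we are treating the identity as one of formal Laurent series in $q^{-\alpha_1},\dots,q^{-\alpha_{\rp+\ry}}$ with integration applied coefficient-by-coefficient, the interchange is justified: the coefficient of each monomial $\prod_i q^{-\alpha_i d_i}$ involves only finitely many tuples $(f_1,\dots,f_{\rp+\ry})$ with $\deg f_i = d_i$. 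This reduces the problem to computing
\[ \sum_{f_1,\dots,f_{\rp+\ry}} \Bigl(\prod_{i=1}^{\ry} |f_i|^{-\frac{1}{2}+\alpha_i}\Bigr) \Bigl(\prod_{i=\ry+1}^{\rp+\ry} |f_i|^{-\frac{1}{2}-\alpha_i}\Bigr) \, \mathbb E\Bigl[ \prod_{i=1}^{\ry} \overline{\xi(f_i)} \prod_{i=\ry+1}^{\rp+\ry} \xi(f_i) \Bigr]. \]

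The key step is then to evaluate the expectation. Using complete multiplicativity of $\xi$, we have $\xi(f) = \prod_{\mathfrak p} \xi(\mathfrak p)^{v_\mathfrak p(f)}$ where $v_\mathfrak p$ is the $\mathfrak p$-adic valuation, and $\overline{\xi(\mathfrak p)} = \xi(\mathfrak p)^{-1}$ since $|\xi(\mathfrak p)|=1$. Thus the quantity inside the expectation factors over primes as $\prod_\mathfrak p \xi(\mathfrak p)^{n_\mathfrak p}$ with $n_\mathfrak p = \sum_{i=\ry+1}^{\rp+\ry} v_\mathfrak p(f_i) - \sum_{i=1}^\ry v_\mathfrak p(f_i)$. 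By independence of the $\xi(\mathfrak p)$ and the elementary fact that $\mathbb E[\xi(\mathfrak p)^n] = \mathbf 1_{n=0}$ for $n\in \mathbb Z$ (since $\xi(\mathfrak p)$ is uniform on the unit circle), the expectation equals $1$ if $n_\mathfrak p = 0$ for all $\mathfrak p$ and $0$ otherwise. The condition $n_\mathfrak p = 0$ for every prime is exactly the divisibility condition $\prod_{i=1}^\ry f_i = \prod_{i=\ry+1}^{\rp+\ry} f_i$ in $\mathbb F_q[u]^+$, which yields the stated identity.

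The only subtlety, and thus the main thing to keep straight, is the careful handling of the formal-Laurent-series interpretation so that the interchange of sums and expectation is rigorous; concretely, extracting a single coefficient of $\prod_i q^{-\alpha_i d_i}$ reduces each side to a finite sum indexed by $(f_1,\dots,f_{\rp+\ry})$ with prescribed degrees, and there the identity is just the computation above. There is no analytic convergence issue to worry about at this stage since the claim is about formal series.
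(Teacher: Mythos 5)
Your proof is correct and follows essentially the same route as the paper: expand the product of Dirichlet series, interchange with expectation (which you correctly justify by working coefficient-by-coefficient in the formal Laurent series), and then evaluate $\mathbb E\bigl[\prod_{i\leq \ry}\overline{\xi(f_i)}\prod_{i>\ry}\xi(f_i)\bigr]$ as the indicator of $\prod_{i\leq \ry} f_i = \prod_{i>\ry} f_i$. The paper simply cites this last evaluation as ``orthogonality of characters on a product of circle groups,'' whereas you spell out the mechanism explicitly (factor over primes via complete multiplicativity, use independence of the $\xi(\mathfrak p)$, and use $\mathbb E[\xi(\mathfrak p)^n] = \mathbf{1}_{n=0}$); this is the same argument, just unpacked.
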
 

\begin{proof} Expanding out we have 
\begin{equation}\label{L-prod-expan} \prod_{i=1}^{\ry} \overline{L_\xi (q^{- \frac{1}{2} + \alpha_i}) } \prod_{i=\ry+1}^{\rp+\ry} L_\xi ( q^{- \frac{1}{2} - \alpha_i} ) = \sum_{\substack{f_1,\dots, f_{\rp+\ry} \in \mathbb F_q[u]^+ }} \prod_{i=1}^{\ry}(\overline{\xi(f_i)} \abs{f_i}^{-\frac{1}{2}+\alpha_i}  )\prod_{i=\ry+1}^{\rp+\ry}( \xi(f_i) \abs{f_i}^{-\frac{1}{2}-\alpha_i}) \end{equation}
and orthogonality of characters on a product of circle groups gives
\begin{equation}\label{lambda-average}\mathbb E[  \prod_{i=1}^{\ry}\overline{\xi(f_i)}  \prod_{i=\ry+1}^{\rp+\ry} \xi(f_i)] = \begin{cases} 1 & \textrm{if } \prod_{i=1}^{\ry} f_i =\prod_{i=\ry+1}^{\rp+\ry} f_i  \\ 0 &\textrm{otherwise}\end{cases}.\end{equation}
Taking the expectation of \eqref{L-prod-expan} over $\xi$ and then plugging in \eqref{lambda-average} gives \eqref{L-to-diag}. \end{proof}

\begin{lemma}\label{average-is-coefficient} For each $e_1,\dots, e_{\rp+\ry} \in \mathbb Z$, $ \int_{\Cqs } \psi_{\mathbf e} \mu_{\textrm{ep}}$ is $(-1)^{\sum_{i=1}^{\rp+\ry} e_i + \binom{\rp+\ry}{2} +N\ry}$ times the coefficient of $q^{ \sum_{i=1}^{\ry}(i-1+N-e_i) \alpha_i - \sum_{i=\ry+1}^{\rp+\ry} (d_i-i+1)\alpha_i}$ in \begin{equation}\label{MS}  \prod_{1 \leq i_1< i_2 \leq \rp+\ry} (q^{\alpha_{i_1}} - q^{\alpha_{i_2}})   \sum_{\substack{f_1,\dots, f_{\rp+\ry} \in \mathbb F_q[u]^+ \\ \prod_{i=1}^{\ry} f_i =\prod_{i=\ry+1}^{\rp+\ry} f_i }} \prod_{i=1}^{\ry} \abs{f_i}^{-\frac{1}{2}+\alpha_i}  \prod_{i=\ry+1}^{\rp+\ry} \abs{f_i}^{-\frac{1}{2}-\alpha_i}. \end{equation}  \end{lemma}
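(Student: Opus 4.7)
The plan is to combine \cref{Schur-to-L} with the $L$-function identity \eqref{L-to-diag} and then match coefficients of monomials in $q^{-e_i\alpha_i}$ on both sides. The key feature enabling this approach is that \cref{Schur-to-L} already packages all the $\psi_{\mathbf e}$ into a generating series whose closed form is expressed in terms of $L$ and $\overline L$, and \eqref{L-to-diag} computes the integral of precisely the product of $L$'s and $\overline L$'s that appears.

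The first step is to integrate both sides of \eqref{Schur-to-L-left}=\eqref{Schur-to-L-right} against $\mu_{\textrm{ep}}$, applied term-by-term to the formal Laurent series. On the left, by linearity this produces $\sum_{\mathbf e} \bigl(\int_{\Cqs} \psi_{\mathbf e}\mu_{\textrm{ep}}\bigr) \prod_{i=1}^\ry (-q^{\alpha_i})^{-e_i}\prod_{i=\ry+1}^{\rp+\ry}(-q^{-\alpha_i})^{e_i}$, and simplifying $(-q^{\alpha_i})^{-e_i} = (-1)^{e_i}q^{-e_i\alpha_i}$ and $(-q^{-\alpha_i})^{e_i}=(-1)^{e_i}q^{-e_i\alpha_i}$ shows that the coefficient of $\prod_i q^{-e_i\alpha_i}$ on the left is $(-1)^{\sum_i e_i}\int_{\Cqs}\psi_{\mathbf e}\mu_{\textrm{ep}}$.

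On the right, the factors $\prod_{i=1}^\ry(-q^{\alpha_i})^{1-i-N}\prod_{i=\ry+1}^{\rp+\ry}(-q^{-\alpha_i})^{i-1}$ and the Vandermonde $\prod_{i_1<i_2}(q^{\alpha_{i_1}}-q^{\alpha_{i_2}})$ are independent of the integration variable and come outside the integral, leaving $\int_{\Cqs}\prod_{i=1}^\ry \overline{L(q^{-1/2-\alpha_i})}\prod_{i=\ry+1}^{\rp+\ry} L(q^{-1/2-\alpha_i})\mu_{\textrm{ep}}$, which by \eqref{L-to-diag} equals $\sum_{f_1,\dots, f_{\rp+\ry}}\prod \abs{f_i}^{-1/2\pm\alpha_i}$ under the constraint $\prod_{i\leq\ry}f_i=\prod_{i>\ry}f_i$. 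Computing $\sum_{i=1}^\ry(1-i-N)+\sum_{i=\ry+1}^{\rp+\ry}(i-1) \equiv \binom{\rp+\ry}{2}+N\ry\pmod 2$ isolates the sign coming from the prefactors as $(-1)^{\binom{\rp+\ry}{2}+N\ry}$, and the overall $q$-exponent of these prefactors is $q^{\sum_{i\leq\ry}(1-i-N)\alpha_i+\sum_{i>\ry}(1-i)\alpha_i}$.

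To match the coefficient of $\prod_i q^{-e_i\alpha_i}$ on the right, one then needs to extract the coefficient of $q^{\sum_{i\leq\ry}(i-1+N-e_i)\alpha_i - \sum_{i>\ry}(e_i-i+1)\alpha_i}$ from the product of the Vandermonde and the sum over $f_i$, since this is exactly the residual exponent. Equating the two coefficient expressions and cancelling the common sign $(-1)^{\sum_i e_i}=(-1)^{-\sum_i e_i}$ yields the claimed formula with overall sign $(-1)^{\sum_i e_i+\binom{\rp+\ry}{2}+N\ry}$. The main obstacle, and the only real work, is the bookkeeping of signs and the $\alpha_i$-exponents arising from the prefactors and from the $(-1)^{e_i}$ terms; the rest of the argument is purely formal manipulation of the previously established identities.
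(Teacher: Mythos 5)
Your proposal is correct and follows essentially the same approach as the paper: integrate Lemma~\ref{Schur-to-L} against $\mu_{\textrm{ep}}$, apply \eqref{L-to-diag}, and extract the coefficient of the appropriate monomial in $q^{-\alpha_1},\dots,q^{-\alpha_{\rp+\ry}}$. Your version is slightly more explicit about the sign and exponent bookkeeping than the paper's (which just says ``moving some factors to the left-hand side'' and ``grouping together all the powers of $(-1)$''), but the argument is the same.
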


\begin{proof} Integrating Lemma \ref{Schur-to-L} against $\mu_{\textrm{ep}}$ and then plugging in \eqref{L-to-diag} gives
\begin{equation*}\sum_{e_1,\dots, e_{\rp+\ry} \in \mathbb Z} \int_{\Cqs } \psi_{\mathbf e} \mu_{\textrm{ep}} \prod_{i=1}^{\ry} (-q^{\alpha_i})^{-e_i} \prod_{i=\ry+1}^{\rp+\ry} (-q^{-\alpha_i})^{e_i} \end{equation*}  \begin{equation*} = \prod_{1 \leq i_1< i_2 \leq \rp+\ry} (q^{\alpha_{i_1}} - q^{\alpha_{i_2}})  \prod_{i=1}^{\ry} (-q^{\alpha_i})^{1-i-N}  \prod_{i=\ry+1}^{\rp+\ry}  (-q^{-\alpha_i})^{i-1}  \sum_{\substack{f_1,\dots, f_{\rp+\ry} \in \mathbb F_q[u]^+ \\ \prod_{i=1}^{\ry} f_i =\prod_{i=\ry+1}^{\rp+\ry} f_i }} \prod_{i=1}^{\ry} \abs{f_i}^{-\frac{1}{2}+\alpha_i}  \prod_{i=\ry+1}^{\rp+\ry} \abs{f_i}^{-\frac{1}{2}-\alpha_i} . \end{equation*} 

Moving some factors to the left-hand side, we obtain
\begin{equation*} \sum_{e_1,\dots, e_{\rp+\ry} \in \mathbb Z} \int_{\Cqs } \psi_{\mathbf e} \mu_{\textrm{ep}} \prod_{i=1}^{\ry} (-q^{\alpha_i})^{i-1+N-e_i} \prod_{i=\ry+1}^{\rp+\ry} (-q^{-\alpha_i})^{e_i-i+1 }  \end{equation*} \begin{equation*} = \prod_{1 \leq i_1< i_2 \leq \rp+\ry} (q^{\alpha_{i_1}} - q^{\alpha_{i_2}})   \sum_{\substack{f_1,\dots, f_{\rp+\ry} \in \mathbb F_q[u]^+ \\ \prod_{i=1}^{\ry} f_i =\prod_{i=\ry+1}^{\rp+\ry} f_i }} \prod_{i=1}^{\ry} \abs{f_i}^{-\frac{1}{2}+\alpha_i}  \prod_{i=\ry+1}^{\rp+\ry} \abs{f_i}^{-\frac{1}{2}-\alpha_i} . \end{equation*} 
Extracting the coefficient of a single term and grouping together all the powers of $(-1)$, we obtain the statement. \end{proof}

\begin{lemma}\label{psi-int-bound} We have   \[ \int_{\Cqs } \psi_{\mathbf e} \mu_{\textrm{ep}} = O \Bigl(  \Bigl(O(1) + \sum_{i=1}^{\ry} (N-e_i) + \sum_{i=\ry+1}^{\rp+\ry} e_i\Bigr)^{O(1)}  q^{ \frac{- \max ( \sum_{i=1}^{\ry} (N-e_i), \sum_{i=\ry+1}^{\rp+\ry} e_i }{2}})\Bigr).\] Furthermore, $\int_{\Cqs } \psi_{\mathbf e} \mu_{\textrm{ep}} =0$ unless $ \sum_{i=1}^{\ry}(i-1+N-e_i)  - \sum_{i=\ry+1}^{\rp+\ry} (e_i-i+1) = \binom{\rp+\ry}{2} $.   \end{lemma}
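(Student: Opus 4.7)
The vanishing statement follows directly from \cref{average-is-coefficient}. Every monomial in the Vandermonde $V=\prod_{i_1<i_2}(q^{\alpha_{i_1}}-q^{\alpha_{i_2}})$ has total $\alpha$-degree exactly $\binom{\rp+\ry}{2}$, while every monomial in the sum $S$ on the right-hand side of \eqref{L-to-diag} has total $\alpha$-degree $0$, because the constraint $\prod_{i\leq\ry}f_i=\prod_{i>\ry}f_i$ forces $\sum_{i\leq\ry}\deg f_i=\sum_{i>\ry}\deg f_i$. The monomial whose coefficient is extracted has total $\alpha$-degree $\sum_{i\leq\ry}(i-1+N-e_i)-\sum_{i>\ry}(e_i-i+1)=\ry N-\sum_i e_i+\binom{\rp+\ry}{2}$, which equals $\binom{\rp+\ry}{2}$ if and only if $\sum_i e_i=\ry N$; this is exactly the identity stated in the ``furthermore'' clause.

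For the bound, I will expand the determinant $\psi_{\mathbf e}$ via Leibniz and average term-by-term against $\mu_{\textrm{ep}}$. Using that $\xi$ is a Steinhaus completely multiplicative random function, so that $\mathbb E[\prod_{i\leq\ry}\overline{\xi(f_i)}\prod_{i>\ry}\xi(f_i)]=\mathbf{1}[\prod_{i\leq\ry}f_i=\prod_{i>\ry}f_i]$, this yields
\[
\int\psi_{\mathbf e}\,\mu_{\textrm{ep}}=\sum_{\sigma\in S_{\rp+\ry}}\sgn(\sigma)\,q^{-D_L(\sigma)}\,N(\sigma),
\]
where $D_L(\sigma)=\sum_{i\leq\ry}d_{i\sigma(i)}$ (equal to $\sum_{i>\ry}d_{i\sigma(i)}$ whenever $N(\sigma)\neq 0$, and under the vanishing condition satisfying $D_L(\sigma)=\max+O_{\rp,\ry}(1)$) and $N(\sigma)$ counts tuples $(f_i)$ of monic polynomials with $\deg f_i=d_{i\sigma(i)}$ and matching products. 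The crude bound $N(\sigma)\leq O((\max)^{O(1)})q^{D_L(\sigma)}$, obtained by choosing $(f_i)_{i\leq\ry}$ freely and counting factorizations of the product, only yields $O((\max)^{O(1)})$ per permutation, with no $q$-decay.

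The $q^{-\max/2}$ decay will come from a finer decomposition $N(\sigma)=N_{\textrm{rm}}(\sigma)+R(\sigma)$, where $N_{\textrm{rm}}(\sigma):=\mathbb E_{\mu_{\textrm{rm}}}[\prod_{i\leq\ry}\bar c_{d_{i\sigma(i)}}\prod_{i>\ry}c_{d_{i\sigma(i)}}]$ is the random-matrix (equivalently, by \cref{DS} in the relevant degree range, Gaussian) analog of the arithmetic count, and $R(\sigma)=O((\max)^{O(1)}q^{D_L(\sigma)/2})$ is a square-root-cancellation error obtained by writing $N(\sigma)=\sum_g N_{\ry}(g;\cdot)N_{\rp}(g;\cdot)$ and bounding the difference between this divisor-like sum and its ``main'' divisor-function mean. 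The main terms reassemble under the Leibniz sum into $\sum_\sigma\sgn(\sigma)q^{-D_L(\sigma)}N_{\textrm{rm}}(\sigma)=\int\psi_{\mathbf e}\,\mu_{\textrm{rm}}$, which by \cref{psi-key} equals $\int\tr(M,V_{\mathbf e})\,\mu_{\textrm{Haar}}$; this Haar integral vanishes whenever $V_{\mathbf e}$ is a non-trivial irreducible representation, i.e., whenever $\max>0$, by character orthogonality. (For $\max=0$, $V_{\mathbf e}$ is trivial, $\psi_{\mathbf e}=1$, and the bound $O(1)$ is immediate.) Summing the residual errors $q^{-D_L(\sigma)}|R(\sigma)|$ over the $(\rp+\ry)!$ permutations then yields the claimed $O((\max)^{O(1)}q^{-\max/2})$.

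The main obstacle is establishing the decomposition $N(\sigma)=N_{\textrm{rm}}(\sigma)+R(\sigma)$ with the square-root-size error, which is essentially the square-root cancellation in divisor sums over $\mathbb F_q[u]$ that distinguishes the arithmetic (Euler product) expectations from their random-matrix counterparts and ultimately traces back to the Weil--Deligne Riemann hypothesis for curves; the identification of the main-term sum with $\int\psi_{\mathbf e}\,\mu_{\textrm{rm}}$, while combinatorially natural from the Leibniz expansion, also needs to be made precise since $N_{\textrm{rm}}(\sigma)$ is itself a sum over pairings with non-trivial structure (the coefficients $c_d$ are polynomials in the independent complex Gaussians $b_n$ and are not themselves independent).
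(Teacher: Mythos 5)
Your argument for the vanishing statement is correct and is essentially the same degree-count that the paper uses: after multiplying by the Vandermonde (total $\alpha$-degree $\binom{\rp+\ry}{2}$), the constrained sum in \eqref{L-to-diag} sits in total degree $0$, so the extracted coefficient can only be nonzero when the monomial has total degree $\binom{\rp+\ry}{2}$, which unwinds to $\sum_{i\leq\ry}(N-e_i)=\sum_{i>\ry}e_i$.

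For the bound, however, you have a genuine and essentially acknowledged gap. Your plan is: expand $\psi_{\mathbf e}$ by Leibniz, observe $\int\psi_{\mathbf e}\,\mu_{\textrm{ep}}=\sum_\sigma\sgn(\sigma)q^{-D_L(\sigma)}N(\sigma)$, subtract the ``random matrix'' expectation $N_{\textrm{rm}}(\sigma)$ from each $N(\sigma)$ so the main terms reassemble to $\int\psi_{\mathbf e}\,\mu_{\textrm{rm}}=\int\tr(\cdot,V_{\mathbf e})\,\mu_{\textrm{Haar}}=0$, and then claim the leftover $R(\sigma)=N(\sigma)-N_{\textrm{rm}}(\sigma)$ satisfies $R(\sigma)=O\bigl((\max)^{O(1)}q^{D_L(\sigma)/2}\bigr)$ permutation-by-permutation. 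The soft parts of this (the Leibniz rewriting, the identification of the main-term sum with the Haar average, its vanishing for nontrivial $V_{\mathbf e}$, and the fact that contributing $\sigma$ must have $D_L(\sigma)=\max+O_{\rp,\ry}(1)$) are fine. What is missing is the central arithmetic input: the square-root cancellation in the divisor-type count $N(\sigma)$ relative to its random-matrix analogue. This is not a formal consequence of anything in the paper; it is precisely the nontrivial estimate, and your proposal neither proves it nor reduces it to a result that is available here. Moreover it is not clear that the per-$\sigma$ decomposition is the right shape for such a bound — the cancellation that produces the $q^{-\max/2}$ savings is really a property of the alternating (Vandermonde/Jacobi--Trudi) sum, and trying to extract it term-by-term may well fail or at least require a different grouping of permutations.

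The paper sidesteps this entirely: it identifies $\int\psi_{\mathbf e}\,\mu_{\textrm{ep}}$ with (up to sign, and up to a possible vanishing due to the degree mismatch) a single coefficient of the series $M_{\{1,\dots,\ry\}}$ from \cite{s-rep}, and then invokes the coefficient bound of \cite[Lemma 3.5]{s-rep} directly. That external lemma is exactly the arithmetic/square-root-cancellation input your sketch would need to reprove from scratch. So the honest status of your argument is: the vanishing clause is proved, the structural skeleton of the bound is plausible, but the quantitative heart of the bound is a placeholder, and the paper's route (cite the $M_S$ coefficient bound) is both shorter and actually complete.
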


\begin{proof} We will apply bounds in \cite[Lemma 3.5]{s-rep} for the coefficients of the series $M_S$, defined in \cite[\S3]{s-rep} as
 \[M_S (\alpha_1,\dots,\alpha_{\rp+\ry}) =  \prod_{1 \leq i_1< i_2 \leq \rp+\ry} (q^{\alpha_{i_1}} - q^{\alpha_{i_2}}) \sum_{ \substack{ f_1,\dots, f_{\rp+\ry} \in \mathbb F_q[u]^+ \\ \prod_{i \in S} f_i  /  \prod_{i\notin S} f_i \in u^{\mathbb Z} } }  \prod_{i \in S} \abs{f_i} ^{ -1/2 + \alpha_i}  \prod_{i\notin S} \abs{f_i} ^{-1/2 - \alpha_i   }. \] 
 Taking $S = \{1,\dots,\ry\}$, this definition specializes to
 \begin{equation}\label{MS-u} M_{\{1,\dots,\ry\} }(\alpha_1,\dots,\alpha_{\rp+\ry}) =  \prod_{1 \leq i_1< i_2 \leq \rp+\ry} (q^{\alpha_{i_1}} - q^{\alpha_{i_2}}) \sum_{ \substack{ f_1,\dots, f_{\rp+\ry} \in \mathbb F_q[u]^+ \\ \prod_{i =1}^{\ry}  f_i  /  \prod_{i=\ry+1}^{\rp+\ry}  f_i \in u^{\mathbb Z} } }  \prod_{i=1}^{\ry} \abs{f_i}^{-\frac{1}{2}+\alpha_i}  \prod_{i=\ry+1}^{\rp+\ry} \abs{f_i}^{-\frac{1}{2}-\alpha_i} . \end{equation}
\eqref{MS} and \eqref{MS-u} agree except that the condition $\prod_{i =1}^{\ry}  f_i  /  \prod_{i=\ry+1}^{\rp+\ry}  f_i \in u^{\mathbb Z}$ in \eqref{MS-u} is laxer than the condition $\prod_{i =1}^{\ry}  f_i  =  \prod_{i=\ry+1}^{\rp+\ry}  f_i $ in \eqref{MS}.

If $f_1,\dots, f_{\rp+\ry}$ satisfy $\prod_{i =1}^{\ry}  f_i  /  \prod_{i=\ry+1}^{\rp+\ry}  f_i = u^{n}$ for $n\in \mathbb Z$, then $\sum_{i=1}^{\ry} \deg f_i = \sum_{i=\ry+1}^{\rp+\ry }\deg f_i +n$, so $ \prod_{i=1}^{\ry} \abs{f_i}^{-\frac{1}{2}+\alpha_i}  \prod_{i=\ry+1}^{\rp+\ry} \abs{f_i}^{-\frac{1}{2}-\alpha_i}$ is a monomial in $q^{\alpha_1},\dots, q^{\alpha_{\rp+\ry}}$ of total degree $n$. Since the Vandermonde $ \prod_{1 \leq i_1< i_2 \leq \rp+\ry} (q^{\alpha_{i_1}} - q^{\alpha_{i_2}})$ has total degree $\binom{\rp+\ry}{2}$ in $q^{\alpha_1},\dots, q^{\alpha_{\rp+\ry}}$, this implies that $(f_1,\dots, f_{\rp+\ry})$ contributes to terms in the formal Laurent series \eqref{MS-u} with total degree $n + \binom{\rp+\ry}{2}$. Thus, restricting to $f_1,\dots, f_{\rp+\ry}$ satisfying $\prod_{i =1}^{\ry}  f_i  =  \prod_{i=\ry+1}^{\rp+\ry}  f_i $, i.e., restricting to the case $n=0$, is equivalent to restricting the series \eqref{MS-u} to terms of total degree $\binom{\rp+\ry}{2}$.

In particular, since $\int_{\Cqs } \psi_{\mathbf e} \mu_{\textrm{ep}} $ is by \cref{average-is-coefficient} $\pm$ the coefficient of $q^{ \sum_{i=1}^{\ry}(i-1+N-e_i)} \alpha_i - \sum_{i=\ry+1}^{\rp+\ry} (d_i- i+ 1)\alpha_i$ in \eqref{MS}, it follows that $\int_{\Cqs } \psi_{\mathbf e} \mu_{\textrm{ep}} $ is either $\pm$ the coefficient of $q^{ \sum_{i=1}^{\ry}(i-1+N-e_i)} \alpha_i - \sum_{i=\ry+1}^{\rp+\ry} (e_i-i+1 )\alpha_i$ in \eqref{MS-u} or equal to zero.

 Hence any upper bound on the coefficients of \eqref{MS-u} also gives an upper bound on  $\int_{\Cqs } \psi_{\mathbf e} \mu_{\textrm{ep}} $, which we will shortly use to establish the first part of the statement.  Furthermore, the case where $\int_{\Cqs } \psi_{\mathbf e} \mu_{\textrm{ep}} =0$ occurs when the total degree $ \sum_{i=1}^{\ry}(i-1+N-e_i)  - \sum_{i=\ry+1}^{\rp+\ry} (e_i-i+1) $ of  $q^{ \sum_{i=1}^{\ry}(i-1+N-e_i)} \alpha_i - \sum_{i=\ry+1}^{\rp+\ry} (e_i-i+1 )\alpha_i$ is not equal to $\binom{\rp+\ry}{2}$, giving the second part of the statement.

We apply the upper bound \cite[Lemma 3.5]{s-rep}, which is stated as an upper bound on the coefficient of $\prod_{i=1}^{\rp+\ry} q^{\alpha_i d_i}$, so we must substitute in $( N-e_1  ,\dots, N- e_{\ry} + \ry-1 , - e_{\ry+1}+\ry, \dots, -e_{\rp+\ry} +\rp + \ry-1)$ for $(d_1,\dots,d_{\rp+\ry})$ in the bounds of \cite[Lemma 3.5]{s-rep}. (Also, $d_1,\dots, d_{\rp+\ry}$ are required to satisfy the inequalities of \cite[Lemma 3.2]{s-rep}, but \cite[Lemma 3.2]{s-rep} guarantees the coefficient vanishes if the inequality is not satisfied, so the bound holds also in that case.)  The bound of \cite[Lemma 3.5]{s-rep}, is a product of two factors, the first of which is \[  O (  ( O(1) + \sum_{i \in S} d_i - \sum_{i\notin S} d_i)^{O(1)} )   = O (  ( O(1) + \sum_{i =1}^{\ry}  d_i - \sum_{i=\ry+1}^{\rp+\ry} d_i)^{O(1)} )\] and substituting gives \[O ((O(1) + \sum_{i=1}^{\ry} (N-e_i) + \sum_{i=\ry+1}^{\rp+\ry} e_i)^{O(1)}) \] since the $i-1$ terms may be absorbed into the $O(1)$. The second factor is the minimum of four different bounds, of which we will only need the middle two, which are
\[ \min \Bigl( q ^{ \frac{ -\sum_{i\in S} d_i +\binom{ \abs{S}}{2}}{2}}, q^{  \frac{ \sum_{i\notin S }d_i   + \binom{\abs{S}}{2} - \binom{ \rp+\ry}{2}}{2}} \Bigr) =  \min \Bigl( q ^{- \frac{ -\sum_{i=1}^{\ry}  d_i + \binom{\ry}{2}}{2}}, q^{  \frac{ \sum_{i=\ry+1}^{\rp+\ry} d_i   + \binom{\ry }{2} - \binom{ \rp+\ry}{2}}{2}} \Bigr)\]
and substituting gives
\[ \min \Bigl( q ^{ \frac{ -\sum_{i=1}^{\ry}  (N-e_i )}{2} }, q^{  \frac{ - \sum_{i=\ry+1}^{\rp+\ry} e_i }{2}} \Bigr) =q^{ - \frac{\max ( \sum_{i=1}^{\ry} (N-e_i), \sum_{i=\ry+1}^{\rp+\ry} e_i )}{2}}  \] since the $- \sum_{i=1}^{\ry} (i-1)$ term cancels $\binom{\ry}{2}$ in the exponent of the first $q$ and $\sum_{i=\ry+1}^{\rp+\ry}  (i-1) $ cancels  $ \binom{\ry }{2} - \binom{ \rp+\ry}{2}$ in the exponent of the second $q$. \end{proof}

\begin{lemma}\label{tail-bound}  We have \[    \sum_{\substack{ e_1,\dots e_{\rp+\ry} \in \mathbb Z \\ N \geq e_1\geq  \dots e_{\ry} \\ e_{\ry+1} \geq \dots  e_{\rp+\ry} \geq 0 \\  \sum_{i=1}^{\ry} (N - e_i) + \sum_{i=\ry+1}^{\rp+\ry} e_i > k  }}\kappa_{\mathbf e} \int_{\Cqs } \psi_{\mathbf e} \mu_{\textrm{ep}} =  O ( N^{ O(1)} q^{ - \frac{k}{4}} ).\] \end{lemma}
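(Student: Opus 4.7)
The plan is to use the vanishing criterion from Lemma \ref{psi-int-bound} to convert the constraint in the sum into a one-parameter condition, then combine the $q^{-m/2}$ decay of $\int_{\Cqs}\psi_{\mathbf e}\,\mu_{\textrm{ep}}$ with a polynomial bound on $\abs{\kappa_{\mathbf e}}$ obtained from the Weyl character formula.

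First, by the vanishing clause in Lemma \ref{psi-int-bound}, $\int_{\Cqs}\psi_{\mathbf e}\,\mu_{\textrm{ep}}$ vanishes unless $\sum_{i=1}^{\ry}(i-1+N-e_i) - \sum_{i=\ry+1}^{\rp+\ry}(e_i-i+1) = \binom{\rp+\ry}{2}$, which telescopes to $\sum_{i=1}^{\ry}(N-e_i) = \sum_{i=\ry+1}^{\rp+\ry} e_i$. Writing $m$ for this common value, the restriction $\sum_{i=1}^{\ry}(N-e_i) + \sum_{i=\ry+1}^{\rp+\ry} e_i > k$ becomes $2m > k$, and every nonvanishing term in the sum satisfies $N-e_i \in [0,m]$ for $i \leq \ry$ and $e_i \in [0,m]$ for $i > \ry$; in particular $\abs{e_i} \leq N+m$ for every $i$.

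Second, Lemma \ref{psi-int-bound} gives $\abs{\int_{\Cqs}\psi_{\mathbf e}\,\mu_{\textrm{ep}}} \ll (m+O(1))^{O(1)} q^{-m/2}$. For $\abs{\kappa_{\mathbf e}}$ I use the Schur polynomial expression \eqref{kappa-key}: since $\abs{q^{-\alpha_i}}=\abs{q^{N\alpha_i}}=1$, it suffices to bound the generalized Schur polynomial $s_{\mathbf e}$ on the unit torus. Sorting the shifted sequence $(e_i + \rp+\ry - i)_{i=1}^{\rp+\ry}$ into strictly decreasing order, either two entries coincide (giving $\kappa_{\mathbf e} = 0$), or we identify $\kappa_{\mathbf e}$, up to a sign and a unit-modulus power of the determinant character (to shift any negative entries back into a partition), with the character of an irreducible $U(\rp+\ry)$-representation $V_\rho$ whose highest weight has entries of absolute value $\leq N+m+O(1)$. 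The uniform estimate $\abs{\chi_V(x)} \leq \chi_V(1) = \dim V$ for unitary $x$, combined with the Weyl dimension formula, then yields $\abs{\kappa_{\mathbf e}} \ll (N+m)^{\binom{\rp+\ry}{2}}$.

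Finally, the number of $\mathbf e$ contributing to a fixed value of $m$ is at most $O(m^{\rp+\ry-2})$ since $(N-e_i)_{i=1}^{\ry}$ and $(e_i)_{i=\ry+1}^{\rp+\ry}$ are monotone non-negative integer sequences summing to $m$. Combining the three estimates, the tail sum is bounded by $\sum_{m > k/2} O(m^{O(1)} (N+m)^{O(1)} q^{-m/2}) \ll N^{O(1)} q^{-k/4}$ via the geometric series estimate $\sum_{m > M} m^C q^{-m/2} = O(M^C q^{-M/2})$ together with $k \leq N$, as required. I expect the main obstacle to be the bound on $\abs{\kappa_{\mathbf e}}$: the summation range in \eqref{int-phi-formula-adjusted} admits tuples that are not genuine partitions (when $e_\ry < e_{\ry+1}$) and may have negative entries, so the standard Schur dimension bound does not apply to $\mathbf e$ directly and one must first reorder and shift to identify $\kappa_{\mathbf e}$ with a bona fide unitary character.
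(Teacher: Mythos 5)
Your proof is correct and follows essentially the same route as the paper: bound $\abs{\kappa_{\mathbf e}}$ by a Weyl-dimension-formula argument (handling non-partition tuples by sorting and shifting, exactly as the paper does), use Lemma \ref{psi-int-bound} for the exponential decay, count tuples polynomially, and sum a geometric series. The one small variation is that you invoke the vanishing clause of Lemma \ref{psi-int-bound} to set $\sum_{i\le\ry}(N-e_i)=\sum_{i>\ry}e_i=m$ and hence get $q^{-m/2}=q^{-d/4}$ exactly, whereas the paper skips this and instead uses $\max(\sum(N-e_i),\sum e_i)\ge d/2$ to reach $q^{-d/4}$ directly; both give the same error term.
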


\begin{proof}We have
\[\abs{\kappa_{\mathbf e} } =  \abs { \frac{ \sum_{ \sigma \in S_{\rp+\ry}} \sgn(\sigma) \prod_{i=1}^{\rp+\ry} q^{ - (e_i + \rp +\ry - i)  \alpha_{\sigma(i) }   }}{ \prod_{1 \leq i_1 < i_2 \leq \rp+\ry} (q^{-\alpha_{i_1} } - q^{ - \alpha_{i_2}} ) }} \leq  \frac{  \prod_{ 1\leq i_1 < i_2 \leq \rp +\ry } \abs{ e_{i_1} -i_1 - e_{i_2} + i_2} } { \prod_{ 1\leq i_1 < i_2 \leq \rp +\ry } \abs{i_1- i_2}} \] since the value of the Weyl character formula for the trace of the unitary representation at a unitary matrix with eigenvalues $q^{ -\alpha_1},\dots ,q^{-\alpha_{\rp+\ry}}$ is bounded by the dimension of that representation which is given by the Weyl dimension formula. (Even if $e_1 \geq e_2 \geq \dots e_{\rp+\ry} $ is not satisfied, the Weyl character formula still gives a formula for the either plus or minus the trace of some irreducible representation, or the zero representation, and the Weyl dimension formula gives plus or minus the dimension of that representation so the absolute value of the dimension formula still bounds the absolute value of the character formula.) 

If $N \geq e_1\geq  \dots \geq e_{\ry} $ and $ e_{\ry+1} \geq \dots  e_{\rp+\ry} \geq 0 $ then each factor $\abs{ e_{i_1} -i_1 - e_{i_2} + i_2}$ is certainly bounded by $\sum_{i=1}^{\ry} (N-e_i) + \sum_{i=\ry+1}^{\rp+\ry} e_i + O(N)$ so \[ \abs{\kappa_{\mathbf e} }  \leq   \Bigl(O (N) + \sum_{i=1}^{\ry} (N-e_i) + \sum_{i=\ry+1}^{\rp+\ry} e_i\Bigr)^{O(1) } \] which together with \cref{psi-int-bound} implies 
\[ \kappa_{\mathbf e} \int_{\Cqs } \psi_{\mathbf e} \mu_{\textrm{ep}} = O \Bigl(  \Bigl(O (N) + \sum_{i=1}^{\ry} (N-e_i) + \sum_{i=\ry+1}^{\rp+\ry} e_i\Bigr)^{O(1) }  q^{ \frac{- \max ( \sum_{i=1}^{\ry} (N-e_i), \sum_{i=\ry+1}^{\rp+\ry} e_i )}{2}}\Bigr)\] \[=  O \Bigl(  \Bigl(O (N) + \sum_{i=1}^{\ry} (N-e_i) + \sum_{i=\ry+1}^{\rp+\ry} e_i\Bigr)^{O(1) }  q^{ \frac{- \sum_{i=1}^{\ry} (N-e_i)-\sum_{i=\ry+1}^{\rp+\ry} e_i }{4}}\Bigr) .\]

The number of tuples $e_1,\dots, e_{\rp+\ry}$ satisfying $N \geq e_1\geq  \dots e_{\ry} $ and $ e_{\ry+1} \geq \dots  e_{\rp+\ry} \geq 0 $ and $\sum_{i=1}^{\ry} (N-e_i) + \sum_{i=\ry+1}^{\rp+\ry} e_i = d$ is $O ( d^{O(1)} ) $, so in total we have
\[ \sum_{\substack{ e_1,\dots e_{\rp+\ry} \in \mathbb Z \\ N \geq e_1\geq  \dots e_{\ry} \\ e_{\ry+1} \geq \dots  e_{\rp+\ry} \geq 0 \\  \sum_{i=1}^{\ry} (N - e_i) + \sum_{i=\ry+1}^{\rp+\ry} e_i > k  }}\kappa_{\mathbf e} \int_{\Cqs } \psi_{\mathbf e} \mu_{\textrm{ep}}= \sum_{d=k+1}^{\infty}  O ( d^{O(1)} ) O ( (O(N)+ d)^{ O(1)}  q^{ - \frac{d}{4}} )\] \[ \ll N^{O(1)} \sum_{d=k+1}^{\infty} d^{O(1)} q^{- \frac{d}{4}} = O ( N^{O(1)} k^{O(1)} q^{ - \frac{k}{4}} ) = O ( N^{ O(1)} q^{ - \frac{k}{4}} ).\qedhere \] \end{proof}

Recall that our desired main term has the form (after substituting $\frac{1}{2}+\alpha_i$ for $s_i$)
\[ \operatorname{MT}_{N}^{\rp,\ry} (\frac{1}{2}+\alpha_1,\dots, \frac{1}{2}+\alpha_{\rp+\ry} )= \prod_{i=\rp+1}^{\rp+\ry} q^{  \alpha_i N}  \sum_{\substack{  S \subseteq \{1,\dots,\rp+\ry\} \\ \abs{S}=\ry}}  \prod_{i \in S} q^{ -\alpha_i N} \sum_{ \substack{ f_1,\dots, f_{\rp+\ry} \in \mathbb F_q[u]^+ \\ \prod_{i \in S} f_i = \prod_{i\notin S} f_i}}  \prod_{i\in S}\abs{f_i} ^{ -\frac{1}{2}- \alpha_i  } \prod_{i \notin S} \abs{f_i} ^{ -\frac{1}{2}-\alpha_i  } .\]
This is interpreted by continuing each term $\sum_{ \substack{ f_1,\dots, f_{\rp+\ry} \in \mathbb F_q[u]^+ \\ \prod_{i \in S} f_i = \prod_{i\notin S} f_i}}  \prod_{i\in S}\abs{f_i} ^{ -\frac{1}{2}-\alpha_i  } \prod_{i \notin S} \abs{f_i} ^{ -\frac{1}{2}+\alpha_i  }$ meromorphically from its domain of absolute convergence and then summing the meromorphic functions. Thus, in this segment of the proof only, we will allow the $\alpha_i$ to be arbitrary complex numbers instead of imaginary numbers. We first evaluate the summand associated to $S =\{1,\dots, \ry\}$, before using this to evaluate the summand associated to arbitrary $S$, and finally evaluate the full main term.

\begin{lemma}\label{first-mt} We have an equality of holomorphic functions on  \[ \{ \alpha_1,\dots, \alpha_{\rp+\ry}\in \mathbb C \mid \operatorname{Re}(\alpha_i)< \frac{1}{4}  \textrm{ for } i \leq \ry, \operatorname{Re}(\alpha_i)> - \frac{1}{4}\textrm{ for } i > \ry \} \] given by \begin{equation}\label{first-mt-left} \prod_{1 \leq i_1 < i_2 \leq \rp+\ry} (q^{-\alpha_{i_1} } - q^{ - \alpha_{i_2}} )  \prod_{i=1}^{\ry} q^{ - \alpha_iN }  \sum_{ \substack{ f_1,\dots, f_{\rp+\ry} \in \mathbb F_q[u]^+ \\ \prod_{i=1}^{\ry} f_i = \prod_{i=\ry+1}^{\rp+\ry} f_i}}  \prod_{i=1}^{\ry}\abs{f_i} ^{ -\frac{1}{2}+\alpha_i  } \prod_{i=\ry+1}^{\rp+\ry}  \abs{f_i} ^{ -\frac{1}{2}-\alpha_i  } \end{equation}\begin{equation}\label{first-mt-right}=  \sum_{\substack{ e_1,\dots e_{\rp+\ry} \in \mathbb Z \\ N \geq e_1\geq  \dots e_{\ry} \\ e_{\ry+1} \geq \dots  e_{\rp+\ry} \geq 0   }}\sum_{ \sigma \in S_{\ry} \times S_{\rp} } \sgn(\sigma) \prod_{i=1}^{\rp+\ry} q^{ - (e_i +\rp+\ry- i)  \alpha_{\sigma(i) } }   \int_{\Cqs } \psi_{\mathbf e} \mu_{\textrm{ep}}\end{equation} where \eqref{first-mt-left} is interpreted by holomorphic continuation from its domain of absolute convergence and \eqref{first-mt-right} is absolutely convergent, and where $S_{\ry} \times S_{\rp}$ is embedded in $S_{\rp +\ry}$ as the subgroup preserving the partition into $\{1,\dots, \ry\}$ and $\{\ry+1,\dots, \rp+\ry\}$. \end{lemma}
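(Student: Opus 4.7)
The plan is to match the two sides as formal Laurent series in $q^{-\alpha_1},\dots, q^{-\alpha_{r+\tilde r}}$, then deduce equality as holomorphic functions. Write $V'(\alpha) = \prod_{i_1<i_2}(q^{-\alpha_{i_1}} - q^{-\alpha_{i_2}})$ and $M(\alpha) = \sum_{\prod_{i\leq\tilde r} f_i = \prod_{i>\tilde r} f_i}\prod_{i\leq\tilde r}|f_i|^{-1/2+\alpha_i}\prod_{i>\tilde r}|f_i|^{-1/2-\alpha_i}$, so the LHS equals $\prod_{i=1}^{\tilde r} q^{-\alpha_i N}\cdot V'\cdot M$. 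Both sides are antisymmetric Laurent series under $S_{\tilde r}\times S_r$ acting on the first $\tilde r$ and last $r$ variables separately (the LHS because $V'$ is fully $S_{r+\tilde r}$-antisymmetric while $M$ and $\prod q^{-\alpha_i N}$ are symmetric, the RHS by its defining sum over $\sigma$), so both are determined by their coefficients on dominant monomials $\prod q^{-\mu_i\alpha_i}$ with $\mu_1 > \cdots > \mu_{\tilde r}$ and $\mu_{\tilde r+1} > \cdots > \mu_{r+\tilde r}$.

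Setting $e_i = \mu_i - (r+\tilde r) + i$ puts valid $\mathbf e$ in bijection with dominant $\vec\mu$ satisfying $\mu_1 \leq N+r+\tilde r-1$ and $\mu_{r+\tilde r} \geq 0$. On the RHS, only the $\sigma=\textrm{id}$ term produces the dominant monomial $\prod q^{-\mu_i\alpha_i}$ (the $\mu_i$ being strictly decreasing within each block), giving RHS coefficient $\int\psi_{\mathbf e}\mu_{\textrm{ep}}$. On the LHS, using the identity $V' = (-1)^{\binom{r+\tilde r}{2}}\prod_i q^{-(r+\tilde r-1)\alpha_i}\cdot V$ where $V = \prod(q^{\alpha_{i_1}}-q^{\alpha_{i_2}})$ is the Vandermonde appearing in \cref{average-is-coefficient}, the coefficient of $\prod q^{-\mu_i\alpha_i}$ reduces to $(-1)^{\binom{r+\tilde r}{2}}$ times the coefficient in $V\cdot M$ of the monomial with $\alpha_i$-exponent $N-e_i+i-1$ for $i\leq\tilde r$ and $i-1-e_i$ for $i>\tilde r$, which by \cref{average-is-coefficient} equals $(-1)^{\sum e_i + N\tilde r}\int\psi_{\mathbf e}\mu_{\textrm{ep}}$. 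The vanishing clause of \cref{average-is-coefficient} forces $\sum e_i = \tilde r N$ whenever the integral is nonzero, making the sign $(-1)^{2N\tilde r}=+1$ so the LHS and RHS coefficients agree.

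For dominant $\vec\mu$ outside the valid range, the RHS coefficient is trivially zero (no such $\mathbf e$ is summed), and the LHS coefficient also vanishes: monomials of $M$ have $\alpha_i$-exponent $\deg f_i \geq 0$ for $i\leq\tilde r$ and $-\deg f_i \leq 0$ for $i>\tilde r$, while monomials of $V'$ contribute $-\alpha_j$-exponent in $\{0,\ldots,r+\tilde r-1\}$, forcing $\mu_j \leq N+r+\tilde r-1$ for $j\leq\tilde r$ and $\mu_j \geq 0$ for $j>\tilde r$. Hence the two sides agree as formal Laurent series.

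For absolute convergence of the RHS on the stated open domain, fix a point and choose $\epsilon>0$ so that $\operatorname{Re}\alpha_i < 1/4-\epsilon$ for $i\leq\tilde r$ and $\operatorname{Re}\alpha_i > -1/4+\epsilon$ for $i>\tilde r$. Writing $d_i = N-e_i$ for $i\leq\tilde r$ and $d_i = e_i$ for $i>\tilde r$ (all nonnegative on the valid range), a direct computation of $-\mu_i\operatorname{Re}(\alpha_{\sigma(i)})$ shows $|\prod q^{-\mu_i\alpha_{\sigma(i)}}|$ is bounded by $q^{(1/4-\epsilon)\sum d_i}$ times a constant depending on $\vec\alpha$ and $N$, while \cref{psi-int-bound} gives $|\int\psi_{\mathbf e}\mu_{\textrm{ep}}| \ll \mathrm{poly}\cdot q^{-\max(D_1,D_2)/2}$ with $D_1 = \sum_{i\leq\tilde r}d_i,\, D_2 = \sum_{i>\tilde r}d_i$. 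Since the integral vanishes unless $D_1 = D_2 = D$, the product is $\mathrm{poly}(D)\cdot q^{-2\epsilon D}$, and summing over the $O(D^{O(1)})$ valid tuples of each size (and finitely many $\sigma$) gives absolute convergence. The LHS is holomorphic by meromorphic continuation, so the two holomorphic functions, matching as Laurent series on the intersection of their convergence regions, agree throughout the stated domain by the identity theorem. The main technical subtlety will be the sign bookkeeping in paragraph two: the identity only works because the vanishing clause $\sum e_i = \tilde r N$ from \cref{average-is-coefficient} precisely kills the a-priori nontrivial sign $(-1)^{\sum e_i + N\tilde r}$.
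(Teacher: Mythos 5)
Your proof is correct and follows essentially the same route as the paper: express both sides as formal Laurent series, exploit antisymmetry under $S_{\ry}\times S_{\rp}$ to reduce to dominant monomials, match coefficients via \cref{average-is-coefficient}, resolve the sign using the vanishing clause of \cref{psi-int-bound}, and obtain absolute convergence of \eqref{first-mt-right} from the decay in \cref{psi-int-bound} against the growth of the monomial. Your sign computation $(-1)^{\binom{\rp+\ry}{2}}\cdot(-1)^{\sum e_i + \binom{\rp+\ry}{2}+N\ry}=(-1)^{\sum e_i + N\ry}$ reducing to $+1$ via $\sum e_i = \ry N$ is a clean reformulation of the paper's sign argument in the $d_i$ variables; the two are consistent since $\sum d_i = -\binom{\rp+\ry}{2} - \sum e_i$. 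The one small difference: for absolute convergence you invoke the vanishing clause to set $D_1=D_2$, whereas the paper avoids this and just uses $\max(D_1,D_2)\geq (D_1+D_2)/2$; both are valid, the paper's being marginally more robust since it does not rely on discarding the vanishing terms first.
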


\begin{proof} We first check absolute convergence of \eqref{first-mt-right}.   By \cref{psi-int-bound} we have
\[ \int_{\Cqs } \psi_{\mathbf e} \mu_{\textrm{ep}} = O \Bigl(  \Bigl(O(1) + \sum_{i=1}^{\ry} (N-d_i) + \sum_{i=\ry+1}^{\rp+\ry} d_i\Bigr)^{O(1)}  q^{ \frac{- \sum_{i=1}^{\ry} (N-d_i) - \sum_{i=\ry+1}^{\rp+\ry} d_i }{4 }}\Bigr).\]
For $i \leq \ry $ we have  \[\abs{ q^{ - (e_i +\rp+\ry- i)  \alpha_{\sigma(i) } }} = q^{ - (e_i +\rp+\ry- i)  \operatorname{Re} (\alpha_{\sigma(i)} ) } \] \[ = q^{  (N-e_i) \operatorname{Re} (\alpha_{\sigma(i)} ) } q^{ - (N + \rp+\ry-i) \operatorname{Re} (\alpha_{\sigma(i)} )}  \ll_{N, \alpha} q^{  (N-e_i) \operatorname{Re} (\alpha_{\sigma(i)} ) } \leq  q^{ (N-e_i) \max_{1\leq j \leq \ry} \operatorname{Re} (\alpha_{j} ) }  \] where $\ll_{N,\alpha}$ denotes an implicit constant that may depend on $N$ and $\alpha_1,\dots, \alpha_{\rp+\ry}$ but does not depend on $e_1,\dots, e_{\rp+\ry}$ (which is all that is needed for absolute convergence). Similarly, for $i> \ry$ we have \[\abs{ q^{ - (e_i +\rp+\ry- i)  \alpha_{\sigma(i) } }} = q^{ - (e_i +\rp+\ry- i)  \operatorname{Re} (\alpha_{\sigma(i)} ) } = q^{ -e_i  \operatorname{Re} (\alpha_{\sigma(i)} ) }  q^{ - (\rp+\ry-i) \operatorname{Re} (\alpha_{\sigma(i)} )} \] \[\ll_{\alpha} q^{ -e_i  \operatorname{Re} (\alpha_{\sigma(i)} ) }    \leq q^{ -e_i \min_{\ry+1 \leq j \leq \rp+\ry} \operatorname{Re} (\alpha_{j} )} \]
so
\[ \prod_{i=1}^{\rp+\ry} q^{ - (e_i +\rp+\ry- i)  \alpha_{\sigma(i) } }  \ll_{N,\alpha}   q^{ \sum_{i=1}^{\ry} (N-e_i) \max_{1\leq j \leq \ry} \operatorname{Re} (\alpha_{j} )   - \sum_{i=\ry+1}^{\rp+\ry} e_i \min_{\ry+1 \leq j \leq \rp+\ry} \operatorname{Re} (\alpha_{j}  )   }  \]
and thus
\[  \prod_{i=1}^{\rp+\ry} q^{ - (e_i +\rp+\ry- i)  \alpha_{\sigma(i) } }   \int_{\Cqs } \psi_{\mathbf e} \mu_{\textrm{ep}} \] \[ \ll_{N,\alpha}  q^{ -\sum_{i=1}^{\ry} (N-e_i) \left( \frac{1}{4} - \max_{1\leq j \leq \ry} \operatorname{Re} (\alpha_{j} ) \right)   - \sum_{i=\ry+1}^{\rp+\ry} e_i \left( \frac{1}{4}+ \min_{\ry+1 \leq j \leq \rp+\ry}  \operatorname{Re} (\alpha_{j}  )  \right)   }    O \Bigl(  \Bigl(O(1) + \sum_{i=1}^{\ry} (N-e_i) + \sum_{i=\ry+1}^{\rp+\ry} e_i\Bigr)^{O(1)} \Bigr) \]
When we sum over $e_1,\dots, e_{\rp+\ry}$, the assumptions on $\operatorname{Re}(\alpha_i)$ imply that the exponential term in $q$ is exponentially decreasing and thus dominates the polynomial term and leads to absolute convergence of \eqref{first-mt-right}.

Both \eqref{first-mt-left} and \eqref{first-mt-right} may be expressed as formal Laurent series in $q^{-\alpha_1}, \dots, q^{-\alpha_{\rp+\ry}}$. It now suffices to check that, for each $d_1,\dots, d_{\rp+\ry}\in \mathbb Z$, the coefficient of $q^{ \sum_{i=1}^{\rp+\ry} d_i \alpha_i} $ in  \eqref{first-mt-left} equals the coefficient of $q^{ \sum_{i=1}^{\rp+\ry} d_i \alpha_i} $ in  \eqref{first-mt-right}. Then both sides will be equal on the locus where both are absolutely convergent, hence equal everywhere by analytic continuation. This in particular implies \eqref{first-mt-left} is holomorphic on the same region as \eqref{first-mt-right}.

To check the equality of coefficients, we first observe that $\sum_{ \sigma \in S_{\ry} \times S_{\rp} } \sgn(\sigma) \prod_{i=1}^{\rp+\ry} q^{ - (e_i +\rp+\ry- i)  \alpha_{\sigma(i) } } $ is antisymmetric in the variables $\alpha_1,\dots,\alpha_{\ry}$ in the sense that swapping two variables is equivalent to multiplying the sum by $-1$, and similarly antisymmetric in the variables $\alpha_{\ry+1},\dots, \alpha_{\rp+\ry}$. Antisymmetry is stable under linear combinations, so \eqref{first-mt-right} is antisymmetric in $\alpha_1,\dots,\alpha_{\ry}$ and in $\alpha_{\ry+1},\dots, \alpha_{\rp+\ry}$. Similarly, $\sum_{ \substack{ f_1,\dots, f_{\rp+\ry} \in \mathbb F_q[u]^+ \\ \prod_{i=1}^{\ry} f_i = \prod_{i=\ry+1}^{\rp+\ry} f_i}}  \prod_{i=1}^{\ry}\abs{f_i} ^{ -\frac{1}{2}-\alpha_i  } \prod_{i=\ry+1}^{\rp+\ry}  \abs{f_i} ^{ -\frac{1}{2}+\alpha_i  }$ is symmetric in  $\alpha_1,\dots,\alpha_{\ry}$ and in $\alpha_{\ry+1},\dots, \alpha_{\rp+\ry}$ and the Vandermonde is antisymmetric, so their product \eqref{first-mt-left} is antisymmetric.

It follows that the coefficient of $q^{ \sum_{i=1}^{\rp+\ry} d_i \alpha_i} $ in either \eqref{first-mt-left} or \eqref{first-mt-right} is antisymmetric in $d_1,\dots, d_{\ry}$ and in $d_{\ry+1},\dots, d_{\rp+\ry}$. In particular, the coefficient vanishes if we have $d_i=d_j$ for $i< j \leq \ry$ or $ \ry+1< i<j$, as in that case swapping $d_i$ and $d_j$ both preserves the value of the coefficient and negates it.  Furthermore, to check that the coefficients of $q^{ \sum_{i=1}^{\rp+\ry} d_i \alpha_i} $ are equal for all $d_1,\dots, d_{\rp+\ry}$, it suffices to check for only tuples such that $d_1< \dots < d_{\ry}$ and $d_{\ry+1}< \dots < d_{\rp+\ry}$: If the $d_1,\dots, d_{\ry}$ are distinct, we can swap them until they are in increasing order, multiplying both coefficients by the same power of $-1$, and similarly with the $d_{\ry+1},\dots, d_{\rp+\ry}$, but if they are not distinct, both coefficients vanish and are trivially equal.

So it remains to check for $d_1<\dots < d_{\ry}$ and $d_{\ry+1}< \dots <d_{\rp+\ry}$ that the coefficients of $q^{ \sum_{i=1}^{\rp+\ry} d_i \alpha_i} $ in  \eqref{first-mt-left} and  \eqref{first-mt-right} are equal. First, we observe that the only $\sigma$ that contributes to this coefficient in \eqref{first-mt-right} is the identity, since we have $e_1 \geq \dots \geq e_{\ry}$ and $e_{\ry+1} \geq \dots e_{\rp+\ry}$ so that $-(e_1+\rp+\ry-1 )<-( e_2 + \rp+\ry-2) < \dots <-( e_{\ry }+\rp)$ and  $-(e_{\ry+1} + \rp-1)< \dots <- e_{\rp+\ry} $ and any permutation other than the identity would change the order and thus not take these exponents to $d_1,\dots , d_{\rp+\ry}$.  So the only relevant term is the one with $\sigma = \textrm{id}$ and $e_i =  i - \rp-\ry -d_i$ for all $i$. 

If $N \geq 1-\rp-\ry -d_1$ and $d_{\rp+\ry} \leq 0$  so that $N \geq e_1$ and $e_{\rp+\ry} \geq 0$ then this  term has coefficient $  \int_{\Cqs } \psi_{\mathbf e} \mu_{\textrm{ep}}$ which by \cref{average-is-coefficient} is \[(-1)^{\sum_{i=1}^{\rp+\ry} e_i + \binom{\rp+\ry}{2} +N\ry } =(-1)^{ \sum_{i=1}^{\rp+\ry} d_i+N\ry} \] times the coefficient of $q^{ \sum_{i=1}^{\ry}(\rp+\ry -1+N +d_i ) \alpha_i - \sum_{i=\ry+1}^{\rp+\ry} (- d_i - \rp- \ry  +1)\alpha_i}$ in \eqref{MS}.

Even if $N < 1-\rp- \ry  -d_1$ or $d_{\rp+\ry} > 0$ then the same conclusion holds as then no term in \eqref{first-mt-right} contributes so the coefficient is zero but it is compared to the coefficient in \eqref{MS} of a monomial whose exponent in $q^{\alpha_1}$ is $<- N-\rp- \ry +1 $ or whose exponent of $q^{\alpha_{\rp+\ry}}$ is $> 0 $ and no monomial of this form appears so this is also trivially zero.

On the other hand, \eqref{first-mt-right} is equal to the product of \eqref{MS} with \[  \frac{ \prod_{1 \leq i_1 < i_2 \leq \rp+\ry} (q^{-\alpha_{i_1} } - q^{ - \alpha_{i_2}} ) }{ \prod_{1 \leq i_1 < i_2 \leq \rp+\ry} (q^{\alpha_{i_1} } - q^{  \alpha_{i_2}} )}  \prod_{i=1}^{\ry}  q^{ - \alpha_iN }   =   \prod_{1 \leq i_1 < i_2 \leq \rp+\ry}( - q^{-\alpha_{i_1}} q^{- \alpha_{i_2}}) \prod_{i=1}^{\ry} q^{ - \alpha_iN }  \] so the coefficient of $q^{ \sum_{i=1}^{\rp+\ry} d_i \alpha_i} $ in \eqref{first-mt-right} is $(-1)^{ \binom{\rp+\ry}{2}}$ times the coefficient of  $q^{ \sum_{i=1}^{\rp+\ry} \alpha_i (d_i +\rp+\ry-1)  + \sum_{i=1}^{\ry} \alpha_i N } $ in \eqref{MS}. Since \[ \sum_ {i=1}^{\rp+\ry} \alpha_i (d_i +\rp+\ry-1)  + \sum_{i=1}^{\ry}  \alpha_i N =  \sum_{i=1}^{\ry}(\rp+\ry -1+N +d_i ) \alpha_i - \sum_{i=\ry+1}^{\rp+\ry} (- d_i - \rp- \ry  +1)\alpha_i\] the two coefficients agree up to a factor of $(-1)^{ \sum_{i=1}^{\rp+\ry} d_i + \binom{\rp+\ry}{2} +N \ry } $.

However, by \cref{psi-int-bound}, that coefficient vanishes unless  
\[ \binom{\rp+\ry}{2} = \sum_{i=1}^{\ry}(\rp+\ry -1+N+ d_i )   - \sum_{i=\ry+1}^{\rp+\ry} (-d_i - \rp- \ry  +1)= (\rp+\ry) (\rp+\ry-1) + N \ry  + \sum_{i=1}^{\ry} d_i - \sum_{i=\ry+1}^{\rp+\ry} d_i \] in which case $\sum_{i=1}^{\rp+\ry} d_i + \binom{\rp+\ry}{2} +N \ry$ is even, so either the two coefficients are equal or they are negatives of each other but zero and hence equal anyways. \end{proof} 

\begin{lemma}\label{second-mt} For $S \subseteq\{1,\dots, \rp+\ry\}$ of size $\ry$, we have an equality of holomorphic functions on  \[ \{ \alpha_1,\dots, \alpha_{\rp+\ry}\in \mathbb C \mid \operatorname{Re}(\alpha_i)< \frac{1}{4}  \textrm{ for } i \in S, \operatorname{Re}(\alpha_i)> - \frac{1}{4}\textrm{ for } i \notin S \} \] given by \begin{equation}\label{second-mt-left} \prod_{1 \leq i_1 < i_2 \leq \rp+\ry} (q^{-\alpha_{i_1} } - q^{ - \alpha_{i_2}} )  \prod_{i\in S}  q^{ - \alpha_iN }  \sum_{ \substack{ f_1,\dots, f_{\rp+\ry} \in \mathbb F_q[u]^+ \\ \prod_{i\in S } f_i = \prod_{i\notin S} f_i}}  \prod_{i\in S} \abs{f_i} ^{ -\frac{1}{2}+\alpha_i  } \prod_{i\notin S}   \abs{f_i} ^{ -\frac{1}{2}-\alpha_i  } \end{equation}\begin{equation}\label{second-mt-right}=  \sum_{\substack{ e_1,\dots e_{\rp+\ry} \in \mathbb Z \\ N \geq e_1\geq  \dots e_{\ry} \\ e_{\ry+1} \geq \dots  e_{\rp+\ry} \geq 0   }}\sum_{ \substack{  \sigma \in S_{\rp+\ry} \\ \sigma^{-1} (S) = \{1,\dots, \ry\} } } \sgn(\sigma) \prod_{i=1}^{\rp+\ry} q^{ - (e_i +\rp+\ry- i)  \alpha_{\sigma(i) } }   \int_{\Cqs } \psi_{\mathbf e} \mu_{\textrm{ep}}.\end{equation} \end{lemma}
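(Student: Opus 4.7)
The plan is to reduce Lemma \ref{second-mt} to Lemma \ref{first-mt} by a permutation of the variables $\alpha_1,\dots,\alpha_{\rp+\ry}$. Fix any permutation $\tau \in S_{\rp+\ry}$ with $\tau(\{1,\dots,\ry\}) = S$. I will apply Lemma \ref{first-mt} to the tuple $(\alpha_{\tau(1)},\dots,\alpha_{\tau(\rp+\ry)})$ in place of $(\alpha_1,\dots,\alpha_{\rp+\ry})$, and then match the result to the two sides of Lemma \ref{second-mt}.

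On the left-hand side, substituting $\alpha_i \mapsto \alpha_{\tau(i)}$ turns $\prod_{i=1}^{\ry} q^{-\alpha_i N}$ into $\prod_{j\in S} q^{-\alpha_j N}$, and turns the diagonal sum $\sum_{\prod_{i=1}^{\ry} f_i = \prod_{i=\ry+1}^{\rp+\ry} f_i}$ into the analogous sum $\sum_{\prod_{i\in S} f_i = \prod_{i\notin S} f_i}$ after reindexing $f_i \mapsto f_{\tau(i)}$. The Vandermonde factor transforms by $\sgn(\tau)$ via the standard antisymmetry $\prod_{i_1<i_2}(x_{\tau(i_1)}-x_{\tau(i_2)}) = \sgn(\tau)\prod_{j_1<j_2}(x_{j_1}-x_{j_2})$ applied to $x_i = q^{-\alpha_i}$. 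Hence the substituted LHS equals $\sgn(\tau)$ times \eqref{second-mt-left}. The domain $\operatorname{Re}(\alpha_{\tau(i)}) < \frac14$ for $i\leq\ry$ and $>-\frac14$ for $i>\ry$ becomes precisely the domain of Lemma \ref{second-mt}, so holomorphy on that domain is preserved.

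On the right-hand side, the substitution sends the inner sum to $\sum_{\sigma \in S_\ry \times S_\rp} \sgn(\sigma)\prod_{i=1}^{\rp+\ry} q^{-(e_i+\rp+\ry-i)\alpha_{\tau(\sigma(i))}}$. Setting $\sigma' = \tau\circ\sigma$ and using $\sgn(\sigma')=\sgn(\tau)\sgn(\sigma)$, the condition $\sigma \in S_\ry\times S_\rp$, i.e. $\sigma$ preserves the partition $\{1,\dots,\ry\}\sqcup\{\ry+1,\dots,\rp+\ry\}$, becomes $(\sigma')^{-1}(S) = \sigma^{-1}(\tau^{-1}(S)) = \{1,\dots,\ry\}$. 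Hence the substituted RHS equals $\sgn(\tau)$ times \eqref{second-mt-right}. Cancelling the common factor of $\sgn(\tau)$ from both sides yields the equality claimed in Lemma \ref{second-mt}.

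There is no real obstacle here; the only points to be careful with are (i) verifying that the reindexing of the summation variables $f_i$ is a bijection compatible with the constraint $\prod_{i\in S} f_i = \prod_{i\notin S} f_i$, which is immediate since $\tau$ is a bijection sending $\{1,\dots,\ry\}$ to $S$, and (ii) confirming that the absolute convergence of \eqref{second-mt-right} and the holomorphy of \eqref{second-mt-left} on the stated domain both follow from the corresponding statements in Lemma \ref{first-mt} after the change of variables. Both are automatic.
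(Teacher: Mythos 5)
Your proof is correct and follows essentially the same route as the paper: fix a permutation $\tau$ with $\tau(\{1,\dots,\ry\})=S$, substitute $\alpha_i\mapsto\alpha_{\tau(i)}$ into Lemma~\ref{first-mt}, and observe that each side picks up a common factor $\sgn(\tau)$ (from the Vandermonde on the left, from the reindexing $\sigma'=\tau\sigma$ on the right), which cancels. The paper phrases the substitution on the right as $\sigma\mapsto\tau^{-1}\sigma$ rather than $\sigma'=\tau\sigma$, but these are the same change of variables.
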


\begin{proof}  We let $\tau$ be any fixed permutation in $S_{\rp+\ry}$ with $\tau^{-1} (S) =\{1,\dots, \ry\}$ and perform the change of variables replacing $\alpha_i$ with $\alpha_{\tau(i)}$ in the statement of \cref{first-mt}, obtaining

\begin{equation}\label{third-mt-left} \prod_{1 \leq i_1 < i_2 \leq \rp+\ry} (q^{-\alpha_{\tau(i_1)} } - q^{ - \alpha_{\tau(i_2)}} )  \prod_{i=1}^{\ry}  q^{ -\alpha_{\tau(i)} N }  \sum_{ \substack{ f_1,\dots, f_{\rp+\ry} \in \mathbb F_q[u]^+ \\ \prod_{i=1}^{\ry} f_i = \prod_{i=\ry+1}^{\rp+\ry} f_i}}  \prod_{i=1}^{\ry}\abs{f_i} ^{ -\frac{1}{2}+ \alpha_{\tau(i)} } \prod_{i=\ry+1}^{\rp+\ry}  \abs{f_i} ^{ -\frac{1}{2}-\alpha_{\tau(i)}  } \end{equation}\begin{equation}\label{third-mt-right} =  \sum_{\substack{ e_1,\dots e_{\rp+\ry} \in \mathbb Z \\ N \geq e_1\geq  \dots e_{\ry} \\ e_{\ry+1} \geq \dots  e_{\rp+\ry} \geq 0   }}\sum_{ \sigma \in S_{\ry} \times S_{\rp} } \sgn(\sigma) \prod_{i=1}^{\rp+\ry} q^{ - (e_i +\rp+\ry- i)  \alpha_{\tau( \sigma(i) )} }   \int_{\Cqs } \psi_{\mathbf e} \mu_{\textrm{ep}}.\end{equation}
We have
\[ \prod_{i=1}^{\ry}  q^{ -\alpha_{\tau(i)} N }  =   \prod_{i\in S}  q^{ - \alpha_iN }\] and 
\[  \sum_{ \substack{ f_1,\dots, f_{\rp+\ry} \in \mathbb F_q[u]^+ \\ \prod_{i=1}^{\ry} f_i = \prod_{i=\ry+1}^{\rp+\ry} f_i}}  \prod_{i=1}^{\ry}\abs{f_i} ^{ -\frac{1}{2}-+ \alpha_{\tau(i)} } \prod_{i=\ry+1}^{\rp+\ry}  \abs{f_i} ^{ -\frac{1}{2}-\alpha_{\tau(i)}  } = \sum_{ \substack{ f_1,\dots, f_{\rp+\ry} \in \mathbb F_q[u]^+ \\ \prod_{i\in S } f_i = \prod_{i\notin S} f_i}}  \prod_{i\in S} \abs{f_i} ^{ -\frac{1}{2}+\alpha_i  } \prod_{i\notin S}   \abs{f_i} ^{ -\frac{1}{2}-\alpha_i  } \] 
using the change of variables $f_i \mapsto f_{\tau(i)}$, and we have
\[ \prod_{1 \leq i_1 < i_2 \leq \rp+\ry} (q^{-\alpha_{\tau(i_1)} } - q^{ - \alpha_{\tau(i_2)}} )  = \sgn(\tau)  \prod_{1 \leq i_1 < i_2 \leq \rp+\ry} (q^{-\alpha_{i_1} } - q^{ - \alpha_{i_2}} ) \] 
so \eqref{third-mt-left} is equal to $\sgn(\tau)$ times \eqref{second-mt-left}. Similarly, the change of variables $\sigma \to \tau^{-1} \sigma$ gives
\[ \sum_{ \sigma \in S_{\ry} \times S_{\rp} } \sgn(\sigma) \prod_{i=1}^{\rp+\ry} q^{ - (e_i +\rp+\ry- i)  \alpha_{\tau( \sigma(i) ) }}= \sum_{ \tau^{-1} \sigma \in S_{\ry} \times S_{\rp} } \sgn(\tau^{-1}\sigma) \prod_{i=1}^{\rp+\ry} q^{ - (d_i +\rp+\ry- i)  \alpha_{ \sigma(i) } }\] \[= \sgn(\tau) \sum_{ \substack{  \sigma \in S_{\rp+\ry} \\ \sigma^{-1} (S) = \{1,\dots, \ry\} } } \sgn(\sigma) \prod_{i=1}^{\rp+\ry} q^{ - (e_i +\rp+\ry- i)  \alpha_{\sigma(i) }}\]
so \eqref{third-mt-right} is equal to $\sgn(\tau)$ times \eqref{second-mt-right}. Thus \eqref{third-mt-left} and \eqref{third-mt-right} are equal to each other. \end{proof}

\begin{lemma}\label{mt-is-extended} \[    \sum_{\substack{ e_1,\dots e_{\rp+\ry} \in \mathbb Z \\ N \geq e_1\geq  \dots e_{\ry} \\ e_{\ry+1} \geq \dots  e_{\rp+\ry} \geq 0   }}\kappa_{\mathbf e} \int_{\Cqs } \psi_{\mathbf e} \mu_{\textrm{ep}} =  \operatorname{MT}_{N}^{\rp,\ry} .\] \end{lemma}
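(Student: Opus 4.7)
The plan is to substitute the Weyl-character expression for $\kappa_{\mathbf e}$ into the left-hand side, interchange the order of summation, and recognize the inner double sum as the right-hand side of \cref{second-mt}. Concretely, I would first pull out of the sum all factors independent of $\mathbf e$ and $\sigma$, writing
\[
\sum_{\mathbf e}\kappa_{\mathbf e}\int_{\Cqs}\psi_{\mathbf e}\mu_{\textrm{ep}}
=\frac{(-1)^{N\ry}\prod_{i=\rp+1}^{\rp+\ry}q^{N\alpha_i}}{\prod_{1\leq i_1<i_2\leq\rp+\ry}(q^{-\alpha_{i_1}}-q^{-\alpha_{i_2}})}\sum_{\sigma\in S_{\rp+\ry}}\sgn(\sigma)\sum_{\mathbf e}\prod_{i=1}^{\rp+\ry}q^{-(e_i+\rp+\ry-i)\alpha_{\sigma(i)}}\int_{\Cqs}\psi_{\mathbf e}\mu_{\textrm{ep}},
\]
where the outer sum over $\mathbf e$ ranges over tuples with $N\geq e_1\geq\dots\geq e_\ry$ and $e_{\ry+1}\geq\dots\geq e_{\rp+\ry}\geq 0$. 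Next I would partition the sum over $\sigma\in S_{\rp+\ry}$ according to the subset $S=\sigma(\{1,\dots,\ry\})\subseteq\{1,\dots,\rp+\ry\}$ of size $\ry$, which is the same condition as $\sigma^{-1}(S)=\{1,\dots,\ry\}$. For each such $S$, the resulting inner double sum over $\mathbf e$ and such $\sigma$ is \emph{exactly} the expression \eqref{second-mt-right}, so by \cref{second-mt} it equals
\[
\prod_{1\leq i_1<i_2\leq\rp+\ry}(q^{-\alpha_{i_1}}-q^{-\alpha_{i_2}})\prod_{i\in S}q^{-\alpha_iN}\sum_{\substack{f_1,\dots,f_{\rp+\ry}\in\mathbb F_q[u]^+\\\prod_{i\in S}f_i=\prod_{i\notin S}f_i}}\prod_{i\in S}|f_i|^{-\frac{1}{2}+\alpha_i}\prod_{i\notin S}|f_i|^{-\frac{1}{2}-\alpha_i}.
\]

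The Vandermonde factor cancels with the denominator pulled out in the first step, leaving a sum over $S$ of precisely the summands appearing in the definition of $\operatorname{MT}^{\rp,\ry}_N$ (after translating via $s_i=\frac{1}{2}+\alpha_i$); comparison with that definition finishes the proof. The only real points that need attention are bookkeeping: first, checking that the summation range in the statement matches the range appearing in \cref{second-mt} (it does, by construction); second, noting that the identity is stated for $s_i$ on the critical line, i.e.\ $\alpha_i$ purely imaginary, which lies in the common domain of holomorphy for every $S$ given in \cref{second-mt}, so the identity of meromorphically-continued functions specializes correctly. I do not anticipate any substantive obstacle beyond this reorganization, since all the analytic work was already done in \cref{first-mt} and \cref{second-mt}.
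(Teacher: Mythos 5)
Your proposal follows essentially the same route as the paper: expand $\kappa_{\mathbf e}$ via the Weyl character formula, interchange the $\sigma$- and $\mathbf e$-sums, partition $\sigma$ according to $S=\sigma(\{1,\dots,\ry\})$, and then apply \cref{second-mt} for each $S$ to match the definition of $\operatorname{MT}^{\rp,\ry}_N$. One small point deserves more care: your ``first step'' pulls out $1/\prod_{i_1<i_2}(q^{-\alpha_{i_1}}-q^{-\alpha_{i_2}})$, which is only legitimate where the $\alpha_i$ are distinct, yet the lemma is asserted for arbitrary points on the critical line (where the Vandermonde can vanish). The caveat you raise about ``the common domain of holomorphy for \cref{second-mt}'' is aimed at a different and unproblematic issue; what is actually needed is to observe that both sides of the lemma are continuous in $\alpha_1,\dots,\alpha_{\rp+\ry}$ on the imaginary axis (each $\kappa_{\mathbf e}$ is a polynomial despite the apparent Vandermonde denominator, and the $\mathbf e$-sum converges uniformly by \cref{psi-int-bound}), so it suffices to establish the identity on the dense locus where the $\alpha_i$ are distinct. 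The paper sidesteps this by multiplying both sides by the Vandermonde before comparing, rather than dividing, and then invoking continuity once at the start; your cancellation argument is equivalent but needs that continuity step made explicit.
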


\begin{proof} Since the sum $  \sum_{\substack{ e_1,\dots e_{\rp+\ry} \in \mathbb Z \\ N \geq e_1\geq  \dots e_{\ry} \\ e_{\ry+1} \geq \dots  e_{\rp+\ry} \geq 0   }}\kappa_{\mathbf e} \int_{\Cqs } \psi_{\mathbf e} \mu_{\textrm{ep}}$ is uniformly convergent as a function of $\alpha_1,\dots, \alpha_{\rp+\ry}$ on the imaginary axis, both sides are continuous functions of $\alpha_1,\dots, \alpha_{\rp+\ry}$. So it suffices to prove this identity after restricting to  a dense subset, and therefore suffices to prove it after multiplying by the Vandermonde $ \prod_{1 \leq i_1 < i_2 \leq \rp+\ry} (q^{-\alpha_{i_1} } - q^{ - \alpha_{i_2}} )$.  When we do this, the right-hand side becomes, by definition of $\operatorname{MT} $,
\[ \prod_{1 \leq i_1 < i_2 \leq \rp+\ry} (q^{-\alpha_{i_1} } - q^{ - \alpha_{i_2}})\prod_{i=\rp+ 1}^{\rp+\ry} q^{  \alpha_i N}  \sum_{\substack{  S \subseteq \{1,\dots,\rp+\ry\} \\ \abs{S}=\ry}}  \prod_{i \in S} q^{ -\alpha_i N} \sum_{ \substack{ f_1,\dots, f_{\rp+\ry} \in \mathbb F_q[u]^+ \\ \prod_{i \in S} f_i = \prod_{i\notin S} f_i}}  \prod_{i\in S}\abs{f_i} ^{ -\frac{1}{2}+\alpha_i  } \prod_{i \notin S} \abs{f_i} ^{ -\frac{1}{2}-\alpha_i  } \]
which is the sum over $S$ of \eqref{second-mt-left} multiplied by $\prod_{i=\rp+1}^{\rp+\ry} q^{  \alpha_i N}  $. The left-hand side becomes, by definition of $\kappa$,
\[  \sum_{\substack{ e_1,\dots e_{\rp+\ry} \in \mathbb Z \\ N \geq e_1\geq  \dots e_{\ry} \\ e_{\ry+1} \geq \dots  e_{\rp+\ry} \geq 0   }}  (-1)^{N \ry} \sum_{ \sigma \in S_{\rp+\ry}} \sgn(\sigma) \prod_{i=1}^{\rp+\ry} q^{ - (e_i +\rp+\ry- i)  \alpha_{\sigma(i) }   } \prod_{i=\rp+1}^{\rp+\ry} q^{   N\alpha_i  }  \int_{\Cqs } \psi_{\mathbf e} \mu_{\textrm{ep}} \] 
\[ = \prod_{i=\rp+1}^{\rp+\ry} q^{   N\alpha_i  }  \sum_{\substack{  S \subseteq \{1,\dots,\rp+\ry\} \\ \abs{S}=\ry}} \sum_{\substack{ e_1,\dots e_{\rp+\ry} \in \mathbb Z \\ N \geq e_1\geq  \dots e_{\ry} \\ e_{\ry+1} \geq \dots  e_{\rp+\ry} \geq 0   }}  (-1)^{N \ry} \sum_{\substack{ \sigma \in S_{\rp+\ry}\\ \sigma^{-1} (S) = \{1,\dots,\ry\}}} \sgn(\sigma) \prod_{i=1}^{\rp+\ry} q^{ - (e_i +\rp+\ry- i)  \alpha_{\sigma(i) }   }   \int_{\Cqs } \psi_{\mathbf e} \mu_{\textrm{ep}} \]
which is the sum over $S$ of \eqref{second-mt-right} multiplied by $\prod_{i=\rp+1}^{\rp+\ry} q^{  \alpha_i N}  $, since each permutation $\sigma$ sends $\{1,\dots,\ry\}$ to exactly one set $S$ of cardinality $\ry$. (The rearrangement of the sum is justified by absolute convergence.) The claim then follows from \cref{second-mt}.\end{proof}

\begin{proof}[Proof of \cref{intro-cfkrs}] This follows from combining \cref{lem-mip} and \eqref{int-phi-formula-adjusted} with \cref{tail-bound} and \cref{mt-is-extended}, noting the error term $O ( N^{ O(1)} q^{ - \frac{k}{4}} )$ of \cref{tail-bound} is easily absorbed into the error term $O (N^ { \frac{ (\rp+\ry)^2}{2} - \beta \frac{q-2}{4}}) $ of \cref{lem-mip} since $k = \lfloor N^\beta \rfloor $ so the exponential term  $q^{ - \frac{k}{4}} $ dominates any polynomial in $N$. \end{proof}

\bibliographystyle{alpha}
\bibliography{references.bib}

\end{document}